%

\input ./style/arxiv-general.cfg
\documentclass[aop,MSNbibl,seceqn,dvips]{arximspdf}
\makeatletter
   \@ifpackageloaded{graphicx}{}{\usepackage{graphicx}}
\makeatother
\usepackage{mathbh}
\usepackage{mathrsfs}

%

\doi{10.1214/15-AOP1038}
\volume{44}
\issue{4}
\pubyear{2016}
\firstpage{2889}
\lastpage{2979}
\docsubty{FLA}

\makeatletter
\newcommand{\fontas}{\fontsize{8.36}{10.36}\selectfont}
\def\sfrac#1#2{#1/#2}
\def\vfrac#1#2{(#1)/#2}
\def\afrac#1#2{#1/(#2)}

\def\sklfrac#1#2{(#1/#2)}
\def\sklvfrac#1#2{((#1)/#2)}

\def\sklvafrac#1#2{((#1)/(#2))}
\newcommand{\bbbv}{\mathbh{v}}

\newcommand{\rrvert}{\vert}

\newcommand{\rrVert}{\Vert}
\newcommand{\llvert}{\vert}
\newcommand{\llVert}{\Vert}
\renewcommand{\mid}{|}

\newcommand{\eqref}[1]{(\ref{#1})}
\newcommand{\overset}{\stackrel}
\newtheorem{teo}{Theorem}[section]
\newproclaim{defn}[teo]{Definition}
\newproclaim{rem}[teo]{Remark}

\newtheorem{cor}[teo]{Corollary}
\newtheorem{lem}[teo]{Lemma}
\newtheorem{claim}[teo]{Claim}
\newtheorem{prop}[teo]{Proposition}

\newcommand{\BN}{{\mathbb{N}}}
\newcommand{\BP}{{\mathbb{P}}}
\newcommand{\BZ}{{\mathbb{Z}}}
\newcommand{\CF}{{\mathcal{F}}}
\newcommand{\CG}{{\mathcal{G}}}
\newcommand{\CP}{{\mathcal{P}}}

\newcommand{\ind}{{\mathbh{1}}}
\renewcommand{\epsilon}{{\varepsilon}}
\newcommand{\Om}{{\Omega}}
\newcommand{\si}{{\sigma}}
\newcommand{\ep}{{\varepsilon}}
\newcommand{\esup}{\operatorname{esssup}}
\makeatother

\begin{document}
\begin{frontmatter}

\title{Local limit theorem and equivalence of dynamic and static
points of view for certain ballistic random walks in i.i.d. environments}
\runtitle{Local limit for certain ballistic RWRE}

\begin{aug}
\author[A]{\fnms{Noam}~\snm{Berger}\thanksref{M1,M2,T1}\ead[label=e1]{berger@math.huji.ac.il}},
\author[A]{\fnms{Moran}~\snm{Cohen}\corref{}\thanksref{M1,T1}\ead[label=e2]{moranski@math.huji.ac.il}}
\and
\author[B]{\fnms{Ron}~\snm{Rosenthal}\thanksref{M3,T2}\ead[label=e3]{ron.rosenthal@math.ethz.ch}}
\runauthor{N. Berger, M. Cohen and R. Rosenthal}
\affiliation{Hebrew University of Jerusalem\thanksmark{M1}, TU Munich\thanksmark{M2} and ETH Z\"urich\thanksmark{M3}}
\address[A]{N. Berger\\
M. Cohen\\
Einstein Institute of Mathematics\\
Edmond J. Safra Campus\\
The Hebrew University of Jerusalem \\
Givat Ram. Jerusalem 9190401\\
Israel\\
\printead{e1}\\
\phantom{E-mail:\ }\printead*{e2}}
%
\address[B]{R. Rosenthal\\
Department Mathematik\\
Eidgen\"{o}ssische Technische Hochschule\\
Gruppe 3, HG E 66.2\\
R\"{a}mistrasse 101, 8092\\
Z\"{u}rich\\
Switzerland\\
\printead{e3}}
\end{aug}
\thankstext{T1}{Supported in part by ERC StG Grant 239990.}
\thankstext{T2}{Supported in part by an ETH fellowship and by ERC StG
Grant 239990.}

%
\received{\smonth{8} \syear{2014}}
%
\revised{\smonth{6} \syear{2015}}

%
\begin{abstract}
In this work, we discuss certain ballistic random walks in random
environments
on $\mathbb{Z}^{d}$, and prove the equivalence between the static
and dynamic points of view in dimension $d\geq4$. Using this equivalence,
we also prove a version of a local limit theorem which relates the
local behavior of the quenched and annealed measures of the random
walk by a prefactor.
\end{abstract}

%
\begin{keyword}[class=AMS]
\kwd{60K37}
\kwd{82D30}
\end{keyword}
\begin{keyword}
\kwd{Random walks in random environments}
\kwd{ballisticity}
\kwd{equivalence of static and dynamic points of view}
\end{keyword}
\end{frontmatter}

\setcounter{footnote}{2}

\section{Introduction}\label{sec1}

\subsection{Background}\label{sec1.1}

Let $d\geq1$. A random walk in a random environment (RWRE) on $\mathbb{Z}^{d}$
is defined by the following procedure: let $\mathcal{M}_{d}$ denote
the space of all probability measures on $\mathcal{E}_{d}= \{ \pm
e_{i} \} _{i=1}^{d}$
(the standard unit coordinate vectors), and define $\Omega=
(\mathcal{M}_{d} )^{\mathbb{Z}^{d}}$.
An \textit{environment} is an element $\omega\in\Omega$. For $x\in
\mathbb{Z}^{d}$
and $e\in\mathcal{E}_{d}$, we denote by $\omega(x,e )$
the probability that the measure $\omega(x )$ gives to
$e$. Let $P$ be an i.i.d. measure on $\Omega$, in the sense that
$P=\nu^{\mathbb{Z}^{d}}$ for some probability measure $\nu$ on
$\mathcal{M}_{d}$.
Throughout this paper, we assume that $P$ is uniformly elliptic, that is,
there exists some $\eta>0$ such that for every $e\in\mathcal{E}_{d}$
%
\begin{equation}
P \bigl( \bigl\{ \omega\in\Omega: \omega(0,e )\geq\eta\bigr\} \bigr
)=\nu
\bigl( \bigl\{ \omega\in\mathcal{M}_{d}: \omega(e )\geq\eta\bigr\}
\bigr)=1.\label{eqellipticconstant}
\end{equation}
For a given, fixed environment $\omega\in\Omega$ and $x\in\mathbb{Z}^{d}$,
the \textit{quenched} random walk on it (or the \emph{quenched law})
is a time homogeneous Markov chain on $\mathbb{Z}^{d}$ with transition
probabilities
\[
P_{\omega}^{x} (X_{n+1}=y+e\mid X_{n}=y )=
\omega(y,e)\qquad\forall y\in\mathbb{Z}^{d}, e\in\mathcal{E}_{d}
\]
and initial distribution $P_{\omega}^{x} (X_{0}=x )=1$.
We let $\mathbf{P}^{x}=P\otimes P_{\omega}^{x}$ be the joint law
of the environment and the walk, and define the \textit{annealed} (or \textit{averaged}) law as its marginal on the space of trajectories
\[
\mathbb{P}^{x} (\cdot)=\int_{\Omega}P_{\omega
}^{x}
(\cdot)\,dP (\omega).
\]
We use the notation $E$, $E_{\omega}^{x}$ and $\mathbb{E}^{x}$
for the expectations of the measures $P$, $P_{\omega}^{x}$ and~$\mathbb{P}^{x}$,
respectively.

In \cite{SZ99,Ze02}, Sznitman and Zerner proved that the limiting
velocity of the random walk
\[
\bbbv=\lim_{n\to\infty}\frac{X_{n}}{n}
\]
exists for $P$-almost every environment and $P_{\omega}^{0}$-almost
every trajectory of the random walk on it. A question to remain open,
which in fact is one the most important open questions in the field,
is whether the limiting velocity is an almost sure constant.

An important family of measures $P$ for the model is given by the
following definition.

\begin{defn}
The RWRE is said to be ballistic if the limiting velocity is a nonzero
almost sure constant.
\end{defn}

\subsection{Conditions for ballisticity}\label{sec1.2}

In \cite{Sz01,Sz02}, Sznitman introduced two criteria for ballisticity
of RWRE, called conditions $(T)$ and $(T')$. In order to give a
formal definition of these conditions, some preliminary definitions
are needed.

\begin{defn}
Let $\ell\in S^{d-1}:= \{ x\in\mathbb{R}^{d}: \llVert
x\rrVert _{2}=1 \} $
be a direction in $\mathbb{R}^{d}$.
\begin{longlist}[(2)]
\item[(1)] For $L>0$ and a sequence $ \{ X_{n} \} $ (in $\mathbb
{Z}^{d}$),
define
\[
T_{L}=T_{L}^{ (\ell)} \bigl( \{ X_{n} \}
\bigr)=\inf\bigl\{ n\geq0: \langle X_{n},\ell\rangle\geq L \bigr\},
\]
where $ \langle\cdot,\cdot\rangle$ denotes the standard
inner product in $\mathbb{R}^{d}$.
\item[(2)] Similarly, for a set $A\subset\mathbb{Z}^{d}$ and a sequence
$ \{ X_{n} \} $
(in $\mathbb{Z}^{d}$), denote
\[
T_{A}=T_{A} \bigl( \{ X_{n} \} \bigr)=\inf\{
n\geq0: X_{n}\in A \}.
\]
\end{longlist}
\end{defn}

We can now state the definition of Sznitman's ballisticity conditions.

\begin{defn}
(1) Given $0<\gamma\leq1$, we say that $P$ satisfies condition
$ (T_{\gamma} )$
in direction $\ell_{0}\in S^{d-1}$ if for every $\ell\in S^{d-1}$
in some neighborhood of $\ell_{0}$ there exists a finite constant
$C$ such that
\[
\mathbb{P}^{0} \bigl(T_{L}^{ (-\ell)}<T_{L}^{ (\ell
)}
\bigr)<Ce^{-L^{\gamma}}.
\]

(2) $P$ is said to satisfy condition $ (T )$ if it satisfies
condition $ (T_{1} )$.

(3) $P$ is said to satisfy condition $ (T' )$ if it satisfies
condition $ (T_{\gamma} )$ for some $\frac{1}{2}<\gamma<1$.
\end{defn}

%
\begin{rem}
It was shown in \cite{Sz02} that all the conditions $ (T_{\gamma
} )$
for $\frac{1}{2}<\gamma<1$ are equivalent.
\end{rem}

The relation between ballisticity and the above definition is given
by the following theorem and conjecture.\vadjust{\goodbreak}

\begin{teo}[(Sznitman \cite{Sz02})]\label
{teoSznitmanballisticitycriterion}
If condition $ (T' )$
holds for some direction $\ell_{0}\in S^{d-1}$, then the RWRE is
ballistic, and the limiting velocity $\bbbv$ satisfies \mbox{$\langle\bbbv,\ell_{0} \rangle>0$}.
In addition, under this assumption, condition $ (T' )$ holds
for all $\ell\in S^{d-1}$ satisfying $ \langle\bbbv,\ell
\rangle>0$.
\end{teo}

\begin{conjecture*}[(Sznitman)]
Condition $ (T' )$ is equivalent to ballisticity.
\end{conjecture*}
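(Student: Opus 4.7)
The forward implication, $(T')\Rightarrow$ ballisticity, is precisely Theorem~\ref{thm:Sznitman_ballisticity_criterion}, so only the converse requires work. Assume the walk is ballistic with asymptotic velocity $\mathbbm{v}\neq 0$, and set $\ell_{0}:=\mathbbm{v}/\|\mathbbm{v}\|_{2}$. The goal is then to produce a neighborhood $U$ of $\ell_{0}$ in $S^{d-1}$ and some $\gamma\in(1/2,1)$ such that $\mathbb{P}^{0}(T_{L}^{(-\ell)}<T_{L}^{(\ell)})\leq Ce^{-L^{\gamma}}$ for all $\ell\in U$. The natural strategy I would pursue is a two-stage bootstrap.

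Stage 1 (extract polynomial decay from ballisticity). By the law of large numbers, $\langle X_{n},\ell_{0}\rangle/n\to\langle \mathbbm{v},\ell_{0}\rangle>0$ annealed. The plan is to combine this with uniform ellipticity~\eqref{eq:elliptic_constant} and a coarse-graining over slabs $\{x\in\mathbb{Z}^{d}:k\leq\langle x,\ell_{0}\rangle<k+1\}$ in order to produce a polynomial bound of the form $\mathbb{P}^{0}(T_{L}^{(-\ell_{0})}<T_{L}^{(\ell_{0})})\leq CL^{-M}$ for an exponent $M=M(d)$ as large as the argument allows. Heuristically: if the walk were to backtrack macroscopically with more than polynomial probability, one could iterate the slab-crossing structure and contradict the linear growth of $\langle X_{n},\ell_{0}\rangle$ furnished by ballisticity.

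Stage 2 (bootstrap to $(T')$). Once polynomial decay of sufficiently high degree is available, one invokes Sznitman's effective criterion from \cite{Sz02} (the polynomial condition $(P)_{M}$) to promote polynomial decay to the stretched-exponential bound defining $(T_{\gamma})$ for some $\gamma\in(1/2,1)$, after which the neighborhood statement follows from the second part of Theorem~\ref{thm:Sznitman_ballisticity_criterion} together with the equivalence of the $(T_{\gamma})$ conditions for $\gamma\in(1/2,1)$ mentioned in the remark.

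\textbf{Main obstacle.} Stage~1 is the heart of the matter, and it is precisely where the conjecture remains open in full generality. Ballisticity is an annealed statement about $X_{n}/n$ and does not by itself preclude rare but non-negligible environments in which the walk backtracks significantly \emph{before} first crossing level $L$; yet ruling out exactly such events is what $(T')$ demands. A successful attack would require either a quantitative analysis of the atypical environments responsible for backtracking, or a regeneration-time construction that functions under only the ballisticity assumption. Since Theorem~\ref{thm:Sznitman_ballisticity_criterion} already supplies the direction needed for the present paper, we adopt $(T')$ as a working hypothesis and treat the converse as the outstanding open question.
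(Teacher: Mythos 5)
This statement is labeled as a \emph{conjecture} in the paper, not a theorem: the paper does not prove it, and no proof is known. You have correctly recognized this. The forward direction $(T')\Rightarrow\text{ballisticity}$ is indeed Theorem~\ref{thm:Sznitman_ballisticity_criterion}, and the converse is the open problem. Your ``proof proposal'' is, as you yourself say, a sketch of a possible program together with an honest admission that the essential step remains open; since the paper supplies no proof either, there is nothing to compare against, and your assessment of the situation is accurate.

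Two remarks on the sketch. First, a small attribution point: the promotion from fast polynomial decay (condition $\mathscr{P}_M|\ell$ of Definition~\ref{def:P_condition}) to $(T_\gamma)$ is due to Berger, Drewitz and Ram\'irez \cite{BDR12}, not to \cite{Sz02}; Sznitman's ``effective criterion'' in \cite{Sz02} is a related but distinct bootstrap device, and in any case one would still be assuming the input of Stage~1 rather than deriving it. Second, and more importantly, Stage~1 is not merely ``the heart of the matter'' in the sense of being technically demanding --- it is the genuinely open core, and the heuristic you offer for it does not close the gap. Ballisticity is a $\mathbb{P}^0$-almost-sure qualitative statement about $X_n/n$; it provides no rate, no tail bound on slab-crossing times, and no exclusion of environments that are atypical for the walk up to the exit time $T_L^{(\ell_0)}\wedge T_L^{(-\ell_0)}$. ``Iterating the slab-crossing structure'' presupposes a renewal or regeneration mechanism, but the construction of regeneration times with good moment bounds is exactly what conditions of type $(T_\gamma)$ are designed to deliver (see Theorem~\ref{thm:regeneration_times_thm}); assuming it here would be circular. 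So the proposal should not be read as a near-proof with one lemma missing, but as a restatement of the conjecture in a form that isolates the missing quantitative input.
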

In recent years, several improvements of Theorem~\ref
{teoSznitmanballisticitycriterion}
have been proved: in \cite{DR11}, Drewitz and Ram\'{\i}rez showed that
for some constant $\gamma_{d}\in(0.366,0.388 )$ which is
dimension dependent $ (T_{\gamma} )$ for $\gamma\in
(\gamma_{d},1 )$
are all equivalent. In \cite{berger2008slowdown}, Theorem 1.4, Berger
showed that in dimension $d\geq4$ the condition $ (T_{\gamma
} )$
for $\gamma\in(0,1 )$ implies ballisticity. In an additional
work \cite{DR12}, Drewitz and Ram\'{\i}rez showed that in dimension
$d\geq4$
all the conditions $ (T_{\gamma} )$ for $\gamma\in
(0,1 )$
are equivalent. In \cite{BDR12}, Berger, Drewitz and Ram\'{\i}rez showed
that in fact (fast enough) polynomial decay (see Definition~\ref{defPcondition}
below) is equivalent to condition $ (T_{\gamma} )$ for any
$0<\gamma<1$. Finally, in \cite{CR13} Campos and Ram\'{\i}rez proved
ballisticity for some nonuniformly elliptic environments satisfying
(fast enough) polynomial decay.

\begin{defn}[{[Condition $ (\mathscr{P} )$]}]
\label{defPcondition} Let $N_{0}$ be an even integer. For a coordinate
direction $\ell=\ell_{1}$, let $\ell_{2},\ldots,\ell_{d}$ be any
fixed completion of $\ell_{1}$ to an orthonormal basis of $\mathbb{R}^{d}$
and define
\begin{eqnarray*}
\operatorname{Box}_{x} &=& \biggl\{ y\in\mathbb{Z}^{d}: -
\frac
{N_{0}}{2}< \langle y-x,\ell\rangle<N_{0}, \langle y-x,
\ell_{j} \rangle<25N_{0}^{3}~\forall2\leq j\leq d
\biggr\},
\\
\widetilde{\operatorname{Box}}_{x}&=& \bigl\{ y\in\mathbb{Z}^{d}:
\tfrac
{1}{3}N_{0}\leq\langle y-x,\ell\rangle<N_{0},
\langle y-x,\ell_{j} \rangle<N_{0}^{3}~\forall2
\leq j\leq d \bigr\},
\\
\partial\operatorname{Box}_{x}&=& \bigl\{ y\in\mathbb{Z}^{d}\setminus
\operatorname{Box}_{x}: \exists z\in\operatorname{Box}_{x}\mbox{ such that }
\llVert y-z\rrVert_{1}=1 \bigr\}
\end{eqnarray*}
and
\[
\partial_{+}\operatorname{Box}_{x}= \bigl\{ y\in\partial
\operatorname{Box}_{x}: \langle y-x,\ell\rangle\geq N_{0}, \bigl
\llvert\langle y-x,\ell_{j} \rangle\bigr\rrvert<25N_{0}^{3}~\forall2\leq j\leq d \bigr\}.
\]
Fix $M>0$ and $\ell\in S^{d-1}$. We say that condition $\mathscr
{P}_{M}\mid\ell$
is fulfilled if
\[
\sup_{x\in\widetilde{\operatorname{Box}}_{0}}\mathbb{P}^{x} (T_{\partial\operatorname{Box}_{0}}\neq
T_{\partial_{+}\operatorname{Box}_{0}} )<\frac{1}{N_{0}^{M}}
\]
holds for some $N_{0}\geq\exp(100+4d (\ln\eta
)^{2} )$,
where $\eta$ is the ellipticity constant defined in (\ref
{eqellipticconstant}).
We say that condition $ (\mathscr{P} )$ holds in direction
$\ell$ if condition $\mathscr{P}_{M}\mid\ell$ holds for some $M>15d+5$.
\end{defn}

\begin{defn}
Throughout this paper, we denote by $ (\mathscr{P} )$ the
following equivalent conditions:
\begin{itemize}
\item[(1)]$ (T' )$.
\item[(2)]$ (T_{\gamma} )$ for some $\gamma\in
(0,1 )$.
\item[(3)]$ (T_{\gamma} )$ for all $\gamma\in(0,1 )$.
\item[(4)]$ (\mathscr{P} )$.
\end{itemize}
\end{defn}

\subsection{The environment viewed from the particle}\label{sec1.3}

Let $ \{ X_{n} \} $ be a RWRE. The \textit{environment viewed
from the particle} is the discrete time process $ \{ \overline
{\omega}_{n} \} $
defined on~$\Omega$ by
\[
\overline{\omega}_{n}=\sigma_{X_{n}}\omega,
\]
where for $x\in\mathbb{Z}^{d}$ we denote by $\sigma_{x}$ the shift
in direction $x$ of $\omega$, that is, $\sigma_{x}\omega
(y,\cdot)=\omega(x+y,\cdot)$
for every $y\in\mathbb{Z}^{d}$.

Beside the fact that the environment viewed from the particle process
takes values in a compact space, it has the advantage of being Markovian,
cf. \cite{bolthausen2002satic}, with respect to the transition kernel
%
\begin{equation}
\mathfrak{R}g (\omega)=\sum_{e\in\mathcal
{E}_{d}}\omega(0,e )g (
\sigma_{e}\omega),\label{eqTransitionkernel}
\end{equation}
defined for every bounded measurable function $g:\Omega\to\mathbb{R}$.

It is natural to ask what are the invariant measures of the Markov
chain $ \{ \overline{\omega}_{n} \} $.

\begin{defn}
A probability measure $Q$ on $\Omega$ is said to be invariant (or
invariant with respect to the point of view of the particle), if for
every bounded continuous function $g:\Omega\to\mathbb{R}$
%
\begin{equation}
\int_{\Omega}\mathfrak{R}g (\omega)\,dQ (\omega)=\int
_{\Omega}g (\omega)\,dQ (\omega).\label{eqinvariantprobabilitymeasure}
\end{equation}
\end{defn}

One can find many examples for invariant measures with respect to
the process $ \{ \overline{\omega}_{n} \} $. For example,
every Dirac probability measure of any translation invariant environment
provides such an example. One additional method to obtain invariant
measures is by taking any sub-sequential limit of the C\'{e}asro means
$ \{ \frac{1}{n}\sum_{k=0}^{n-1}\mathfrak{R}^{k}\nu\} $,
where $\nu$ is any probability measure on $\Omega$ and $\mathfrak
{R}\nu$
is the measure defined by the identity $\int_{\Omega}f (\omega
)\,d (\mathfrak{R}\nu) (\omega)=\int_{\Omega}\mathfrak{R}f (\omega
)\,d\nu(\omega)$
for every bounded measurable function $f:\Omega\to\mathbb{R}$.

As it turns out, an invariant measure $Q$ is particularly useful
when it is also equivalent to the original measure $P$. In this case,
we say that the static point of view (the one related to $P$) is
equivalent to the dynamic point of view (the one related to $Q$).
If such a measure exists, it can be used to prove law of large numbers
and central limit theorem type results; see, for example, \cite
{Ko85,KV86,bolthausen2002ten,RA03,SS04,Ze04,BB07,MP07,DR13}
and the references therein.

The existence of an equivalent invariant measure was proved in several
cases. In the one-dimensional case, the existence of an equivalent
measure was proved by Alili \cite{Al99}. In the reversible case,
also known as random conductance model, the existence of an invariant
equivalent measure is a well-known fact for most cases. For balanced
RWRE, the existence of such a measure was proved by Lawler in \cite{La87}.
Later on, this was strengthened to the case of balanced elliptic RWRE
by Guo and Zeitouni in \cite{GZ12} and even further to the nonelliptic
case (for genuinely $d$-dimen\-sional measures) by Berger and Deuschel
in \cite{BD14}. For Dirichlet random walks, a classification for
the cases where such a measure exists was proved by Sabot in~\cite{Sa13}.
Finally, partial results in the ballistic case are also known; see
Section~\ref{subKnown-results} below.

The following result was proved by Kozlov in \cite{Ko85} (for the
proof see also \mbox{\cite{bolthausen2002ten,DR13}}).

\begin{teo}[(Kozlov \cite{Ko85})]
\label{teoKozlovstheorem} Assume $P$ is elliptic\footnote{$P$ is
called elliptic if $P (\min_{e\in\mathcal{E}_{d}}\omega
(x,e )>0 )=1$ $\forall x\in\mathbb{Z}^{d}$.}
and ergodic with respect to $ \{ \sigma_{x} \} _{x\in
\mathbb{Z}^{d}}$.
Assume there exists an invariant probability measure $Q$ for the
environment seen from the point of view of the particle which is absolutely
continuous with respect to $P$. Then the following hold:
\begin{longlist}[(4)]
\item[(1)] $Q$ is equivalent to $P$.

\item[(2)]  The environment viewed from the particle with initial law $Q$ is
ergodic.

\item[(3)] $Q$ is the unique invariant probability measure for the point of
view of the particle which is absolutely continuous with respect to
$P$.

\item[(4)]  The C\'{e}asro means $ \{ \frac{1}{N+1}\sum_{k=0}^{N}\mathfrak
{R}^{k}P \} $
converge weakly to $Q$.
\end{longlist}
\end{teo}
\subsection{Main goal}\label{sec1.4}

This paper has two purposes. The first is to prove the equivalence
of the dynamic and static point of views under condition $
(\mathscr{P} )$,
uniform ellipticity and the additional assumption that $d\geq4$.
The second purpose of this paper is to prove a certain type of local
limit theorem relating the quenched and annealed laws by a prefactor.

\subsection{Known results in the strongly ballistic case}\label{sec1.5}
\label{subKnown-results}

Let $d\geq2$. In \cite{Sz01}, Sznitman proved an annealed CLT under
condition $ (T' )$. The ideas he presented may also be used
to prove an annealed local CLT. For completeness, we present a proof
of the annealed local CLT in the \hyperref[appendi]{Appendix}. In \cite{bolthausen2002satic},
Bolthausen and Sznitman proved the equivalence of the static and dynamic
point of views for certain (nonnestling) ballistic random walks in
random environment, when $d\geq4$ and the disorder is low. In \cite{RA03},
Rassoul-Agha proved the existence of an equivalent invariant measure
on half spaces under Kalikow's condition, mixing and uniform ellipticity.
In \cite{BZ08}, Berger and Zeitouni and in \cite{RAS05,RAS07,RAS09}
Rassoul-Agha and Sepp\"al\"ainen proved quenched invariance principle
under moments assumptions for the first regeneration time. In particular,
a quenched CLT holds under condition $ (\mathscr{P} )$.

\subsection{Main results}\label{sec1.6}

Our two main results are the following.

\begin{teo}
\label{teoAbsolutelycontinuousinvariantmeasure} Let $d\geq4$
and assume $P$ is uniformly elliptic, i.i.d. and satisfies condition
$ (\mathscr{P} )$. Then there exists a unique probability
measure $Q$ on the space of environments which is invariant with
respect to the point of view of the particle and is equivalent to
the original measure $P$. In addition, $E [ (\frac
{dQ}{dP} )^{k} ]<\infty$
for every $k\in\mathbb{N}$.
\end{teo}

\begin{teo}
\label{teoPrefactorthm} Let $d\geq4$ and assume $P$ is uniformly
elliptic, i.i.d. and satisfies condition $ (\mathscr{P} )$.
Then there exists a unique measurable, nonnegative function $f\in
L^{1} (\Omega,P )$
such that for $P$-almost every $\omega\in\Omega$
%
\begin{equation}
\lim_{n\to\infty}\sum_{x\in\mathbb{Z}^{d}}\bigl\llvert
P_{\omega
}^{0} (X_{n}=x )-\mathbb{P}^{0}
(X_{n}=x )f (\sigma_{x}\omega)\bigr\rrvert
=0.\label{eqPrefactorthm}
\end{equation}
This unique function $f$ is the Radon--Nikodym derivative $\frac{dQ}{dP}$
of the probability measure $Q$ constructed in Theorem~\ref
{teoAbsolutelycontinuousinvariantmeasure}.
\end{teo}

\subsection{Remarks about lower dimensions}\label{sec1.7}

In this paper, we only prove Theorem~\ref
{teoAbsolutelycontinuousinvariantmeasure}
and Theorem~\ref{teoPrefactorthm} in dimension $4$ or higher.
Here, we wish to remark about the situation in lower dimensions.

For $d=1$, the existence of an equivalent measure
which is invariant with respect to the point of view of the particle
was proved by Alili; see \cite{Al99}.

We conjecture that similar results should hold in dimension 3. In
fact, the only place in the proof where we directly use the condition
$d\geq4$ is in \cite{berger2008slowdown}, Lemma 4.10; see also Lemma
\ref{lemIntersectionlemma} below. On the other hand, we believe
that in dimension $2$ an equivalent probability measure which is
invariant with respect to the point of view of the particle does not
exist.

\subsection{Structure of the paper and general remarks}\label{sec1.8}

In Section~\ref{sec2}, we recall some of the notation from \cite{berger2008slowdown}
as well as some of the result obtained there. In addition, we prove
a slightly different version of \cite{berger2008slowdown}, Lemma 4.2,
thus giving annealed estimations for a fixed time. In Section~\ref{sec3}, we
generalize \cite{berger2008slowdown}, Proposition 4.5, which gives
an upper bound on the difference between the annealed and quenched
distribution, to include estimations on the exit time of the box.
Section~\ref{sec4} is devoted to converting the estimation obtained in Section~\ref{sec3}
for $ (d-1 )$-dimensional cubes in a time interval into
a result about $d$-dimensional cubes in a fixed time. In Section~\ref{sec5}, we
bootstrap the result for large $d$-dimensional cubes obtained
in Section~\ref{sec4} all the way to boxes of finite size. Section~\ref{sec6} is devoted
to the proof of the first main result, the existence of an equivalent
probability measure on the space of environments which is invariant
with respect to the point of view of the particle. Finally, in
Section~\ref{sec7} we prove the second main result regarding the existence of a
prefactor.

Throughout this paper, the value of constants $c$ and $C$ may change
from one line to the next. Numbered constants, such as
$c_{1},c_{2},\ldots$
are fixed according to their first appearance in the text. Expectation
with respect to a measure $\mu$ which is not $P, P_{\omega}$ or
$\mathbb{P}$
is denoted by $E_{\mu}$. Finally, some of the inequalities may only
hold for large enough values of $N,n$ and $M$.

\section{Notation and other preliminary results}\label{sec2}

We start by recalling some of the notation and results from \cite
{berger2008slowdown}
to be used throughout the paper. In addition, we cite an inequality
by McDiarmid for future use and state analogous result to \cite
{berger2008slowdown}, Lemma~4.2,
for the annealed measure in a fixed time.

For $k,N\geq0$, define $R_{k} (N )= \lfloor e^{
(\log N )^{(k+2)/(k+3)} } \rfloor$
and denote $R (N )=R_{1} (N )$. Note that
$R_{0} (N )= \lfloor\log N \rfloor$
and that for every $k,n\geq0$, and every large enough $N$,
\[
R_{k}^{n} (N ):= \bigl(R_{k} (N )
\bigr)^{n}\leq R_{k+1} (N )<N.
\]

Let
\[
\vartheta=\lim_{n\to\infty}\frac{X_{n}}{\llVert X_{n}\rrVert _{2}}
\]
be the direction of the speed. We assume without loss of generality
that $ \langle\vartheta,e_{1} \rangle>0$ and note that
due to the results of \cite{Sz01,Sz02}, this implies that $
(\mathscr{P} )$
holds both in direction $\vartheta$ and in direction $e_{1}$.

\begin{defn}
For $k\in\mathbb{N}$, define $H_{k}$ to be the hyperplane
$H_{k}= \{ x\in\mathbb{Z}^{d}: \langle x,e_{1}
\rangle=k \} $.
\end{defn}

\begin{defn}
By the term $N^{-\xi(1 )}$, we mean a nonnegative function
of $N\in\mathbb{N}$ which decays faster than any polynomial, that is,
if $f (N )=N^{-\xi(1 )}$, then for every $k\in
\mathbb{N}$
\[
\lim_{N\to\infty}N^{k}f (N )=0.
\]
Note that $N^{-\xi(1 )}$ is independent of the environment
unless otherwise stated.
\end{defn}

\begin{defn}
For two nonempty sets $A,B\subset\mathbb{Z}^{d}$, we define $\operatorname{dist} (A,B )=\min\{ \llVert x-y\rrVert _{1}:
x\in A, y\in B\} $.
If $A= \{ x \} $ we write $\operatorname{dist} (x,B )$
instead of $\operatorname{dist} ( \{ x \},B )$.
\end{defn}

\begin{defn}
For\vspace*{1pt} $x= (x_{1},\ldots,x_{d} )\in\mathbb{Z}^{d}$ and $n\in
\mathbb{N}$,
we denote $x\leftrightarrow n$ if $x$ and $n$ have the same parity,
that is, $\sum_{i=1}^{d}x_{i}+n$ is even. In a similar way for $x,y\in
\mathbb{Z}^{d}$,
we denote $x\leftrightarrow y$ if $\sum_{i=1}^{d}
(x_{i}+y_{i} )$
is even.
\end{defn}

\begin{defn}
Recall that for $x\in\mathbb{Z}^{d}$ we denote by $\sigma_{x}$ the
shift in direction $x$ in $\omega$, that is, $\sigma_{x}\omega
(y,\cdot)=\omega(x+y,\cdot)$
for every $y\in\mathbb{Z}^{d}$.
\end{defn}

\begin{defn}
For $z\in\mathbb{Z}^{d}$ and $N\in\mathbb{N}$, we define (see also
Figure~\ref{figThe-basic-block}):
\begin{longlist}[(2)]
\item[(1)] the parallelogram of size $N$ and center $z$ to be
\begin{eqnarray*}
\mathcal{P} (z,N ) &=& \biggl\{ x\in\mathbb{Z}^{d}: \bigl\llvert\langle
x-z,e_{1} \rangle\bigr\rrvert<N^{2},
\\
&&{}  \biggl\llVert x-z-
\vartheta\cdot\frac{ \langle x-z,e_{1} \rangle
}{ \langle\vartheta,e_{1} \rangle}\biggr\rrVert_{\infty
}<NR_{5}
(N ) \biggr\}.
\end{eqnarray*}

\item[(2)] The middle third of $\mathcal{P} (z,N )$
\begin{eqnarray*}
\tilde{\mathcal{P}} (z,N ) &=& \biggl\{ x\in\mathbb{Z}^{d}: \bigl\llvert
\langle x-z,e_{1} \rangle\bigr\rrvert<\frac
{1}{3}N^{2},
\\
&&{}  \biggl\llVert x-z-\vartheta\cdot\frac{ \langle
x-z,e_{1} \rangle}{ \langle\vartheta,e_{1} \rangle
}\biggr\rrVert
_{\infty} <\frac{1}{3}NR_{5} (N ) \biggr\}.
\end{eqnarray*}

\item[(3)] The boundary of $\mathcal{P} (z,N )$
\[
\partial\mathcal{P} (z,N )= \bigl\{ x\in\mathbb{Z}^{d}\setminus
\mathcal{P} (z,N ): \exists y\in\mathcal{P} (z,N )\mbox{ s.t. }\llVert
x-y\rrVert
_{1}=1 \bigr\}.
\]

%
\begin{figure}

\includegraphics{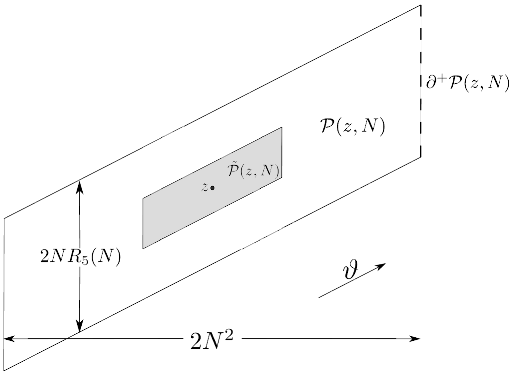}

\caption{The basic block $\mathcal{P} (z,N )$: the box
$\tilde{\mathcal{P}} (z,N )$
is in gray and the right boundary $\partial^{+}\mathcal{P}
(z,N )$
is the dashed line.}
\label{figThe-basic-block}
\end{figure}

\item[(4)] The right boundary of $\mathcal{P} (z,N )$
\[
\partial^{+}\mathcal{P} (z,N )= \bigl\{ x\in\partial\mathcal{P} (z,N
): \langle x-z,e_{1} \rangle=N^{2} \bigr\}.
\]
\end{longlist}
\end{defn}

\subsection{Regeneration times}\label{sec2.1}
\label{subRegeneration-times}

\begin{defn}
Let $ \{ X_{n} \} $ be a nearest-neighbor sequence in
$\mathbb{Z}^{d}$,
and let $\ell\in S^{d-1}$ be a direction. We say that $t$ is a regeneration
time for $ \{ X_{n} \} $ in direction $\ell$ if the following
holds:
\begin{longlist}[(2)]
\item[(1)]$ \langle X_{s},\ell\rangle< \langle
X_{t},\ell\rangle$
for every $s<t$.
\item[(2)]$ \langle X_{t+1},\ell\rangle> \langle
X_{t},\ell\rangle$.
\item[(3)]$ \langle X_{s},\ell\rangle> \langle
X_{t+1},\ell\rangle$
for every $s>t+1$.
\end{longlist}
\end{defn}
The following theorem summarize the results on the regeneration time
structure.

\begin{teo}[{(\cite{SZ99,Sz02})}]\label{teoregenerationtimesthm}Assume that $P$
satisfies $ (T_{\gamma} )$ in direction $\ell_{0}$ for
some $\gamma>0$. Then:
\begin{longlist}[(2)]
\item[(1)] With probability one, there exist infinitely many regeneration times,
which we denote by $\tau_{1}<\tau_{2}<\cdots.$
\item[(2)] The ensemble
\[
\bigl\{ (\tau_{n+1}-\tau_{n},X_{\tau_{n+1}}-X_{\tau
_{n}}
) \bigr\} _{n\geq1}
\]
is an i.i.d. ensemble under the annealed measure.
\item[(3)] There exists $C>0$ such that for every $n\in\mathbb{N}$
\[
\mathbb{P} (\tau_{2}-\tau_{1}=n )\leq C\mathbb{P} (
\tau_{1}=n ),
\]
and for every $y\in\mathbb{Z}^{d}$
\[
\mathbb{P} (X_{\tau_{2}}-X_{\tau_{1}}=y )\leq C\mathbb{P}
(X_{\tau_{1}}=y ).
\]

\item[(4)] There exists $c>0$ such that for every $n$,
\[
\mathbb{P} \bigl(\exists k\leq\tau_{1}: \llVert X_{k}
\rrVert_{\infty}>n \bigr)\leq e^{-cn^{\gamma}}.
\]
\end{longlist}
\end{teo}

The following is the main technical statement from \cite{berger2008slowdown}.

\begin{teo}[(\cite{berger2008slowdown}, Proposition~2.2)]
\label{teoRegenerationtimestimeestimate}
If $d\geq4$, and $P$ satisfies condition $ (\mathscr{P} )$
in one of the $2d$-principal directions, then for every $\alpha<d$
\[
\mathbb{P} (\tau_{1}>k )\leq\exp\bigl(- (\log t )^{\alpha}
\bigr).
\]
\end{teo}

\begin{cor}
\label{corlengthofregenerations}For $N\in\mathbb{N}$ denote by
$B_{N}=B_{N} ( \{ X_{n} \} )$, the event
\[
B_{N} \bigl( \{ X_{n} \} \bigr)= \bigl\{ \forall1\leq k
\leq N^{2}: \tau_{k}-\tau_{k-1}\leq R (N ) \bigr
\},
\]
where $\tau_{0}=0$. Then $\mathbb{P} (B_{N} )\geq
1-N^{-\xi(1 )}$.
\end{cor}

\begin{rem}
\label{RemTheeventAN}Note that the event $B_{N} ( \{
X_{n} \} )$
implies the event that the distance traveled between two regeneration
times is bounded by $R (N )$ as well, that is,
\[
A_{N} \bigl( \{ X_{n} \} \bigr)= \bigl\{ \forall1\leq k
\leq N^{2}: \max\bigl\{ \llVert X_{t}-X_{\tau_{k-1}}
\rrVert_{\infty}: \tau_{k-1}\leq t\leq\tau_{k} \bigr
\} \leq R (N ) \bigr\}
\]
satisfies $B_{N} ( \{ X_{n} \} )\subset
A_{N} ( \{ X_{n} \} )$
and in particular $\mathbb{P} (A_{N} )\geq1-N^{-\xi
(1 )}$.
\end{rem}

\subsection{Intersections of paths of random walks}\label{sec2.2}
\label{subIntersections-of-paths}

The following lemma estimates the number of intersections of two independent
random walks in dimension $d\geq4$. This is in fact the only place
in the proof where the assumption $d\geq4$ is used explicitly. Denote
by $P_{\omega,\omega}^{z,z}$, $E_{\omega,\omega}^{z,z}$ the law
(resp., expectation) of two random walks on the same environment
$\omega$, which conditioned on $\omega$ evolve independently according
to the quenched law of $\omega$ starting from $z$.

\begin{lem}[(\cite{berger2008slowdown}, Lemma 4.10)]\label{lemIntersectionlemma}
Let $d\geq4$ and assume $P$ is uniformly elliptic, i.i.d. and satisfies
$ (\mathscr{P} )$. Let $X^{ (1 )}= \{
X_{n}^{ (1 )} \} $
and $X^{ (2 )}= \{ X_{n}^{ (2 )} \} $
be two\vspace*{1pt} independent random walks running in the same environment $\omega$.
For $i\in\{ 1,2 \}$, let $ [X^{ (i
)} ]$
be the set of points visited by $X^{ (i )}$. Then
\begin{eqnarray*}
&& P \bigl( \bigl\{ \omega\in\Omega: E_{\omega,\omega}^{0,0} \bigl[\bigl
\llvert\bigl[X^{ (1 )} \bigr]\cap\bigl[X^{
(2 )} \bigr]\cap
\mathcal{P} (0,N )\bigr\rrvert\ind_{A_{N} ( \{ X_{n}^{ (1 )} \} )\cap
A_{N} ( \{ X_{n}^{ (2 )} \} )} \bigr]\geq R_{2}
(N ) \bigr\} \bigr)
\\
&&\qquad =N^{-\xi(1 )}.
\end{eqnarray*}
\end{lem}

For future use, we denote
%
\begin{eqnarray}\label{eqintersectionevent}
J (N )&=& \bigl\{ \omega\in\Omega:
E_{\omega,\omega
}^{z,z} \bigl[\bigl\llvert\bigl[X^{ (1 )} \bigr]
\cap\bigl[X^{ (2 )} \bigr]\cap\mathcal{P} (0,N )\bigr\rrvert
\nonumber\\[-8pt]\\[-8pt]\nonumber
&&{}\times
\ind_{A_{N} ( \{ X_{n}^{ (1 )} \}
)\cap A_{N} ( \{ X_{n}^{ (2 )} \}
)} \bigr]\leq R_{2} (N ),~\forall z\in\tilde{\mathcal{P}} (0,N )\bigr\}.
\end{eqnarray}
Therefore, due to the last lemma, we have $P (J (N
) )=1-N^{-\xi(1 )}$.

\subsection{McDiarmid's inequality}\label{sec2.3}

The following Azuma type inequality, proved by McDiarmid in \cite{McD98},
is used in Section~\ref{secAdding-time-estimation}.

\begin{teo}[(\cite{McD98}, Theorem 3.14)]\label{teoAzumatypeinequality} Let
$ \{ M_{k} \} _{k=0}^{n}$ be a martingale with respect to
a probability measure $\mathtt{P}$, given by $M_{k}=E_{\mathtt
{P}} [X\mid\mathscr{F}_{k} ]$,
with $M_{0}=E_{\mathtt{P}} [X ]$. For $1\leq k\leq n$ let
$U_{k}=\esup(\llvert M_{k}-M_{k-1}\rrvert \mid\mathscr
{F}_{k-1} )$
and define $U=\sum_{k=1}^{n}U_{k}^{2}$. Then
\[
\mathtt{P} \bigl(\llvert M_{n}-M_{0}\rrvert>\alpha, U\leq
c \bigr)\leq2e^{-\afrac{\alpha^{2}}{2c}}.
\]
\end{teo}

\subsection{Annealed estimation for a fixed time}\label{sec2.4}

In this subsection, we state some standard estimations on the annealed
measure of the random walk. The proof is a standard and straightforward
use of Fourier transform techniques applied to the regeneration structure
described in Section~\ref{subRegeneration-times}. The first three
claims are proved in a very similar way to the proof of \cite
{berger2008slowdown}, Lemma~4.2
(see also Lemma~\ref{lemAnnealedderivativeestimationsd-1+time}
for another version). The formal statement is the following.

\begin{lem}
\label{lemgeneralannealedestimations} Assume that $P$ is uniformly
elliptic, i.i.d. and satisfies $ (\mathscr{P} )$. Then for
large enough $n\in\mathbb{N}$ and $x,y,z,w\in\mathbb{Z}^{d}$ such
that $\llVert x-y\rrVert _{1}=1$, $\llVert z-w\rrVert _{1}=1$
%
\begin{eqnarray}
\mathbb{P}^{z} (X_{n}=x )&\leq& Cn^{-\sfrac{d}{2}},\label{eqgeneralannealedestimation1}
\\
%
\bigl\llvert\mathbb{P}^{z} (X_{n}=x )-
\mathbb{P}^{z} (X_{n+1}=y )\bigr\rrvert&\leq&
Cn^{-\vfrac{d+1}{2}},\label{eqgeneralannealedestimation2}
\\
%
\bigl\llvert\mathbb{P}^{z} (X_{n}=x )-
\mathbb{P}^{w} (X_{n+1}=x )\bigr\rrvert&\leq&
Cn^{-\vfrac{d+1}{2}}.\label{eqgeneralannealedestimation3}
\end{eqnarray}
In addition, for every $\varepsilon>0$ and every partition $\Pi
_{n}^{ (\varepsilon)}$
of $\mathbb{Z}^{d}$ into boxes of side length~$n^{\varepsilon}$.
%
\begin{equation}
\sum_{\Delta\in\Pi_{n}^{ (\varepsilon)}} \mathop{\sum
_{x\in\Delta}}_{x\leftrightarrow n} \Bigl[\max_{y\in\Delta}
\mathbb{P}^{0} (X_{n}=y )-\mathbb{P}^{0}
(X_{n}=x ) \Bigr]\leq Cn^{-\sfrac{1}{2}+3d\varepsilon}.\label{eqgeneralannealedestimation4}
\end{equation}
\end{lem}

The proof of Lemma~\ref{lemgeneralannealedestimations} can be
found in Appendix~\ref{appendixsubAnnealed-derivative-estimations}.

Before turning to the last estimation of this subsection, we state
here a very simple claim to be used in several places.

\begin{claim}
\label{Clmsimpleclaimannealedtoquenched}Let $A$ be an event
in the $\sigma$-algebra of $ (\mathbb{Z}^{d} )^{\mathbb{N}}$ and
assume that $\mathbb{P} (A )\leq\varepsilon$, then
$P ( \{ \omega\in\Omega: P_{\omega} (A
)\geq\sqrt{\varepsilon} \} )\leq\sqrt{\varepsilon}$.
In particular, if a sequence of events $ \{ A_{N} \} $ satisfies
$\mathbb{P} (A_{N} )=1-N^{-\xi(1 )}$, then
$P ( \{ \omega\in\Omega: P_{\omega} (A_{N}
)=1-N^{-\xi(1 )} \} )=1-N^{-\xi
(1 )}$.
\end{claim}
\begin{pf}
Define the random variable $X:\Omega\to[0,1 ]$ by $X
(\omega)=P_{\omega} (A )$.
By the Markov inequality, $P (X (\omega)\geq\sqrt
{\varepsilon} )\leq\frac{E [X (\omega)
]}{\sqrt{\varepsilon}}=\frac{\mathbb{P} (A )}{\sqrt
{\varepsilon}}\leq\frac{\varepsilon}{\sqrt{\varepsilon}}=\sqrt
{\varepsilon}$.
\end{pf}
Next, we show that the location of the walk at time $n$ is concentrated
in a box which is a bit larger than $\sqrt{n}$. More formally, we
have the following.

\begin{lem}
\label{lemgoodestimationforthelocation} Assume that $P$ is
uniformly elliptic, i.i.d. and satisfies $ (\mathscr{P} )$.
Then:
\begin{longlist}[(2)]
\item[(1)]$\mathbb{P}^{z} (\llVert X_{n}-\mathbb{E}^{z}
[X_{n} ]\rrVert _{\infty}>\sqrt{n}R_{5} (n
) )\leq e^{-R_{5} (n )}=n^{-\xi(1 )}$,
\item[(2)] \vspace*{1pt}$P ( \{ \omega\in\Omega: P_{\omega}^{z}
(\llVert X_{n}-\mathbb{E}^{z} [X_{n} ]\rrVert
_{\infty}>\sqrt{n}R_{5} (n ) )\leq e^{-\sklfrac {1}{2}R_{5} (n )} \} )=1-n^{-\xi(1 )}$,
\item[(3)] For every $\delta>0$ there exists $C>0$ such that $\mathbb
{P}^{z} (\llVert X_{n}-\mathbb{E}^{z} [X_{n}
]\rrVert _{\infty}>C\sqrt{n} )<\delta$.
\end{longlist}
\end{lem}

The proof of Lemma~\ref{lemgoodestimationforthelocation} can
be found in Appendix~\ref{appendixsubAnnealed-derivative-estimations}.

\section{Adding time estimation}\label{sec3}
\label{secAdding-time-estimation}

The goal of this section is to prove a generalized version of \cite
{berger2008slowdown}, Proposition 4.5.
The original lemma gives a bound on the difference between the probability
measures $\mathbb{P}^{z} (X_{T_{\partial\CP(0,N
)}}\in\cdot)$
and $P_{\omega}^{z} (X_{T_{\partial\CP(0,N )}}\in
\cdot)$
to hit any cube in a partition of $\partial^{+}\CP(0,N )$
into cubes of side length $N^{\theta}$, for any $0<\theta\leq1$.
This estimation immediately implies that the total variation of the
two measures goes to zero as $N$ goes to infinity. Here, we show
that if an estimation on the hitting time $T_{\partial\CP
(0,N )}$
is added, then a similar estimation can be derived. More formally,
we have the following.

\begin{prop}
\label{propTimed-1boxdifference}Let $d\geq4$ and assume $P$
is uniformly elliptic, i.i.d. and satisfies $ (\mathscr{P} )$.
For\vspace*{1pt} every $0<\theta\leq1$, let $F (N )=F (N,\theta
)$
be the event that for every $z\in\tilde{\CP} (0,N )$, every
cube $\Delta$ of side length $N^{\theta}$ which is contained in
$\partial^{+}\CP(0,N )$ and every interval $I$ of length
$N^{\theta}$
\begin{eqnarray*}
&& \bigl\llvert P_{{\omega}}^{z} (X_{T_{\partial\CP(0,N
)}}\in\Delta,
T_{\partial\CP(0,N )}\in I )-\BP^{z} (X_{T_{\partial\CP(0,N )}}\in
\Delta,
T_{\partial\CP(0,N )}\in I )\bigr\rrvert
\\
&&\qquad \leq CN^{-d (1-\theta)-\sklvafrac
{d-2}{d+2}\theta}.
\end{eqnarray*}
Then $P (F (N ) )=1-N^{-\xi(1 )}$.
\end{prop}

The proof of Proposition~\ref{propTimed-1boxdifference} follows
the one of \cite{berger2008slowdown}, Proposition 4.5 (see also \cite
{berger2008slowdown}, Section~4,
and in particular Lemma 4.15). Here are the main steps of the proof:
the proof starts with another version for annealed derivatives bounds
(see Lemma~\ref{lemAnnealedderivativeestimationsd-1+time}).
Next, in Lemma~\ref{lemDistanceoftimefromexpectation} we prove
an annealed concentration inequality for the hitting time $T_{\partial
\mathcal{P} (0,N )}$.
Lemma~\ref{lemFirstdifferenceestimation} provides a first weak
estimation for the difference between the quenched and annealed hitting
probabilities for large enough boxes, that is, $\theta>\frac{d}{d+1}$.
Using induction and the estimation from the last lemma, we prove an
upper bound on the probability to hit a given box of side length
$N^{\theta}$
in a time interval of length $N^{\theta}$ for every $0<\theta\leq1$
(see Lemma~\ref{lemUpperestimationonthequenchedexit}). In Lemma
\ref{lemEstimationforfarhyperplane}, we use the upper quenched
estimations in order to show that the difference between the quenched
and annealed hitting probabilities, in a slightly further hyperplane
are as required. Finally, in the proof of Proposition~\ref
{propTimed-1boxdifference},
we show how to translate the estimations from the further hyperplane
back to the original hyperplane. The first main tool used in the proof
is an environment exposure procedure, which in the context of ballistic
RWRE already appeared in the work of Bolthausen and Sznitman \cite
{bolthausen2002satic}.
This exposure procedure defines a martingale and allows the use of
Azuma's and McDiarmid's inequalities. The second main tool is the
intersection estimate for two independent random walks from Lemma
\ref{lemIntersectionlemma}.

\begin{rem}
In Section~\ref{secLorentztrans}, we use Proposition~\ref
{propTimed-1boxdifference}
for boxes whose side length is only asymptotic to $N^{\theta}$ (for
some $0<\theta<1$), that is, the side length is $N^{\theta}+o
(N^{\theta} )$.
One can verify that the same proof holds for such boxes as well.
\end{rem}

We start by stating another version for the estimation on the annealed
measure (see Lemma~\ref{lemgeneralannealedestimations} and \cite
{berger2008slowdown}, Lemma 4.2).

\begin{lem}[(Annealed derivative estimations)]
\label{lemAnnealedderivativeestimationsd-1+time} Assume $P$
is uniformly elliptic, i.i.d. and satisfies $ (\mathscr{P} )$.
Fix $z_{1}\in\BZ^{d}$, $N\in\mathbb{N}$ and let $z\in\tilde{\CP
} (z_{1},N )$.
Let $ \{ X_{n} \} $ be an RWRE starting at $z$. Then for
large enough $N$:
\begin{longlist}[(2)]
\item[(1)] For every $m\in\mathbb{N}$ and every $x\in\partial^{+}\CP
(z_{1},N )$
%
\begin{equation}
\mathbb{P}^{z} ({T_{\partial\mathcal{P}(0,N)}}=m, X_{{T_{\partial
\mathcal{P}(0,N)}}}=x
)<CN^{-d}.\label
{eqannealedderivativeestimation1}
\end{equation}

\item[(2)] For every $m\in\mathbb{N}$ and every $x,y\in\partial^{+}\CP
(z_{1},N )$
such that $\llVert x-y\rrVert _{1}=1$
%
\begin{eqnarray}\label{eqannealedderivativeestimation2}
&& \bigl\llvert\mathbb{P}^{z} ({T_{\partial\mathcal{P}(0,N)}}=m,
X_{{T_{\partial\mathcal{P}(0,N)}}}=x )-\mathbb{P}^{z} ({T_{\partial
\mathcal{P}(0,N)}}=m+1,
X_{{T_{\partial
\mathcal{P}(0,N)}}}=y )\bigr\rrvert\hspace*{-20pt}
\nonumber\\[-8pt]\\[-8pt]\nonumber
&&\qquad <CN^{-d-1}.
\end{eqnarray}

\item[(3)] For every $m\in\mathbb{N}$, every $x\in\partial^{+}\CP
(z_{1},N )$
and every $1\leq j\leq d$
%
\begin{eqnarray}\label
{eqannealedderivativeestimation3}
&& \bigl\llvert\mathbb{P}^{z} ({T_{\partial\mathcal{P}(0,N)}}=m,
X_{{T_{\partial\mathcal{P}(0,N)}}}=x )-\mathbb{P}^{z+e_{j}}
({T_{\partial\mathcal{P}(0,N)}}=m+1,
X_{{T_{\partial\mathcal{P}(0,N)}}}=x )\bigr\rrvert\hspace*{-17pt}
\nonumber\\[-8pt]\\[-8pt]\nonumber
&&\qquad <CN^{-d-1}.
\end{eqnarray}
\end{longlist}
\end{lem}

The proof of Lemma~\ref{lemAnnealedderivativeestimationsd-1+time}
can be found in Appendix~\ref{appendixsubAnnealed-derivative-estimations}.

Next, we prove an annealed concentration inequality for the hitting
time $T_{\partial\CP}$.

\begin{lem}
\label{lemDistanceoftimefromexpectation} Let $d\geq4$ and assume
$P$ is uniformly elliptic, i.i.d. and satisfies $ (\mathscr
{P} )$.
Then
%
\begin{equation}
\mathbb{P}^{z} (T_{\partial\CP(0,N )}\neq T_{\partial^{+}\CP(0,N )}
)=N^{-\xi(1
)}\label{eqannealedtimeestimations1}
\end{equation}
and for every $z\in\tilde{\CP} (0,N )$
%
\begin{equation}
\mathbb{P}^{z} \bigl(\bigl\llvert T_{\partial\CP(0,N
)}-
\mathbb{E}^{z} [T_{\partial\CP(0,N )} ]\bigr\rrvert>NR_{2} (N )
\bigr)=N^{-\xi(1
)}.\label{eqannealedtimeestimations2}
\end{equation}
\end{lem}

\begin{pf}
The fact that (\ref{eqannealedtimeestimations1}) holds was proved
in \cite{berger2008slowdown}, Lemma 4.2(1). For (\ref
{eqannealedtimeestimations2}),
we first show that $\llvert \mathbb{E}^{z} [\tau_{k}
]-\mathbb{E}^{z} [\tau_{k}\mid B_{N} ]\rrvert =N^{-\xi
(1 )}$
for every $1\leq k\leq N^{2}$, where $B_{N}$ is as defined in Corollary
\ref{corlengthofregenerations}. Indeed, using the notation $\tau_{0}=0$,
for every $1\leq k\leq N^{2}$
\begin{eqnarray*}
&& \mathbb{E}^{z} \bigl[\llvert\tau_{k}-\tau_r{k-1}
\rrvert\cdot\ind_{B_{N}^{c}} \bigr]
\\
&&\qquad   \leq\mathbb{E}^{z} \bigl[
\llvert\tau_{k}-\tau_{k-1}\rrvert\cdot\ind_{\exists j\neq k \llvert
\tau_{j}-\tau
_{j-1}\rrvert \geq R (N )}
\bigr]+\mathbb{E}^{z} \bigl[\llvert\tau_{k}-
\tau_{k-1}\rrvert\cdot\ind_{\llvert \tau_{k}-\tau
_{k-1}\rrvert \geq R (N )} \bigr]
\\
&&\qquad  \leq\mathbb{E}^{z} \bigl[\llvert\tau_{k}-
\tau_{k-1}\rrvert\bigr]\mathbb{P}^{z} \bigl(B_{N}^{c}
\bigr)+\sum_{t>R (N
)}t\cdot\mathbb{P}^{z}
\bigl(\llvert\tau_{k}-\tau_{k-1}\rrvert=t \bigr)
\\
&&\qquad  \leq\mathbb{E}^{z} \bigl[\llvert\tau_{k}-
\tau_{k-1}\rrvert\bigr]\mathbb{P}^{z} \bigl(B_{N}^{c}
\bigr)+\sum_{t>R (N
)}t\cdot\exp\bigl(- (\log t
)^{\alpha} \bigr)=N^{-\xi
(1 )},
\end{eqnarray*}
where for the last inequality we used Theorem~\ref{teoregenerationtimesthm}
and for the last equality we used Corollary~\ref{corlengthofregenerations}.
Therefore, for every $1\leq k\leq N^{2}$
\begin{eqnarray*}
&& \bigl\llvert\mathbb{E}^{z} [\tau_{k}-\tau_{k-1}
\mid B_{N} ]-\mathbb{E}^{z} [\tau_{k}-
\tau_{k-1} ]\bigr\rrvert
\\
&&\qquad  \leq\bigl\llvert\mathbb{E}^{z} [
\tau_{k}-\tau_{k-1}\mid B_{N} ]-
\mathbb{E}^{z} \bigl[ (\tau_{k}-\tau_{k-1} )\ind
_{B_{N}} \bigr]\bigr\rrvert
\\
&&\quad\qquad{} +\bigl\llvert\mathbb{E}^{z} \bigl[ (\tau_{k}-
\tau_{k-1} )\ind_{B_{N}} \bigr]-\mathbb{E}^{z} [
\tau_{k}-\tau_{k-1} ]\bigr\rrvert
\\
&&\qquad  =\bigl\llvert\bigl(1-\mathbb{P}^{z} (B_{N} ) \bigr)
\mathbb{E}^{z} \bigl[ (\tau_{k}-\tau_{k-1} )\mid B_{N} \bigr]\bigr\rrvert +\bigl\llvert\mathbb{E}^{z} \bigl[
(\tau_{k}-\tau_{k-1} )\ind_{B_{N}^{c}} \bigr]\bigr
\rrvert
\\
&&\qquad  =\mathbb{P}^{z} \bigl(B_{N}^{c} \bigr)
\mathbb{E}^{z} \bigl[ (\tau_{k}-\tau_{k-1} )\mid
B_{N} \bigr]+\mathbb{E}^{z} \bigl[ (\tau_{k}-
\tau_{k-1} )\ind_{B_{N}^{c}} \bigr]
\\
&&\qquad  \leq R (N )\mathbb{P}^{z} \bigl(B_{N}^{c}
\bigr)+N^{-\xi
(1 )}=N^{-\xi(1 )}.
\end{eqnarray*}
Summing the differences $ \{ \mathbb{E}^{z} [\tau_{j}-\tau
_{j-1}\mid B_{N} ]-\mathbb{E}^{z} [\tau_{j}-\tau
_{j-1} ] \} _{j=1}^{k}$
gives
\[
\bigl\llvert\mathbb{E}^{z} [\tau_{k}\mid B_{N}
]-\mathbb{E}^{z} [\tau_{k} ]\bigr\rrvert
=N^{-\xi(1
)},\qquad1\leq k\leq N^{2}.
\]

Since we know that $\mathbb{P}^{z} (B_{N} )=1-N^{-\xi
(1 )}$
(see Corollary~\ref{corlengthofregenerations}), it is enough to
show that
\[
\mathbb{P}^{z} \bigl(\bigl\llvert T_{\partial\CP(0,N
)}-
\mathbb{E}^{z} [T_{\partial\CP(0,N )} ]\bigr\rrvert>NR_{2} (N )
\mid B_{N} \bigr)=N^{-\xi
(1 )}.
\]
Under the event $B_{N}$, there exist some $1\leq k\leq N^{2}$ such
that $\tau_{k}\leq T_{\partial P}\leq\tau_{k}+R (N )$ and
thus (using the first estimation)
\begin{eqnarray*}
&& \mathbb{P}^{z} \bigl(\bigl\llvert T_{\partial\CP(0,N )}-
\mathbb{E}^{z} [T_{\partial\CP(0,N )} ]\bigr\rrvert>NR_{2} (N )
\mid B_{N} \bigr)
\\
&&\qquad  \leq\sum_{k=1}^{N^{2}}
\mathbb{P}^{z} \biggl(\bigl\llvert\tau_{k}-\mathbb
{E}^{z} [\tau_{k} ]\bigr\rrvert>\frac{1}{2}NR_{2}
(N ) \Big|  B_{N} \biggr)
\\
&&\qquad  \leq\sum_{k=1}^{N^{2}}\mathbb{P}^{z}
\biggl(\bigl\llvert\tau_{k}-\mathbb{E}^{z} \bigl[
\tau_{k}\mid B_{N} \bigr]\big\rrvert>\frac
{1}{4}NR_{2}
(N )\Big| B_{N} \biggr)+N^{-\xi(1 )}.
\end{eqnarray*}
Note that conditioned on $B_{N}$ the first $N$ regenerations are
still i.i.d., so by Azuma's inequality this can be bounded by
\[
\sum_{k=1}^{N^{2}}2\exp\biggl(-
\frac{N^{2}R_{2}^{2} (N
)}{32kR^{2} (N )} \biggr)+N^{-\xi(1 )}\leq e^{-R_{2} (N )}+N^{-\xi(1
)}=N^{-\xi
(1 )}.
\]\upqed
\end{pf}

\begin{lem}
\label{lemFirstdifferenceestimation} Let $d\geq4$ and assume
$P$ is uniformly elliptic, i.i.d. and satisfies $ (\mathscr
{P} )$.
Fix $0<\theta\leq1$. Let $L (N )=L (\theta,N )$
be the event that for every $\frac{2}{5}N^{2}\leq M\leq N^{2}$, every
$z\in\tilde{\mathcal{P}} (0,N )$, every $ (d-1
)$-dimensional
cube $\Delta$ of size $N^{\theta}$ which is contained in $H_{M}$
and every interval $I\subset\mathbb{N}$ of length $N^{\theta}$
\[
\bigl\llvert P_{{\omega}}^{z} (X_{T_{M}}\in\Delta,
T_{M}\in I, B_{N} )-\mathbb{P}^{z}
(X_{T_{M}}\in\Delta, T_{M}\in I, B_{N} )\bigr
\rrvert\leq N^{d (\theta-1 )}.
\]
Then for $\theta>\frac{d}{d+1}$, $P (L (\theta,N
) )=1-N^{-\xi(1 )}$.
\end{lem}

\begin{pf}
Fix\vspace*{1pt} $\theta$, and let $\frac{d}{d+1}<\theta'<\theta$. Let $V=
[N^{2\theta'} ]$.
Fix $\frac{2}{5}N^{2}\leq M\leq N^{2}$, $v\in H_{M+V}$ and $m\in
\mathbb{N}$.
Finally denote by $\CG$ the $\si$-algebra determined by the configuration
on
\[
\CP^{M} (0,N ):=\CP(0,N )\cap\bigl\{ x: \langle x,e_{1}
\rangle\leq M \bigr\}.
\]
We are interested in the quantity (see also Figure~\ref{fig2})
\[
J^{ (M )} (v,m )=E \bigl[P_{{\omega}}^{z}
(X_{T_{M+V}}=v,T_{M+V}=m, B_{N} )\mid\CG\bigr].
\]

\begin{figure}

\includegraphics{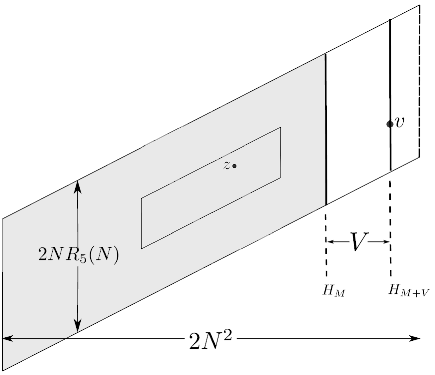}

\caption{The quantity $J^{ (M )} (v,m )$ is the
probability
of hitting the point $v$ at time $m$, conditioned on the environments
in the gray\vspace*{1pt} area, and averaged (annealed) elsewhere. The small parallelogram
indicates the middle third $\tilde{\mathcal{P}} (0,N )$
in which the random walk starts.}
\label{fig2}
\end{figure}
In order to estimate $J^{ (M )} (v,m )$ we order
the vertices of $\mathcal{P}^{M} (0,N )$ lexicographically,
$x_{1},x_{2},\ldots$ with the first coordinate being the most significant
and let $ \{ \CF_{k} \} $ be the $\sigma$-algebra determined
by ${\omega} (x_{1},\cdot),\ldots,\omega
(x_{k},\cdot)$,
so in particular for every $-N^{2}+1\leq l\leq M$ the vertices in
$H_{l}\cap\mathcal{P}^{M} (0,N )$ are exposed after those
in $H_{l-1}\cap\mathcal{P}^{M} (0,N )$.

Consider the martingale $M_{k}=E [P_{{\omega}}^{z}
(X_{T_{M+V}}=v, T_{M+V}=m\llvert B_{N} )\rrvert \mathcal
{F}_{k} ]$.
In order to use McDiarmid's inequality, we first bound $U_{k}:=\esup
(\llvert M_{k}-M_{k-1}\rrvert \mid \CF_{k-1} )$.
We claim that for an event of environments with $P$ probability $\geq
1-N^{-\xi(1 )}$
\[
U_{k}\leq CR (N )E \bigl[P_{{\omega}}^{z}
\bigl(x_{k} \mbox{ is visited }\llvert B_{N} \bigr)\rrvert
\CF_{k-1} \bigr]V^{-\vfrac{d+1}{2}}.
\]
Indeed, let ${\omega}'$ be an environment that agrees with ${\omega}$
everywhere
except possibly in~$x_{k}$. Let $\mathtt{P}$ be the probability
measure under which the random walk has quenched transition probabilities
given by $\omega$ in $ \{ x_{i}: i\leq k \} $ and averaged
(annealed) transition probabilities in $\mathbb{Z}^{d}\setminus
\{ x_{i}: i\leq k \} $
conditioned the event $B_{N}$. Similarly, let $\mathtt{P}'$ be the
probability measure defined like $\mathtt{P}$ with $\omega'$ instead
of $\omega$. More\vspace*{1pt} formally for an event $A\subset(\BZ
^{d} )^{\BN}$,
we have $\mathtt{P} (A )=E [P_{\omega}^{z}
(A\mid B_{N} )\mid\CF_{k} ]$
and equivalently for $\mathtt{P}'$. Then
\begin{eqnarray*}
U_{k} & \leq&\sup_{{\omega}'}\bigl\llvert
\mathtt{P}' (X_{T_{M+V}}=v,T_{M+V}=m )-\mathtt{P}
(X_{T_{M+V}}=v,T_{M+V}=m )\bigr\rrvert
\\[-3pt]
& \leq&\sup_{{\omega}'}\bigl\llvert\mathtt{P}'
\bigl(X_{T_{M+V}}=v,T_{M+V}=m, \{ x_{k} \mbox{ is not
visited} \} \bigr)
\\[-3pt]
&&{} -\mathtt{P} \bigl(X_{T_{M+V}}=v,T_{M+V}=m, \{
x_{k} \mbox{ is not visited} \} \bigr)\bigr\rrvert
\\[-3pt]
&&{} +\sup_{{\omega}'}\bigl\llvert\mathtt{P}'
\bigl(X_{T_{M+V}}=v,T_{M+V}=m, \{ x_{k} \mbox{ is
visited} \} \bigr)
\\[-3pt]
&&{}-\mathtt{P} \bigl(X_{T_{M+V}}=v,T_{M+V}=m, \{
x_{k} \mbox{ is visited} \} \bigr)\bigr\rrvert,
\end{eqnarray*}
where the supremum is taken over all environments ${\omega}'$ that agree
with ${\omega}$ on $\BZ^{d}\setminus\{ x_{k} \} $. Note that
on the event $ \{ x_{k} \mbox{ is not visited} \} $, the
distributions $\mathtt{P}$ and $\mathtt{P}'$ are the same and, therefore,
the difference of the probabilities equals zero. On the other hand,
on the event $ \{ x_{k} \mbox{ is visited} \} $, we can
couple both walks together until the first hitting time of $x_{k}$
(which in particular implies that the hitting time of $x_{k}$ is
the same). Since we conditioned on the event $B_{N}$, the next regeneration
time after hitting $x_{k}$ is at most $R (N )$ steps later.
Therefore, from Lemma~\ref{lemAnnealedderivativeestimationsd-1+time}
it follows that
\begin{eqnarray*}
&& \bigl\llvert\mathtt{P}^{z} (X_{T_{M+V}}=v,
T_{M+V}=m\mid x_{k} \mbox{ is visited} )-
\mathbb{P}^{x_{k}} (X_{T_{M+V}}=v, T_{M+V}=m )\bigr\rrvert
\\[-2pt]
&&\qquad
<CR (N )V^{-\vfrac{d+1}{2}}
\end{eqnarray*}
and
\begin{eqnarray*}
&& \bigl\llvert\mathtt{P}'^{z} (X_{T_{M+V}}=v,
T_{M+V}=m\mid x_{k} \mbox{ is visited} )-
\mathbb{P}^{x_{k}} (X_{T_{M+V}}=v, T_{M+V}=m )\bigr\rrvert
\\
&&\qquad <CR (N )V^{-\vfrac{d+1}{2}}.
\end{eqnarray*}
Consequently, we get
\[
U_{k}\leq CR (N )V^{-\vfrac{d+1}{2}}\mathtt{P}^{z}
(x_{k} \mbox{ is visited} )
\]
as required.

Next, we show that $U:=\sum_{k}U_{k}^{2}$ is bounded by
$CR_{2}^{2} (N )V^{-d-1}$
provided $\omega\in J (N )$. Indeed, noting that if $x$
is visited and $B_{N}$ holds, then the first visit to the layer
$H_{ \langle x,e_{1} \rangle-1}$
is in the box $B (x )= \{ y: y\in H_{ \langle
x,e_{1} \rangle-1}, \llVert y-x\rrVert _{\infty}\leq
R (N ) \} $
it follows that
\begin{eqnarray*}
U_{k} & \leq& CR (N )V^{-\vfrac{d+1}{2}}\mathbf{P}^{z}
\bigl(x_{k}\in[X ]\mid\CF_{k-1},B_{N} \bigr)
\\
& \leq& CR (N )V^{-\vfrac{d+1}{2}}\sum_{y\in B
(x_{k} )}
\mathbf{P}^{z} (T_{ \langle x,e_{1}
\rangle-1}=y\mid\mathcal{F}_{k-1} )
\\
& =&CR (N )V^{-\vfrac{d+1}{2}}\sum_{y\in B (x_{k}
)}P_{\omega}^{z}
(T_{ \langle x,e_{1} \rangle
-1}=y )
\\
& \leq& CR (N )V^{-\vfrac{d+1}{2}}\sum_{y\in B
(x_{k} )}P_{\omega}^{z}
\bigl(y\in[X ] \bigr).
\end{eqnarray*}
Since\vspace*{1pt} $\llvert B (x_{k} )\rrvert \leq C\cdot2^{d}\cdot
R^{d} (N )$
and every $y\in\mathbb{Z}^{d}$ is in $B (x )$ for at most
$2^{d}R^{d} (N )$ points $x\in\mathbb{Z}^{d}$, we get
\begin{eqnarray*}
U&:=&\sum_{k=1}^{n}U_{k}^{2}
 \leq C\sum_{k=1}^{n}R^{2} (N
)V^{-d-1}\cdot\biggl[\sum_{y\in B (x_{k} )}P_{{\omega
}}^{z}
\bigl(y\in[X ] \bigr) \biggr]^{2}
\\
& \leq& R^{2} (N )\cdot2^{d}R^{d} (N
)V^{-d-1}\sum_{k=1}^{n}\sum
_{y\in B (x_{k} )}P_{{\omega
}}^{z} \bigl(y\in[X
] \bigr)^{2}
\\
& \leq& C\cdot2^{2d}\cdot R^{2d+2} (N )V^{-d-1}\sum
_{y\in
\mathcal{P} (0,N )}P_{{\omega}}^{z} \bigl(y\in[X
], B_{N} \bigl( \{ X_{n} \} \bigr) \bigr)^{2}
\\
&\overset{ (1 )} {\leq}& C\cdot2^{2d}\cdot R^{2d+2} (N )\cdot
R_{2} (N )V^{-d-1}\leq C\cdot R_{2}^{2}
(N )V^{-d-1},
\end{eqnarray*}
where for $ (1 )$ we used the assumption $\omega\in J
(N )$.
Thus, by McDiarmid's inequality (see Theorem~\ref{teoAzumatypeinequality})
for every $\delta>0$
\begin{eqnarray*}
&&  P \bigl(\bigl\llvert E \bigl[P_{{\omega}}^{z}
(X_{T_{M+V}}=v, T_{M+V}=m, B_{N}\mid\CG) \bigr]
\\
&&\quad{} -
\mathbb{P}^{z} (X_{T_{M+V}}=v,
 T_{M+V}=m,
B_{N} )\bigr\rrvert>\delta\bigr)
\\
&&\qquad \leq P \bigl(J (N )^{c} \bigr)+2\exp\biggl(-\frac
{\delta^{2}}{2CR_{2}^{2} (N )V^{-d-1}}
\biggr).
\end{eqnarray*}
In particular, for $\delta=\frac{1}{4}N^{-d}=\frac{1}{4}V^{-\vfrac{d+1}{2}}V^{\eta}$,
with $\eta=\frac{ (d+1 )\theta'-d}{2\theta'}>0$ we get
\begin{eqnarray*}
&& P \biggl(\bigl\llvert E \bigl[P_{{\omega}}^{z}
(X_{T_{M+V}}=v, T_{M+V}=m, B_{N}\mid\CG) \bigr]
\\
&&\quad{}-
\mathbb{P}^{z} (X_{T_{M+V}}=v, T_{M+V}=m,
B_{N} )\bigr\rrvert>\frac
{1}{4}N^{-d} \biggr)
\\
&&\qquad \leq N^{-\xi(1 )}+2\exp\biggl(-\frac{ [N^{2\theta
'} ]^{2\eta}}{32CR_{2}^{2} (N )} \biggr)=N^{-\xi
(1 )}.
\end{eqnarray*}
Using Corollary~\ref{corlengthofregenerations}, this also gives
\begin{eqnarray*}
\hspace*{-5pt}&& P \bigl(\bigl\llvert E \bigl[P_{{\omega}}^{z}
(X_{T_{M+V}}=v, T_{M+V}=m\mid\CG) \bigr]-\mathbb{P}^{z}
(X_{T_{M+V}}=v, T_{M+V}=m )\bigr\rrvert>\tfrac{1}{2}N^{-d}
\bigr)
\\
\hspace*{-5pt}&&\qquad =N^{-\xi(1 )}.
\end{eqnarray*}

Let $W_{1} (N )\subset\Om$ be the event that
\begin{eqnarray*}
&& \bigl\llvert E \bigl[P_{{\omega}}^{z} (X_{T_{M+V}}=v,
T_{M+V}=m, B_{N}\mid\CG) \bigr]-\mathbb{P}^{z}
(X_{T_{M+V}}=v, T_{M+V}=m, B_{N} )\bigr\rrvert
\\
&&\qquad \leq
\tfrac{1}{2}N^{-d}
\end{eqnarray*}
for every $\frac{2}{5}N^{2}\leq M\leq N^{2}$, every $v\in H_{M+V}\cap
\CP(0,2N )$,
every $z\in\tilde{\mathcal{P}} (0,N )$ and every $m\in
[0,N^{3} ]$.
Then by the above argument $P (W_{1} (N )
)=1-N^{-\xi(1 )}$.
Consider now ${\omega}\in W_{1} (N )$, $\frac
{2}{5}N^{2}\leq M\leq N^{2}$
a cube $\Delta$ of side length $N^{\theta}$ which is contained in
$H_{M}$ and an interval $I$ of length $N^{\theta}$. We wish to
estimate
\[
\bigl\llvert P_{{\omega}}^{z} (X_{T_{M}}\in\Delta,
T_{M}\in I, B_{N} )-\mathbb{P}^{z_{1}}
(X_{T_{M}}\in\Delta, T_{M}\in I, B_{N} )\bigr
\rrvert.
\]
Let $c (\Delta)$ and $c (I )$ be the centers
of the cube $\Delta$ and the interval $I$, respectively. Let $c'
(\Delta)=c (\Delta)+V\frac{\vartheta}{
\langle\vartheta,e_{1} \rangle}$,
$c' (I )=c (I )+V\frac{1}{ \langle\bbbv,e_{1} \rangle}$
and define (see also Figure~\ref{figBixsmallbox})
\begin{eqnarray*}
\Delta^{ (1 )} &=& \bigl\{ v\in H_{M+V}: \bigl\llVert
v-c' (\Delta)\bigr\rrVert_{\infty}<\tfrac{1}{2}\cdot
\tfrac{9}{10}\cdot N^{\theta} \bigr\},
\\
\Delta^{ (2 )} &=& \bigl\{ v\in H_{M+V}: \bigl\llVert
v-c' (\Delta)\bigr\rrVert_{\infty}<\tfrac{1}{2}\cdot
\tfrac{11}{10}\cdot N^{\theta} \bigr\},
\\
I^{ (1 )}&=& \bigl\{ t\in\mathbb{N}: \bigl\llvert t-c' (I )
\bigr\rrvert<\tfrac{1}{2}\cdot\tfrac{9}{10}\cdot N^{\theta
}
\bigr\},
\end{eqnarray*}
and
\[
I^{ (2 )}= \bigl\{ t\in\mathbb{N}: \bigl\llvert t-c' (I )
\bigr\rrvert<\tfrac{1}{2}\cdot\tfrac{11}{10}\cdot N^{\theta
}
\bigr\}.
\]

\begin{figure}

\includegraphics{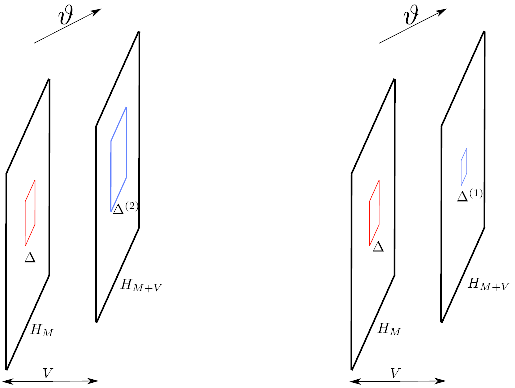}

\caption{Using\vspace*{1pt} the annealed walk in the area between
the hyperplane $H_{M}$ and $H_{M+V}$, we can turn the estimates on
the hitting probabilities of the small box $\Delta^{ (1 )}$
and big box $\Delta^{ (2 )}$ in the hyperplane $H_{M+V}$
into both quenched and annealed estimations for the hitting probabilities
of the box $\Delta$ in the hyperplane $H_{M}$. The\vspace*{1pt} probability to
hit $\Delta$ and not to hit $\Delta^{ (2 )}$ as well as
the probability not to hit $\Delta$ but to hit $\Delta^{
(1 )}$
are of order $N^{-\xi(1 )}$.}
\label{figBixsmallbox}
\end{figure}
Annealed estimations (for the proof see Appendix~\ref
{AppendixsubMore-annealed-estimations})
yields
%
\begin{eqnarray}
\qquad \mathbb{P}^{z} \bigl(X_{T_{M+V}}\in\Delta^{ (1 )},
T_{M+V}\in I^{ (1 )} \bigr)&<&\mathbb{P}^{z}
(X_{T_{M}}\in\Delta, T_{M}\in I )+N^{-\xi(1
)},\label{eqFirstdifferenceestimationannealedestimation1}
\\
%
\mathbb{P}^{z} \bigl(X_{T_{M+V}}\in\Delta^{ (2 )},
T_{M+V}\in I^{ (2 )} \bigr)&>&\mathbb{P}^{z}
(X_{T_{M}}\in\Delta, T_{M}\in I )-N^{-\xi(1
)}\label{eqFirstdifferenceestimationannealedestimation2}
\end{eqnarray}
and also, due to Claim~\ref{Clmsimpleclaimannealedtoquenched},
for an event $W_{2} (N )$ of $P$ probability $\geq1-N^{-\xi
(1 )}$
%
\begin{eqnarray}
&& E \bigl[P_{{\omega}}^{z} \bigl(X_{T_{M+V}}\in
\Delta^{ (1
)}, T_{M+V}\in I^{ (1 )} \bigr)\mid
\mathcal{G} \bigr]
\nonumber\\[-8pt]\label{eqFirstdifferenceestimationannealedestimation3} \\[-8pt]\nonumber
&&\qquad <P_{\omega}^{z} (X_{T_{M}}\in\Delta,
T_{M}\in I )+N^{-\xi(1 )},
%
\\
&& E \bigl[P_{{\omega}}^{z} \bigl(X_{T_{M+V}}\in
\Delta^{ (2
)}, T_{M+V}\in I^{ (2 )} \bigr)\mid
\mathcal{G} \bigr]
\nonumber\\[-8pt]\label{eqFirstdifferenceestimationannealedestimation4} \\[-8pt]\nonumber
&&\qquad >P_{\omega}^{z} (X_{T_{M}}\in\Delta,
T_{M}\in I )-N^{-\xi(1 )}.
\end{eqnarray}
Thus, from the definition of $W_{1} (N )$, $W_{2}
(N )$
and the last $4$ estimations, it follows that $W_{1} (N
)\cap W_{2} (N )\subset L (\theta,N )$
and the proof is complete.
\end{pf}

\begin{lem}
\label{lemUpperestimationonthequenchedexit} Let\vspace*{1pt} $d\geq4$ and
assume $P$ is uniformly elliptic, i.i.d. and satisfies $ (\mathscr
{P} )$.
For every $0<\theta\leq1$ and $h\in\mathbb{N}$ let $D^{
(\theta,h )} (N )$
be the event that for every $z\in\tilde{\CP} (0,N )$, every
$\frac{1}{2}N^{2}\leq M\leq N^{2}$, every $ (d-1 )$-dimensional
cube of side length $N^{\theta}$ which is contained in $H_{M}$ and
every interval $I\subset\mathbb{N}$ of length $N^{\theta}$
%
\begin{equation}
P_{{\omega}}^{z} (X_{T_{M}}\in\Delta, T_{M}
\in I )\leq R_{h} (N )N^{ (\theta-1 )d}\label
{eqUpperestimationonthequenchedexit1}
\end{equation}
and
%
\begin{equation}
P_{\omega}^{z} (X_{T_{M}}\in\Delta)\leq
R_{h} (N )N^{ (\theta-1 ) (d-1 )}.\label
{eqUpperestimationonthequenchedexit2}
\end{equation}
Then for every $0<\theta\leq1$ there exists $h=h (\theta)$
such that $P (D^{ (\theta,h )} (N )
)=1-N^{-\xi(1 )}$.
\end{lem}

\begin{pf}
The proof of (\ref{eqUpperestimationonthequenchedexit2}) is
the content of \cite{berger2008slowdown}, Lemma 4.13, and, therefore,
we restrict attention to the proof of (\ref
{eqUpperestimationonthequenchedexit1}).
We prove the lemma by a descending induction on $\theta$. From Lemma
\ref{lemFirstdifferenceestimation} together with Lemma~\ref
{lemAnnealedderivativeestimationsd-1+time}(1),
$P (D^{ (\theta,1 )} )\geq P (L
(\theta,N ) )=1-N^{-\xi(1 )}$
for every $\frac{d}{d+1}<\theta\leq1$. For the induction step, fix
$\theta$ and assume that the statement of the lemma holds for some
$\theta'$ such that $\theta>\frac{d}{d+1}\theta'$. Define
$h'=h (\theta' )$
and $\rho=\frac{\theta}{\theta'}>\frac{d}{d+1}$. Let
\[
S (N )=D^{ (\rho,1 )} (N )\cap\mathop{\bigcap_{z\in\CP(0,2N )}}
_{s\in[-2NR_{5} (N ),2NR_{5} (N ) ]} \sigma_{z}\varrho_{s}
\bigl(D^{ (\theta
',h' )} \bigl( \bigl[N^{\rho} \bigr] \bigr) \bigr)\cap T
(N,\rho),
\]
where $\varrho$ is the time shift for the random walk, defined by
$\varrho(X_{1},X_{2},\ldots)= (X_{2},X_{3},\ldots
)$
and
\begin{eqnarray*}
T (N,\rho) &=& \bigl\{ {\omega}\in\Om:
\forall
v\in\CP(0,N ),  P_{{\omega}}^{v} \bigl(X_{T_{\partial\CP(v,
[N^{\rho} ] )}}
\notin\partial^{+}\CP\bigl(v, \bigl[N^{\rho} \bigr] \bigr)
\bigr)<e^{-R (N )}
\\
&&{} \mbox{and }P_{{\omega}}^{v} \bigl(\bigl\llvert
T_{\partial\CP
(v, [N^{\rho} ] )}-\mathbb{E}^{v} [T_{\partial
\CP(v, [N^{\rho} ] )} ]\bigr\rrvert
>NR_{2} (N ) \bigr)=N^{-\xi(1 )}
\bigr\}.
\end{eqnarray*}
From the definition of $S (N )$, Lemma~\ref
{lemDistanceoftimefromexpectation}
and the induction assumption, we know that $P (S (N
) )=1-N^{-\xi(1 )}$.
Therefore, we need to show that for some $h$ and all $N$ large enough,
we have $S (N )\subset D^{ (\theta,h )}
(N )$.
To this end, fix ${\omega}\in S (N )$, $z\in\tilde{\CP
} (0,N )$,
$\frac{1}{2}N^{2}\leq M\leq N^{2}$, a $ (d-1 )$-dimensional
cube $\Delta$ of size length $N^{\theta}$ in $\CP(0,N
)\cap H_{M}$
and an interval $I\subset[\mathbb{E}^{z} [T_{M}
]-NR_{2} (N ),\mathbb{E}^{z} [T_{M}
]+NR_{2} (N ) ]$
of length $N^{\theta}$. As before, we denote by $c (\Delta)$
and $c (I )$ the centers of $\Delta$ and $I$, respectively.
Let $V= [N^{\rho} ]^{2}$, $c (\Delta)'=c
(\Delta)-V\frac{\vartheta}{ \langle\vartheta,e_{1} \rangle}$
and $c (I )'=c (I )-V\frac{1}{ \langle
\bbbv,e_{1} \rangle}$.
Since ${\omega}\in\bigcap_{z\in\CP(0,2N )}\bigcap_{s\in[-2NR_{5} (N
),2NR_{5} (N )
]}\sigma_{z}\varrho_{s} (D^{ (\theta',h' )}
( [N^{\rho} ] ) )$
it follows that for every $v\in H_{M-V}$ and every $t\in\mathbb{N}$
\[
P_{{\omega}}^{v} (X_{T_{M}}\in\Delta, T_{M}
\in I-t )<R_{h'} (N )N^{\rho(\theta'-1 )d}=R_{h'} (N
)N^{ (\theta-\rho)d}.
\]
In addition, due to the Markov property of the quenched law
\begin{eqnarray*}
&& P_{{\omega}}^{z} (X_{T_{M}}\in\Delta, T_{M}
\in I )
\\
&&\qquad  =\mathop{\sum_{v\in H_{M-V}\cap\CP(x', [N^{\rho} ] )}}_{
\llvert t-c' (I )\rrvert \leq N^{\rho}R_{5} (N^{\rho
} )}
P_{{\omega}}^{z} (X_{T_{M-V}}=v, T_{M-V}=t
)P_{{\omega}}^{v} (X_{T_{M}}\in\Delta, T_{M}
\in I-t )
\\
&&\quad\qquad{}+N^{-\xi(1 )}
\\
&&\qquad  \leq \mathop{\sum_{v\in H_{M-V}\cap\CP(x', [N^{\rho} ] )}}_
{\llvert t-c' (I )\rrvert \leq N^{\rho}R_{5} (N^{\rho
} )}P_{{\omega}}^{z}
(X_{T_{M-V}}=v, T_{M-V}=t )R_{h'} (N
)N^{ (\theta-\rho
)d}+N^{-\xi(1 )}.
\end{eqnarray*}
Now, the last sum can be separated into the sum over $2^{d-1}R_{5}
(N^{\rho} )^{d-1}$
$ (d-1 )$-dimensional cubes of side length $N^{\rho}$ and
$2R_{2} (N^{\rho} )$ intervals of length $N^{\rho}$. Since
${\omega}\in D^{ (\rho,1 )} (N )$ the
probability to
hit each of these cubes in any of these time intervals is bounded
by $R_{1} (N )N^{ (\rho-1 )d}$. Thus,
\begin{eqnarray*}
P_{{\omega}}^{z} (X_{T_{M}}\in\Delta, T_{M}
\in I ) & <&2^{d}R_{5} \bigl(N^{\rho}
\bigr)^{d}R_{1} (N )N^{
(\rho-1 )d}R_{h'} (N
)N^{ (\theta-\rho
)d}+N^{-\xi(1 )}
\\
& \leq& R_{\max\{ 6,h' \} +1} (N )N^{
(\theta-1 )d},
\end{eqnarray*}
and the proof is complete by taking $h=\max\{ 6,h' \} +1$.
\end{pf}
\begin{lem}
\label{lemEstimationforfarhyperplane} Let $d\geq4$ and assume
$P$ is uniformly elliptic, i.i.d. and satisfies $ (\mathscr
{P} )$.
Let $\CG$ be the $\si$-algebra generated by $ \{ {\omega}
(z ): \langle z,e_{1} \rangle\leq N^{2} \} $.
Let $\eta>0$, $V= [N^{\eta} ]$ and define $R (N,\eta
)$
to be the event that for every $z\in\tilde{\CP} (0,N )$,
every $v\in H_{N^{2}+V}$ and every $m\in\mathbb{N}$
\begin{eqnarray*}
&& \bigl\llvert E \bigl[P_{{\omega}}^{z} (X_{T_{N^{2}+V}}=v,
T_{N^{2}+V}=m )\mid\CG\bigr]-\mathbb{P}^{z}
(X_{T_{N^{2}+V}}=v, T_{N^{2}+V}=m )\bigr\rrvert
\\
&&\qquad \leq N^{-d}V^{\vfrac{1-d}{6}}.
\end{eqnarray*}
Then $P (R (N,\eta) )=1-N^{-\xi(1
)}$.
\end{lem}

\begin{pf}
Let $v\in H_{N^{2}+V}$, $m\in\mathbb{N}$ and let $\theta>0$ be
such that $\theta<\frac{1}{20}\eta$. Let $K$ be an integer such
that $2^{-K-1}N^{2}\leq V<2^{-K}N^{2}$, and for $0\leq k\leq K$
define (see also Figure~\ref{figThebullet})
\begin{eqnarray*}
\CP^{ (k )} &=&\CP(0,N )\cap\bigl\{ x\in\mathbb{Z}^{d}: 0\leq
N^{2}- \langle x,e_{1} \rangle\leq2^{-k}N^{2}
\bigr\} \qquad\forall1\leq k\leq K,
\\
\CP^{ (0 )}&=&\CP(0,N )\cap\biggl\{ x\in\mathbb{Z}^{d}:
\frac{N^{2}}{2}\leq N^{2}- \langle x,e_{1} \rangle\biggr
\},
\\
F (v ) &=& \biggl\{ x\in\mathcal{P} (0,N ): \biggl\llVert x-v-\vartheta
\frac{ \langle x-v,e_{1} \rangle
}{ \langle\vartheta,e_{1} \rangle}\biggr\rrVert_{\infty
}\leq\bigl\llvert\langle
v-x,e_{1} \rangle\bigr\rrvert^{\sfrac {1}{2}}R_{2} (N )
\biggr\},
\\
\CP^{ (k )} (v ) &=&\mathcal{P}^{ (k
)}\cap F (v ),
\end{eqnarray*}
and
\[
\widehat{\CP}^{ (k )} (v )= \bigl\{ y\in\mathbb{Z}^{d}:
\exists x\in\mathcal{P}^{ (k )} (v ) \mbox{ such that }\llVert x-y\rrVert
_{\infty
}<R_{2} (N ) \bigr\}.
\]

\begin{figure}

\includegraphics{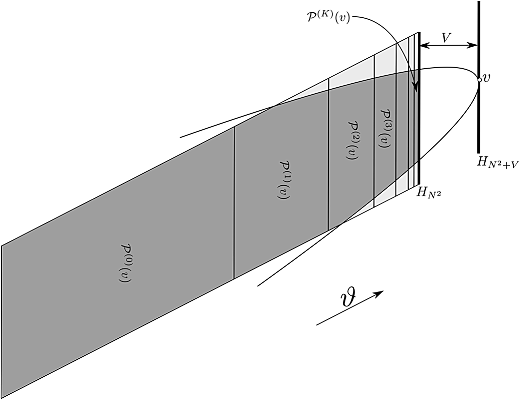}

\caption{The dark gray areas are $\mathcal{P}^{ (k )}
(v )$
for different values of $k$. The environment in the light gray area
has negligible influence on the probability of hitting $v$. (The
picture is not to scale.)}
\label{figThebullet}
\end{figure}

Condition on the event $D^{ (\theta,h )}$ from Lemma~\ref
{lemUpperestimationonthequenchedexit},
with $h$ such that\break $P (D^{ (\theta,h )} (N
) )=1-N^{-\xi(1 )}$.
As in \cite{berger2008slowdown}, Lemma 4.14, for $0\leq k\leq K$
and ${\omega}\in D^{ (\theta,h )}$, we have the estimation
\begin{eqnarray*}
V (k ) & =&E_{{\omega},{\omega}}^{z,z} \bigl[
\bigl\llvert\bigl[X^{ (1 )} \bigr]\cap\bigl[X^{ (2 )} \bigr]\cap
\CP^{ (k )} (v )\bigr\rrvert\bigr]
\\
& \leq&\cases{ R_{2} (N ), &\quad$k=0$,
\cr
R_{h+1} (N
)N^{2 (\vfrac{d+1}{2}+ (1-\theta
) (1-d ) )}2^{-k \lfloor\vfrac{d+1}{2}
\rfloor}, &\quad$1\leq k\leq K$,}
\end{eqnarray*}
where $E_{\omega,\omega}^{z,z}$ is as defined in Section~\ref
{subIntersections-of-paths}.

Indeed, for $k=0$ this follows from Lemma~\ref{lemIntersectionlemma}
while for $k>0$
\begin{eqnarray*}
V (k ) & =&\sum_{x\in\mathcal{P}^{ (k )}
(v )} \bigl[P_{\omega}^{z}
(x \mbox{ is visited} ) \bigr]^{2}
\\
& \leq&\sum_{x\in\mathcal{P}^{ (k )} (v
)} \biggl[\sum
_{y: \llVert y-x\rrVert _{\infty}<R
(N )}P_{\omega}^{z} (X_{T_{ \langle y,e_{1}
\rangle}}=y )
\biggr]^{2}+N^{-\xi(1 )}
\\
& \overset{ (1 )} {\leq}&\sum_{x\in\mathcal{P}^{
(k )} (v )}C\cdot
R^{d} (N )\cdot\sum_{y: \llVert y-x\rrVert _{\infty}<R (N )}
\bigl[P_{\omega}^{z} (X_{T_{ \langle y,e_{1} \rangle
}}=y )
\bigr]^{2}+N^{-\xi(1 )}
\\
& \overset{ (2 )} {\leq}&C\cdot R^{2d} (N )\sum
_{y\in\widehat{\mathcal{P}}^{ (k )} (v )} \bigl[P_{\omega}^{z}
(X_{T_{ \langle y,e_{1} \rangle
}}=y ) \bigr]^{2}+N^{-\xi(1 )}
\\
& \overset{ (3 )} {\leq}&R_{2} (N )\sum_{y\in
\widehat{\mathcal{P}}^{ (k )} (v )}R_{h}^{2}
(N )N^{2 (1-\theta) (1-d )}
\\
& \leq& R_{h+1} (N )N^{2 (\vfrac{d+1}{2}+ (1-\theta
) (1-d ) )}2^{-k \lfloor\vfrac {d+1}{2} \rfloor},
\end{eqnarray*}
where for $ (1 )$ we used Cauchy--Schwarz inequality, for
$ (2 )$ we used the fact that each point is counted at most
$R^{d} (N )$ times and for $ (3 )$ the assumption
$\omega\in D^{ (\theta,h )} (N )$.

We now use again the filtration $ \{ \CF_{i} \} $ from Lemma
\ref{lemFirstdifferenceestimation}, and consider the martingale
\[
M_{i}=E \bigl[P_{{\omega}}^{z} (X_{T_{N^{2}+V}}=v, T_{N^{2}+V}=m\mid
B_{N} )\mid\mathcal{F}_{i}
\bigr].
\]
In order to use McDiarmid's inequality, we need to bound $U_{i}=\esup
(\llvert M_{i}-M_{i-1}\rrvert \mid\mathcal{F}_{i-1} )$
under the assumption $\omega\in D^{ (\theta,h )}
(N )$.
Let $x$ be such that $\omega_{x}$ is measurable with respect to
$\CF_{i}$ but not with respect to $\CF_{i-1}$. By a similar argument
as in the proof of Lemma~\ref{lemFirstdifferenceestimation}, we
have $U_{i}=N^{-\xi(1 )}$ if $x\notin F (v )$,
while for $x\in F (v )$
\[
U_{i}\leq R (N )E \bigl[P_{{\omega}}^{z} (x \mbox{ is
hit}\mid B_{N} )\mid\mathcal{F}_{i-1} \bigr]\operatorname{Der}
\bigl(N^{2}+V- \langle x,e_{1} \rangle\bigr),
\]
where $\operatorname{Der} (N^{2}+V- \langle x,e_{1} \rangle)$
is the maximal derivative of the annealed distribution with respect
to both place and time with distance $N^{2}+V- \langle
x,e_{1} \rangle$
to the hitting hyperplane. By Lemma~\ref
{lemAnnealedderivativeestimationsd-1+time},
this derivative is bounded by $CN^{-d-1}2^{k\sklfrac{d}{2}}$ and, therefore,
whenever $\omega\in D^{ (\theta,h )}$
\begin{eqnarray*}
U & =&\sum_{i}U_{i}^{2}\leq
C\sum_{k=0}^{K}V (k )N^{-2
(d+1 )}2^{kd}+N^{-\xi(1 )}
\\
& \leq& CR_{h+1} (N )N^{-2 (d+1 )}
\\
&&{} +CR_{h+1} (N )N^{2 (\vfrac{d+1}{2}+ (1-\theta
) (1-d ) )-2 (d+1 )}\sum_{k=1}^{K}2^{kd-k\sklvfrac{d+1}{2}}+N^{-\xi(1 )}
\\
& \leq& CR_{h+1} (N ) \bigl[N^{-2 (d+1
)}+N^{2 (\vfrac{d+1}{2}+ (1-\theta) (1-d
) )-2 (d+1 )}c2^{\sklvfrac{d-1}{2}K}
\bigr]
\\
&& {} +N^{-\xi
(1 )}
\\
& \leq& CR_{h+1} (N ) \bigl[N^{-2 (d+1
)}+N^{2 (\vfrac{d+1}{2}+ (1-\theta) (1-d
) )-2 (d+1 )}2^{\sklvfrac{d-1}{2}K}
\bigr]
\\
&& {} +N^{-\xi
(1 )}.
\end{eqnarray*}
Recalling that $K$ was chosen so that $2^{K}<N^{2}V^{-1}$ we can
bound the last sum term by
\begin{eqnarray*}
&& CR_{h+1} (N ) \bigl[N^{-2 (d+1 )}+N^{2 (\vfrac{d+1}{2}+ (1-\theta) (1-d )
)-2 (d+1 )}N^{ (d-1 )}V^{-\vfrac{d-1}{2}}
\bigr]
\\
&&\quad{} +N^{-\xi(1 )}
\\
&&\qquad =  CR_{h+1} (N ) \bigl[N^{-2 (d+1
)}+N^{-2d-2\theta(1-d )}V^{-\vfrac{d-1}{2}}
\bigr]+N^{-\xi(1 )}
\\
&&\qquad \leq CN^{-2d}N^{-\sklvfrac{d-1}{6}+\epsilon}
\end{eqnarray*}
for some small enough $\ep>0$.

Using McDiarmid's inequality (see Theorem~\ref{teoAzumatypeinequality}),
\begin{eqnarray*}
&& P \bigl(\bigl\llvert E \bigl[P_{{\omega}}^{z}
(X_{T_{N^{2}+V}}=v, T_{N^{2}+V}=m, B_{N} )\mid\CG\bigr]
\\
&&\quad{} -
\mathbb{P}^{z} (X_{T_{N^{2}+V}}=v, T_{N^{2}+V}=m,
B_{N} )\bigr\rrvert>N^{-d}V^{\vfrac{1-d}{6}} \bigr)
\\
&&\qquad \leq P \bigl(\bigl\llvert E \bigl[P_{{\omega}}^{z}
(X_{T_{N^{2}+V}}=v, T_{N^{2}+V}=m, B_{N} )\mid\CG\bigr]
\\
&&\quad\qquad{}-
\mathbb{P}^{z} (X_{T_{N^{2}+V}}=v, T_{N^{2}+V}=m,
B_{N} )\bigr\rrvert>N^{-d}V^{\vfrac{1-d}{6}},
D^{ (\theta,h )} (N ) \bigr)
\\
&&\quad\qquad{} +N^{-\xi(1 )}
\\
&&\qquad \leq P \bigl(\bigl\llvert E \bigl[P_{{\omega}}^{z}
(X_{T_{N^{2}+V}}=v, T_{N^{2}+V}=m, B_{N} )\mid\CG\bigr]
\\
&&\quad\qquad{}-
\mathbb{P}^{z} (X_{T_{N^{2}+V}}=v, T_{N^{2}+V}=m,
B_{N} )\bigr\rrvert>N^{-d}V^{\vfrac{1-d}{6}},
\\
&&\hspace*{3pt}\quad\qquad U\leq
CN^{-2d-\sklvfrac{d-1}{6}+\varepsilon} \bigr)+N^{-\xi(1 )}
\\
&&\qquad \leq C\exp\bigl(-cN^{-\sklvfrac{d+1}{6} (1-2\eta
)-\varepsilon} \bigr)+N^{-\xi(1 )}=N^{-\xi
(1 )}.
\end{eqnarray*}
Since $P (B_{N} )=1-N^{-\xi(1 )}$ by Corollary
\ref{corlengthofregenerations}, this completes the proof.
\end{pf}
We are finally ready to prove Proposition~\ref{propTimed-1boxdifference}.
\begin{pf*}{Proof of Proposition~\ref{propTimed-1boxdifference}}
Let $\eta>0$ and define $V= [N^{\eta} ]$. By Lem\-ma~\ref
{lemEstimationforfarhyperplane},
we know that $P (R (N,\eta) )=1-N^{-\xi
(1 )}$.
As before, all we need to show is that $R (N,\eta)\cap
S (N,\eta)\subseteq F (N,\theta)$
for an\vspace*{1pt} appropriate choice of $\eta>0$ and some event $S (N,\eta
)$
satisfying $P (S (N,\eta) )=1-N^{-\xi
(1 )}$.
This is done identically as in the last step of the proof of Lemma
\ref{lemFirstdifferenceestimation}. Let ${\omega}\in R (N,\eta
)$,
let $\Delta$ be a cube of side length $N^{\theta}$ which is contained
in $\partial^{+}\CP(0,N )$ and let $I$ be an interval
of length $N^{\theta}$ in $\mathbb{N}$. As in Lemma~\ref
{lemFirstdifferenceestimation}
(see also Figure~\ref{figBixsmallbox}), we denote by $c (\Delta
)$
and $c (I )$ the center of $\Delta$ and $I$, respectively,
and let $c' (\Delta)=c (\Delta)+V\frac
{\vartheta}{ \langle\vartheta,e_{1} \rangle}$,
$c' (I )=c (I )+\mathbb{E}^{0} [T_{V} ]$.

Let $\Delta^{ (1 )}$ and $\Delta^{ (2 )}$ be
$ (d-1 )$-dimensional
cubes that are contained in $H_{N^{2}+V}$, centered at $c'
(\Delta)$
and are of side lengths $N^{\theta}-R_{3} (N )\sqrt{V}$
and $N^{\theta}+R_{3} (N )\sqrt{V}$, respectively. In a similar
fashion, let $I^{ (1 )}$ and $I^{ (2 )}$ be intervals
centered at $c' (I )$ which are of lengths $N^{\theta
}-R_{3} (N )\sqrt{V}$
and $N^{\theta}+R_{3} (N )\sqrt{V}$, respectively.

As in the proof of Lemma~\ref{lemFirstdifferenceestimation} (the
proof can be found in Appendix~\ref{AppendixsubMore-annealed-estimations}),
we know that
%
\begin{eqnarray}
&& \mathbb{P}^{z} \bigl(X_{T_{N^{2}+V}}\in\Delta^{ (1 )},
T_{N^{2}V}\in I^{ (1 )} \bigr)
\nonumber\\[-8pt]\label{eqTimed-1boxdifferenceannealedestimation1} \\[-8pt]\nonumber
&&\qquad < \mathbb{P}^{z}
(X_{T_{N^{2}}}\in\Delta, T_{N^{2}}\in I )+N^{-\xi
(1 )},
\\
%
&& \mathbb{P}^{z} \bigl(X_{T_{N^{2}+V}}\in\Delta^{ (2 )},
T_{N^{2}+V}\in I^{ (2 )} \bigr)
\nonumber\\[-8pt]\label{eqTimed-1boxdifferenceannealedestimation2} \\[-8pt]\nonumber
&&\qquad > \mathbb{P}^{z}
(X_{T_{N^{2}}}\in\Delta, T_{N^{2}}\in I )-N^{-\xi
(1 )}
\end{eqnarray}
and using Claim~\ref{Clmsimpleclaimannealedtoquenched} for an
event $S (N,\eta)$ such that $P (S (N,\eta
) )=1-N^{-\xi(1 )}$
we have
%
\begin{eqnarray}
&& E \bigl[P_{{\omega}}^{z} \bigl(X_{T_{N^{2}+V}}\in
\Delta^{
(1 )}, T_{N^{2}+V}\in I^{ (1 )} \bigr)\mid
\mathcal{G} \bigr]
\nonumber\\[-8pt]\label{eqTimed-1boxdifferenceannealedestimation3} \\[-8pt]\nonumber
&&\qquad <P_{\omega}^{z} (X_{T_{N^{2}}}\in\Delta,
T_{N^{2}}\in I )+N^{-\xi(1 )},
\\
%
&& E \bigl[P_{{\omega}}^{z} \bigl(X_{T_{N^{2}+V}}\in
\Delta^{
(2 )}, T_{N^{2}+V}\in I^{ (2 )} \bigr)\mid
\mathcal{G} \bigr]
\nonumber\\[-8pt]\label{eqTimed-1boxdifferenceannealedestimation4} \\[-8pt]\nonumber
&&\qquad >P_{\omega}^{z} (X_{T_{N^{2}}}\in\Delta,
T_{N^{2}}\in I )-N^{-\xi(1 )}.
\end{eqnarray}
In addition, on the event $R (N,\eta)$, for $i=1,2$
\begin{eqnarray*}
&& \bigl\llvert E \bigl[P_{{\omega}}^{z} \bigl(X_{T_{N^{2}+V}}\in
\Delta^{ (i )}, T_{N^{2}+V}\in I^{ (i )} \bigr)\mid\CG
\bigr]-\mathbb{P}^{z} \bigl(X_{T_{N^{2}+V}}\in\Delta^{ (i )},
T_{N^{2}+V}\in I^{ (i )} \bigr)\bigr\rrvert
\\
&&\qquad \leq\bigl\llvert
\Delta^{ (i )}\bigr\rrvert\cdot\bigl\llvert I^{ (i )}\bigr\rrvert
\cdot N^{-d}V^{\vfrac{1-d}{6}}.
\end{eqnarray*}
Therefore, for ${\omega}\in R (N,\eta)\cap S (N,\eta
)$
we have
\begin{eqnarray*}
&& \bigl\llvert P_{{\omega}}^{z} (X_{T_{\partial\CP(0,N
)}}\in\Delta,
T_{\partial\CP(0,N )}\in I )-\mathbb{P}^{z} (X_{T_{\partial\CP(0,N
)}}\in\Delta,
T_{\partial\CP(0,N )}\in I )\bigr\rrvert
\\
&&\qquad \leq \bigl(\bigl\llvert\Delta^{ (1 )}\bigr\rrvert\bigl\llvert
I^{
(1 )}\bigr\rrvert+\bigl\llvert\Delta^{ (2 )}\bigr\rrvert
\bigl\llvert I^{ (2 )}\bigr\rrvert\bigr)N^{-d}V^{\vfrac{1-d}{6}}
\\
&&\quad\qquad{} +
\bigl(\bigl\llvert\Delta^{ (2 )}\bigr\rrvert\bigl\llvert
I^{ (2
)}\bigr\rrvert-\bigl\llvert\Delta^{ (1 )}\bigr\rrvert
\bigl\llvert I^{
(1 )}\bigr\rrvert\bigr)CN^{-d}+N^{-\xi(1 )}
\\
&&\qquad \leq C \bigl[ \bigl(N^{\theta}+R_{3} (N )\sqrt{V}
\bigr)^{d}N^{-d}V^{\vfrac{1-d}{6}}+R_{3} (N )
\sqrt{V}N^{\theta
(d-1 )-d} \bigr].
\end{eqnarray*}
Taking $\eta<2\theta$ we can bound the last term by
\[
C \bigl[N^{\theta d-d+\eta\sklvfrac{1-d}{6}}+R_{3} (N )N^{\theta(d-1
)-d+\sfrac{\eta}{2}} \bigr].
\]
Notice that the exponents of the powers of $N$ are the same when
$\eta=\frac{6\theta}{d+2}<2\theta$, in which case the last bound
equals $C (1+R_{3} (N ) )\cdot N^{-d
(1-\theta)-\sklvafrac{d-1}{d+2}\theta}\leq CN^{-d (1-\theta
)-\sklvafrac{d-2}{d+2}\theta}$. Thus, the proof is complete.
\end{pf*}

\section{From $(d-1)$-dimensional boxes and time intervals to $d$-dimensional boxes in a fixed time}\label{sec4}
\label{secLorentztrans}

The goal of this section is to use the estimation proved in
Section~\ref{secAdding-time-estimation}, for the difference between the
quenched and annealed probabilities to hit boxes in a hyperplane within
a time interval, in order to achieve similar estimation for the difference
between the quenched and annealed probabilities to hit a $d$-dimensional
box in a specific time. Formally, we have the following.

\begin{prop}\label{PropLorentztransformation-1} Let $d\geq4$ and assume $P$
is uniformly elliptic, i.i.d. and satisfies $ (\mathscr{P} )$.
For every $0<\theta\leq\frac{1}{2}$, let $H (N )=H
(N,\theta)$
be the event that for every $z\in\tilde{\mathcal{P}} (0,N )$
and every $d$-dimensional cube $\Delta$ of side length $N^{\theta}$
\[
\bigl\llvert P_{\omega}^{z} (X_{N}\in\Delta)-
\mathbb{P}^{z} (X_{N}\in\Delta)\bigr\rrvert\leq
CN^{-d
(1-\theta)-\sklfrac{1}{3}\theta}.
\]
Then $P (H (N ) )=1-N^{-\xi(1 )}$.
\end{prop}
\begin{rem}
The constant $\frac{1}{3}$ can in fact be replaced by any number
which is smaller than $\min\{ \frac{1}{2},\frac
{d-2}{d+2} \} $.
\end{rem}

The idea of the proof is to exploit the estimation of Proposition
\ref{propTimed-1boxdifference} and the fact that regeneration
times occur quite often. More precisely, we show that the event of
hitting a box $\Delta$ at time $N$ is bounded both from below and
from above by the event of hitting a certain hyperplane in a specific
$ (d-1 )$-dimensional box within a specific time interval.
This implies that the difference between the probabilities is roughly
the same as in Proposition~\ref{propTimed-1boxdifference}, and
thus gives the required result.

\begin{pf*}{Proof of Proposition \ref{PropLorentztransformation-1}}
Due to Lemma~\ref{lemgoodestimationforthelocation}, we may restrict
ourselves to boxes $\Delta$ whose center $c (\Delta)$
satisfies $\llVert c (\Delta)-\mathbb{E}^{z}
[X_{N} ]\rrVert _{\infty}<\sqrt{N}R_{5} (N )$.
Given a cube $\Delta$ of side length $N^{\theta}$ such that $c
(\Delta)$
satisfies $\llVert c (\Delta)-\mathbb{E}^{z}
[X_{N} ]\rrVert _{\infty}<\sqrt{N}R_{5} (N )$
let $\Delta^{ (1 )}$ and $\Delta^{ (2 )}$ be the
$ (d-1 )$-dimensional cubes in the hyperplane $H_{
\langle c (\Delta),e_{1} \rangle-N^{\theta}}$
with center $c (\Delta)-\frac{N^{\theta}}{ \langle
\vartheta,e_{1} \rangle}\vartheta$
and side length $N^{\theta}-R_{5} (N )N^{\theta/2}$
and $N^{\theta}+R_{5} (N )N^{\theta/2}$, respectively.
Noting that
\begin{eqnarray*}
L &:=& \bigl\langle c (\Delta),e_{1} \bigr\rangle-N^{\theta
}\geq
\bigl\langle\mathbb{E}^{z} [X_{N} ],e_{1} \bigr
\rangle-2\sqrt{N}R_{5} (N )
\\
&\geq& cN-2\sqrt{N}R_{5} (N ),
\end{eqnarray*}
it follows from Proposition~\ref{propTimed-1boxdifference}
[for every $\omega\in F (N,\theta)$ and every $z\in
\tilde{\mathcal{P}} (0,N )$]
that
%
\begin{eqnarray}\label{eqLorentztrans1}
&& \bigl\llvert P_{\omega}^{z} \bigl(X_{T_{\partial\mathcal{P} (0,\sqrt
{L} )}}\in
\Delta^{ (1 )}, T_{\partial\mathcal
{P} (0,\sqrt{L} )}\in I^{ (1 )} \bigr)\nonumber
\\
&&\quad{}  -\mathbb
{P}^{z} \bigl(X_{T_{\partial\mathcal{P} (0,\sqrt{L}
)}}\in\Delta^{ (1 )},
T_{\partial\mathcal{P}
(0,\sqrt{L} )}\in I^{ (1 )} \bigr)\bigr\rrvert
\\
&&\qquad \leq
CN^{-d (1-\theta)-\sklvafrac{d-2}{d+2}\theta},\nonumber
\end{eqnarray}
with $I^{ (1 )}=N-\frac{N^{\theta}}{ \langle\bbbv,e_{1} \rangle}+ [-\frac{1}{2} (N^{\theta
}-R_{5} (N )N^{\theta/2} ),\frac
{1}{2} (N^{\theta}-R_{5} (N )N^{\theta/2} ) ]$,
and
%
\begin{eqnarray}\label{eqLorentztrans2}
&& \bigl\llvert P_{\omega}^{z} \bigl(X_{T_{\partial\mathcal{P} (0,\sqrt
{L} )}}\in
\Delta^{ (2 )}, T_{\partial\mathcal
{P} (0,\sqrt{L} )}\in I^{ (2 )} \bigr)\nonumber
\\
&&\quad{} -\mathbb
{P}^{z} \bigl(X_{T_{\partial\mathcal{P} (0,\sqrt{L}
)}}\in\Delta^{ (2 )},
T_{\partial\mathcal{P}
(0,\sqrt{L} )}\in I^{ (2 )} \bigr)\bigr\rrvert
\\
&&\qquad \leq
CN^{-d (1-\theta)-\sklvafrac{d-2}{d+2}\theta},\nonumber
\end{eqnarray}
with $I^{ (2 )}=N-\frac{N^{\theta}}{ \langle\bbbv,e_{1} \rangle}+ [-\frac{1}{2} (N^{\theta
}+R_{5} (N )N^{\theta/2} ),\frac
{1}{2} (N^{\theta}+R_{5} (N )N^{\theta/2} ) ]$.

In addition, by a standard CLT type arguments, as the one in Lemma
\ref{lemFirstdifferenceestimation} (see Appendix~\ref
{AppendixsubMore-annealed-estimations}
for the proof), we have the following annealed estimations:
%
\begin{eqnarray}
\mathbb{P}^{z} \bigl(X_{T_{\partial\mathcal{P} (0,\sqrt
{L} )}}\in\Delta^{ (1 )},
T_{\partial\mathcal
{P} (0,\sqrt{L} )}\in I^{ (1 )}, X_{N}\notin\Delta\bigr)&\leq&
N^{-\xi(1 )},\label{eqLorentztrans3}
\\
%
\mathbb{P}^{z} \bigl( \bigl(X_{T_{\partial\mathcal{P} (0,\sqrt
{L} )}}\in\Delta^{ (2 )}, T_{\partial\mathcal
{P} (0,\sqrt{L} )}\in I^{ (2 )} \bigr)^{c}, X_{N}
\in\Delta\bigr)&\leq& N^{-\xi(1 )}.\label{eqLorentztrans4}
\end{eqnarray}
Using Claim~\ref{Clmsimpleclaimannealedtoquenched} again, this
also implies that
%
\begin{eqnarray}\label{eqLorentztrans5}
&& P \bigl( \bigl\{ \omega\in\Omega: P_{\omega}^{z}
\bigl(X_{T_{\partial\mathcal{P} (0,\sqrt{L} )}}\in\Delta^{ (1 )},
T_{\partial\mathcal{P} (0,\sqrt
{L} )}\in
I^{ (1 )}, X_{N}\notin\Delta\bigr)\leq N^{-\xi(1 )} \bigr
\} \bigr)
\nonumber\\[-8pt]\\[-8pt]\nonumber
&&\qquad \geq1-N^{-\xi(1 )}
\end{eqnarray}
and
%
\begin{eqnarray}\label{eqLorentztrans6}
&& P \bigl( \bigl\{ \omega\in\Omega: P_{\omega}^{z} \bigl(
\bigl(X_{T_{\partial\mathcal{P} (0,\sqrt{L} )}}\in\Delta^{ (2 )},
T_{\partial\mathcal{P} (0,\sqrt
{L} )}\in
I^{ (2 )} \bigr)^{c}, X_{N}\in\Delta\bigr)\leq
N^{-\xi(1 )} \bigr\} \bigr)\hspace*{-20pt}
\nonumber\\[-8pt]\\[-8pt]\nonumber
&&\qquad \geq1-N^{-\xi
(1 )}.
\end{eqnarray}
Combining all of the above, we get for an event with $P$ probability
$\geq1-N^{-\xi(1 )}$ that
\begin{eqnarray*}
&& P_{\omega}^{z} (X_{N}\in\Delta)
\\
&&\hspace*{-3.8pt}\qquad \overset{\fontas\eqref{eqLorentztrans6}} {\leq}  P_{\omega}^{z}
\bigl(X_{T_{\partial\mathcal{P} (0,\sqrt{L} )}}\in\Delta^{ (2 )},
T_{\partial\mathcal{P} (0,\sqrt
{L} )}\in
I^{ (2 )}, X_{N}\in\Delta\bigr)+N^{-\xi(1 )}
\\
&&\qquad\leq P_{\omega}^{z} \bigl(X_{T_{\partial\mathcal{P} (0,\sqrt
{L} )}}\in
\Delta^{ (2 )}, T_{\partial\mathcal
{P} (0,\sqrt{L} )}\in I^{ (2 )}
\bigr)+N^{-\xi
(1 )}
\\
&&\hspace*{-3.8pt}\qquad \overset{\fontas\eqref{eqLorentztrans2}} {\leq}  \mathbb{P}^{z}
\bigl(X_{T_{\partial\mathcal{P} (0,\sqrt{L} )}}\in\Delta^{ (2 )},
T_{\partial\mathcal{P} (0,\sqrt
{L} )}\in
I^{ (2 )} \bigr)+CN^{-d (1-\theta )-\sklvafrac{d-2}{d+2}\theta}
\\
&&\quad\qquad{} +N^{-\xi(1 )}
\\
&&\qquad \leq \mathbb{P}^{z} \bigl(X_{T_{\partial\mathcal{P} (0,\sqrt
{L} )}}\in\Delta^{ (1 )}, T_{\partial\mathcal
{P} (0,\sqrt{L} )}\in I^{ (1 )} \bigr)
\\
&&\quad\qquad{} +\mathbb{P}^{z}
\bigl(X_{T_{\partial\mathcal{P} (0,\sqrt{L}
)}}\in\Delta^{ (2 )}\setminus\Delta^{ (1 )},
T_{\partial\mathcal{P} (0,\sqrt{L} )}\in I^{
(2 )} \bigr)
\\
&&\quad\qquad{}+  \mathbb{P}^{z} \bigl(X_{T_{\partial\mathcal{P} (0,\sqrt
{L} )}}\in\Delta^{ (2 )},
T_{\partial\mathcal
{P} (0,\sqrt{L} )}\in I^{ (2 )}\setminus I^{ (1 )}
\bigr)
\\
&&\quad\qquad{}+CN^{-d (1-\theta)-\sklvafrac{d-2}{d+2}\theta}
\\
&&\hspace*{-3.8pt}\qquad \overset{\fontas\eqref{eqLorentztrans3}} {\leq}  \mathbb{P}^{z}
\bigl(X_{T_{\partial\mathcal{P} (0,\sqrt{L} )}}\in\Delta^{ (1 )},
T_{\partial\mathcal{P} (0,\sqrt
{L} )}\in
I^{ (1 )}, X_{N}\in\Delta\bigr)
\\
&&\quad\qquad{}+\mathbb{P}^{z}
\bigl(X_{T_{\partial\mathcal{P} (0,\sqrt
{L} )}}\in\Delta^{ (2 )}\setminus\Delta^{
(1 )},
T_{\partial\mathcal{P} (0,\sqrt{L} )}\in I^{ (2 )} \bigr)
\\
&&\quad\qquad{}+  \mathbb{P}^{z} \bigl(X_{T_{\partial\mathcal{P} (0,\sqrt
{L} )}}\in\Delta^{ (2 )},
T_{\partial\mathcal
{P} (0,\sqrt{L} )}\in I^{ (2 )}\setminus I^{ (1 )}
\bigr)
\\
&&\quad\qquad{}+CN^{-d (1-\theta)-\sklvafrac{d-2}{d+2}\theta}
\\
&&\qquad\leq \mathbb{P}^{z} (X_{N}\in\Delta)+\mathbb
{P}^{z} \bigl(X_{T_{\partial\mathcal{P} (0,\sqrt{L}
)}}\in\Delta^{ (2 )}\setminus
\Delta^{ (1 )}, T_{\partial\mathcal{P} (0,\sqrt{L} )}\in I^{
(2 )} \bigr)
\\
&&\quad\qquad{}+  \mathbb{P}^{z} \bigl(X_{T_{\partial\mathcal{P} (0,\sqrt
{L} )}}\in\Delta^{ (2 )},
T_{\partial\mathcal
{P} (0,\sqrt{L} )}\in I^{ (2 )}\setminus I^{ (1 )}
\bigr)
\\
&&\quad\qquad{}+CN^{-d (1-\theta)-\sklvafrac{d-2}{d+2}\theta}
\\
&&\hspace*{-1.8pt}\qquad\overset{(1)} {\leq}  \mathbb{P}^{z} (X_{N}\in\Delta
)+CR_{5} (N )\cdot N^{\theta(d-\sfrac{1}{2}
)}\cdot N^{-d}+CN^{-d (1-\theta)-\sklvafrac{d-2}{d+2}\theta}
\\
&&\qquad=  \mathbb{P}^{z} (X_{N}\in\Delta)+CN^{-d
(1-\theta)-\sklfrac{1}{3}\theta},
\end{eqnarray*}
whereas for $ (1 )$, we used the annealed derivative estimation
proved in Lem\-ma~\ref{lemAnnealedderivativeestimationsd-1+time}
as well as the fact that the number of pairs $ (x,t )$ such
that $ (x,t )\in(\Delta^{ (2 )}\setminus
\Delta^{ (1 )},I^{ (2 )} )$
or $ (x,t )\in(\Delta^{ (2 )},I^{
(2 )}\setminus I^{ (1 )} )$
is bounded by $CR_{5} (N )N^{\theta(d-\sfrac
{1}{2} )}$.
The other direction
\[
P_{\omega}^{z} (X_{N}\in\Delta)\geq
\mathbb{P}^{z} (X_{N}\in\Delta)-CN^{-d (1-\theta)-\sklfrac
{1}{3}\theta}
\]
follows via the same argument except we use (\ref{eqLorentztrans1}),
(\ref{eqLorentztrans4})
and (\ref{eqLorentztrans5}) instead of (\ref{eqLorentztrans2}),
(\ref{eqLorentztrans3}) and (\ref{eqLorentztrans6}).
\end{pf*}

\section{Total variation bound for finite boxes}\label{sec5}

In the previous section, it was shown that for every $0<\theta\leq1$
the difference between the quenched and annealed probabilities to
hit a $d$-dimensional box of side length $N^{\theta}$ at time $N$
is bounded by $CN^{-d (1-\theta)-\sklvafrac{d-2}{d+2}\theta}$.
Since a must be inside a the box of side length $2N+1$ around its
starting point at time $N$, it in particular implies that the total
variation between the quenched and annealed distribution over any
partition of $\mathbb{Z}^{d}$ into $d$-dimensional boxes of side
length $N^{\theta}$ goes to zero as $N$ goes to infinity. The goal
of this section is to strengthen this result and prove that the same
result hold for partitions of $\mathbb{Z}^{d}$ into boxes whose side
length is of constant size, independent of~$N$. More formally, we
have the following.

\begin{teo}
\label{teoTotalvariationonsmallboxes} Let $d\geq4$ and assume
$P$ is uniformly elliptic, i.i.d. and satisfies $ (\mathscr
{P} )$.
For $N,M\in\mathbb{N}$ denote by $G (N )=G (N,M )$
the set of environments $\omega\in\Omega$ such that for every $z\in
\mathbb{Z}^{d}$
satisfying $\llVert z\rrVert _{\infty}\leq N$
\[
\sum_{\Delta\in\Pi}\bigl\llvert P_{\omega}^{z}
(X_{N}\in\Delta)-\mathbb{P}^{z} (X_{N}\in\Delta
)\bigr\rrvert\leq\frac{C_{2}}{M^{c_{1}}}+\frac{C_{2}}{N^{c_{1}}},
\]
where $\Pi$ is any partition of $\mathbb{Z}^{d}$ into boxes of side
length $M$. Then for an appropriate $0<c_{1},C_{2}<\infty$, $P
(G (N ) )=1-N^{-\xi(1 )}$.
\end{teo}

The idea of the proof is to shrink the size of the boxes repeatedly,
each time by a constant factor from the previous step. This is done
as follows: first, we fix some factor, say $\theta=\frac{1}{200}$.
Then, in the $k$th step of the process, we let the random walk
run for $N^{1/2^{k}}$ steps and ask for the difference between the
annealed and quenched measures hit the same box of side $N^{\theta/2^{k}}$.
Repeating the last procedure roughly $\log\log N$ times, we get boxes
of constant side length $M$. The idea is to bound the total variation
of the $ (k+1 )$th step of this process by the one of
the $k$th step. Denoting by $\lambda_{k}$ the total variation
of the $k$th step, we show that $\lambda_{k}\leq\lambda
_{k-1}+CN_{k}^{-\alpha}$
for some $C,\alpha>0$. An additional short calculation then yields
the result.

\begin{pf*}{Proof of Theorem \ref{teoTotalvariationonsmallboxes}}
We start by introducing some notation to be used throughout the proof.
Let $\theta=\frac{1}{200}$. For $j\in\mathbb{N}$ denote
$N_{j}= \lfloor N^{\sfrac{1}{2^{j}}} \rfloor$
and let $r (N )= \lceil\log_{2} (\frac{\log
N}{\theta\log M} ) \rceil$
(which is the minimal natural number such that $N_{r (N
)}^{\theta}\leq M$).
Moreover, denote $n_{0}=n-\sum_{j=1}^{r (N )}N_{j}$ and
$n_{k}=\sum_{j=1}^{k}N_{j}$, $\forall 1\leq
k\leq r (N )$.
For $0\leq k\leq r (n )$, let $\Pi_{k}$ be a partition of
$\mathbb{Z}^{d}$ into boxes of side length $ \lfloor
N_{k}^{\theta} \rfloor$.
Finally, for $0\leq k\leq r (N )$ let
\[
\lambda_{k}=\sum_{\Delta\in\Pi_{k}}\bigl\llvert
P_{\omega}^{z} (X_{n_{k}}\in\Delta)-
\mathbb{P}^{z} (X_{n_{k}}\in\Delta)\bigr\rrvert.
\]
Note that in particular $\lambda_{r (N )}$ is the total
variation between the quenched and annealed measures on cubes of side
length $\leq M$ which is the term we wish to bound from above. If
one wish to be slightly more precise, one should replace $N_{r
(N )}$
by~$M$, and thus obtaining total variation for boxes of side length
$M$, this however does not influence the estimates to follow.

As stated before the main idea of the proof is to prove an inequality
of the form
\[
\lambda_{k}\leq\lambda_{k-1}+CN_{k}^{-\alpha}
\qquad\forall1\leq k\leq r (N )
\]
for some $\alpha>0$, which immediately implies $\lambda_{r
(N )}\leq\lambda_{1}+C\sum_{k=1}^{r (N
)}N_{k}^{-\alpha}$.
As it turns out the last term is bounded by $C_{2}M^{-c_{1}}$ for
some constants $0<c_{1},C_{2}<\infty$, while the first term, that is,
$\lambda_{1}$, is bounded (due to Proposition~\ref
{PropLorentztransformation-1})
by $CN^{-\sklvafrac{d-2}{d+2}\theta}$, and the result follows.

We now turn to the estimation of $\lambda_{k}$. By the triangle inequality
and the Markov property of $P_{\omega}$, we have
%
\begin{eqnarray}
\lambda_{k} & =&\sum_{\Delta\in\Pi_{k}}\bigl\llvert
P_{\omega}^{z} (X_{n_{k}}\in\Delta)-
\mathbb{P}^{z} (X_{n_{k}}\in\Delta)\bigr\rrvert
\nonumber
\\
& =&\sum_{\Delta\in\Pi_{k}}\biggl\llvert\sum
_{\Delta'\in\Pi
_{k-1}} \bigl[P_{\omega}^{z}
\bigl(X_{n_{k}}\in\Delta, X_{n_{k-1}}\in\Delta' \bigr)-
\mathbb{P}^{z} \bigl(X_{n_{k}}\in\Delta, X_{n_{k-1}}\in
\Delta' \bigr) \bigr]\biggr\rrvert
\nonumber
\\
& \leq&\sum_{\Delta\in\Pi_{k}}\sum_{\Delta'\in\Pi_{k-1}}
\bigl\llvert P_{\omega}^{z} \bigl(X_{n_{k}}\in\Delta,
X_{n_{k-1}}\in\Delta' \bigr)-\mathbb{P}^{z}
\bigl(X_{n_{k}}\in\Delta, X_{n_{k-1}}\in\Delta' \bigr)
\bigr\rrvert
\nonumber
\\
& \leq&\sum_{\Delta\in\Pi_{k}}\sum_{\Delta'\in\Pi_{k-1}}
\biggl\llvert\sum_{u\in\Delta'}P_{\omega}^{u}
(X_{n_{k}-n_{k-1}}\in\Delta)
\nonumber\\[-8pt]\label{eqlamkestimationpart1} \\[-8pt]\nonumber
&& {} \times \bigl[P_{\omega}^{z}
(X_{n_{k-1}}=u )-\mathbb{P}^{z} \bigl(X_{n_{k-1}}\in
\Delta' \bigr)P_{\omega}^{z} \bigl(X_{n_{k-1}}=u
\mid X_{n_{k-1}}\in\Delta' \bigr) \bigr]\biggr\rrvert
\\
&&{} +\sum_{\Delta\in\Pi_{k}}\sum_{\Delta'\in\Pi_{k-1}}
\biggl\llvert\sum_{u\in\Delta'}\mathbb{P}^{z}
\bigl(X_{n_{k-1}}\in\Delta' \bigr)P_{\omega}^{z}
\bigl(X_{n_{k-1}}=u\mid X_{n_{k-1}}\in\Delta' \bigr)
\nonumber\\[-8pt]\label{eqlamk+estimationpart2}\\[-8pt]\nonumber
&&{}\times
\bigl[P_{\omega}^{u} (X_{n_{k}-n_{k-1}}\in\Delta)-
\mathbb{P}^{u} (X_{n_{k}-n_{k-1}}\in\Delta) \bigr]\biggr\rrvert
\\
&&{} +\sum_{\Delta\in\Pi_{k}}\sum_{\Delta'\in\Pi_{k-1}}
\biggl\llvert\sum_{u\in\Delta'}\mathbb{P}^{u}
(X_{n_{k}-n_{k-1}}\in\Delta)
\nonumber\\[-8pt]\label{eqlamk+estimationpart3}\\[-8pt]\nonumber
&& {}\times \bigl[\mathbb{P}^{z}
\bigl(X_{n_{k-1}}\in\Delta' \bigr)P_{\omega}^{z}
\bigl(X_{n_{k-1}}=u\mid X_{n_{k-1}}\in\Delta' \bigr)-
\mathbb{P}^{z} (X_{n_{k-1}}=u ) \bigr]\biggr\rrvert
\\
&&{} +\sum_{\Delta\in\Pi_{k}}\sum_{\Delta'\in\Pi_{k-1}}
\biggl\llvert\sum_{u\in\Delta'}\mathbb{P}^{u}
(X_{n_{k}-n_{k-1}}\in\Delta)\mathbb{P}^{z} (X_{n_{k-1}}=u )
\nonumber\\[-8pt]\label{eqlamk+estimationpart4}\\[-8pt]\nonumber
&&{} -
\mathbb{P}^{z} \bigl(X_{n_{k}}\in\Delta, X_{n_{k-1}}\in
\Delta' \bigr)\biggr\rrvert.
\end{eqnarray}
We turn to estimate each of the terms (\ref
{eqlamkestimationpart1})--(\ref{eqlamk+estimationpart4})
separately.

For the first term (\ref{eqlamkestimationpart1}), we have
\begin{eqnarray*}
\eqref{eqlamkestimationpart1}
&=& \sum_{\Delta\in\Pi_{k}}\sum_{\Delta'\in\Pi_{k-1}}
\biggl\llvert\sum_{u\in\Delta'}P_{\omega
}^{u}
(X_{n_{k}-n_{k-1}}\in\Delta)
\\
&&{}\times \bigl[P_{\omega
}^{z}
(X_{n_{k-1}}=u )-\mathbb{P}^{z} \bigl(X_{n_{k-1}}\in
\Delta' \bigr)P_{\omega}^{z} \bigl(X_{n_{k-1}}=u
\mid X_{n_{k-1}}\in\Delta' \bigr) \bigr]\biggr\rrvert
\\
&\leq& \sum_{\Delta\in\Pi_{k}}\sum_{\Delta'\in\Pi_{k-1}}
\sum_{u\in\Delta'}P_{\omega}^{u}
(X_{n_{k}-n_{k-1}}\in\Delta)
\\
&&{}\times \bigl\llvert P_{\omega}^{z}
(X_{n_{k-1}}=u )-\mathbb{P}^{z} \bigl(X_{n_{k-1}}\in
\Delta' \bigr)P_{\omega}^{z} \bigl(X_{n_{k-1}}=u
\mid X_{n_{k-1}}\in\Delta' \bigr)\bigr\rrvert
\\
&=& \sum_{\Delta'\in\Pi_{k-1}}\sum_{u\in\Delta'}
\bigl\llvert P_{\omega
}^{z} (X_{n_{k-1}}=u )
\\
&&{}-
\mathbb{P}^{z} \bigl(X_{n_{k-1}}\in\Delta'
\bigr)P_{\omega}^{z} \bigl(X_{n_{k-1}}=u\mid
X_{n_{k-1}}\in\Delta' \bigr)\bigr\rrvert
\\
&=& \sum_{\Delta'\in\Pi_{k-1}}\sum_{u\in\Delta'}P_{\omega
}^{z}
\bigl(X_{n_{k-1}}=u\mid X_{n_{k-1}}\in\Delta' \bigr)
\\
&&{}\times
\bigl\llvert P_{\omega}^{z} \bigl(X_{n_{k-1}}\in
\Delta' \bigr)-\mathbb{P}^{z} \bigl(X_{n_{k-1}}\in
\Delta' \bigr)\bigr\rrvert
\\
&=&\sum_{\Delta'\in\Pi_{k-1}}\bigl\llvert P_{\omega}^{z}
\bigl(X_{n_{k-1}}\in\Delta' \bigr)-\mathbb{P}^{z}
\bigl(X_{n_{k-1}}\in\Delta' \bigr)\bigr\rrvert=
\lambda_{k-1}.
\end{eqnarray*}
For the second term (\ref{eqlamk+estimationpart2}), the triangle
inequality yields
\begin{eqnarray*}
\eqref{eqlamk+estimationpart2} & =&\sum_{\Delta\in\Pi_{k}}\sum
_{\Delta'\in\Pi_{k-1}}\biggl\llvert\sum
_{u\in\Delta'}\mathbb{P}^{z} \bigl(X_{n_{k-1}}\in
\Delta' \bigr)P_{\omega}^{z} \bigl(X_{n_{k-1}}=u
\mid X_{n_{k-1}}\in\Delta' \bigr)
\\
&&{}\times  \bigl[P_{\omega
}^{u}
(X_{n_{k}-n_{k-1}}\in\Delta)-\mathbb{P}^{u} (X_{n_{k}-n_{k-1}}\in
\Delta) \bigr]\biggr\rrvert
\\
& \leq&\sum_{\Delta'\in\Pi_{k-1}}\sum_{u\in\Delta'}
\mathbb{P}^{z} \bigl(X_{n_{k-1}}\in\Delta'
\bigr)P_{\omega}^{z} \bigl(X_{n_{k-1}}=u\mid
X_{n_{k-1}}\in\Delta' \bigr)
\\
&&{}\times \sum_{\Delta\in
\Pi_{k}}
\bigl\llvert P_{\omega}^{u} (X_{n_{k}-n_{k-1}}\in\Delta)-
\mathbb{P}^{u} (X_{n_{k}-n_{k-1}}\in\Delta)\bigr\rrvert.
\end{eqnarray*}
By Lemma~\ref{lemgoodestimationforthelocation}, this can be bounded
by
%
\begin{eqnarray}\label{eqlamkestimationpart22}
&& \sum_{\Delta'\in\Pi_{k-1}}\sum_{u\in\Delta'}
\mathbb{P}^{z} \bigl(X_{n_{k-1}}\in\Delta'
\bigr)P_{\omega}^{z} \bigl(X_{n_{k-1}}=u\mid
X_{n_{k-1}}\in\Delta' \bigr)\nonumber
\\
&&\quad{}\times  \mathop{\sum
_{\Delta\in\Pi_{k}}}_{\operatorname{dist} (\Delta,u )\leq\sqrt
{n_{k}-n_{k-1}}R_{5} (n_{k}-n_{k-1} )}\bigl\llvert P_{\omega
}^{u}
(X_{n_{k}-n_{k-1}}\in\Delta)-\mathbb{P}^{u} (X_{n_{k}-n_{k-1}}\in
\Delta)\bigr\rrvert\hspace*{-20pt}
\\
&&\qquad\quad{} +  (n_{k}-n_{k-1} )^{-\xi(1 )}.\nonumber
\end{eqnarray}

We say that a cube $\Delta'\in\Pi_{k-1}$ is good if for every $u\in
\Delta'$
and every $\Delta\in\Pi_{k}$
\[
\bigl\llvert P_{\omega}^{u} (X_{n_{k}-n_{k-1}}\in\Delta)-
\mathbb{P}^{u} (X_{n_{k}-n_{k-1}}\in\Delta)\bigr\rrvert\leq
CN_{k}^{ (\theta-1 ) (d-1 )-\sklfrac
{1}{3}\theta},
\]
otherwise we say that $\Delta'$ is bad. Note that the condition holds
trivially for all $\Delta$ such that $\operatorname{dist} (u,\Delta
)>n_{k}-n_{k-1}$.
Noting that:
\begin{itemize}
\item  For every $u$ we only need to consider boxes $\Delta$ such that
$\operatorname{dist} (\Delta,u )\leq\sqrt
{n_{k}-n_{k-1}}R_{5} (n_{k}-n_{k-1} )=\sqrt
{N_{k}}R_{5} (N_{k} )$
whose number is bounded by $\frac{N_{k}^{\sfrac{d}{2}}R_{5}^{d}
(N_{k} )}{\llvert \Delta\rrvert }$.
\item We only need to consider boxes $\Delta'$ such that $\operatorname{dist} (z,\Delta' )\leq n_{k-1}$.
\item The event $G_{N}= \{ \mbox{all boxes }\Delta' \mbox{ such
that }\operatorname{dist} (\Delta',z )\leq n_{k-1}\mbox{ are
good} \} $
satisfies $P (G_{N}^{c} )\leq n_{k-1}^{d}\cdot N^{-\xi
(1 )}=N^{-\xi(1 )}$
due to Proposition~\ref{PropLorentztransformation-1}.
\end{itemize}

We conclude that (\ref{eqlamkestimationpart22}) is bounded
by
\begin{eqnarray*}
&& CN_{k}^{ (\theta-1 ) (d-1 )-\sklfrac{1}{3}\theta
}\cdot\frac{N_{k}^{\sfrac{d}{2}}R_{5} (N_{k} )}{
\lfloor N_{k}^{\theta} \rfloor^{d}}+P
\bigl(G_{N}^{c} \bigr)
\\[-1pt]
&&\qquad \leq CN_{k}^{1-\sklfrac{4}{3}\theta-\sfrac{d}{2}}R_{5}^{d}
(N_{k} )+N^{-\xi(1 )}\leq CN_{k}^{1-\sklfrac {4}{3}\theta-\sfrac{d}{2}}R_{5}^{d}
(N_{k} ).
\end{eqnarray*}

Turning to deal with (\ref{eqlamk+estimationpart3}), notice that
\begin{eqnarray*}
\eqref{eqlamk+estimationpart3} & =&\sum_{\Delta\in\Pi_{k}}\sum
_{\Delta'\in\Pi_{k-1}}\biggl\llvert\sum
_{u\in\Delta'}\mathbb{P}^{u} (X_{n_{k}-n_{k-1}}\in\Delta)
\\[-1pt]
&&{}\times  \bigl[\mathbb{P}^{z} \bigl(X_{n_{k-1}}\in\Delta'
\bigr)P_{\omega}^{z} \bigl(X_{n_{k-1}}=u\mid
X_{n_{k-1}}\in\Delta' \bigr)-\mathbb{P}^{z}
(X_{n_{k-1}}=u ) \bigr]\biggr\rrvert
\\[-1pt]
& \leq&\sum_{\Delta\in\Pi_{k}}\sum_{\Delta'\in\Pi_{k-1}}
\mathbb{P}^{z} \bigl(X_{n_{k-1}}\in\Delta' \bigr)
\\[-1pt]
&&{}\times
\Bigl\llvert\max_{u\in\Delta
'}\mathbb{P}^{u}
(X_{n_{k}-n_{k-1}}\in\Delta)-\min_{u\in\Delta'}\mathbb{P}^{u}
(X_{n_{k}-n_{k-1}}\in\Delta)\Bigr\rrvert
\\[-1pt]
& =& \sum_{\Delta'\in\Pi_{k-1}}\mathbb{P}^{z}
\bigl(X_{n_{k-1}}\in\Delta' \bigr)
\\[-1pt]
&&{}\times \sum
_{\Delta\in\Pi_{k}}\Bigl\llvert\max_{u\in\Delta'}
\mathbb{P}^{u} (X_{n_{k}-n_{k-1}}\in\Delta)-\min_{u\in\Delta'}
\mathbb{P}^{u} (X_{n_{k}-n_{k-1}}\in\Delta)\Bigr\rrvert
\\[-1pt]
&\overset{ (1 )} {\leq} & \sum_{\Delta'\in\Pi
_{k-1}}
\mathbb{P}^{z} \bigl(X_{n_{k-1}}\in\Delta' \bigr)
\\[-1pt]
&&{}\times
\mathop{\mathop{\sum_{\Delta\in\Pi_{k}~\mathrm{s.t.}~\exists u\in\Delta
'}}_{\operatorname{dist} (\Delta,\mathbb{E}^{u}
[X_{n_{k}-n_{k-1}} ] )}}_{\leq\sqrt{n_{k}-n_{k-1}}R_{5} (n_{k}-n_{k-1} )}
\Bigl\llvert\max_{u\in\Delta'}\mathbb{P}^{u}
(X_{n_{k}-n_{k-1}}\in\Delta)
\\[-1pt]
&&{}-\min_{u\in\Delta
'}\mathbb{P}^{u}
(X_{n_{k}-n_{k-1}}\in\Delta)\Bigr\rrvert+ (n_{k}-n_{k-1}
)^{-\xi(1 )},
\end{eqnarray*}
where for $ (1 )$ we used Lemma~\ref
{lemgoodestimationforthelocation}.
Due to the annealed derivative estimation from Lemma~\ref
{lemgeneralannealedestimations},
we can bound the last term by
\begin{eqnarray*}
&& \sum_{\Delta'\in\Pi_{k-1}} \mathbb{P}^{z}
\bigl(X_{n_{k-1}}\in\Delta' \bigr)
\underbrace{ \biggl(\frac{dN_{k-1}^{\theta}+\sqrt
{n_{k}-n_{k-1}}R_{5} (n_{k}-n_{k-1} )}{ \lfloor
N_{k}^{\theta} \rfloor} \biggr)^{d}}_{\mathrm{number~of~relevant~boxes}} \cdot
\underbrace{\vphantom{\frac
{C}{ (n_{k} )^{\vfrac{d+1}{2}}}} \bigl\lfloor
N_{k}^{\theta
} \bigr\rfloor^{d}}_{\mathrm{size~of~each~box}}
\\
&&\quad{}\times \underbrace{\frac{C}{
(n_{k}-n_{k-1} )^{\vfrac{d+1}{2}}}}_{\mathrm{derivative~estimation}}\,  +\, (n_{k}-n_{k-1}
)^{-\xi(1 )}
\\
&&\qquad =  \frac{C (dN_{k-1}^{\theta}+\sqrt{N_{k}}R_{5}
(N_{k} ) )^{d}}{N_{k}^{\vfrac{d+1}{2}}}+N_{k}^{-\xi
(1 )}\leq CR_{6}
(N_{k} )N_{k}^{-\sfrac{1}{2}}.
\end{eqnarray*}
Finally, for (\ref{eqlamk+estimationpart4}) we have
\begin{eqnarray*}
\eqref{eqlamk+estimationpart4} & =&\sum_{\Delta\in\Pi_{k}}\sum
_{\Delta'\in\Pi_{k-1}}\biggl\llvert\sum
_{u\in\Delta'}\mathbb{P}^{u} (X_{n_{k}-n_{k-1}}\in\Delta
)\mathbb{P}^{z} (X_{n_{k-1}}=u )
\\
&&{} -\mathbb{P}^{z}
\bigl(X_{n_{k}}\in\Delta, X_{n_{k-1}}\in\Delta' \bigr)
\biggr\rrvert
\\
& =&\sum_{\Delta\in\Pi_{k}}\sum_{\Delta'\in\Pi_{k-1}}
\biggl\llvert\sum_{u\in\Delta'}\mathbb{P}^{z}
(X_{n_{k-1}}=u )
\\
&&{}\times \bigl[\mathbb{P}^{u} (X_{n_{k}-n_{k-1}}\in
\Delta)-\mathbb{P}^{z} (X_{n_{k}}\in\Delta\mid
X_{n_{k-1}}=u ) \bigr]\biggr\rrvert
\\
& \leq&\sum_{\Delta'\in\Pi_{k-1}}\sum_{u\in\Delta'}
\mathbb{P}^{z} (X_{n_{k-1}}=u )
\\
&&{}\times \sum
_{\Delta\in\Pi_{k}}\bigl\llvert\mathbb{P}^{u}
(X_{n_{k}-n_{k-1}}\in\Delta)-\mathbb{P}^{z} (X_{n_{k}}\in
\Delta\mid X_{n_{k-1}}=u )\bigr\rrvert.
\end{eqnarray*}
Notice that under the event $B_{N}$, which by Corollary~\ref
{corlengthofregenerations}
satisfies $P (B_{N} )\geq1-N^{-\xi(1 )}$, the
first regeneration time after hitting $u$ is after no more than $R
(N )$
steps. Therefore, the distance between the regeneration times of both
annealed walks started in $u$ and started in $z$ conditioned to
hit $u$ is at most $2R (N )$ of one another. Using the
annealed derivative estimation from Lemma~\ref{lemgeneralannealedestimations}
for the annealed walks after the regeneration times, we get
\begin{eqnarray*}
\bigl\llvert\mathbb{P}^{u} (X_{n_{k}-n_{k-1}}\in\Delta)-
\mathbb{P}^{z} (X_{n_{k}}\in\Delta\mid X_{n_{k-1}}=u )
\bigr\rrvert&\leq& \frac{CR (N )\cdot N_{k}^{d\theta}}{
(n_{k}-n_{k-1}-R (N ) )^{\vfrac{d+1}{2}}}
\\
&\leq&\frac
{CR (N )\cdot N_{k}^{d\theta}}{N_{k}^{\vfrac{d+1}{2}}},
\end{eqnarray*}
recalling that due to Lemma~\ref{lemgoodestimationforthelocation}
we only need to consider boxes $\Delta$ at distance $\leq\sqrt
{n_{k}-n_{k-1}}R_{5} (n_{k}-n_{k-1} )$
from the annealed expectation $\mathbb{E}^{u}
[X_{n_{k}-n_{k-1}} ]$,
it follows that
\begin{eqnarray*}
\eqref{eqlamk+estimationpart4} & \leq&\sum_{\Delta'\in\Pi
_{k-1}}\sum
_{u\in\Delta'}\mathbb{P}^{z} (X_{n_{k-1}}=u
)\cdot\biggl(\frac{dN_{k-1}^{\theta}+\sqrt{n_{k}-n_{k-1}}R_{5}
(n_{k}-n_{k-1} )}{ \lfloor N_{k}^{\theta} \rfloor
} \biggr)^{d}
\\
&&{}\times \frac{CR (N )\cdot N_{k}^{d\theta
}}{N_{k}^{\vfrac{d+1}{2}}}+N_{k}^{-\xi(1 )}
\\
& =& \biggl(\frac{dN_{k-1}^{\theta}+\sqrt{N_{k}}R_{5}
(N_{k} )}{ \lfloor N_{k}^{\theta} \rfloor} \biggr)^{d}\cdot\frac{CR (N
)\cdot N_{k}^{d\theta
}}{N_{k}^{\vfrac{d+1}{2}}}+N_{k}^{-\xi(1 )}
\\
&\leq & CR_{6} (N )N_{k}^{-\sfrac{1}{2}}.
\end{eqnarray*}
Combining all of the above we conclude that under the event $G_{N}\cap B_{N}$
(whose probability is ${\geq}1-N^{-\xi(1 )}$) for every
$k\geq1$
\begin{eqnarray*}
\lambda_{k} &\leq& \lambda_{k-1}+CN_{k}^{1-\sklfrac{4}{3}\theta-\sfrac
{d}{2}}R_{5}^{d}
(N_{k} )+CN_{k}^{-\sfrac{1}{2}}+CR (N )N_{k}^{-\sfrac{1}{2}}
\\
&\leq& \lambda_{k-1}+CN_{k}^{-\sfrac{1}{2}}R_{5}
(N_{k} )\leq\lambda_{k-1}+CN_{k}^{-\sfrac{1}{3}}.
\end{eqnarray*}
Consequently,
\begin{eqnarray*}
\lambda_{r (N )} & \leq&\lambda_{1}+C\sum
_{k=1}^{r
(N )}N_{k}^{-\sfrac{1}{3}}=
\lambda_{1}+C\sum_{k=1}^{r
(N )}
\frac{1}{ \lfloor N^{\sfrac{1}{2^{k}}} \rfloor
^{\sfrac{1}{3}}}
\\
& \leq&\lambda_{1}+C\sum_{k=1}^{r (N )}N^{-\afrac
{1}{3\cdot2^{k}}}
\leq\lambda_{1}+C\int_{1}^{r (N
)+1}e^{-\afrac{1}{3\cdot2^{t}}\cdot\log N}\,dt
\\
&&\hspace*{-37pt} \overset{u=\afrac{1}{3\cdot2^{t}}\cdot\log N}{=}\lambda_{1}+C\int
_{\alpha_{N}}^{\beta_{N}}\frac{e^{-u}}{-\ln3\cdot u}\,du,
\end{eqnarray*}
where $\alpha_{N}=\frac{1}{6}\cdot\log N$ and $\beta_{N}=\frac
{1}{3\cdot2^{r (N )+1}}\cdot\log N$.
Since for large enough $N$, we have $\beta_{N}\geq\frac{\theta
}{\sqrt[6]{M}}\log N\geq1$
we get
\begin{eqnarray*}
\lambda_{k} &\leq& \lambda_{1}+C\int_{\alpha_{N}}^{\beta
_{N}}-e^{-u}\,du=
\lambda_{1}+C \bigl[e^{-u} \bigr]_{\alpha_{N}}^{\beta
_{N}}
\leq\lambda_{1}+Ce^{-\beta_{N}}
\\
&=& \lambda_{1}+
\frac{C}{N^{\afrac
{1}{3\cdot2^{r (n )+1}}}}
\leq \lambda_{1}+\frac
{C}{M^{\sfrac{1}{6}}}.
\end{eqnarray*}
Finally, recalling the definition $\lambda_{1}$ and the fact that
$n_{0}\geq cN$ it follows from Proposition~\ref{PropLorentztransformation-1},
that $\lambda_{1}\leq CN^{-\sklfrac{1}{3}\theta}$, which completes the
proof.
\end{pf*}

\section{Proof of Theorem \texorpdfstring{\protect\ref{teoAbsolutelycontinuousinvariantmeasure}}{1.10}}\label{sec6}

In this section, we prove our first main result, that is, the existence
of a probability measure on the space of environments, which is equivalent
to the original i.i.d. measure and is invariant with respect to the
point of view of the particle. The proof is divided into two parts.
In the first and main part of the proof the existence of an invariant
measure which is not singular with respect to the original i.i.d.
measure is proved. In the second part, we show that the existence of
such a measure guarantees the existence of an equivalent invariant
measure.

In order to prove the existence of a nonsingular invariant measure,
we exploit the result from the last section which allows us to construct
a coupling of the annealed and quenched law of the walk at time $N$
such that for most environments, that is, with $P$ probability $\geq
1-N^{-\xi(1 )}$,
will keep them at distance at most $M$ of one another with positive
probability independent of $N$. Using the uniform ellipticity, the
last coupling can be strengthen to guarantee the walks will coincide
at time $N$ with positive probability, which again is uniform in
$N$. Defining now, two random environments $\omega_{N},\omega
_{N}^{\prime}$
which are the original environment shifted according to the location
of the annealed and quenched random walks at time $N$, respectively,
we get a coupling of the two such that $\omega_{N}=\omega_{N}^{\prime}$
with positive probability. Taking a Cesaro partial limit of the laws
of $\omega_{N}$ and $\omega_{N}^{\prime}$, we get two probability measures
on environments which are the original i.i.d. measure and an invariant
measure with respect to the point of view of the particle, respectively.
By taking the above coupling to the limit, we can conclude that both
measures will give the same environment with positive probability,
and, therefore, in particular that they are not singular.

In the second part of the proof (see Lemma~\ref
{LemEitherSingularorabsolutelycontinuous}),
we use general properties of probability measures which are invariant
with respect to the point of view of the particle in order to show
that the existence of a nonsingular invariant probability measure
guarantees the existence of an equivalent invariant one. Recently,
we learned that the method of obtaining an absolutely continuous probability
measure from a nonsingular one already appeared in \cite{RA03}, Lemma 5.
For the readers convenience and in order to keep the section self-contained,
we include a proof below.

In Section~\ref{subPropertiesoftheRadonNikodymderivative},
we discuss several properties of the Radon--Nikodym derivative of the
invariant measure with respect to the i.i.d. measure. This includes
estimation on its average on a box as well as the existence of all
of its moments.

\subsection{Existence of an equivalent
measure}\label{sec6.1}
\label{subExistenceofanequivalentmeasure}

\begin{lem}
\label{Lemmathereexistsnonsingular} Let $d\geq4$ and assume
$P$ is uniformly elliptic, i.i.d. and satisfies $ (\mathscr
{P} )$.
Then there exists a measure $Q$ on the space of environments
which is invariant with respect to the point of view of the particle
and is not singular with respect to the original i.i.d. measure $P$.
\end{lem}

\begin{pf}
Fix $0<\ep<1$, a large $M\in\mathbb{N}$ and denote by $K
(N )=K (N,M,\epsilon)$
the set of environments $\omega\in\Omega$ such that
%
\begin{equation}
\sum_{\Delta\in\Pi(M )}\bigl\llvert P_{{\omega}}^{0}
(X_{N}\in\Delta)-\mathbb{P}^{0} (X_{N}\in\Delta
)\bigr\rrvert<\epsilon,\label{eqTotalvariationbox}
\end{equation}
where $\Pi(M )$ is a partition of $\mathbb{Z}^{d}$ into
$d$-dimensional boxes of side length $M$. By Theorem~\ref
{teoTotalvariationonsmallboxes},
for every $\varepsilon>0$ there exists $M\in\mathbb{N}$ (independent
of $N$) such that $P (K (N ) )\geq1-N^{-\xi
(1 )}$.
Equation (\ref{eqTotalvariationbox}) tells us that the total variation
distance of the respective distributions $\mathbb{P}^{0}
(X_{N}\in\cdot)$
and $P_{{\omega}}^{0} (X_{N}\in\cdot)$ on $\Pi
(M )$
is less than $\varepsilon$ and that therefore there exists a coupling
$\tilde{\Theta}_{\omega,N,M}$ on $\Pi(M )\times\Pi
(M )$
of both measures such that $\tilde{\Theta}_{\omega,N,M}
(\Lambda_{\Pi} )>1-\varepsilon$,
where $\Lambda_{\Pi}= \{ (\Delta,\Delta):
\Delta\in\Pi(M ) \} $.

Next, using the last coupling, we show how to construct a new coupling
of $\mathbb{P}^{0} (X_{N}=\cdot)$ and $P_{\omega
}^{0} (X_{N}=\cdot)$
on $\mathbb{Z}^{d}\times\mathbb{Z}^{d}$ which gives a positive (independent
of $N$) probability to the event $\Lambda= \{ (x,x ): x\in\mathbb
{Z}^{d} \} $.
Define $\Theta_{\omega,N}$ on $\mathbb{Z}^{d}\times\mathbb{Z}^{d}$
by
\begin{eqnarray*}
\Theta_{\omega,N} (x,y ) &:=& \sum_{\Delta,\Delta'\in\Pi
(M )}\tilde{
\Theta}_{\omega,N-dM,M} \bigl(\Delta,\Delta' \bigr)
\mathbb{P}^{0} (X_{N}=x\mid X_{N-dM}\in\Delta
)P_{\omega}^{0}
\\
&&{}\times  \bigl(X_{N}=y\mid X_{N-dM}
\in\Delta' \bigr).
\end{eqnarray*}
Note that due to the law of total probability $\Theta_{\omega,N}$
is indeed a coupling of $\mathbb{P}^{0} (X_{N}=\cdot)$
and $P_{\omega}^{0} (X_{N}=\cdot)$.

For $x\in\mathbb{Z}^{d}$, let $\Delta_{x}$ be the unique cube that
contains $x$ in the partition $\Pi(M )$. Since the side
length of each box in the partition $\Pi(M )$ is $M$ it
follows that the random walk can reach from each point in the
box $\Delta_{x}$ to $x$ in less than $dM$ steps. Recalling also
that the law of $P$ is uniformly elliptic with elliptic constant
$\eta$ [see (\ref{eqellipticconstant})] we conclude that
\begin{eqnarray*}
\Theta_{\omega,N} (x,x ) & \geq& \tilde{\Theta}_{\omega,N-dM,M} (
\Delta_{x},\Delta_{x} )\mathbb{P}^{0}
(X_{N}=x\mid X_{N-dM}\in\Delta_{x}
)P_{\omega}^{0}
\\
&&{}\times  (X_{N}=x\mid X_{N-dM}\in
\Delta_{x} )
\\
& \geq& \tilde{\Theta}_{\omega,N-dM,M} (\Delta_{x},\Delta_{x}
)\eta^{2dM}.
\end{eqnarray*}
Summing over $x$, we get
\begin{eqnarray*}
\Theta_{\omega,N} (\Lambda) &=&\sum_{x\in\mathbb
{Z}^{d}}
\Theta_{\omega,N} (x,x )\geq\sum_{x\in\mathbb
{Z}^{d}}\tilde{
\Theta}_{\omega,N,M} (\Delta_{x},\Delta_{x} )
\eta^{2dM}
\\
&=&\sum_{\Delta\in\Pi(M )}\tilde{
\Theta}_{\omega,N,M} (\Delta,\Delta)M^{d}\eta^{2dM}> (1-
\epsilon)M^{d}\eta^{2dM}.
\end{eqnarray*}

The last coupling allows us to construct for every $N$ two probability
measures on $\Omega$ that coincide with positive probability (independent
of $N$). Indeed, for $N\in\mathbb{N}$ let $Q_{N}$ and $P_{N}$
be defined by
\[
P_{N} (A )=E \biggl[\sum_{x\in\mathbb{Z}^{d}}\mathbb
{P}^{0} (X_{N}=x )\ind_{\sigma_{x}\omega\in A} \biggr]
\]
and
\[
Q_{N} (A )=E \biggl[\sum_{x\in
\mathbb{Z}^{d}}P_{\omega}^{0}
(X_{N}=x )\ind_{\sigma
_{x}\omega\in A} \biggr].
\]
Note that for every $N\in\mathbb{N}$ the measure $P_{N}$ is in fact
the i.i.d. measure $P$ since the annealed walk is independent of
the environment distribution. Indeed, for every measurable event
$A\subset\Omega$
\begin{eqnarray*}
P_{N} (A ) & =&E \biggl[\sum_{x\in\mathbb{Z}^{d}}\mathbb
{P}^{0} (X_{N}=x )\ind_{\sigma_{x}\omega\in A} \biggr]=\sum
_{x\in\mathbb{Z}^{d}}\mathbb{P}^{0} (X_{N}=x )E [
\ind_{\sigma_{x}\omega\in A} ]
\\
& =&\sum_{x\in\mathbb{Z}^{d}}\mathbb{P}^{0}
(X_{N}=x )P (\sigma_{-x}A )=\sum
_{x\in\mathbb{Z}^{d}}\mathbb{P}^{0} (X_{T_{N}}=x )P (A )=P
(A ).
\end{eqnarray*}
Also note that using the coupling $\Theta_{\omega,N}$ we have for
every measurable event $A$
\begin{eqnarray*}
\bigl\llvert Q_{N} (A )-P_{N} (A )\bigr\rrvert& =&\biggl
\llvert E \biggl[\sum_{x\in\mathbb{Z}^{d}} \bigl[
\mathbb{P}^{0} (X_{N}=x )-P_{\omega}^{0}
(X_{N}=x ) \bigr]\ind_{\sigma_{x}\omega\in A} \biggr]\biggr\rrvert
\\
& =&\biggl\llvert E \biggl[\sum_{x\in\mathbb{Z}^{d}} \biggl[\sum
_{y\in
\mathbb{Z}^{d}}\Theta_{\omega,N} (x,y )-\sum
_{z\in
\mathbb{Z}^{d}}\Theta_{\omega,N} (z,x ) \biggr]\ind
_{\sigma_{x}\omega\in A} \biggr]\biggr\rrvert
\\
& =&\biggl\llvert E \biggl[\sum_{x\in\mathbb{Z}^{d}} \biggl[\sum
_{y\neq
x}\Theta_{\omega,N} (x,y )-\sum
_{z\neq x}\Theta_{\omega,N} (z,x ) \biggr]
\ind_{\sigma_{x}\omega\in A} \biggr]\biggr\rrvert
\\
& \leq&\max\biggl\{ \sum_{x\in\mathbb{Z}^{d}}\sum
_{y\neq x}\Theta_{\omega,N} (x,y ),\sum
_{x\in\mathbb{Z}^{d}}\sum_{z\neq
x}
\Theta_{\omega,N} (z,x ) \biggr\}
\\
&<& 1- (1-\epsilon)M^{d}
\eta^{2dM}.
\end{eqnarray*}
Let $ \{ n_{k} \} $ be a subsequence such that the weak
limits of the Cesaro sequences $ \{ \frac{1}{n_{k}}\sum
_{N=0}^{n_{k}-1}Q_{N} \} _{k\geq1}$,
$ \{ \frac{1}{n_{k}}\sum_{N=0}^{n_{k}-1}P_{N} \} _{k\geq1}$
and $ \{ \frac{1}{n_{k}}\sum_{N=0}^{n_{k}-1}\Theta_{\omega,N} \} _{k\geq1}$
exists.\vspace*{2pt} Since for~every $N\in\mathbb{N}$ the measure $P_{N}$ equals
$P$ it follows that the limit of $ \{ \frac{1}{n_{k}}\sum
_{N=0}^{n_{k}-1}P_{N} \} _{k\geq1}$
is $P$ as well. Next, notice that the weak limit of $ \{ \frac
{1}{n_{k}}\sum_{N=0}^{n_{k}-1}Q_{N} \} _{k\geq1}$
which we\vspace*{1pt} denote by $Q$ is invariant with respect to the point of
view of the particle [see (\ref{eqTransitionkernel}) and (\ref
{eqinvariantprobabilitymeasure})
for the definition]. Indeed, for every bounded continuous function
$f:\Omega\to\mathbb{R}$
\begin{eqnarray*}
\int_{\Omega}\mathfrak{R}f (\omega)\,dQ (\omega) & =&\lim
_{k\to\infty}\frac{1}{n_{k}}\sum_{N=0}^{n_{k}-1}
\int_{\Omega}\mathfrak{R}f (\omega)\,dQ_{N} (\omega)
\\
&=&
\lim_{k\to\infty}\frac{1}{n_{k}}\sum_{N=0}^{n_{k}-1}
\int_{\Omega}\sum_{e\in\mathcal{E}_{d}}\omega(0,e
)f (\sigma_{e}\omega)\,dQ_{N} (\omega)
\\
& =&\lim_{k\to\infty}\frac{1}{n_{k}}\sum
_{N=0}^{n_{k}-1}\int_{\Omega}f (\omega
)\,dQ_{N+1} (\omega)
\\
&=&\lim_{k\to\infty}\frac{1}{n_{k}}\sum
_{N=1}^{n_{k}}\int_{\Omega}f
(\omega)\,dQ_{N} (\omega)
\\
&=& \int_{\Omega}f (\omega)\,dQ
(\omega),
\end{eqnarray*}
where $\mathfrak{R}$ is as in (\ref{eqTransitionkernel}). Finally,
we show that $Q$ and $P$ are not singular. Using the coupling of
$P_{N}$ and $Q_{N}$, for every event $A\subset\Omega$ we have
\begin{eqnarray*}
\bigl\llvert P (A )-Q (A )\bigr\rrvert& =&\lim_{k\to
\infty}
\frac{1}{n_{k}}\Biggl\llvert\sum_{N=0}^{n_{k}-1}
\bigl(P_{N} (A )-Q_{N} (A ) \bigr)\Biggr\rrvert
\\
& \leq&\lim_{k\to\infty}\frac{1}{n_{k}}\sum
_{N=0}^{n_{k}-1}\bigl\llvert P_{N} (A
)-Q_{N} (A )\bigr\rrvert
\\
&\leq& 1- (1-\epsilon)M^{d}
\eta^{2dM}.
\end{eqnarray*}
Since this holds for all events, it follows that $\llVert P-Q\rrVert
_{\mathrm{TV}}\leq1- (1-\epsilon)M^{d}\eta^{2dM}$,
and thus $P$ and $Q$ are not singular.
\end{pf}

\begin{lem}
\label{LemEitherSingularorabsolutelycontinuous} Assume $P$
is uniformly elliptic and i.i.d. If there exists a probability measure
$Q$ on the space of environments which is invariant with respect
to the point of view of the particle and is not singular with respect
to $P$, then there exists a probability measure $\tilde{Q}$ which
is invariant with respect to the point of view of the particle and
is also equivalent to $P$.
\end{lem}

\begin{pf}
Denote by $Q=Q_{c}+Q_{s}$ the Lebesgue decomposition of $Q$ to an
absolutely continuous part $Q_{c}$ (w.r.t. $P$) and a singular part
$Q_{s}$ (w.r.t. $P$). Let $f=\frac{dQ_{c}}{dP}$ and define $A= \{
\omega\in\Omega: f (\omega)=0 \} $.
From the invariance with respect to the point of view of the particle
and the uniform ellipticity, we have
\[
Q=\sum_{e\in\mathcal{E}_{d}}\omega(e )\sigma_{e}\circ
Q\geq\eta\sum_{e\in\mathcal{E}_{d}}\sigma_{e}\circ Q
\]
and,\vspace*{1pt} therefore, $\sigma_{e}\circ Q\ll Q$ for every $e\in\mathcal{E}_{d}$.
Since in addition we have $ (\sigma_{e}\circ Q )_{s}=\sigma
_{e}\circ Q_{s}$,
$ (\sigma_{e}\circ Q )_{c}=\sigma_{e}\circ Q_{c}$ and
$\frac{d (\sigma_{e}\circ Q )_{c}}{dP} (\cdot
)=f (\sigma_{e} (\cdot) )$
we get that
%
\begin{equation}
Q_{c}=\sum_{e\in\mathcal{E}_{d}}\omega(e )\sigma
_{e}\circ Q_{c}\geq\eta\sum_{e\in\mathcal{E}_{d}}
\sigma_{e}\circ Q_{c}\label{eqequivalentmeasures}
\end{equation}
and thus
\[
f (\omega)\geq\eta\sum_{e\in\mathcal{E}_{d}}f \bigl(
\sigma_{e} (\omega) \bigr).
\]
Consequently, $\omega\in A$ implies $\sigma_{e}\omega\in A$ for
every $e\in\mathcal{E}_{d}$, $P$-a.s.\vadjust{\goodbreak}

In particular, we get that $A$ is $\sigma_{e_{1}}$ invariant and,
therefore, by ergodicity that it is a 0--1 event. This immediately
implies that if $Q$ is not singular with respect to~$P$, that is,
$P (A )\neq1$,
then $P (A )=0$ and thus $P\ll Q_{c}$. Taking $\tilde
{Q}=\frac{Q_{c}}{Q_{c} (\Omega)}$,
we get that $\tilde{Q}$ is equivalent to the i.i.d. measure and is
invariant with respect to the point of view of the particle [by the
first equality in (\ref{eqequivalentmeasures})].
\end{pf}

\begin{rem}
\label{Remthemeasureisinfactequivalent} \hspace*{-1pt}Note that the sequence
of probability measures\break $ \{ \sum_{N=0}^{n-1}Q_{N} \}
_{n\geq1}$
equals to $ \{ \sum_{N=0}^{n-1}\mathfrak{R}^{N}P \}
_{n\geq1}$.
Recalling Theorem~\ref{teoKozlovstheorem} it follows that the
measure $\sum_{N=0}^{n-1}Q_{N}$ converges (without taking a subsequence)
to the equivalent measure $Q$ which is the \textit{unique} probability
measure invariant with respect to the point of view of the particle.
In particular, there is no need to restrict ourselves to the absolutely
continuous part as done in Lemma~\ref{LemEitherSingularorabsolutelycontinuous}.
\end{rem}
\subsection{Some properties
of the Radon--Nikodym derivative}\label{sec6.2}
\label{subPropertiesoftheRadonNikodymderivative}

In this subsection, we discuss some properties of the equivalent probability
measure $Q$ and its Radon--Nikodym derivative. The next definition
will be useful in the statement of the lemmas.

\begin{defn}
Given two environments $\omega,\omega'\in\Omega$ define their distance
by
\[
\operatorname{dist} \bigl(\omega,\omega' \bigr)=\inf\bigl\{ \llVert x
\rrVert_{1}: \omega'=\sigma_{x}\omega\bigr\},
\]
where the infimum over an empty set is defined to be infinity.
\end{defn}

For future use, we denote by $\Psi$ and $\Psi_{N}$ the couplings
of $P$ and $Q$ and of $P_{N}$ and $Q_{N}$, respectively, on $\Omega
\times\Omega$,
that is,
%
\begin{equation}
\Psi_{N} (A )=E \biggl[\sum_{x,y\in\mathbb{Z}^{d}}\Theta
_{\omega,N} (x,y )\ind_{ (\sigma_{x}\omega,\sigma
_{y}\omega)\in A} \biggr],\label{eqDefinitionofPsiN}
\end{equation}
and $\Psi$ is the weak limit of the Cesaro sequence $ \{ \frac
{1}{n}\sum_{N=0}^{n-1}\Psi_{N} \} _{n=1}^{\infty}$
along any converging sub-sequence which we denote from here on by
$ \{ n_{k} \} _{k\geq1}$.

Our main goal is to prove the following concentration inequality for
the average of the Radon--Nikodym derivative on a box.

\begin{lem}
\label{lemRadonNikodymderivativeestimate} Let $M\in\mathbb{N}$
and denote by $\Delta_{0}$ a $d$-dimensional cube of side length
$M$ in $\mathbb{Z}^{d}$. Then for every $\varepsilon>0$,
\[
P \biggl(\biggl\llvert\frac{1}{\llvert \Delta_{0}\rrvert }\sum_{x\in
\Delta
_{0}}
\frac{dQ}{dP} (\sigma_{x}\omega)-1\biggr\rrvert>\varepsilon
\biggr)\leq M^{-\xi(1 )}.
\]
\end{lem}

As a first step toward the proof of Lemma~\ref
{lemRadonNikodymderivativeestimate},
we prove the following.

\begin{lem}
\label{lemPerliminarylemmatoRadonNikodymderivativeestimate}For
$M\in\mathbb{N}$ let $D_{M}^{ (1 )}:\Omega\to
[0,\infty]$
and $D_{M}^{ (2 )}:\Omega\to[0,\infty]$ be defined
by
\[
D_{M}^{ (1 )} (\omega)=E_{\Psi} [\ind
_{\operatorname{dist} (\omega_{1},\omega_{2} )>dM}\mid\mathfrak{F}_{\omega
_{1}} ] (\omega)
\]
and
\[
D_{M}^{ (2 )} (\omega)=E_{\Psi
} [
\ind_{\operatorname{dist} (\omega_{1},\omega_{2}
)>dM}\mid\mathfrak{F}_{\omega_{2}} ] (\omega),
\]
where $\mathfrak{F}_{\omega_{1}}$, $\mathfrak{F}_{\omega_{2}}$ are
the $\sigma$-algebras generated by the first, respectively, second
coordinate in $\Omega\times\Omega$ and $\Psi$ is as defined below
(\ref{eqDefinitionofPsiN}). For every $M\in\mathbb{N}$, there
exists an event $\mathbf{F}_{M}$ with the following properties:
\begin{longlist}[(2)]
\item[(1)]$P (\mathbf{F}_{M} )=1-M^{-\xi(1 )}$.
\item[(2)] For every $\varepsilon>0$, if $M$ is large enough, then
$D_{M}^{ (1 )} (\omega)\leq\varepsilon\ind
_{\mathbf{F}_{M}} (\omega)+\ind_{\mathbf
{F}_{M}^{c}} (\omega)$
and $\frac{dQ}{dP} (\omega)D_{M}^{ (2 )}
(\omega)\leq\varepsilon\ind_{\mathbf{F}_{M}} (\omega
)+\ind_{\mathbf{F}_{M}^{c}} (\omega)$.
\end{longlist}
\end{lem}

\begin{pf}
Let
\begin{eqnarray*}
\mathbf{F}_{M}&=&\bigcap_{k=M}^{\infty}
\biggl\{ \omega\in\Omega: \forall x\in[-k,k ]^{d}\cap
\mathbb{Z}^{d},
\\
&&{} \sum_{\Delta\in\Pi_{M}}\bigl\llvert
\mathbb{P}^{x} (X_{k}\in\Delta)-P_{\omega}^{x}
(X_{k}\in\Delta)\bigr\rrvert\leq\frac{C_{2}}{M^{c_{1}}}+
\frac{C_{2}}{k^{c_{1}}} \biggr\},
\end{eqnarray*}
where $\Pi_{M}$ is a partition of $\mathbb{Z}^{d}$ into boxes of
side length $M$ and $0<c_{1},C_{2}<\infty$ are the constants from
Theorem~\ref{teoTotalvariationonsmallboxes}. Thus, by the same
theorem, we have $P (\mathbf{F}_{M} )=1-M^{-\xi
(1 )}$.
Fix some $\varepsilon>0$. The definition of $\mathbf{F}_{M}$ together
with the definition of the couplings $\tilde{\Theta}_{\omega,k,M}$
constructed in the proof of Lemma~\ref{Lemmathereexistsnonsingular}
implies\vspace*{1pt} that for every $\omega\in\mathbf{F}_{M}$, every $k\geq M$
and every $x\in[-k,k ]^{d}\cap\mathbb{Z}^{d}$ we have
$\tilde{\Theta}_{\sigma_{x}\omega,k,M} (\Lambda_{\Pi
_{M}} )>1-\frac{2C_{2}}{M^{c_{1}}}>1-\varepsilon$
for large enough $M$, where as before $\Lambda_{\Pi_{M}}= \{
(\Delta,\Delta): \Delta\in\Pi_{M} \} $.

Before turning to prove the estimates for $ \{ D_{M}^{
(i )} (\omega) \} _{i\in\{ 1,2 \} }$,
we prove a similar results for the conditional expectations of $\Psi_{N}$.
For\vspace*{2pt} $N,M\in\mathbb{N}$ and $i\in\{ 1,2 \}$, define
$D_{M,N}^{ (i )}:\Omega\to[0,\infty]$
by $D_{M,N}^{ (i )} (\omega)=E_{\Psi_{N}}
[\ind_{\operatorname{dist} (\omega_{1},\omega_{2} )>dM}\mid\mathfrak{F}_{\omega
_{i}} ] (\omega)$.
Note that for $P$-almost every environment $\omega\in\Omega$ we
have
%
\begin{equation}
D_{M,N}^{ (1 )} (\omega)=\sum_{x,y\in\mathbb
{Z}^{d}}
\Theta_{\sigma_{-x}\omega,N} (x,y )\ind_{\llVert x-y\rrVert
_{1}>dM}\label
{eqdefinitionofconditionedprobabilitydistance}
\end{equation}
and
%
\begin{equation}
D_{M,N}^{ (2 )} (\omega)= \biggl(\frac
{dQ_{N}}{dP} (\omega)
\biggr)^{-1}\sum_{x,y\in\mathbb
{Z}^{d}}
\Theta_{\sigma_{-y}\omega,N} (x,y )\ind_{\llVert x-y\rrVert
_{1}>dM}.\label
{eqdefinitionofconditionedprobabilitydistance2}
\end{equation}
Indeed, using (\ref{eqDefinitionofPsiN}) we have for every measurable
event $A\subset\Omega$
\begin{eqnarray*}
&& E_{\Psi_{N}} [\ind_{A\times\Omega}\ind_{\operatorname{dist}
(\omega_{1},\omega_{2} )>dM} ]
\\
&&\qquad  =
\Psi_{N} \bigl(A\times\Omega\cap\bigl\{ (\omega_{1},
\omega_{2} ): \operatorname{dist} (\omega_{1},
\omega_{2} )>dM \bigr\} \bigr)
\\
&&\qquad  = E \biggl[\sum_{x,y\in\mathbb{Z}^{d}}\Theta_{\omega,N} (x,y )
\ind_{ (\sigma_{x}\omega,\sigma_{y}\omega
)\in A\times\Omega}\ind_{\operatorname{dist} (\sigma_{x}\omega,\sigma_{y}\omega
)>dM} \biggr]
\\
&&\qquad  =\sum_{x,y\in\mathbb{Z}^{d}}E \bigl[\Theta_{\omega,N} (x,y )
\ind_{\sigma_{x}\omega\in A}\ind_{\llVert x-y\rrVert _{1}>dM} \bigr],
\end{eqnarray*}
which by translation invariance of $P$ equals
\begin{eqnarray*}
&& \sum_{x,y\in\mathbb{Z}^{d}}E \bigl[\Theta_{\sigma_{-x}\omega,N} (x,y )
\ind_{\omega\in A}\ind_{\llVert x-y\rrVert _{1}>dM} \bigr]
\\
&&\qquad =E \biggl
[\sum
_{x,y\in\mathbb{Z}^{d}}\Theta_{\sigma_{-x}\omega,N} (x,y )\ind_{\omega
\in A}\ind
_{\llVert x-y\rrVert _{1}>dM} \biggr].
\end{eqnarray*}
Due to the fact that the first marginal of $\Psi_{N}$ is $P$ the
last term equals
\[
E_{\Psi_{N}} \biggl[\ind_{ (\omega,\omega' )\in A\times
\Omega}\cdot\sum
_{x,y\in\mathbb{Z}^{d}}\Theta_{\sigma_{-x}\omega,N} (x,y )\ind
_{\llVert x-y\rrVert _{1}>dM}
\biggr],
\]
which by the definition of conditional expectation implies (\ref
{eqdefinitionofconditionedprobabilitydistance}).
A similar argument shows that
\begin{eqnarray*}
\hspace*{-2pt}&& E_{\Psi_{N}} [\ind_{\Omega\times A}\ind_{\operatorname{dist}
(\omega_{1},\omega_{2} )>dM} ]
\\
\hspace*{-2pt}&&\qquad  = E \biggl[\sum
_{x,y\in
\mathbb{Z}^{d}}\Theta_{\sigma_{-x}\omega',N} (x,y )\ind
_{\omega'\in A}\ind_{\llVert x-y\rrVert _{1}>dM} \biggr]
\\
\hspace*{-2pt}&&\qquad  = E_{Q_{N}} \biggl[ \biggl(\frac{dQ_{N}}{dP} \bigl(
\omega' \bigr) \biggr)^{-1}\sum
_{x,y\in\mathbb{Z}^{d}}\Theta_{\sigma_{-x}\omega
',N} (x,y )\ind_{\llVert x-y\rrVert _{1}>dM}\ind
_{\omega'\in A} \biggr]
\\
\hspace*{-2pt}&&\qquad  =E_{\Psi_{N}} \biggl[ \biggl(\frac{dQ_{N}}{dP} (\omega_{2} )
\biggr)^{-1}\sum_{x,y\in\mathbb{Z}^{d}}
\Theta_{\sigma
_{-x}\omega_{2},N} (x,y )\ind_{\llVert x-y\rrVert
_{1}>dM}\cdot\ind_{\Omega\times A} (
\omega_{1},\omega_{2} ) \biggr]
\end{eqnarray*}
and thus that (\ref{eqdefinitionofconditionedprobabilitydistance2})
holds as well.

Since $\Theta_{\sigma_{-x}\omega,N} (x,y )>0$ implies
$x\in[-N,N ]^{d}\cap\mathbb{Z}^{d}$,
it follows that for large enough $M$, every $\omega\in\mathbf{F}_{M}$
and every $N\geq M$
\begin{eqnarray*}
&& \sum_{x,y\in\mathbb{Z}^{d}}\Theta_{\sigma_{-x}\omega,N} (x,y )
\ind_{\llVert x-y\rrVert _{1}>dM}
\\
&&\qquad   = 1-\sum_{x,y\in\mathbb{Z}^{d}}
\Theta_{\sigma_{-x}\omega,N} (x,y )\ind_{\llVert x-y\rrVert _{1}\leq dM}
\\
&&\qquad  \leq 1-\min_{z\in[-N,N ]^{d}\cap\mathbb{Z}^{d}}\sum_{x,y\in\mathbb{Z}^{d}}
\Theta_{\sigma_{-z}\omega,N} (x,y )\ind_{\llVert x-y\rrVert _{1}\leq dM}
\\
&&\qquad  \leq 1-\min_{z\in[-N,N ]^{d}\cap\mathbb{Z}^{d}}\sum_{\Delta\in\Pi
_{M}}\sum
_{x,y\in\Delta}\Theta_{\sigma_{-z}\omega,N} (x,y )
\\
&&\qquad  = 1-\min_{z\in[-N,N ]^{d}\cap\mathbb{Z}^{d}} \biggl(\sum_{\Delta\in\Pi_{M}}
\tilde{\Theta}_{\sigma_{-z}\omega,N,M} (\Delta,\Delta) \biggr)
\\
&&\qquad  = 1-\min_{z\in[-N,N ]^{d}\cap\mathbb{Z}^{d}}\tilde{\Theta}_{\sigma
_{-z}\omega,N,M} (
\Lambda_{\Pi_{M}} )<\varepsilon.
\end{eqnarray*}
Thus,
\[
D_{M,N}^{ (1 )} (\omega)=\sum_{x,y\in\mathbb
{Z}^{d}}
\Theta_{\sigma_{-x}\omega,N} (x,y )\ind_{\llVert x-y\rrVert
_{1}>dM}\leq\varepsilon
\ind_{\mathbf
{F}_{M}} (\omega)+\ind_{\mathbf{F}_{M}^{c}} (\omega)
\]
and similarly,
\[
\frac{dQ_{N}}{dP} (\omega)D_{M,N}^{ (2 )} (\omega)=\sum
_{x,y\in\mathbb{Z}^{d}}\Theta_{\sigma
_{-y}\omega,N} (x,y )
\ind_{\llVert x-y\rrVert
_{1}>dM}\leq\varepsilon\ind_{\mathbf{F}_{M}} (\omega)+
\ind_{\mathbf{F}_{M}^{c}} (\omega).
\]

Next, we turn to prove the estimate for $ \{ D_{M}^{ (i
)} \} _{i\in\{ 1,2 \} }$.
It is enough to show that along some sub-sequence of $ \{
n_{k} \} _{k\geq1}$
(which for simplicity we still denote by $ \{ n_{k} \}
_{k\geq1}$)
%
\begin{eqnarray}\label{eqConvergenceoftheDM}
D_{M}^{ (1 )} (\omega) &=&\lim_{k\to\infty
}
\frac{1}{n_{k}}\sum_{N=0}^{n_{k}-1}D_{M,N}^{ (1 )}
(\omega)\quad\mbox{and}\quad
\nonumber\\[-8pt]\\[-8pt]\nonumber
D_{M}^{ (2 )} (\omega) &=&  \biggl(
\frac{dQ}{dP} (\omega) \biggr)^{-1}\lim_{k\to\infty}
\frac{1}{n_{k}}\sum_{N=0}^{n_{k}-1}
\frac
{dQ_{N}}{dP} (\omega)D_{M,N}^{ (2 )} (\omega),\qquad P\mbox{-a.s.}
\end{eqnarray}
Indeed, if (\ref{eqConvergenceoftheDM}) holds, then for $P$-almost
every $\omega$ we have
\begin{eqnarray*}
D_{M}^{ (1 )} (\omega) & =&\lim_{k\to\infty
}
\frac{1}{n_{k}}\sum_{N=0}^{n_{k}-1}D_{M,N}^{ (1 )}
(\omega)
\\
&=&\lim_{k\to\infty}\frac{1}{n_{k}} \Biggl[\sum
_{N=0}^{M-1}D_{M,N}^{ (1 )} (\omega
)+\sum_{N=M}^{n_{k}-1}D_{M,N}^{ (1 )}
(\omega) \Biggr]
\\
& \leq&\lim_{k\to\infty}\frac{1}{n_{k}} \Biggl[M+\sum
_{N=M}^{n_{k}-1}D_{M,N}^{ (1 )} (\omega
) \Biggr]
\\
&\leq&\lim_{k\to\infty}\frac{1}{n_{k}} \Biggl[M+\sum
_{N=M}^{n_{k}-1} \bigl(\varepsilon
\ind_{\mathbf{F}_{M}} (\omega)+\ind_{\mathbf{F}_{M}^{c}} (\omega)
\bigr) \Biggr]
\\
& =&\varepsilon\ind_{\mathbf{F}_{M}} (\omega)+\ind_{\mathbf
{F}_{M}^{c}} (\omega)
\end{eqnarray*}
and similarly
\begin{eqnarray*}
&& \frac{dQ}{dP} (\omega)D_{M}^{ (2 )} (\omega)
\\
&&\qquad =  \lim_{k\to\infty}\frac{1}{n_{k}}\sum
_{N=0}^{n_{k}-1}\frac
{dQ_{N}}{dP} (\omega
)D_{M,N}^{ (2 )} (\omega)
\\
&&\qquad =\lim_{k\to\infty}
\frac{1}{n_{k}} \Biggl[\sum_{N=0}^{M-1}
\frac{dQ_{N}}{dP} (\omega)D_{M,N}^{
(2 )} (\omega)+\sum
_{N=M}^{n_{k}-1}\frac
{dQ_{N}}{dP} (\omega
)D_{M,N}^{ (2 )} (\omega) \Biggr]
\\
&&\qquad \leq \lim_{k\to\infty}\frac{1}{n_{k}} \Biggl[\sum
_{N=0}^{M-1}\frac{dQ_{N}}{dP} (\omega)+\sum
_{N=M}^{n_{k}-1}\frac{dQ_{N}}{dP}D_{M,N}^{ (1 )}
(\omega) \Biggr]
\\
&&\qquad \leq\lim_{k\to\infty}\frac{1}{n_{k}} \Biggl[\sum
_{N=0}^{M-1}\frac{dQ_{N}}{dP} (\omega)+
\sum_{N=M}^{n_{k}-1} \bigl(\varepsilon
\ind_{\mathbf{F}_{M}} (\omega)+\ind_{\mathbf{F}_{M}^{c}} (\omega)
\bigr) \Biggr]
\\
&&\qquad =  \varepsilon\ind_{\mathbf{F}_{M}} (\omega)+\ind_{\mathbf
{F}_{M}^{c}} (\omega).
\end{eqnarray*}
Turning to prove (\ref{eqConvergenceoftheDM}), for every measurable
event $A\subset\Omega$ we have
\begin{eqnarray*}
&& E \bigl[D_{M}^{ (1 )} (\omega)\ind_{A} (\omega
) \bigr]
\\
&&\qquad  = E_{\Psi} \bigl[\ind_{\operatorname{dist}
(\omega_{1},\omega_{2} )>dM}\cdot\ind_{A\times\Omega} (
\omega_{1},\omega_{2} ) \bigr]
\\
&&\qquad =  \Psi\bigl( \bigl\{ (\omega_{1},\omega_{2} ):
\operatorname{dist} (\omega_{1},\omega_{2} )>dM \bigr\} \cap A\times
\Omega\bigr)
\\
&&\qquad \overset{ (1 )} {=}\lim_{k\to\infty
}\frac{1}{n_{k}}
\sum_{N=0}^{n_{k}-1}\Psi_{N} \bigl(
\bigl\{ (\omega_{1},\omega_{2} ): \operatorname{dist} (\omega
_{1},\omega_{2} )>dM \bigr\} \cap A\times\Omega\bigr)
\\
&&\qquad =  \lim_{k\to\infty}\frac{1}{n_{k}}\sum
_{N=0}^{n_{k}-1}E_{\Psi
_{N}} \bigl[
\ind_{\operatorname{dist} (\omega_{1},\omega_{2}
)>dM}\cdot\ind_{A\times\Omega} (\omega_{1},
\omega_{2} ) \bigr]
\\
&&\qquad \overset{ (2 )} {=} \lim_{k\to\infty}
\frac
{1}{n_{k}}\sum_{N=0}^{n_{k}-1}E_{\Psi_{N}}
\bigl[D_{M,N}^{
(1 )} (\omega_{1} )\cdot
\ind_{A\times\Omega} (\omega_{1},\omega_{2} ) \bigr]
\\
&&\qquad =  \lim_{k\to\infty}\frac{1}{n_{k}}\sum
_{N=0}^{n_{k}-1}E \bigl[D_{M,N}^{ (1 )}
(\omega_{1} )\cdot\ind_{A} (\omega_{1} )
\bigr]
\\
&&\qquad =\lim_{k\to\infty}E \Biggl[\frac{1}{n_{k}}\sum
_{N=0}^{n_{k}-1}D_{M,N}^{ (1 )} (
\omega_{1} )\cdot\ind_{A} (\omega_{1} ) \Biggr],
\end{eqnarray*}
where $ (1 )$ is due to the definition of $\Psi$ below
(\ref{eqDefinitionofPsiN}) and $ (2 )$ uses the definition
of $D_{M,N}^{ (1 )}$ as the conditional expectation. This
implies that $\frac{1}{n_{k}}\sum_{N=0}^{n_{k}-1}D_{M,N}^{
(1 )}$
converges in $L^{1} (P )$ to $D_{M}^{ (1 )}$ and
thus by standard arguments contains a subsequence that converges $P$-almost
surely. Similarly, for $D_{M}^{ (2 )}$
\begin{eqnarray*}
&& E_{Q} \bigl[D_{M}^{ (2 )} (\omega)\ind
_{A} (\omega) \bigr]
\\
&&\qquad = E_{\Psi} \bigl[\ind_{\operatorname{dist} (\omega_{1},\omega_{2} )>dM}
\cdot\ind_{\Omega
\times A} (\omega_{1},\omega_{2} ) \bigr]
\\
&&\qquad =  \Psi\bigl( \bigl\{ (\omega_{1},\omega_{2} ):
\operatorname{dist} (\omega_{1},\omega_{2} )>dM \bigr\} \cap\Omega
\times A \bigr)
\\
&&\qquad = \lim_{k\to\infty}\frac{1}{n_{k}}\sum
_{N=0}^{n_{k}-1}\Psi_{N} \bigl( \bigl\{ (
\omega_{1},\omega_{2} ): \operatorname{dist} (
\omega_{1},\omega_{2} )>dM \bigr\} \cap\Omega\times A \bigr)
\\
&&\qquad =  \lim_{k\to\infty}\frac{1}{n_{k}}\sum
_{N=0}^{n_{k}-1}E_{\Psi
_{N}} \bigl[
\ind_{\operatorname{dist} (\omega_{1},\omega_{2}
)>dM}\cdot\ind_{\Omega\times A} (\omega_{1},
\omega_{2} ) \bigr]
\\
&&\qquad =\lim_{k\to\infty}\frac{1}{n_{k}}\sum
_{N=0}^{n_{k}-1}E_{\Psi_{N}}
\bigl[D_{M,N}^{ (2 )} (\omega_{2} )\cdot
\ind_{\Omega\times A} (\omega_{1},\omega_{2} ) \bigr]
\\
&&\qquad =  \lim_{k\to\infty}\frac{1}{n_{k}}\sum
_{N=0}^{n_{k}-1}E_{Q_{N}} \bigl[D_{M,N}^{ (2 )}
(\omega_{2} )\cdot\ind_{A} (\omega_{2} )
\bigr]
\\
&&\qquad =\lim_{k\to\infty}\frac{1}{n_{k}}\sum
_{N=0}^{n_{k}-1}E_{Q} \biggl[ \biggl(
\frac{dQ}{dP} (\omega_{2} ) \biggr)^{-1}\cdot
\frac
{dQ_{N}}{dP} (\omega_{2} )\cdot D_{M,N}^{ (2
)}
(\omega_{2} )\cdot\ind_{A} (\omega_{2} )
\biggr]
\\
&&\qquad =  \lim_{k\to\infty}E_{Q} \Biggl[ \biggl(
\frac{dQ}{dP} (\omega_{2} ) \biggr)^{-1}\cdot
\frac{1}{n_{k}}\sum_{N=0}^{n_{k}-1}
\frac{dQ_{N}}{dP} (\omega_{2} )\cdot D_{M,N}^{ (2 )}
(\omega_{2} )\cdot\ind_{A} (\omega_{2} )
\Biggr].
\end{eqnarray*}
This proves the second quality in (\ref{eqConvergenceoftheDM}),
$Q$ (and thus $P$)-almost surely for an appropriate sub-sequence.
\end{pf}

\begin{pf*}{Proof of Lemma~\ref{lemRadonNikodymderivativeestimate}}
The proof deals separately with the events $B_{\varepsilon}^{-}\hspace*{-1pt}=\{
\omega\in\Omega: \frac{1}{\llvert \Delta_{0}\rrvert }\sum_{x\in\Delta
_{0}}\frac{dQ}{dP} (\sigma_{x}\omega
)<1-\varepsilon\} $
and $B_{\epsilon}^{+}= \{ \omega\in\Omega: \frac{1}{\llvert \Delta
_{0}\rrvert }\sum_{x\in\Delta_{0}}\frac{dQ}{dP}
(\sigma_{x}\omega)>1+\varepsilon\} $.
We start with the event $B_{\varepsilon}^{-}$. The idea is to separate
the event $B_{\varepsilon}^{-}$ into two events the first with probability
$M^{-\xi(1 )}$ and the second, denoted $S_{\varepsilon}^{-}$,
which will turn out to be with $P$ probability zero measure. To
this end, assume without loss of generality that $\Delta_{0}$ is centered
at the zero, denote $M_{\varepsilon}=\frac{\epsilon}{6d^{2}}M$, define
$\Delta_{0}^{-}= \{ x\in\mathbb{Z}^{d}: \llVert x\rrVert _{\infty
}<M-dM_{\varepsilon} \} $
and let
\[
S_{\varepsilon}^{-}= \bigl\{ \omega\in B_{\varepsilon}^{-}: \sigma
_{x}\omega\in\mathbf{F}_{M_{\varepsilon}}, \forall x\in
\Delta_{0} \bigr\},
\]
where $\mathbf{F}_{M_{\varepsilon}}$ is the event from Lemma~\ref
{lemPerliminarylemmatoRadonNikodymderivativeestimate}.
Due to property (1) of $\mathbf{F}_{M_{\varepsilon}}$ from Lemma~\ref{lemPerliminarylemmatoRadonNikodymderivativeestimate}
\begin{eqnarray*}
P \bigl(S_{\varepsilon}^{-} \bigr)&\geq& P \bigl(B_{\varepsilon
}^{-}
\bigr)-\llvert\Delta_{0}\rrvert P \bigl(\mathbf{F}_{M_{\varepsilon}}^{c}
\bigr)=P \bigl(B_{\varepsilon}^{-} \bigr)-M^{d}\cdot
(M_{\varepsilon} )^{-\xi(1
)}
\\
&=& P \bigl(B_{\varepsilon}^{-}
\bigr)-M^{-\xi(1 )},
\end{eqnarray*}
and, therefore, it is enough to show that $P (S_{\varepsilon
}^{-} )=0$.
We claim that there exists an event $K\subset S_{\varepsilon}^{-}$
such that $ (1 )$ $P (K )\geq P
(S_{\varepsilon}^{-} )\cdot( (4d )^{d}\llvert \Delta_{0}\rrvert )^{-1}$
and $ (2 )$ if $\omega,\omega'\in K$ and $\omega\neq
\omega'$,
then $\operatorname{dist} (\omega,\omega' )>4dM$. Indeed, for every
$x\in\mathbb{Z}^{d}$ let $U_{x}$ be an independent (of\vspace*{1pt} everything
defined so far) random variable uniformly distributed on $
[0,1 ]$,
and define\footnote{The event $K$ is not measurable in the $\sigma
$-algebra of $\Omega$.
However, using Fubini's theorem we can find a section in $\Omega$
which is measurable and have the desired properties.}
\[
K= \bigl\{ \omega\in S_{\varepsilon}^{-}: \forall x\in4d\Delta
_{0} \mbox{ if }\sigma_{x}\omega\in B_{\varepsilon}^{-}
\mbox{ then }U_{x}<U_{0} \bigr\}.
\]
Informally, from each family of environments whose distance is smaller
than $4dM$ we choose one uniformly. This immediately implies that
for two distinct points in $K$ property $(2)$ holds. Property $(1)$
on the other hand holds due to translation invariance of $P$.

Now, let
\[
H=\bigcup_{x\in\Delta_{0}}\sigma_{x}K\quad\mbox{and}
\quad H^{-}=\bigcup_{x\in\Delta_{0}^{-}}
\sigma_{x}K.
\]
By property (2) of $K$, in both cases this is a disjoint union and,
therefore, recalling once more the translation invariance of the measure
$P$, we have
%
\begin{eqnarray}\label{eqPprobestimationforHandH^-}
P (H ) &=& \llvert\Delta_{0}\rrvert P (K )\quad\mbox{and}
\nonumber\\[-8pt]\\[-8pt]\nonumber
P \bigl(H^{-} \bigr) &=&\bigl\llvert\Delta_{0}^{-}
\bigr\rrvert P (K )=\llvert\Delta_{0}\rrvert\biggl(1-
\frac{\varepsilon}{6d^{2}} \biggr)^{d}P (K )> \biggl(1-\frac{\varepsilon}{6}
\biggr)P (H ).
\end{eqnarray}
Going back to the definition of the event $B_{\epsilon}^{-}$ and
recalling that $K\subset S_{\varepsilon}^{-}\subset B_{\varepsilon}^{-}$
we get
\begin{eqnarray*}
Q (H ) & =&\int_{H}\frac{dQ}{dP} (\omega)\,dP (\omega)=
\sum_{x\in\Delta_{0}}\int_{\sigma
_{x}K}
\frac{dQ}{dP} (\omega)\,dP (\omega)
\\
& =&\int_{K}\sum_{x\in\Delta_{0}}
\frac{dQ}{dP} (\sigma_{x}\omega)\,dP (\omega)
\\
& \leq&\int_{K} (1-\epsilon)\llvert\Delta_{0}
\rrvert dP (\omega)= (1-\varepsilon)\llvert\Delta_{0}\rrvert P (K )
\\
& =& (1-\varepsilon)P (H ).
\end{eqnarray*}
Combining with (\ref{eqPprobestimationforHandH^-}), for small
enough $\varepsilon>0$ this yields
%
\begin{eqnarray}
Q (H )&\leq& (1-\varepsilon)P (H )=\frac{1-\varepsilon}{1-\sfrac
{\varepsilon}{6}} \biggl(1-\frac
{\varepsilon}{6}
\biggr)P (H )<\frac{1-\varepsilon }{1-\sfrac{\varepsilon}{6}}P \bigl(H^{-} \bigr)
\nonumber\\[-8pt]\\[-8pt]\nonumber
&<&  \biggl(1-
\frac
{\varepsilon}{3} \biggr)P \bigl(H^{-} \bigr).\label
{eqQprobestimationforHandH^-}
\end{eqnarray}
Let $A= \{ (\omega,\omega' ): \omega\in H^{-}, \omega'\notin H \} $.
Then by (\ref{eqPprobestimationforHandH^-}) and (\ref
{eqQprobestimationforHandH^-})
%
\begin{eqnarray}\label{eqPsiAestimation}
\Psi(A ) & \geq& P \bigl(H^{-} \bigr)-Q (H )\geq P (H )- \biggl(1-
\frac{\varepsilon}{3} \biggr)P \bigl(H^{-} \bigr)
\nonumber\\[-8pt]\\[-8pt]\nonumber
&\geq& \frac{\varepsilon}{3}P
\bigl(H^{-} \bigr)>\frac
{\varepsilon}{3} \biggl(1-\frac{\varepsilon}{6}
\biggr)P (H )>\frac{\varepsilon}{4}P (H ).
\end{eqnarray}
By the construction of $K$, for every $ (\omega,\omega'
)\in A$
we have $\operatorname{dist} (\omega,\omega' )>dM_{\varepsilon}$
and, therefore,
%
\begin{eqnarray}\label{eqfirstintDMHestimation}
&& \int_{H}D_{M_{\varepsilon}}^{ (1 )} (\omega)\,dP (
\omega)\nonumber
\\
&&\qquad   = \int_{H\times\Omega}D_{M_{\varepsilon
}}^{ (1 )} (\omega
)\,d\Psi\bigl(\omega,\omega' \bigr)\geq\int_{H^{-}\times\Omega
}D_{M_{\varepsilon}}^{
(1 )}
(\omega)\,d\Psi\bigl(\omega,\omega' \bigr)
\nonumber
\\
&&\qquad  = \int_{\Omega\times\Omega}E_{\Psi} [\ind_{\operatorname{dist} (\omega,\omega' )>dM_{\varepsilon}}\mid
\mathfrak{F}_{\omega} ] (\omega)\ind_{H^{-}\times\Omega
} \bigl(\omega,
\omega' \bigr)\,d\Psi\bigl(\omega,\omega' \bigr)
\nonumber
\\
&&\qquad  = \int_{\Omega\times\Omega}E_{\Psi} \bigl[\ind_{\operatorname{dist} (\omega,\omega' )>dM_{\varepsilon}}
\ind_{H^{-}\times\Omega} \bigl(\omega,\omega' \bigr)\mid\mathfrak
{F}_{\omega} \bigr] (\omega)\,d\Psi\bigl(\omega,\omega'
\bigr)
\nonumber\\[-8pt]\\[-8pt]\nonumber
&&\qquad  =\int_{\Omega\times\Omega}\ind_{\operatorname{dist} (\omega,\omega'
)>dM_{\varepsilon}}\ind_{H^{-}\times\Omega} \bigl(
\omega,\omega' \bigr)\,d\Psi\bigl(\omega,\omega' \bigr)
\nonumber
\\
&&\qquad  \geq \int_{\Omega\times\Omega}\ind_{\operatorname{dist} (\omega,\omega'
)>dM_{\varepsilon}}\ind_{A}
\bigl(\omega,\omega' \bigr)\,d\Psi\bigl(\omega,\omega'
\bigr)
\nonumber
\\
&&\qquad  =\int_{\Omega\times\Omega}\ind_{A} \bigl(\omega,
\omega' \bigr)\,d\Psi\bigl(\omega,\omega' \bigr)
\nonumber
\\
&&\qquad  =\Psi(A )>\frac{\varepsilon}{4}P (H ),\nonumber
\end{eqnarray}
where for the last inequality we used (\ref{eqPsiAestimation}).
However, recalling that $H\subset\mathbf{F}_{M_{\varepsilon}}$ by
definition, and using Lemma~\ref
{lemPerliminarylemmatoRadonNikodymderivativeestimate}
with $M_{\varepsilon}$ and $\frac{\varepsilon}{5}$ instead of $M$
and $\varepsilon$ we get
%
\begin{eqnarray}\label{eqsecondintDMHestimation}
\int_{H}D_{M_{\varepsilon}}^{ (1 )} (\omega)\,dP (
\omega) & \leq&\int_{H}\frac{\varepsilon}{5}\ind
_{\mathbf{F}_{M_{\varepsilon}}} (\omega)+\ind_{\mathbf
{F}_{M_{\varepsilon}}^{c}} (\omega)\,dP (\omega)
\nonumber
\nonumber\\[-8pt]\\[-8pt]\nonumber
& =&\int_{H}\frac{\varepsilon}{5}\,dP (\omega)=
\frac
{\varepsilon}{5}P (H ).
\end{eqnarray}
Combining (\ref{eqfirstintDMHestimation}) and (\ref{eqsecondintDMHestimation})
we must conclude that $P (H )=0$ and, therefore $P
(K )=0$.
This\vspace*{1pt} however, by property (1) of $K$, implies that $P
(S_{\varepsilon}^{-} )=0$
and, therefore, finally that $P (B_{\varepsilon}^{-}
)=M^{-\xi(1 )}$.

Next, we turn to deal with the event $B_{\varepsilon}^{+}$. As in
the proof for $B_{\varepsilon}^{-}$ for $\varepsilon>0$ we denote
$M_{\varepsilon}=\frac{\varepsilon}{6d^{2}}M$. Also\vspace*{1pt} assume without
loss of generality that $\Delta_{0}$ is centered in zero, define
$\Delta_{0}^{+}= \{ x\in\mathbb{Z}^{d}: \llVert x\rrVert _{\infty
}<M+dM_{\varepsilon} \} $
and let
\[
S_{\varepsilon}^{+}= \bigl\{ \omega\in B_{\varepsilon}^{+}: \sigma
_{x}\omega\in\mathbf{F}_{M_{\varepsilon}}, \forall x\in
\Delta_{0}^{+} \bigr\},
\]
where $\mathbf{F}_{M_{\varepsilon}}$ is the event from Lemma~\ref
{lemPerliminarylemmatoRadonNikodymderivativeestimate}.
Due to property (1) of $\mathbf{F}_{M_{\varepsilon}}$ from Lemma~\ref{lemPerliminarylemmatoRadonNikodymderivativeestimate}
\begin{eqnarray*}
P \bigl(S_{\varepsilon}^{+} \bigr)&\geq& P \bigl(B_{\varepsilon
}^{+}
\bigr)-\bigl\llvert\Delta_{0}^{+}\bigr\rrvert P \bigl(
\mathbf{F}_{M_{\varepsilon}}^{c} \bigr)=P \bigl(B_{\varepsilon}^{+}
\bigr)- \biggl(1+\frac{\varepsilon}{6d} \biggr)^{d}M^{d}\cdot
(M_{\varepsilon} )^{-\xi(1 )}
\\
&=&P \bigl(B_{\varepsilon}^{+}
\bigr)-M^{-\xi(1 )}
\end{eqnarray*}
and, therefore, it is enough to show that $P (S_{\varepsilon
}^{+} )=0$.
As for $S_{\varepsilon}^{-}$ we claim that there exists an event
$K\subset S_{\varepsilon}^{+}$ such that (1) $P (K )\geq
P (S_{\varepsilon}^{+} )/ ( (4d )^{d}\llvert \Delta_{0}^{+}\rrvert )^{-1}$
and (2) if $\omega,\omega'\in K$ and $\omega\neq\omega'$, then
$\operatorname{dist} (\omega,\omega' )>4d (M+M_{\varepsilon
} )$.

Now, let
\[
H=\bigcup_{x\in\Delta_{0}}\sigma_{x}K\quad\mbox{and}
\quad H^{+}=\bigcup_{x\in\Delta_{0}^{+}}
\sigma_{x}K.
\]
By property (2) of $K$, in both cases this is a disjoint union and,
therefore, recalling once more the translation invariance of the measure
$P$, we have for small enough $\varepsilon>0$
%
\begin{eqnarray}\label{eqappendixofBep^+1}
P (H ) &=&\llvert\Delta_{0}\rrvert P (K )\quad\mbox{and}
\nonumber\\[-8pt]\\[-8pt]\nonumber
P \bigl(H^{+} \bigr)&=&\bigl\llvert\Delta_{0}^{+}
\bigr\rrvert P (K )= \biggl(1+\frac{\varepsilon}{6d^{2}} \biggr
)^{d}\llvert
\Delta_{0}\rrvert P (K )< \biggl(1+\frac{\varepsilon}{6} \biggr)P (H
).
\end{eqnarray}
Going back to the definition of the event $B_{\varepsilon}^{+}$ and
recalling that $K\subset S_{\varepsilon}^{+}\subset B_{\varepsilon}^{+}$
we get
%
\begin{eqnarray}\label{eqappendixofBep^+15}
Q (H ) & =&\int_{H}\frac{dQ}{dP} (\omega)\,dP (\omega)=
\sum_{x\in\Delta_{0}}\int_{\sigma
_{x}K}
\frac{dQ}{dP} (\omega)\,dP (\omega)
\nonumber
\\
& =&\int_{K}\sum_{x\in\Delta_{0}}
\frac{dQ}{dP} (\sigma_{x}\omega)\,dP (\omega)
\nonumber\\[-8pt]\\[-8pt]\nonumber
& >& \int_{K} (1+\varepsilon)\llvert\Delta_{0}
\rrvert \,dP (\omega)= (1+\varepsilon)\llvert\Delta_{0}\rrvert P (K )
\nonumber
\\
& =& (1+\varepsilon)P (H ),\nonumber
\end{eqnarray}
and, therefore, combining with (\ref{eqappendixofBep^+1}), for
small enough $\varepsilon>0$ this yields
%
\begin{eqnarray}\label{eqappendixofBep^+2}
Q (H ) &>& (1+\varepsilon)P (H )=\frac
{1+\varepsilon}{1+\sfrac{\varepsilon}{6}} \biggl(1+\frac{\varepsilon
}{6}
\biggr)P (H )>\frac{1+\varepsilon}{1+\sfrac
{\varepsilon}{6}}P \bigl(H^{+} \bigr)
\nonumber\\[-8pt]\\[-8pt]\nonumber
&>&  \biggl(1+
\frac{\varepsilon
}{3} \biggr)P \bigl(H^{+} \bigr).
\end{eqnarray}
Let $A= \{ (\omega,\omega' ): \omega\notin
H^{+}, \omega'\in H \} $.
Then by (\ref{eqappendixofBep^+2})
%
\begin{eqnarray}\label{eqappendixofBep^+3}
\Psi(A )&\geq& Q (H )-P \bigl(H^{+} \bigr)>Q (H )-\frac{1}{1+\sfrac
{\varepsilon}{3}}Q
(H )=\frac{\sfrac{\varepsilon}{3}}{1+\sfrac{\varepsilon}{3}}Q (H )
\nonumber\\[-8pt]\\[-8pt]\nonumber
&\geq&
\frac{\varepsilon}{4}Q (H ).
\end{eqnarray}
By the construction of $K$, for every $ (\omega,\omega'
)\in A$
we have $\operatorname{dist} (\omega,\omega' )>dM_{\varepsilon}$
and, therefore,
%
\begin{eqnarray}\label{eqappendixofBep^+4}
&& \int_{H}D_{M_{\varepsilon}}^{ (2 )} (\omega)\,dQ (
\omega) \nonumber
\\[-1pt]
&&\qquad  = \int_{\Omega\times H}D_{M_{\varepsilon
}}^{ (2 )} \bigl(
\omega' \bigr)\,d\Psi\bigl(\omega,\omega' \bigr)
\nonumber
\\[-1pt]
&&\qquad  =\int_{\Omega\times\Omega}E_{\Psi} [\ind_{\operatorname{dist} (\omega,\omega' )>dM_{\varepsilon}}\mid
\mathfrak{F}_{\omega'} ] \bigl(\omega' \bigr)
\ind_{\Omega
\times H} \bigl(\omega,\omega' \bigr)\,d\Psi\bigl(\omega,
\omega' \bigr)
\nonumber
\\[-1pt]
&&\qquad  =\int_{\Omega\times\Omega}E_{\Psi} \bigl[\ind_{\operatorname{dist} (\omega,\omega' )>dM_{\varepsilon}}
\ind_{\Omega
\times H} \bigl(\omega,\omega' \bigr)\mid
\mathfrak{F}_{\omega
'} \bigr] \bigl(\omega' \bigr)\,d\Psi\bigl(
\omega,\omega' \bigr)
\nonumber\\[-9pt]\\[-9pt]\nonumber
&&\qquad  = \int_{\Omega\times\Omega}\ind_{\operatorname{dist} (\omega,\omega'
)>dM_{\varepsilon}}\ind_{\Omega\times H} \bigl(
\omega,\omega' \bigr)\,d\Psi\bigl(\omega,\omega' \bigr)
\nonumber
\\[-1pt]
&&\qquad  \geq \int_{\Omega\times\Omega}\ind_{\operatorname{dist} (\omega,\omega'
)>dM_{\varepsilon}}\ind_{A}
\bigl(\omega,\omega' \bigr)\,d\Psi\bigl(\omega,\omega'
\bigr)
\nonumber
\\[-1pt]
&&\qquad  = \int_{\Omega\times\Omega}\ind_{A} \bigl(\omega,
\omega' \bigr)\,d\Psi\bigl(\omega,\omega' \bigr)
\nonumber
\\[-1pt]
&&\qquad  = \Psi(A )\geq\frac{\varepsilon}{4}Q (H ),\nonumber
\end{eqnarray}
where for the last inequality we used (\ref{eqappendixofBep^+3}).
However, recalling that $H\subset\mathbf{F}_{M_{\varepsilon}}$ by
definition, that $P (H )\leq Q (H )$ by (\ref
{eqappendixofBep^+15})
and using Lemma~\ref{lemPerliminarylemmatoRadonNikodymderivativeestimate}
with $M_{\varepsilon}$ and $\frac{\varepsilon}{5}$ instead of $M$
and $\varepsilon$ we get
%
\begin{eqnarray}\label{eqappendixofBep^+5}
\int_{H}D_{M_{\varepsilon}}^{ (2 )}\,dQ (\omega) & \leq&\int_{H}\frac{1}{f (\omega)} \biggl[\frac
{\varepsilon}{5}
\ind_{\mathbf{F}_{M_{\varepsilon}}}+\ind_{\mathbf
{F}_{M_{\varepsilon}}^{c}} \biggr]\,dQ (\omega)\nonumber
\\
&=& \int
_{H} \biggl[\frac{\varepsilon}{5}\ind_{\mathbf{F}_{M_{\varepsilon
}}}+
\ind_{\mathbf{F}_{M_{\varepsilon}}^{c}} \biggr]\,dP (\omega)
\\
& =&\int_{H}\frac{\varepsilon}{5}\,dP (\omega)=
\frac
{\varepsilon}{5}P (H )\leq\frac{\varepsilon}{5}Q (H ).\nonumber
\end{eqnarray}
Combining (\ref{eqappendixofBep^+4}) and (\ref{eqappendixofBep^+5}),
we must conclude that $Q (H )=0$. Therefore, by (\ref
{eqappendixofBep^+15})
we have $P (H )=0$, and thus that $P (K )=0$. This\vspace*{1pt}
however, by property (1) of $K$, implies that $P (S_{\varepsilon
}^{+} )=0$
and, therefore, finally that $P (B_{\varepsilon}^{+}
)=M^{-\xi(1 )}$.
\end{pf*}

\begin{cor}
\label{corExistenceofmoments}Let $d\geq4$ and assume $P$ is
uniformly elliptic, i.i.d. and satisfies $ (\mathscr{P} )$.
Then $E [ (\frac{dQ}{dP} )^{k} ]<\infty$ for every
$k\in\mathbb{N}$.
\end{cor}

\begin{pf}
For every $M\in\mathbb{N}$ large enough, Lemma~\ref
{lemRadonNikodymderivativeestimate}
implies
\begin{eqnarray*}
P \biggl(\frac{dQ}{dP} (\omega)\geq 2 (2M+1 )^{d} \biggr)&\leq& P
\biggl(\frac{1}{ (2M+1 )^{d}}\sum_{x\in
[-M.M ]^{d}\cap\mathbb{Z}^{d}}\frac{dQ}{dP}
(\sigma_{x}\omega)\geq2 \biggr)
\\[-2pt]
&=& M^{-\xi(1 )}.
\end{eqnarray*}
Thus, $\frac{dQ}{dP}$ has super polynomial decay and the result follows.\vadjust{\goodbreak}
\end{pf}

\begin{pf*}{Proof of Theorem~\ref{teoAbsolutelycontinuousinvariantmeasure}}
The proof is the content of Lemmas~\ref{Lemmathereexistsnonsingular},
\ref{LemEitherSingularorabsolutelycontinuous} and Corollary
\ref{corExistenceofmoments}.
\end{pf*}

\section{Proof of Theorem \texorpdfstring{\protect\ref{teoPrefactorthm}}{1.11}}\label{sec7}

In this section, we prove our second main result, the prefactor local
limit theorem. The uniqueness of the prefactor follows from its definition
quite easily and most of the work is concentrated into the proof of
existence. Our candidate for the prefactor is the Radon--Nikodym derivative
of the equivalent measure $Q$ constructed in the previous section.
The\vspace*{1pt} proof proceeds as follows: instead of directly comparing the quenched
measure $P_{\omega}^{0} (X_{n}=\cdot)$ and the annealed
times the prefactor measure $\mathbb{P}^{0} (X_{n}=\cdot
)f (\sigma_{\cdot}\omega)$,
we introduce two new measures (denoted temporarily by $\rho_{1}$
and $\rho_{2}$) and show that the total variation of the pairs $
(\mathbb{P}^{0} (X_{n}=\cdot)f (\sigma_{\cdot
}\omega),\rho_{1} )$,
$ (\rho_{1},\rho_{2} )$, $ (\rho_{2},P_{\omega
}^{0} (X_{n}=\cdot) )$
goes to zero as $n$ goes to infinity for $P$-almost every environment.
Both measures $\rho_{1}$ and $\rho_{2}$ are constructed in a way
that allows us to exploit the previous results on the connection between
the quenched and annealed measures in the total variations estimations.
More formally, we fix two parameters $0<\delta<\varepsilon<\frac{1}{2}$
and define the measures $\rho_{1}$ and $\rho_{2}$ as follows: for
$\rho_{1}$, we choose a point by first choosing a point according
to the annealed law at time $n-n^{\varepsilon}$ times the prefactor
and then letting it ``evolve'' according to the quenched law for
$n^{\varepsilon}$ additional steps. For $\rho_{2}$ we fix some partition
of $\mathbb{Z}^{d}$ to boxes of side length $n^{\delta}$, choose
a box according to the quenched measure at time $n-n^{\varepsilon}$,
choose a point inside the box proportionally to its prefactor and
then let it ``evolve'' into a new point according to the quenched
law for $n^{\varepsilon}$ additional steps. For a more precise definition
of the measures, see Definition~\ref{DefLotsofmeasures}.

\subsection{Uniqueness}\label{sec7.1}

We start with a proof that the prefactor, if exists, is unique. Assume
both $f$ and $g$ satisfy (\ref{eqPrefactorthm}) and denote $h=f-g$.
By the triangle inequality for $P$-almost every $\omega\in\Omega$
%
\begin{equation}
\lim_{n\to\infty}\sum_{x\in\mathbb{Z}^{d}}
\mathbb{P}^{0} (X_{n}=x )\bigl\llvert h (\sigma_{x}
\omega)\bigr\rrvert=0,\label{eqprefactoruniqueness}
\end{equation}
that is, $\lim_{n\to\infty}\mathbb{E}^{0} [\llvert h
(\sigma_{X_{n}}\omega)\rrvert ]=0$,
$P$-a.s. If $h\neq0$, then there exists a measurable subset $A$
of $\Omega$ such that $P (A )>0$ and $\llvert h\rrvert >c>0$
on $A$. Thus, for every $n\in\mathbb{N}$
%
\begin{eqnarray}\label{equniqunessoftheprefactor}
E \bigl[\mathbb{E}^{0} \bigl[\bigl\llvert h (\sigma_{X_{n}}
\omega)\bigr\rrvert\bigr] \bigr] & \geq& E \bigl[\mathbb{E}^{0} \bigl[
\bigl\llvert h (\sigma_{X_{n}}\omega)\bigr\rrvert\ind_{\sigma
_{X_{n}}\omega\in A}
\bigr] \bigr]\geq cE \bigl[\mathbb{E}^{0} [\ind_{\sigma_{X_{n}}\omega
\in A} ] \bigr]
\nonumber
\\
& =&cE \bigl[\mathbb{P}^{0} (\sigma_{X_{n}}\omega\in A )
\bigr]=c\sum_{y\in\mathbb{Z}^{d}}\mathbb{P}^{0}
(X_{n}=y )E [\ind_{\sigma_{y}\omega\in A} ]
\nonumber
\\
& =&c\sum_{y\in\mathbb{Z}^{d}}\mathbb{P}^{0}
(X_{n}=y )P (\sigma_{y}\omega\in A )
\\
&=& cP (A )\sum
_{y\in
\mathbb{Z}^{d}}\mathbb{P}^{0} (X_{n}=y )
\nonumber
\\
& =&cP (A )>0.\nonumber
\end{eqnarray}
Since
\begin{eqnarray*}
E \bigl[\mathbb{E}^{0} \bigl[\bigl\llvert h (\sigma_{X_{n}}
\omega)\bigr\rrvert\bigr] \bigr] & =&E \biggl[\sum_{y\in\mathbb
{Z}^{d}}
\bigl\llvert h (\sigma_{y}\omega)\bigr\rrvert\mathbb
{P}^{0} (X_{n}=y ) \biggr]
\\
& =&\sum_{y\in\mathbb{Z}^{d}}\mathbb{P}^{0}
(X_{n}=y )\cdot E \bigl[\bigl\llvert h (\sigma_{y}\omega)
\bigr\rrvert\bigr]
\\
& =&\sum_{y\in\mathbb{Z}^{d}}\mathbb{P}^{0}
(X_{n}=y )\cdot E \bigl[\bigl\llvert h (\omega)\bigr\rrvert\bigr]=E
\bigl[\llvert h\rrvert\bigr],
\end{eqnarray*}
the sequence $ \{ \llvert h (\sigma_{X_{n}}\omega
)\rrvert \} _{n\in\mathbb{N}}$
is tight and, therefore, by (\ref{equniqunessoftheprefactor})
it follows that $\lim_{n\to\infty}\mathbb{E}^{0} [\llvert h (\sigma
_{X_{n}}\omega)\rrvert ]>0$
a contradiction to (\ref{eqprefactoruniqueness}).

\subsection{Existence}\label{sec7.2}

Let $f (\omega)$ be the Radon--Nikodym derivative of $Q$
defined in Theorem~\ref{teoAbsolutelycontinuousinvariantmeasure}.
We will show that $f$ satisfies Theorem~\ref{teoPrefactorthm}
starting with the following simple proposition.

\begin{prop}
\label{propInvarianceoftheprefactor}For $P$-almost every $\omega$
every $n\in\mathbb{N}$ and every $x\in\mathbb{Z}^{d}$
\[
f (\sigma_{x}\omega)=\sum_{y\in\mathbb
{Z}^{d}}P_{\omega}^{y}
(X_{n}=x )f (\sigma_{y}\omega).
\]
\end{prop}
\begin{pf}
For $n=1$, this follows from the definition of $f=\frac{dQ}{dP}$
as the Radon--Nikodym derivative of the measure $Q$ which is invariant
with respect to the point of view of the particle. Indeed, using (\ref
{eqTransitionkernel})
and (\ref{eqinvariantprobabilitymeasure}) and the translation
invariance of $P$ for every bounded measurable function $g:\Omega\to
\mathbb{R}$
we have
\begin{eqnarray*}
\int_{\Omega}g (\omega)f (\omega)\,dP (\omega) & =&\int
_{\Omega}g (\omega)\,dQ (\omega)=\int_{\Omega}
\mathfrak{R}g (\omega)\,dQ (\omega)
\\
& =&\int_{\Omega} \bigl(\mathfrak{R}g (\omega) \bigr)f (\omega
)\,dP (\omega)
\\
&=& \int_{\Omega}\sum_{e\in\mathcal{E}_{d}}
\omega(0,e )g (\sigma_{e}\omega)f (\omega)\,dP (\omega)
\\
& =&\int_{\Omega}\sum_{e\in\mathcal{E}_{d}}\omega
(-e,0 )g (\omega)f (\sigma_{-e}\omega)\,dP (\omega)
\\
& =&\int_{\Omega}g (\omega)\sum_{e\in\mathcal
{E}_{d}}
\omega(e,0 )f (\sigma_{e}\omega)\,dP (\omega)
\end{eqnarray*}
and, therefore,
\[
f (\omega)=\sum_{e\in\mathcal{E}_{d}}\omega(e,0 )f (
\sigma_{e}\omega)=\sum_{e\in\mathcal
{E}_{d}}P_{\omega}^{e}
(X_{1}=0 )f (\sigma_{e}\omega).
\]
Applying the last equality for $\sigma_{x}\omega$ gives the result
in the case $n=1$.

For $n>1$, the proof follows by induction. Indeed,
\begin{eqnarray*}
\sum_{y\in\mathbb{Z}^{d}}P_{\omega}^{y}
(X_{n}=x )f (\sigma_{y}\omega) & =&\sum
_{y\in\mathbb{Z}^{d}}\sum_{z\in
\mathbb{Z}^{d}}P_{\omega}^{y}
(X_{n-1}=z )P_{\omega
}^{z} (X_{1}=x )f (
\sigma_{y}\omega)
\\
& =&\sum_{z\in\mathbb{Z}^{d}}P_{\omega}^{z}
(X_{1}=x )\sum_{y\in\mathbb{Z}^{d}}P_{\omega}^{y}
(X_{n-1}=z )f (\sigma_{y}\omega)
\\
& \overset{ (1 )} {=}&\sum_{z\in\mathbb{Z}^{d}}P_{\omega
}^{z}
(X_{1}=x )f (\sigma_{z}\omega)\overset{ (2 )} {=}f (
\sigma_{x}\omega),
\end{eqnarray*}
where for $ (1 )$ we used the induction assumption and in
$ (2 )$ we used the case $n=1$.
\end{pf}

As stated at the beginning of the section, the proof of Theorem~\ref
{teoPrefactorthm}
uses comparison with two additional probability measures which we
now define.

\begin{defn}
\label{DefLotsofmeasures}For $n\in\mathbb{N}$ and $\omega\in\Omega$,
define the following probability measures on $\mathbb{Z}^{d}$:
\begin{longlist}[(2)]
\item[(1)]$\nu_{\omega}^{\mathrm{ann}\times\mathrm{pre},n}$---the annealed
at time
$n$ times the prefactor (normalized)
\[
\nu_{\omega}^{\mathrm{ann}\times\mathrm{pre},n} (x )=\frac
{1}{Z_{\omega,n}}\mathbb{P}^{0}
(X_{n}=x )f (\sigma_{x}\omega),
\]
where $Z_{\omega,n}=\sum_{x\in\mathbb{Z}^{d}}\mathbb{P}^{0}
(X_{n}=x )f (\sigma_{x}\omega)$
is a normalizing constant. In Lem\-ma~\ref
{lemEvaluationofthenormalizingconstant},
we show that $\lim_{n\to\infty}Z_{\omega,n}=1$, $P$-almost surely.
\item[(2)]$\nu_{\omega}^{\mathrm{que},n}$---the quenched measure at time $n$
\[
\nu_{\omega}^{\mathrm{que},n} (x )=P_{\omega}^{0}
(X_{n}=x ).
\]

\item[(3)]$\nu_{\omega}^{\mathrm{box}\mbox{-}\mathrm{que} \times\mathrm{pre},n}=\nu
_{\omega,\Pi}^{\mathrm{box}\mbox{-}\mathrm{que} \times\mathrm{pre},n}$---the quenched measure on boxes with a choice of a point in the box
proportional to the prefactor. Given a partition $\Pi$ of $\mathbb{Z}^{d}$
into boxes of side length $l$, we choose a box according to the quenched
measure at time $n$ and then choose a point inside of the box proportionally
to the value of the Radon--Nikodym derivative there.
\[
\nu_{\omega,\Pi}^{\mathrm{box}\mbox{-}\mathrm{que} \times\mathrm{pre},n} (x )= \cases{
\displaystyle P_{\omega}^{0}
(X_{n}\in\Delta_{x} )\frac{f (\sigma
_{x}\omega)}{
\sum_{y\in\Delta_{x},
y\leftrightarrow n} f (\sigma_{y}\omega)}, &\quad$x
\leftrightarrow n$,
\cr
0, &\quad\mbox{otherwise},}
\]
where $\Delta_{x}$ is the unique $d$-dimensional box that contains
$x$ in the partition $\Pi$.
\end{longlist}
\end{defn}

Before turning to the proof of Theorem~\ref{teoPrefactorthm}, we
wish to study the normalization constant $Z_{\omega,n}$ of the measure
$\nu_{\omega}^{\mathrm{ann}\times\mathrm{pre},n}$.

%
\begin{lem}
\label{lemEvaluationofthenormalizingconstant}With the notation
as in Definition~\ref{DefLotsofmeasures} for $P$-almost every
$\omega$, we have $\lim_{n\to\infty}Z_{\omega,n}=1$.
\end{lem}
\begin{pf}
Fix $\varepsilon>0$, $0<\delta<\frac{1}{6d}$ and let $\Pi$ be a
partition of $\mathbb{Z}^{d}$ into boxes of side length $n^{\delta}$.
If $x,y\in\Delta$ for some $\Delta\in\Pi$, then the annealed derivative
estimation (see Lemma~\ref{lemgeneralannealedestimations}) gives
%
\begin{eqnarray}\label{eqannealedestimationfornormalizationconstant}
\bigl\llvert\mathbb{P}^{0} (X_{n}=x )-
\mathbb{P}^{0} (X_{n}=y )\bigr\rrvert&\leq& C\llVert x-y
\rrVert_{1}n^{-\vfrac{d+1}{2}}
\nonumber\\[-8pt]\\[-8pt]\nonumber
&\leq& Cn^{-\vfrac{d+1}{2}+\delta}.
\end{eqnarray}
Denoting $\Pi_{n}= \{ \Delta\in\Pi: \Delta\cap
[-n,n ]^{d}\neq\varnothing\} $
we have
\begin{eqnarray*}
\llvert Z_{\omega,n}-1\rrvert &=& \biggl\llvert\sum
_{x\in\mathbb
{Z}^{d}}\mathbb{P} (X_{n}=x ) \bigl[f (\sigma
_{x}\omega)-1 \bigr]\biggr\rrvert
\\
&=&\biggl\llvert\sum
_{\Delta\in\Pi
_{n}}\mathop{\sum_{x\in\Delta}}_{x\leftrightarrow n}
\mathbb{P}^{0} (X_{n}=x ) \bigl[f (\sigma_{x}
\omega)-1 \bigr]\biggr\rrvert.
\end{eqnarray*}
By Lemma~\ref{lemgoodestimationforthelocation}, there exists
$C_{\varepsilon}>0$ such that $\mathbb{P}^{0} (\llVert
X_{n}-\mathbb{E}^{0} [X_{n} ]\rrVert
_{1}>C_{\varepsilon}\sqrt{n} )<\varepsilon$.
Separating the sum into boxes in $\widehat{\Pi}_{n}= \{ \Delta
\in\Pi_{n}: \Delta\cap\{ x\in\mathbb{Z}^{d}: \llVert x-\mathbb{E}^{0}
[X_{n} ]\rrVert \leq
C_{\varepsilon}\sqrt{n} \} \neq\varnothing\} $
and in $\Pi_{n}\setminus\widehat{\Pi}_{n}$ we can bound the last
term by
%
\begin{eqnarray}
\qquad & \leq&\biggl\llvert\sum_{\Delta\in\Pi_{n}\setminus\widehat{\Pi
}_{n}}\mathop{\sum
_{x\in\Delta}}_{x\leftrightarrow n}\mathbb{P}^{0}
(X_{n}=x ) \bigl[f (\sigma_{x}\omega)-1 \bigr]\biggr\rrvert
\label{eqnormalizationconstantlemma0}
\\
&&{} +\biggl\llvert\sum_{\Delta\in\widehat{\Pi}_{n}}\mathop{\sum
_{x\in\Delta}}_{x\leftrightarrow n} \biggl(\frac{1}{\llvert \Delta
\rrvert } \mathop{\sum
_{y\in\Delta}}_{y\leftrightarrow n} \bigl[\mathbb{P}^{0}
(X_{n}=y )-\mathbb{P}^{0} (X_{n}=x ) \bigr]
\biggr) \bigl[f (\sigma_{x}\omega)-1 \bigr]\biggr\rrvert\label
{eqnormalizationconstantlemma1}
\\
&&{} +\biggl\llvert\sum_{\Delta\in\widehat{\Pi}_{n}}\mathop{\sum
_{x\in\Delta}}_{x\leftrightarrow n}\frac{1}{\llvert \Delta\rrvert
}\mathop{\sum
_{y\in\Delta}}_{y\leftrightarrow n} \mathbb{P}^{0}
(X_{n}=y ) \bigl[f (\sigma_{x}\omega)-1 \bigr]\biggr\rrvert.
\label{eqnormalizationconstnatlemma2}
\end{eqnarray}
We start by evaluating the term (\ref{eqnormalizationconstantlemma0}).
By Lemma~\ref{lemRadonNikodymderivativeestimate}, there exists
some constant~$C$, such that with $P$ probability $\geq1-n^{-\xi
(1 )}$
for every $\Delta\in\Pi_{n}$ (and in particular in $\Pi
_{n}\setminus\widehat{\Pi}_{n}$)
we have $\mathop{\sum_{y\in\Delta}}_{y\leftrightarrow n} [f (\sigma
_{y}\omega)+1 ]\leq C\llvert \Delta\rrvert $. Therefore, under the
above event, we can bound (\ref
{eqnormalizationconstantlemma0})
by
\begin{eqnarray*}
&& \sum_{\Delta\in\Pi_{n}\setminus\widehat{\Pi}_{n}}\mathop{\sum
_{x\in\Delta}}_{x\leftrightarrow n}\mathbb{P}^{0}
(X_{n}=x ) \bigl[f (\sigma_{x}\omega)+1 \bigr]
\\
&&\qquad  \leq \sum
_{\Delta\in\Pi_{n}\setminus\widehat
{\Pi}_{n}}\mathop{\max_{
x\in\Delta}}_{
x\leftrightarrow n}
\mathbb{P}^{0} (X_{n}=x )\mathop{\sum
_{y\in\Delta}}_{x\leftrightarrow n} \bigl[f (\sigma_{y}\omega
)+1 \bigr]
\\
&&\qquad  \leq C\sum_{\Delta\in\Pi_{n}\setminus\widehat{\Pi}_{n}}\llvert\Delta
\rrvert\cdot
\mathop{\max_{
x\in\Delta}}_{x\leftrightarrow n}\mathbb{P}^{0}
(X_{n}=x ).
\end{eqnarray*}
Using Lemma~\ref{lemgeneralannealedestimations}, (\ref
{eqgeneralannealedestimation4})
and the definition of $\widehat{\Pi}_{n}$, we thus have
\begin{eqnarray*}
\eqref{eqnormalizationconstantlemma0} & \leq& C\sum_{\Delta\in\Pi
_{n}\setminus\widehat{\Pi}_{n}}
\llvert\Delta\rrvert\cdot\mathop{\max_{
x\in\Delta}}_{
x\leftrightarrow n}
\mathbb{P}^{0} (X_{n}=x )
\\[-3pt]
& \leq& C\sum_{\Delta\in\Pi_{n}\setminus\widehat{\Pi}_{n}}\mathop{\max_{
y\in\Delta}}_{
y\leftrightarrow n}
\Bigl[\mathop{\max_{
x\in\Delta}}_{
x\leftrightarrow n}
\mathbb{P}^{0} (X_{n}=x )-\mathbb{P}^{0}
(X_{n}=y ) \Bigr]
\\[-3pt]
&&{} +C\sum_{\Delta\in\Pi_{n}\setminus\widehat{\Pi}_{n}}\mathop{\max_{
y\in\Delta}}_{
y\leftrightarrow n}
\mathbb{P}^{0} (X_{n}=y )
\\
& \leq&\frac{C}{n^{\sfrac{1}{2}-3d\delta}}+\varepsilon.
\end{eqnarray*}
Recalling that $\delta<\frac{1}{6d}$ and taking $n\to\infty$ this
gives (by an application of the Borel--Cantelli lemma)
\[
\limsup_{n\to\infty}\eqref{eqnormalizationconstantlemma0}\leq
\limsup_{n\to\infty}\sum_{\Delta\in\Pi_{n}\setminus\widehat
{\Pi}_{n}}\mathop{
\sum_{x\in\Delta}}_{x\leftrightarrow n}\mathbb{P}^{0}
(X_{n}=x ) \bigl[f (\sigma_{x}\omega)+1 \bigr]\leq
\varepsilon,\qquad P\mbox{-a.s.}
\]

Next, we deal with the term (\ref{eqnormalizationconstantlemma1}).
Due to (\ref{eqannealedestimationfornormalizationconstant}),
this is bounded by
\begin{eqnarray*}
&& \sum_{\Delta\in\widehat{\Pi}_{n}}\mathop{\sum
_{x\in\Delta}}_{x\leftrightarrow n}\biggl\llvert\frac{1}{\llvert
\Delta\rrvert }
\mathop{\sum_{y\in\Delta}}_{y\leftrightarrow n} \bigl[\mathbb{P}
(X_{n}=y )-\mathbb{P} (X_{n}=x) \bigr]\biggr\rrvert\bigl[f
(\sigma_{x}\omega)+1 \bigr]
\\[-3pt]
&&\qquad \leq \sum_{\Delta\in\widehat{\Pi}_{n}}\mathop{\sum
_{x\in\Delta}}_{x\leftrightarrow n}\frac{C}{n^{\sklvfrac{d+1}{2}-\delta}}
\bigl[f (
\sigma_{x}\omega)+1 \bigr]
\\[-3pt]
&&\qquad \leq\frac{C}{n^{\sfrac{1}{2}-\delta}}  \cdot\biggl(\frac
{1}{n^{\sfrac{d}{2}}}\sum
_{\llVert x-\mathbb{E}^{0}
[X_{n} ]\rrVert \leq2C_{\varepsilon}\sqrt{n}}f (\sigma_{x}\omega)
\biggr)+
\frac{C}{n^{\sfrac{1}{2}-\delta}}.
\end{eqnarray*}
By Lemma~\ref{lemRadonNikodymderivativeestimate} and an application
of Borel--Cantelli for $P$-almost every $\omega$ once $n$ is large
enough, we have
$\frac{1}{n^{\sfrac{d}{2}}}\sum_{\llVert x-\mathbb{E}^{0}
[X_{n} ]\rrVert \leq2C_{\varepsilon}\sqrt{n}}f
(\sigma_{x}\omega)\leq(8C_{\varepsilon} )^{d}$,
and thus the last term tends to zero as $n$ goes to infinity $P$-almost
surely.

Finally, for (\ref{eqnormalizationconstnatlemma2}), we recall
that Lemma~\ref{lemgeneralannealedestimations} also ensures $\mathbb
{P} (X_{n}=x )\leq Cn^{-\sfrac{d}{2}}$
for every $x\in\mathbb{Z}^{d}$ and, therefore,
\begin{eqnarray*}
\eqref{eqnormalizationconstnatlemma2} & =&\biggl\llvert\sum
_{\Delta\in
\widehat{\Pi}_{n}}\sum_{x\in\Delta}
\frac{1}{\llvert \Delta\rrvert }\sum_{y\in\Delta}\mathbb{P}
(X_{n}=y ) \bigl[f (\sigma_{x}\omega)-1 \bigr]\biggr\rrvert
\\[-3pt]
& \leq&\sum_{\Delta\in\widehat{\Pi}_{n}}\frac{1}{\llvert \Delta
\rrvert }\sum
_{y\in\Delta}\mathbb{P} (X_{n}=y )\biggl\llvert\sum
_{x\in\Delta} \bigl[f (\sigma_{x}\omega)-1 \bigr]
\biggr\rrvert
\\[-3pt]
& \leq&\frac{C}{n^{\sfrac{d}{2}}}\sum_{\Delta\in\widehat{\Pi
}_{n}}\biggl\llvert
\sum_{x\in\Delta} \bigl[f (\sigma_{x}\omega)-1
\bigr]\biggr\rrvert
\\[-3pt]
& =&\frac{C}{n^{d (\sfrac{1}{2}-\delta)}}\sum_{\Delta\in
\widehat{\Pi}_{n}}\biggl\llvert
\frac{1}{\llvert \Delta\rrvert }\sum_{x\in
\Delta} \bigl[f (
\sigma_{x}\omega)-1 \bigr]\biggr\rrvert.
\end{eqnarray*}
Lemma~\ref{lemRadonNikodymderivativeestimate} now implies that
\begin{eqnarray*}
&& P \biggl(\frac{C}{n^{d (\sfrac{1}{2}-\delta)}}\sum_{\Delta\in\widehat
{\Pi}_{n}}\biggl
\llvert\frac{1}{\llvert \Delta\rrvert }\sum_{x\in\Delta} \bigl[f (
\sigma_{x}\omega)-1 \bigr]\biggr\rrvert>\varepsilon\biggr)
\\
&&\qquad \leq P \biggl(\exists\Delta\in\widehat{\Pi}_{n}: \biggl\llvert
\frac
{1}{\llvert \Delta\rrvert }\sum_{x\in\Delta} \bigl[f (\sigma
_{x}\omega)-1 \bigr]\biggr\rrvert>\frac{\varepsilon}{C\cdot
C_{\varepsilon}^{d}} \biggr)
\\
&&\qquad \leq n^{d (\sfrac{1}{2}-\delta)}P \biggl(\biggl\llvert\frac
{1}{\llvert \Delta_{0}\rrvert }\sum
_{x\in\Delta_{0}} \bigl[f (\sigma_{x}\omega)-1 \bigr]\biggr
\rrvert>\frac{\varepsilon
}{C\cdot C_{\varepsilon}^{d}} \biggr)
\\
&&\qquad =n^{d (\sfrac{1}{2}-\delta
)}\cdot n^{-\xi(1 )}=n^{-\xi(1 )},
\end{eqnarray*}
where $\Delta_{0}$ is any choice for $\Delta_{0}\in\widehat{\Pi}_{n}$.
Therefore, by Borel--Cantelli, we have
\[
\limsup_{n\to\infty}\eqref{eqnormalizationconstnatlemma2}\leq
\limsup_{n\to\infty}\frac{C}{n^{d (\sfrac{1}{2}-\delta
)}}\sum
_{\Delta\in\widehat{\Pi}_{n}}\biggl\llvert\frac{1}{\llvert \Delta
\rrvert }\sum
_{x\in\Delta} \bigl[f (\sigma_{x}\omega)-1 \bigr]\biggr
\rrvert\leq\varepsilon.
\]
Combining all of the above, we see that $P$-almost surely
\[
\limsup_{n\to\infty}\llvert Z_{\omega,n}-1\rrvert\leq2
\varepsilon.
\]
Since $\varepsilon>0$ was arbitrary, the result follows.
\end{pf}
Before turning to the main lemma in the proof of Theorem~\ref{teoPrefactorthm},
we give two additional preliminary definitions needed in order to
construct the intermediate measures:

\begin{defn}
\label{DefsomenotationsformeasuresonZ^d} Let $\nu_{\omega}^{1}$
and $\nu_{\omega}^{2}$ be two probability measures on $\mathbb{Z}^{d}$,
which may depend on $\omega\in\Omega$.
\begin{longlist}[(2)]
\item[(1)] The\vspace*{2pt} $L^{1}$ distance of $\nu_{\omega}^{1}$ and $\nu_{\omega}^{2}$
is given by $\llVert \nu_{\omega}^{1}-\nu_{\omega}^{2}\rrVert
_{1}=\sum_{x\in\mathbb{Z}^{d}}\llvert \nu_{\omega}^{1}
(x )-\nu_{\omega}^{2} (x )\rrvert $
(note that this equals twice the total variation between $\nu_{\omega}^{1}$
and $\nu_{\omega}^{2}$).
\item[(2)] The environment-convolution of $\nu_{\omega}^{1}$ and $\nu
_{\omega}^{2}$
is a new probability measure on $\mathbb{Z}^{d}$, denoted $ (\nu
^{1}*\nu^{2} )_{\omega}$,
given by
\[
\bigl(\nu^{1}*\nu^{2} \bigr)_{\omega} (x )=\sum
_{y\in
\mathbb{Z}^{d}}\nu_{\omega}^{1} (y )
\nu_{\sigma
_{y}\omega}^{2} (x-y ).
\]
\end{longlist}
\end{defn}

We can now state the main lemma in the proof of Theorem~\ref{teoPrefactorthm}.
As already stated above, instead of comparing directly the $L^{1}$
distance of $\nu_{\omega}^{\mathrm{ann}\times\mathrm{pre},n}$ and $\nu
_{\omega}^{\mathrm{que},n}$,
that is, the annealed times the prefactor and the quenched probability
measures, appearing in Theorem~\ref{teoPrefactorthm}, we take a
more indirect approach and use two other measures as intermediaries.
This allows us to use previous results on the Radon--Nikodym derivative
and other relations between the quenched and annealed measures in
the evaluation of the $L^{1}$ distances.

\begin{lem}
\label{lemPrefactormainlemma} Fix $0<\delta<\varepsilon<\frac{1}{4}$,
and for $n\in\mathbb{N}$ abbreviate $k= \lceil n^{\epsilon
} \rceil$
and $l= \lceil n^{\delta} \rceil$. Fix a partition $\Pi$
of $\mathbb{Z}^{d}$ into boxes of side length $l$. With the notation
as in Definitions~\ref{DefLotsofmeasures}~and~\ref
{DefsomenotationsformeasuresonZ^d},
we have for $P$-almost every $\omega\in\Omega$:
\begin{longlist}[(2)]
\item[(1)]$\lim_{n\to\infty}\llVert \nu_{\omega}^{\mathrm{ann}\times\mathrm{pre},n}- (\nu^{\mathrm{ann}\times\mathrm{pre},n-k}*\nu^{\mathrm{que},k} )_{\omega}\rrVert _{1}=0$.
\item[(2)]$\lim_{n\to\infty}\llVert (\nu^{\mathrm{ann}\times\mathrm{pre},n-k}*\nu^{\mathrm{que},k} )_{\omega
}- (\nu_{\Pi}^{\mathrm{box}\mbox{-}\mathrm{que}\times\mathrm{pre},n-k}*\nu
^{\mathrm{que},k} )_{\omega}\rrVert _{1}=0$.
\item[(3)]$\lim_{n\to\infty}\llVert (\nu_{\Pi
}^{\mathrm{box}\mbox{-}\mathrm{que}\times\mathrm{pre},n-k}*\nu^{\mathrm{que},k} )_{\omega}- (\nu^{\mathrm{que},n-k}*\nu^{\mathrm{que},k} )_{\omega}\rrVert _{1}=0$.
\end{longlist}
\end{lem}

\begin{rem}
(1)~In the temporary notation from the beginning of this section, we have
$\rho_{1}= (\nu^{\mathrm{ann}\times\mathrm{pre},n-k}*\nu^{\mathrm{que},k} )_{\omega}$
and $\rho_{2}= (\nu^{\mathrm{box}\mbox{-}\mathrm{que}\times\mathrm{pre},n-k}*\nu^{\mathrm{que},k} )_{\omega}$.

(2) Note that by the Markov property of the quenched walk $\nu
_{\omega}^{\mathrm{que},\cdot}$
we have $ (\nu^{\mathrm{que},n-k}*\nu^{\mathrm{que},k}
)_{\omega}=\nu_{\omega}^{\mathrm{que},n}$.
\end{rem}

\begin{pf*}{Proof of Lemma~\ref{lemPrefactormainlemma} part (1)}
We need to show that
\begin{eqnarray*}
&& \lim_{n\to\infty}\sum_{x\in\mathbb{Z}^{d}}\biggl
\llvert\frac
{1}{Z_{n,\omega}}\mathbb{P}^{0} (X_{n}=x )f (\sigma
_{x}\omega)
\\
&&\qquad{} -\frac{1}{Z_{n-k,\omega}}\sum_{y\in\mathbb
{Z}^{d}}
\mathbb{P}^{0} (X_{n-k}=y )f (\sigma_{y}\omega
)P_{\omega}^{y} (X_{k}=x )\biggr\rrvert=0,
\end{eqnarray*}
which by Lemma~\ref{lemEvaluationofthenormalizingconstant}
(and the fact that we can restrict attention to $x\in\mathbb{Z}^{d}$
such that $\llVert x\rrVert _{1}\leq n$) is equivalent to
showing
\begin{eqnarray*}
&& \lim_{n\to\infty}\sum_{x\in[-n,n ]^{d}\cap\mathbb
{Z}^{d}}\biggl
\llvert\mathbb{P}^{0} (X_{n}=x )f (\sigma_{x}
\omega)
\\
&&\qquad{} -\sum_{y\in[-n,n ]^{d}\cap\mathbb
{Z}^{d}}\mathbb{P}^{0}
(X_{n-k}=y )f (\sigma_{y}\omega)P_{\omega}^{y}
(X_{k}=x )\biggr\rrvert=0.
\end{eqnarray*}
Denote $B_{n}= \{ x\in[-n,n ]^{d}\cap\mathbb
{Z}^{d}: \llVert x-\mathbb{E}^{0} [X_{n} ]\rrVert _{1}\leq R_{5} (n
)\sqrt{n} \} $.
By the triangle inequality,
%
\begin{eqnarray}
&& \sum_{x\in[-n,n ]^{d}\cap\mathbb{Z}^{d}}\biggl\llvert\mathbb{P}^{0}
(X_{n}=x )f (\sigma_{x}\omega)\nonumber
\\
&&\quad{} -\sum
_{y\in[-n,n ]^{d}\cap\mathbb{Z}^{d}}\mathbb{P}^{0} (X_{n-k}=y )f (
\sigma_{y}\omega)P_{\omega}^{y}
(X_{k}=x )\biggr\rrvert
\nonumber
\\
&&\qquad \leq \sum_{x\in B_{n}}\biggl\llvert\sum
_{y\in[-n,n ]^{d}\cap
\mathbb{Z}^{d}} \bigl[\mathbb{P}^{0} (X_{n}=x )-
\mathbb{P}^{0} (X_{n-k}=y ) \bigr]f (\sigma_{y}
\omega)P_{\omega}^{y} (X_{k}=x )\biggr\rrvert
\label{eqProofofmainLemmapart11}
\\
&&\quad\qquad{}+  \sum_{x\in B_{n}}\mathbb{P}^{0}
(X_{n}=x )\biggl\llvert f (\sigma_{x}\omega)-\sum
_{y\in[-n,n
]^{d}\cap\mathbb{Z}^{d}}f (\sigma_{y}\omega)P_{\omega
}^{y}
(X_{k}=x )\biggr\rrvert\label{eqProofofmainLemmapart12}
\\
&&\quad\qquad{}+  \sum_{x\in[-n,n ]^{d}\cap\mathbb{Z}^{d}\setminus
B_{n}}\biggl\llvert\mathbb{P}^{0}
(X_{n}=x )f (\sigma_{x}\omega)
\nonumber\\[-8pt]\label{eqProofofmainLemmapart10} \\[-8pt]\nonumber
&&\quad\qquad{} -\sum
_{y\in[-n,n ]^{d}\cap\mathbb
{Z}^{d}}\mathbb{P}^{0} (X_{n-k}=y )f (\sigma
_{y}\omega)P_{\omega}^{y} (X_{k}=x )
\biggr\rrvert.
\end{eqnarray}
Dealing with each of the terms separately [starting with (\ref
{eqProofofmainLemmapart11})],
by the annealed derivative estimation from Lemma~\ref
{lemgeneralannealedestimations}
\begin{eqnarray*}
\eqref{eqProofofmainLemmapart11} & \leq&\sum_{x\in B_{n}}
\frac
{Ck}{n^{\vfrac{d+1}{2}}} \biggl(\sum_{\operatorname{dist} (y,B_{n}
)\leq k}f (
\sigma_{y}\omega)P_{\omega}^{y}
(X_{k}=x ) \biggr)
\\
& \leq&\frac{Ck}{n^{\vfrac{d+1}{2}}}\sum_{\operatorname{dist}
(y,B_{n} )\leq k}f (
\sigma_{y}\omega)
\\
&=&\frac
{Ck}{n^{\sfrac {1}{2}}}\cdot\frac{1}{n^{\sfrac{d}{2}}}\sum
_{\operatorname{dist} (y,B_{n} )\leq k}f (\sigma_{y}\omega).
\end{eqnarray*}
By Lemma~\ref{lemRadonNikodymderivativeestimate} for $P$-almost
every $\omega$ and large enough $n$, we have
\[
\frac{1}{n^{\sfrac{d}{2}}}\sum_{\operatorname{dist} (y,B_{n} )\leq k}f (\sigma
_{y}\omega)\leq2R_{6} (n ).
\]
Thus, using the fact that $k=n^{\varepsilon}\ll n^{\sfrac{1}{4}}$,
it follows that the last term tends to zero $P$-almost surely as
$n$ tends to $\infty$.

Turning to deal with (\ref{eqProofofmainLemmapart12}), we
recall that by Proposition~\ref{propInvarianceoftheprefactor}
we have $f (\sigma_{x}\omega)-\sum_{y\in
[-n,n ]^{d}\cap\mathbb{Z}^{d}}\allowbreak f (\sigma
_{y}\omega)P_{\omega}^{y} (X_{k}=x )=0$
for every $x\in\mathbb{Z}^{d}$ such that $x+ [-k,k
]^{d}\cap\mathbb{Z}^{d}\subset[-n,n ]^{d}\cap\mathbb{Z}^{d}$.
In particular, denoting
$\tilde{B}_{n}=B_{n}\setminus\{ x\in\mathbb{Z}^{d}:
x+ [-k,k ]^{d}\cap\mathbb{Z}^{d}\subset[-n,n
]^{d}\cap\mathbb{Z}^{d} \} $
and using the annealed estimations from Lemma~\ref
{lemgeneralannealedestimations}
\begin{eqnarray*}
\eqref{eqProofofmainLemmapart12} & =&\sum_{x\in\tilde{B}_{n}}
\mathbb{P}^{0} (X_{n}=x )\biggl\llvert f (
\sigma_{x}\omega)-\sum_{y\in[-n,n ]^{d}\cap\mathbb{Z}^{d}}f (
\sigma_{y}\omega)P_{\omega}^{y}
(X_{k}=x )\biggr\rrvert
\\
& \leq&\sum_{x\in\tilde{B}_{n}}\mathbb{P}^{0}
(X_{n}=x )f (\sigma_{x}\omega)
\\
&&{} +\sum
_{x\in\tilde{B}_{n}}\mathbb{P}^{0} (X_{n}=x )\sum
_{y\in[-n,n ]^{d}\cap
\mathbb{Z}^{d}}f (\sigma_{y}\omega
)P_{\omega}^{y} (X_{k}=x )
\\
& \leq&\frac{C\llvert \tilde{B}_{n}\rrvert }{n^{\sfrac{d}{2}}}\cdot
\frac{1}{\llvert \tilde{B}_{n}\rrvert }\sum
_{x\in\tilde
{B}_{n}}f (\sigma_{x}\omega)+\frac{C\llvert \tilde
{B}_{n}\rrvert }{n^{\sfrac{d}{2}}}\cdot
\frac{1}{\llvert \tilde
{B}_{n}\rrvert }\sum_{x\in\tilde{B}_{n}}\sum
_{\llVert y-x\rrVert _{1}\leq k}f (\sigma_{y}\omega),
\end{eqnarray*}
where $\llvert \tilde{B}_{n}\rrvert $ is the size of $\tilde{B}_{n}$.
From the definition of $\tilde{B}_{n}$, it follows that $\llvert \tilde
{B}_{n}\rrvert \leq Ck (\sqrt{n}R_{5} (n ) )^{d-1}$
and, therefore,
\begin{eqnarray*}
\eqref{eqProofofmainLemmapart12} & =&\frac{C\llvert \tilde
{B}_{n}\rrvert }{n^{\sfrac{d}{2}}}\cdot\frac{1}{\llvert \tilde
{B}_{n}\rrvert }
\sum_{x\in\tilde{B}_{n}}f (\sigma_{x}\omega)+
\frac{C\llvert \tilde{B}_{n}\rrvert }{n^{\sfrac{d}{2}}}\cdot\frac
{1}{\llvert \tilde{B}_{n}\rrvert }\sum_{x\in\tilde{B}_{n}}
\sum_{\llVert y-x\rrVert _{1}\leq k}f (\sigma_{y}\omega)
\\
& \leq&\frac{CkR_{5}^{d-1} (n )}{n^{\sfrac {1}{2}}}\cdot\frac{1}{\llvert
\tilde{B}_{n}\rrvert }\sum
_{x\in\tilde
{B}_{n}}f (\sigma_{x}\omega)
\\
&&{}+\frac{CkR_{5}^{d-1}
(n )}{n^{\sfrac {1}{2}}}\cdot
\frac{1}{\llvert \tilde
{B}_{n}\rrvert }\sum_{x\in\tilde{B}_{n}}\sum
_{\llVert y-x\rrVert _{1}\leq k}f (\sigma_{y}\omega).
\end{eqnarray*}
Using again Lemma~\ref{lemRadonNikodymderivativeestimate}, and
the choice $k=n^{\varepsilon}=o (n^{\sfrac{1}{4}} )$, it
follows that both sums tends to zero as $n$ goes to infinity, $P$-almost
surely.

Finally, we turn to deal with (\ref{eqProofofmainLemmapart10}).
Using Lemma~\ref{lemgoodestimationforthelocation}, we have $\mathbb
{P} (X_{n}\notin B_{n} )=n^{-\xi(1 )}$.
Recalling also that $k= \lceil n^{\varepsilon} \rceil
=o (n )$,
we note that if $P_{\omega}^{y} (X_{k}=x )>0$ then $\llVert
x-y\rrVert _{1}\leq k$,
and thus for $x\in[-n,n ]^{d}\cap\mathbb
{Z}^{d}\setminus B_{n}$
and large enough $n$
\begin{eqnarray*}
\bigl\llVert y-\mathbb{E}^{0} [X_{n-k} ]\bigr\rrVert
_{1} & \geq&\bigl\llVert x-\mathbb{E}^{0} [X_{n} ]
\bigr\rrVert_{1}-\bigl\llVert\mathbb{E}^{0}
[X_{n} ]-\mathbb{E}^{0} [X_{n-k} ]\bigr\rrVert
_{1}-\llVert x-y\rrVert_{1}
\\
& \geq&\sqrt{n}R_{5} (n )-2k\geq\tfrac{1}{2}\sqrt
{n}R_{5} (n ).
\end{eqnarray*}
This, however, due to Lemma~\ref{lemgoodestimationforthelocation},
yields $\mathbb{P}^{0} (X_{n-k}=y )=n^{-\xi(1 )}$
and, therefore,
\begin{eqnarray*}
\eqref{eqProofofmainLemmapart10} & \leq&\sum_{x\in[-n,n
]^{d}\cap\mathbb{Z}^{d}\setminus B_{n}}
\mathbb{P}^{0} (X_{n}=x )f (\sigma_{x}\omega)
\\
&&{} +\sum
_{x\in
[-n,n ]^{d}\cap\mathbb{Z}^{d}\setminus B_{n}}\sum_{y\in
[-n,n ]^{d}\cap\mathbb{Z}^{d}}
\mathbb{P}^{0} (X_{n-k}=y )f (\sigma_{y}\omega
)P_{\omega
}^{y} (X_{k}=x )
\\
& \leq& n^{-\xi(1 )}\sum_{x\in[-n,n ]^{d}\cap
\mathbb{Z}^{d}\setminus B_{n}}f (
\sigma_{x}\omega)
\\
&&{} +n^{-\xi(1 )}\sum_{x\in[-n,n ]^{d}\cap
\mathbb{Z}^{d}\setminus B_{n}}
\sum_{y\in[-n,n ]^{d}\cap
\mathbb{Z}^{d}}f (\sigma_{y}\omega
)P_{\omega}^{y} (X_{k}=x )
\\
& \leq&2\cdot n^{-\xi(1 )}\sum_{x\in[-n,n
]^{d}\cap\mathbb{Z}^{d}}f (
\sigma_{x}\omega).
\end{eqnarray*}
By Lemma~\ref{lemRadonNikodymderivativeestimate}, we have $P
(\sum_{x\in[-n,n ]^{d}\cap\mathbb{Z}^{d}}f (\sigma
_{x}\omega)\leq2n^{d} )>1-n^{-\xi(1 )}$
and, therefore, by the Borel--Cantelli lemma for large enough $n$
\[
\eqref{eqProofofmainLemmapart10}\leq4n^{-\xi(1 )}\cdot n^{d}=n^{-\xi
(1 )}
\mathop{\longrightarrow}_{n\to\infty}0.
\]\upqed
\end{pf*}

\begin{pf*}{Proof of Lemma~\ref{lemPrefactormainlemma} part (2)}
Since
\begin{eqnarray*}
&& \bigl\llVert\bigl(\nu^{\mathrm{ann}\times\mathrm{pre},n-k}*\nu^{\mathrm{que},k} \bigr)_{\omega}-
\bigl(\nu^{\mathrm{box}\mbox{-}\mathrm{que}\times\mathrm{pre},n-k}*\nu^{\mathrm{que},k} \bigr)_{\omega}\bigr\rrVert
_{1}
\\
&&\qquad \leq\bigl\llVert\nu_{\omega}^{\mathrm{ann}\times\mathrm{pre},n-k}-
\nu_{\omega}^{\mathrm{box}\mbox{-}\mathrm{que} \times \mathrm{pre},n-k}\bigr\rrVert_{1},
\end{eqnarray*}
it is enough to deal with the right-hand side and show that
\begin{eqnarray*}
&& \lim_{n\to\infty}\mathop{\sum_{
x\in\mathbb{Z}^{d}}}_{
x\leftrightarrow n-k}
\biggl\llvert\frac{1}{Z_{n-k,\omega}}\mathbb{P}^{0} (X_{n-k}=x )f
(\sigma_{x}\omega)
\\
&&\qquad{} -\frac{f
(\sigma_{x}\omega)}{\sum_
{
y\in\Delta_{x},
y\leftrightarrow n-k
}f (\sigma_{y}\omega)} P_{\omega}^{0}
(X_{n-k}\in\Delta_{x} )\biggr\rrvert=0,\qquad P\mbox{-a.s.}
\end{eqnarray*}
Using Lemma~\ref{lemEvaluationofthenormalizingconstant} once
more, this is equivalent to showing
%
\begin{eqnarray}\label{eqLemmameasurespart21}
&& \lim_{n\to\infty}\mathop{\sum_{
x\in\mathbb{Z}^{d}}}_{
x\leftrightarrow n-k}
f (\sigma_{x}\omega)\biggl\llvert\mathbb{P}^{0}
(X_{n-k}=x )
\nonumber\\[-8pt]\\[-8pt]\nonumber
&&\qquad{} -P_{\omega}^{0} (X_{n-k}\in
\Delta_{x} )\frac{1}{\sum_{
y\in\Delta_{x},
y\leftrightarrow n-k}
f (\sigma_{y}\omega)}\biggr\rrvert=0,\qquad P\mbox{-a.s.}
\end{eqnarray}
Denoting $B_{n}= \{ x\in[-n,n ]^{d}\cap\mathbb
{Z}^{d}: \llVert x-\mathbb{E}^{0} [X_{n} ]\rrVert _{1}\leq
C_{\varepsilon}\sqrt{n} \} $
[with $C_{\varepsilon}$ such that $\mathbb{P}^{0} (\llVert
X_{n}-\mathbb{E}^{0} [X_{n} ]\rrVert _{1}>\frac
{C_{\varepsilon}}{2}\sqrt{n} )<\varepsilon$
for large enough $n$] and using the triangle inequality the last
sum is bounded by
%
\begin{eqnarray}
&& \mathop{\sum_{
x\in[-n,n ]\cap\mathbb{Z}^{d}\setminus B_{n}}}_{
x\leftrightarrow n-k} f (
\sigma_{x}\omega)\biggl\llvert\mathbb{P}^{0}
(X_{n-k}=x )
\nonumber\\[-8pt]\label{eqLemmameasurespart20} \\[-8pt]\nonumber
&&\qquad{} -\frac{1}{\sum_
{y\in\Delta_{x},
y\leftrightarrow n-k}
f (\sigma_{y}\omega)}P_{\omega}^{0}
(X_{n-k}\in\Delta_{x} )\biggr\rrvert
\\
&&\qquad{}+  \mathop{\sum_{x\in B_{n}}}_{
x\leftrightarrow n-k} f (
\sigma_{x}\omega)\biggl\llvert\mathbb{P}^{0}
(X_{n-k}=x )-\frac{2}{\llvert \Delta_{x}\rrvert }\mathbb{P}^{0}
(X_{n-k}\in\Delta_{x} )\biggr\rrvert\label
{eqLemmameasurespart22}
\\
&&\qquad{}+  \mathop{\sum_{x\in B_{n}}}_{
x\leftrightarrow n-k} f (
\sigma_{x}\omega)\biggl\llvert\frac{2}{\llvert \Delta
_{x}\rrvert }\mathbb{P}^{0}
(X_{n-k}\in\Delta_{x} )
\nonumber\\[-8pt]\label{eqLemmameasurespart23} \\[-8pt]\nonumber
&&\qquad{} -\frac
{1}{\sum_{
y\in\Delta_{x},
y\leftrightarrow n-k}
f (\sigma_{y}\omega)}
\mathbb{P}^{0} (X_{n-k}\in\Delta_{x} )\biggr\rrvert
\\
&&\qquad+  \mathop{\sum_{
x\in B_{n}}}_{
x\leftrightarrow n-k} f (
\sigma_{x}\omega)\biggl\llvert\frac{1}{\sum_{
y\in\Delta_{x},
y\leftrightarrow n-k}
f (\sigma_{y}\omega)}\mathbb{P}^{0}
(X_{n-k}\in\Delta_{x} )
\nonumber\\[-8pt]\label{eqLemmameasurespart24} \\[-8pt]\nonumber
&&\qquad{} -\frac{1}{\sum_{y\in\Delta_{x},
y\leftrightarrow n-k}
f (\sigma_{y}\omega)}P_{\omega}^{0}
(X_{n-k}\in\Delta_{x} )\biggr\rrvert.
\end{eqnarray}
Dealing with each of the terms separately, and starting with (\ref
{eqLemmameasurespart20}),
we have the following estimate:
\begin{eqnarray*}
\eqref{eqLemmameasurespart20}&\leq&\sum_{x\in[-n,n
]^{d}\cap\mathbb{Z}^{d}\setminus B_{n}}
\mathbb{P}^{0} (X_{n-k}=x )f (\sigma_{x}\omega)
\\
&&{} +P_{\omega
}^{0} \bigl(\bigl\llVert X_{n-k}-
\mathbb{E}^{0} [X_{n} ]\bigr\rrVert_{1}>C_{\varepsilon}
\sqrt{n} \bigr).
\end{eqnarray*}
The term $\sum_{x\in[-n,n ]^{d}\cap\mathbb
{Z}^{d}\setminus B_{n}}\mathbb{P}^{0} (X_{n-k}=x )f
(\sigma_{x}\omega)$
goes to zero as $n$ goes to infinity by the same argument used to
bound (\ref{eqnormalizationconstantlemma0}) in Lemma~\ref
{lemEvaluationofthenormalizingconstant}.
For the second term, Claim~\ref{Clmsimpleclaimannealedtoquenched}
implies that for a set of environments, with $P$ probability $>1-\sqrt
{\varepsilon}$,
for large enough $n$
\[
P_{\omega}^{0} \bigl(\bigl\llVert X_{n-k}-
\mathbb{E}^{0} [X_{n} ]\bigr\rrVert_{1}>C_{\varepsilon}
\sqrt{n} \bigr)\leq P_{\omega}^{0} \biggl(\bigl\llVert
X_{n}-\mathbb{E}^{0} [X_{n} ]\bigr\rrVert
_{1}>\frac{C_{\varepsilon}}{2}\sqrt{n} \biggr)\leq\sqrt{\varepsilon}.
\]
Since $\varepsilon>0$ was arbitrary, this proves that the first term
goes to zero as $n$ goes to infinity.

Turning to (\ref{eqLemmameasurespart22}), the annealed derivative
estimations from Lemma~\ref{lemgeneralannealedestimations} yields
\begin{eqnarray*}
\eqref{eqLemmameasurespart22} & \leq& C\cdot\mathop{\sum
_{x\in B_{n}}}_{
x\leftrightarrow n-k} f (\sigma_{x}\omega)
\frac{2}{\llvert \Delta_{x}\rrvert }\mathop{\sum_{
y\in\Delta_{x}}}_{
y\leftrightarrow n-k}
\bigl\llvert\mathbb{P} (X_{n-k}=x )-\mathbb{P} (X_{n-k}=y )
\bigr\rrvert
\\
& \leq& C\cdot\mathop{\sum_{
x\in B_{n}}}_{
x\leftrightarrow n-k} f
(\sigma_{x}\omega)\frac{2}{\llvert \Delta_{x}\rrvert }\mathop{\sum
_{y\in\Delta_{x}}}_{
y\leftrightarrow n-k} \frac{C}{ (n-k )^{\vfrac{d+1}{2}}}\llVert
x-y\rrVert
_{1}
\\
& \overset{ (1 )} {\leq}& C\cdot\sum_{x\in B_{n}}f (
\sigma_{x}\omega)\frac{1}{\llvert \Delta_{x}\rrvert }\sum_{y\in\Delta_{x}}
\frac{C}{ (n-k )^{\vfrac{d+1}{2}}}\cdot dn^{\delta}
\\
& =&\frac{C'\cdot n^{\sfrac{d}{2}+\delta}}{ (n-k )^{\vfrac
{d+1}{2}}}\cdot\biggl(\frac{1}{n^{\sfrac{d}{2}}}\sum
_{x\in
B_{n}}f (\sigma_{x}\omega) \biggr)
\stackrel{n\to\infty}{\longrightarrow}0, \qquad P\mbox{-a.s.},
\end{eqnarray*}
where for $ (1 )$ we used the fact that the side length
of every cube $\Delta$ is $n^{\delta}$ and for the limit we used
Lemma~\ref{lemRadonNikodymderivativeestimate}, the fact that
$k=n^{\epsilon}=o (n^{\sfrac{1}{4}} )$ and also that
$\delta<\epsilon<\frac{1}{2}$.

Next, we deal with (\ref{eqLemmameasurespart23}). Denoting
$\widehat{\Pi}_{n}= \{ \Delta\in\Pi: \Delta\cap B_{n}\neq
\varnothing\} $
and using the annealed derivative estimations from Lemma~\ref
{lemRadonNikodymderivativeestimate}
we conclude that
\begin{eqnarray*}
\eqref{eqLemmameasurespart23} & =&\mathop{\sum_{
x\in B_{n}}}_{
x\leftrightarrow n-k}
f (\sigma_{x}\omega)\frac{2}{\llvert \Delta_{x}\rrvert }\mathbb{P} (X_{n-k}
\in\Delta_{x} )\biggl\llvert1-\frac
{1}{\sfrac{2}{\llvert \Delta_{x}\rrvert }\sum_{
y\in\Delta_{x},
y\leftrightarrow n-k}f (\sigma_{y}\omega)}\biggr\rrvert
\\
& \leq&\frac{C}{ (n-k )^{\sfrac{d}{2}}}\mathop{\sum_{
x\in B_{n}}}_{
x\leftrightarrow n-k}
f (\sigma_{x}\omega)\biggl\llvert1-\frac{1}{\sfrac{2}{\llvert \Delta
_{x}\rrvert }\sum_{
y\in\Delta_{x},
y\leftrightarrow n-k}
f (\sigma_{y}\omega)}\biggr\rrvert
\\
& \leq& C \biggl(1-\frac{k}{n} \biggr)^{-\sfrac{d}{2}}\frac{1}{n^{\sfrac{d}{2}}}
\\
&&{}\times
\sum_{\Delta\in\widehat{\Pi}_{n}}\sum_{x\in\Delta
}f (
\sigma_{x}\omega)\biggl\llvert1-\frac{1}{\sfrac{2}{\llvert \Delta
\rrvert }\sum_{
y\in\Delta_{x},
y\leftrightarrow n-k}f (\sigma_{y}\omega)}\biggr\rrvert
\\
& =&C \biggl(1-\frac{k}{n} \biggr)^{-\sfrac{d}{2}}\frac{1}{n^{\sklfrac
{d}{2} (1-2\delta)}}\sum
_{\Delta\in\widehat{\Pi
}_{n}}\biggl\llvert\frac{1}{\llvert \Delta\rrvert }\mathop{\sum
_{
x\in\Delta}}_{
x\leftrightarrow n-k} f (\sigma_{x}\omega
)-1\biggr\rrvert.
\end{eqnarray*}
Using the same argument that was used to bound~(\ref{eqnormalizationconstnatlemma2}),
we get that the last term goes to zero as $n$ goes to infinity $P$-a.s.
Finally, we estimate (\ref{eqLemmameasurespart24}).
\begin{eqnarray*}
\eqref{eqLemmameasurespart24} & \leq&\mathop{\sum_{
x\in B_{n}}}_{
x\leftrightarrow n-k}
\frac{f (\sigma_{x}\omega)}{\sum_{y\in\Delta_{x},
y\leftrightarrow n-k}
f (\sigma_{y}\omega)}\bigl\llvert\mathbb{P}^{0} (X_{n-k}\in
\Delta_{x} )-P_{\omega}^{0} (X_{n-k}\in\Delta
_{x} )\bigr\rrvert
\\
& =&\sum_{\Delta\in\widehat{\Pi}_{n}}\bigl\llvert\mathbb{P}^{0}
(X_{n-k}\in\Delta)-P_{\omega}^{0} (X_{n-k}
\in\Delta)\bigr\rrvert.
\end{eqnarray*}
The last term, however, is bounded by $Cn^{-\sklfrac{1}{3}\delta}$ by
Proposition~\ref{PropLorentztransformation-1} for $P$-almost every
$\omega$ and large enough $n$, and thus goes to zero as $n$ goes
to infinity.
\end{pf*}

Part (3) of Lemma~\ref{lemPrefactormainlemma} will follow from
the following more general lemma.

\begin{lem}
\label{lemLemmaforpart3} Let $x,y\in\mathbb{Z}^{d}$ satisfy
$\llVert x-y\rrVert _{1}\leq n^{\theta}$ for some $\theta
<\frac{1}{2}$.
Then the set of environments for which
\[
\bigl\llvert P_{\omega}^{x} (X_{n}=z
)-P_{\omega}^{y} (X_{n}=z )\bigr\rrvert
=n^{-\xi(1 )}\qquad\forall z\in\mathbb{Z}^{d}
\]
has $P$ probability $\geq1-n^{-\xi(1 )}$.
\end{lem}

\begin{pf}
Fix $\theta<\theta'<1$ such that $\theta'<\frac{d+1}{2}\theta$,
$M\in\mathbb{N}$ and a partition $\Pi$ of $\mathbb{Z}^{d}$ into
boxes of side length $M$. By Theorem~\ref{teoTotalvariationonsmallboxes},
if $M$ is large enough, then the event
\begin{eqnarray*}
G (n,M ) &=& \biggl\{ \omega\in\Omega: \sum_{\Delta\in
\Pi}
\bigl\llvert P_{\omega}^{w} (X_{ \lceil n^{\theta}
\rceil}\in\Delta)-
\mathbb{P}^{w} (X_{ \lceil
n^{\theta} \rceil}\in\Delta)\bigr\rrvert<
\frac{1}{8},
\\
&&{} \forall w \mbox{ s.t. }\llVert w-x\rrVert_{1}\leq
n^{2} \biggr\}
\end{eqnarray*}
satisfies $P (G (n,M ) )=1-n^{-\xi(1 )}$.
In particular, using Lemma~\ref{lemgeneralannealedestimations},
whenever $\llVert y-x\rrVert _{1}\leq n^{\theta'}$, for large
enough $n$ we have
\begin{eqnarray*}
&& \bigl\llvert P_{\omega}^{x} (X_{ \lceil n^{\theta} \rceil
}\in\Delta
)-P_{\omega}^{y} (X_{ \lceil n^{\theta
} \rceil}\in\Delta)\bigr\rrvert
\\
&&\qquad \leq \bigl\llvert P_{\omega
}^{x} (X_{ \lceil n^{\theta} \rceil}\in\Delta
)-\mathbb{P}^{x} (X_{ \lceil n^{\theta} \rceil}\in\Delta)\bigr\rrvert
+\bigl
\llvert\mathbb{P}^{x} (X_{ \lceil
n^{\theta} \rceil}\in\Delta)-
\mathbb{P}^{y} (X_{ \lceil n^{\theta} \rceil}\in\Delta)\bigr\rrvert
\\
&&\quad\qquad{}+ \bigl\llvert P_{\omega}^{y} (X_{ \lceil n^{\theta}
\rceil}\in\Delta)-
\mathbb{P}^{y} (X_{ \lceil
n^{\theta} \rceil}\in\Delta)\bigr\rrvert
\\
&&\qquad \leq\frac{1}{4}+\frac{Cn^{\theta'}}{n^{\sklvfrac{d+1}{2}\theta
}}=\frac{1}{4}+Cn^{\theta'-\sklvfrac{d+1}{2}\theta}<
\frac{1}{2}.
\end{eqnarray*}
Consequently,\vspace*{1pt} there exists a coupling of $P_{\omega}^{x}
(X_{ \lceil n^{\theta} \rceil}\in\cdot)$
and $P_{\omega}^{y} (X_{ \lceil n^{\theta} \rceil
}\in\cdot)$
on $\Pi\times\Pi$ denoted $\tilde{\Xi}^{x,y}=\tilde{\Xi
}_{n,\theta,\omega}^{x,y}$
such that $\tilde{\Xi}^{x,y} ( \{ (\Delta,\Delta
): \Delta\in Q \} )>\frac{1}{2}$.
Using the uniform ellipticity, and the last coupling we can construct
a new coupling $\Xi_{1}^{x,y}$ of $P_{\omega}^{x} (X_{
\lceil n^{\theta} \rceil}=\cdot)$
and $P_{\omega}^{y} (X_{ \lceil n^{\theta} \rceil
}=\cdot)$
on $\mathbb{Z}^{d}\times\mathbb{Z}^{d}$ such that $\Xi
_{1}^{x,y} (\Lambda)\geq\frac{1}{2}\eta^{2dM}$,
where $\Lambda= \{ (z,z ): z\in\mathbb
{Z}^{d} \} $
(for a more detailed explanation on the construction, see the proof
of Lemma~\ref{Lemmathereexistsnonsingular}). Next, for $k\geq2$
we construct inductively a new coupling of $P_{\omega}^{x}
(X_{k \lceil n^{\theta} \rceil}=\cdot)$
and $P_{\omega}^{y} (X_{k \lceil n^{\theta} \rceil
}=\cdot)$
on $\mathbb{Z}^{d}\times\mathbb{Z}^{d}$ such that $\Xi_{
\lceil\log^{2}n \rceil\cdot\lceil n^{\theta'}
\rceil}^{x,y} (\Delta)=1-n^{-\xi(1 )}$.
The construction goes as follows: first, note that if $a,b\in\mathbb{Z}^{d}$
are any pair of points such that $\llVert a-x\rrVert
_{1},\llVert b-x\rrVert _{1}\leq n^{2}$,
then by the same reasoning, we can construct a coupling of $P_{\omega
}^{a} (X_{ \lceil n^{\theta} \rceil}=\cdot)$
and $P_{\omega}^{b} (X_{ \lceil n^{\theta} \rceil
}=\cdot)$
on $\mathbb{Z}^{d}\times\mathbb{Z}^{d}$, denoted $\Xi_{1}^{a,b}$,
such that $\Xi_{1}^{a,b} (\Lambda)>\frac{1}{2}\eta^{2dM}$.
Next, assuming the coupling $\Xi_{k-1}^{x,y}$ was constructed we
define $\Xi_{k}^{x,y}$ via the following procedure: choose a pair
of points $ (a,b )$ according to the previous coupling $\Xi
_{k-1}^{x,y}$.
If $a=b$, couple the random walks together (to go along the same
path) for additional $ \lceil n^{\theta} \rceil$ steps.
If $a\neq b$ and $\llVert a-b\rrVert \leq n^{\theta'}$, couple
the random walks using the coupling $\Xi_{1}^{a,b}$. Finally, if
$\llVert a-b\rrVert >n^{\theta'}$ we let the random walks
evolve independently. Formally, this is given by
\begin{eqnarray*}
\Xi_{k}^{x,y} (w_{1},w_{2} ) & =&\sum
_{a,b\in\mathbb
{Z}^{d}}\Xi_{k-1}^{x,y} (a,b )
\bigl[\ind_{a=b}\ind_{w_{1}=w_{2}}P_{\omega}^{a}
(X_{ \lceil n^{\theta}
\rceil}=w_{1} )
\\[-3pt]
&&{} +\ind_{0<\llVert a-b\rrVert _{1}\leq
n^{\theta'}}\cdot
\Xi_{1}^{a,b} (w_{1},w_{2} )
\\[-3pt]
&&{} +\ind_{\llVert a-b\rrVert _{1}>n^{\theta'}}\cdot P_{\omega
}^{a}
(X_{n^{\theta}}=w_{1} )P_{\omega}^{b}
(X_{n^{\theta}}=w_{2} ) \bigr].
\end{eqnarray*}
It is not hard to verify that this indeed defines a coupling of
$P_{\omega}^{x} (X_{k \lceil n^{\theta} \rceil
}=\cdot)$
and $P_{\omega}^{y} (X_{k \lceil n^{\theta} \rceil
}=\cdot)$,
and that in fact by the definition of $\theta'$ and the assumption
$\llVert x-y\rrVert _{1}\leq n^{\theta}$ that $\Xi
_{k}^{x,y} (a,b )=0$
whenever $\llVert a-b\rrVert _{1}\geq n^{\theta'}$ and $n$
is large enough. Therefore,
\begin{eqnarray*}
\Xi_{k}^{x,y} (\Lambda) & =&\sum
_{w\in\mathbb
{Z}^{d}}\sum_{a,b\in\mathbb{Z}^{d}}
\Xi_{k-1}^{x,y} (a,b ) \bigl[\ind_{a=b}
\ind_{w=w}P_{\omega}^{a} (X_{ \lceil
n^{\theta} \rceil}=w )
\\[-3pt]
&&{} +
\ind_{0<\llVert a-b\rrVert _{1}\leq n^{\theta'}}\cdot\Xi_{1}^{a,b}
(w,w )
\\[-3pt]
&&{} +\ind_{\llVert a-b\rrVert >n^{\theta'}}\cdot P_{\omega
}^{a}
(X_{ \lceil n^{\theta} \rceil}=w )P_{\omega}^{b} (X_{ \lceil n^{\theta}
\rceil
}=w )
\bigr]
\\[-3pt]
& \geq&\sum_{w\in\mathbb{Z}^{d}}\sum_{a,b\in\mathbb{Z}^{d}}
\Xi_{k-1}^{x,y} (a,b ) \bigl[\ind_{a=b}P_{\omega}^{a}
(X_{ \lceil n^{\theta} \rceil}=w )
\\[-3pt]
&&{}+\ind_{0<\llVert a-b\rrVert
_{1}\leq n^{\theta'}}\cdot\Xi_{1}^{a,b}
(w,w ) \bigr]
\\[-3pt]
& =&\Xi_{k-1}^{x,y} (\Lambda)+\sum
_{a,b\in\mathbb
{Z}^{d}}\Xi_{k-1}^{x,y} (a,b )
\ind_{0<\llVert
a-b\rrVert _{1}\leq n^{\theta'}}\cdot\Xi_{1}^{a,b} (\Delta)
\\[-3pt]
& \geq&\Xi_{k-1}^{x,y} (\Lambda)+\sum
_{a,b\in\mathbb
{Z}^{d}}\Xi_{k-1}^{x,y} (a,b )
\ind_{0<\llVert
a-b\rrVert _{1}\leq n^{\theta'}}\cdot\frac{1}{2}\eta^{2dM}
\\[-3pt]
& =&\Xi_{k-1}^{x,y} (\Lambda)+\frac{1}{2}
\eta^{2dM}\Xi_{k-1}^{x,y} \bigl( \bigl\{ (a,b ):
0<\llVert a-b\rrVert_{1}\leq n^{\theta'} \bigr\} \bigr)
\\[-3pt]
& =&\Xi_{k-1}^{x,y} (\Lambda)+\frac{1}{2}
\eta^{2dM} \bigl(1-\Xi_{k-1}^{x,y} (\Lambda)-
\Xi_{k-1}^{x,y} \bigl( \bigl\{ (a,b ): \llVert a-b\rrVert
_{1}>n^{\theta
'} \bigr\} \bigr) \bigr)
\\[-3pt]
& \geq&\Xi_{k-1}^{x,y} (\Lambda)+\frac{1}{2}\eta
^{2dM} \bigl(1-\Xi_{k-1}^{x,y} (\Lambda) \bigr).
\end{eqnarray*}
Fixing $r>0$, as long as $\Xi_{j}^{x,y} (\Lambda)<1-n^{-r}$
for $j\leq k$ the last inequality gives
\[
\frac{\Xi_{k}^{x,y} (\Lambda)}{\Xi_{k-1}^{x,y}
(\Lambda)}\geq1+\frac{1}{2}\eta^{2dM}\cdot
\frac{n^{-r}}{1-n^{-r}},
\]
which implies that $\Xi_{k}^{x,y} (\Lambda)$ grows exponentially
in this regime. Hence, for some $C=C (r )<\infty$ we have
$\Xi_{ \lceil C\log n \rceil}^{x,y} (\Lambda
)>1-n^{-r}$
and in particular since $ \{ \Xi_{k}^{x,y} (\Lambda
) \} _{k\geq1}$
is nondecreasing $\Xi_{ \lceil\log^{2}n \rceil
}^{x,y} (\Lambda)>\Xi_{ \lceil C\log n \rceil
}^{x,y} (\Lambda)>1-n^{-r}$
for every $r\in\mathbb{N}$ and large enough $n$, that is, $\Xi
_{ \lceil\log^{2}n \rceil}^{x,y} (\Lambda
)=1-n^{-\xi(1 )}$.
We can\vspace*{1pt} now construct a coupling of $P_{\omega}^{x} (X_{n}=\cdot
)$
and $P_{\omega}^{y} (X_{n}=\cdot)$ on $\mathbb{Z}^{d}$,
by using the coupling $\Xi_{ \lceil\log^{2}n \rceil}^{x,y}$
until time $ \lceil\log^{2}n \rceil\cdot\lceil
n^{\theta} \rceil$.
Formally, if they coincided until time $ \lceil\log^{2}n
\rceil\cdot\lceil n^{\theta} \rceil$
we couple them together (to go along the same path) until time $n$,
or if not to move independently until time $n$. This yields a coupling
such that $\sum_{z\in\mathbb{Z}}\llvert P_{\omega}^{x}
(X_{n}=z )-P_{\omega}^{y} (X_{n}=z )\rrvert <n^{-\xi
(1 )}$
as required.
\end{pf}

\begin{pf*}{Proof of Lemma~\ref{lemPrefactormainlemma} part (3)}
Written explicitly (after some manipulations)
%
\begin{eqnarray}\label{eqLemmameasurespart31}
&& \bigl\llVert\bigl(\nu^{\mathrm{box}\mbox{-}\mathrm{que}\times \mathrm{pre},n-k}*\nu^{\mathrm{que},k}
\bigr)_{\omega}- \bigl(\nu^{\mathrm{que},n-k}*\nu^{\mathrm{que},k}
\bigr)_{\omega}\bigr\rrVert
\nonumber
\\
&&\qquad =  \mathop{\sum_{
x\in\mathbb{Z}^{d}}}_{
x\leftrightarrow n} \biggl
\llvert\sum_{\Delta\in\Pi}\mathop{\sum
_{y\in\Delta}}_{
y\leftrightarrow n-k} P_{\omega}^{0}
(X_{n-k}\in\Delta)P_{\omega}^{y}
(X_{k}=x ) \nonumber
\\
&&\quad\qquad{}\times \biggl[\frac{f (\sigma_{y}\omega
)}{\sum_{z\in\Delta} f (\sigma
_{z}\omega)}-P_{\omega}^{0}
(X_{n-k}=y | X_{n-k}\in\Delta) \biggr]\biggr\rrvert
\\
&&\qquad\leq \mathop{\sum_{
x\in\mathbb{Z}^{d}}}_{
x\leftrightarrow n} \sum
_{\Delta\in\Pi}P_{\omega}^{0}
(X_{n-k}\in\Delta)\biggl\llvert\mathop{\sum
_{y\in\Delta}}_{
y\leftrightarrow n-k} P_{\omega}^{y}
(X_{k}=x ) \biggl[\frac{f (\sigma
_{y}\omega)}{\sum_{z\in\Delta}f (\sigma_{z}\omega)}\nonumber
\\
&&\quad\qquad{} -P_{\omega}^{0}
(X_{n-k}=y|  X_{n-k}\in\Delta) \biggr]\biggr\rrvert.\nonumber
\end{eqnarray}
Noticing that for every $\Delta\in\Pi$ and $x\in\mathbb{Z}^{d}$
we have
\[
\mathop{\sum_{
y\in\Delta}}_{
y\leftrightarrow n-k} \biggl(
\frac{1}{\llvert \Delta\rrvert }\sum_{w\in\Delta}P_{\omega
}^{w}
(X_{k}=x ) \biggr) \biggl[\frac{f (\sigma
_{y}\omega)}{\sum_{z\in\Delta}f (\sigma_{z}\omega)}-P_{\omega}^{0}
(X_{n-k}=y| X_{n-k}\in\Delta) \biggr]=0,
\]
it follows that (\ref{eqLemmameasurespart31}) equals
\begin{eqnarray*}
&& \mathop{\sum_{x\in\mathbb{Z}^{d}}}_{x\leftrightarrow n}\sum
_{\Delta\in\Pi}P_{\omega}^{0} (X_{n-k}\in
\Delta)\biggl\llvert\mathop{\sum_{y\in\Delta}}_{y\leftrightarrow n-k}
\biggl[P_{\omega}^{y} (X_{k}=x )- \biggl(
\frac
{2}{\llvert \Delta\rrvert } \mathop{\sum_{w\in\Delta
}}_{w\leftrightarrow n}
P_{\omega}^{w} (X_{k}=x ) \biggr) \biggr]
\\
&&\quad{}\times \biggl[\frac{f (\sigma
_{y}\omega)}{\sum_{
z\in\Delta,
z\leftrightarrow n-k}
f (\sigma_{z}\omega)}-P_{\omega}^{0} (X_{n-k}=y
|  X_{n-k}\in\Delta) \biggr]\biggr\rrvert
\\
&&\qquad =  \mathop{\sum_{x\in\mathbb{Z}^{d}}}_{x\leftrightarrow n}\sum
_{\Delta\in\Pi}P_{\omega}^{0}
(X_{n-k}\in\Delta)\biggl\llvert\mathop{\sum
_{y\in\Delta}}_{y\leftrightarrow n-k} \biggl[\frac{2}{\llvert \Delta
\rrvert } \mathop{
\sum_{w\in\Delta}}_{w\leftrightarrow n} \bigl(P_{\omega}^{y}
(X_{k}=x )-P_{\omega
}^{w} (X_{k}=x )
\bigr) \biggr]
\\
&&\quad\qquad{}\times \biggl[\frac{f (\sigma
_{y}\omega)}{\sum_{
z\in\Delta,
z\leftrightarrow n-k}f (\sigma_{z}\omega)}-P_{\omega}^{0}
(X_{n-k}=y| X_{n-k}\in\Delta) \biggr]\biggr\rrvert
\\
&&\qquad\leq \mathop{\sum_{x\in\mathbb{Z}^{d}}}_{x\leftrightarrow n}\sum
_{\Delta\in\Pi}P_{\omega}^{0}
(X_{n-k}\in\Delta) \mathop{\sum_{y\in\Delta}}_{y\leftrightarrow n-k}
\biggl\llvert\frac{2}{\llvert \Delta\rrvert } \mathop{\sum_{w\in
\Delta}}_{w\leftrightarrow n}
\bigl(P_{\omega}^{y} (X_{k}=x )-P_{\omega
}^{w}
(X_{k}=x ) \bigr)\biggr\rrvert
\\
&&\quad\qquad{}\times \biggl\llvert\frac{f (\sigma
_{y}\omega)}{\sum_{
z\in\Delta,
z\leftrightarrow n-k}
f (\sigma_{z}\omega)}-P_{\omega}^{0}
(X_{n-k}=y| X_{n-k}\in\Delta)\biggr\rrvert
\\
&&\qquad=  \sum_{\Delta\in\Pi}\mathop{\mathop{\sum
_{x\in\mathbb{Z}^{d}}}_{\operatorname{dist} (x,\Delta)\leq k}}
_{x\leftrightarrow n}P_{\omega}^{0}
(X_{n-k}\in\Delta) \mathop{\sum_{y\in\Delta}}_
{y\leftrightarrow n-k}
\biggl\llvert\frac{2}{\llvert \Delta\rrvert }
\mathop{\sum_{w\in
\Delta}}_{w\leftrightarrow n}
\bigl(P_{\omega}^{y} (X_{k}=x )
\\
&&\quad\qquad{}-P_{\omega
}^{w}
(X_{k}=x ) \bigr)\biggr\rrvert\biggl\llvert\frac{f (\sigma
_{y}\omega)}{\sum_{
z\in\Delta,
z\leftrightarrow n-k}
f (\sigma_{z}\omega)}-P_{\omega}^{0}
(X_{n-k}=y| X_{n-k}\in\Delta)\biggr\rrvert.
\end{eqnarray*}
By Lemma~\ref{lemLemmaforpart3} applied with $k$ as $n$
and $\theta=\frac{\delta}{\epsilon}$, this is bounded by
\begin{eqnarray*}
&& \sum_{\Delta\in\Pi} \mathop{\mathop{\sum
_{x\in\mathbb{Z}^{d}}}_{\operatorname{dist} (x,\Delta)\leq k}}_{
x\leftrightarrow n}P_{\omega}^{0}
(X_{n-k}\in\Delta)
\\
&&\quad{}\times\mathop{\sum_{
y\in\Delta}}_{
y\leftrightarrow n-k}
k^{-\xi(1 )}\biggl\llvert \biggl[\frac{f (\sigma
_{y}\omega)}{\sum_{
z\in\Delta,
z\leftrightarrow n-k}
f (\sigma_{z}\omega)}-P_{\omega}^{0}
(X_{n-k}=y| X_{n-k}\in\Delta) \biggr]\biggr\rrvert
\\
&&\qquad\leq \sum_{\Delta\in\Pi}\mathop{\sum
_{x\in\mathbb{Z}^{d}}}_{\operatorname{dist} (x,\Delta)\leq k}P_{\omega}^{0}
(X_{n-k}\in\Delta)2k^{-\xi
(1 )}
\\
&&\qquad\leq 2k^{-\xi(1 )}\cdot(2k+l )^{d}\mathop{\longrightarrow}_{n\to\infty}0.
\end{eqnarray*}\upqed
\end{pf*}

\begin{pf*}{Proof of Theorem~\ref{teoPrefactorthm}}
Combining all the claims of Lemma~\ref{lemPrefactormainlemma}
and using the triangle inequality, we conclude that
\[
\lim_{n\to\infty}\sum_{x\in\mathbb{Z}^{d}}\biggl
\llvert\frac
{1}{Z}_{\omega,n}\mathbb{P}^{0}
(X_{n}=x )f (\sigma_{x}\omega)-\sum
_{y\in\mathbb{Z}^{d}}P_{\omega}^{0} (X_{n-k}=y
)P_{\sigma_{y}\omega}^{0} (X_{k}=x-y )\biggr\rrvert=0.
\]
Using the Markov property of the quenched law and Lemma~\ref
{lemEvaluationofthenormalizingconstant},
this implies
\[
\lim_{n\to\infty}\sum_{x\in\mathbb{Z}^{d}}\bigl\llvert
\mathbb{P}^{0} (X_{n}=x )f (\sigma_{x}\omega
)-P_{\omega}^{0} (X_{n}=x )\bigr\rrvert=0,
\]
and completes the proof.
\end{pf*}\vspace*{-12pt}

\setcounter{teo}{0}
\setcounter{equation}{0}

\begin{appendix}\label{appendi}
\section*{Appendix}\label{sec8}

\subsection{Annealed local CLT}\label{sec8.1}\label{appendixsubAnnealedlocalCLT}

\begin{prop}\label{propA.1}
Let $d\geq2$ and assume $P$ is uniformly elliptic, i.i.d. and satisfies
$ (\mathscr{P} )$. Then
\begin{eqnarray*}
&& \lim_{n\to\infty}\mathop{\sum_{
x\in\mathbb{Z}^{d}}}_{x\leftrightarrow n}
\biggl\llvert\mathbb{P}^{0} (X_{n}=x )
\\
&&\qquad{} -\frac{2}{ (2\pi
n )^{\sfrac{d}{2}}\sqrt{\det\Sigma}}
\exp\biggl(-\frac
{1}{2n} \bigl(x-\mathbb{E}^{0}
[X_{n} ] \bigr)^{T}\Sigma^{-1} \bigl(x-
\mathbb{E}^{0} [X_{n} ] \bigr) \biggr)\biggr\rrvert=0,
\end{eqnarray*}
where $\Sigma$ is the covariance matrix of $X_{\tau_{2}-\tau_{1}}$.
\end{prop}

The crucial ingredient in the proof is the annealed CLT proved by
Sznitman in \cite{Sz01} for uniformly elliptic i.i.d. random walks
in random environments satisfying condition $ (\mathscr{P} )$.
Then standard annealed derivative estimations to approximate the
value of the annealed in a given point by its average on a box of
side length $\varepsilon n^{\sfrac {1}{2}}$ gives the required result.

\begin{pf*}{Proof of Proposition \ref{propA.1}}
Fix $\varepsilon,\delta>0$ and let $\Pi_{n}^{ (\varepsilon
)}$
be a partition of $\mathbb{Z}^{d}$ into boxes of side length $
\lceil\varepsilon n^{\sfrac {1}{2}} \rceil$.
Let $C_{\delta}>0$ be a constant such that (due to Lemma~\ref
{lemgoodestimationforthelocation})
$\mathbb{P}^{0} (\llVert X_{n}-\mathbb{E}^{0}
[X_{n} ]\rrVert _{\infty}>C_{\delta}\sqrt{n}
)<\delta$
and denote by $\widehat{\Pi}_{n}^{ (\varepsilon,\delta)}$
the family of boxes in $\Pi_{n}^{ (\varepsilon)}$ intersecting
$ \{ x\in\mathbb{Z}^{d}: \llVert x-\mathbb{E}^{0}
[X_{n} ]\rrVert _{\infty}\leq C_{\delta}\sqrt{n} \} $.
Then
%
\begin{eqnarray}
&& \mathop{\sum_{x\in\mathbb{Z}^{d}}}_{x\leftrightarrow n}\biggl\llvert
\mathbb{P}^{0} (X_{n}=x )\nonumber
\\
&&\quad{}-\frac{2}{ (2\pi
n )^{\sfrac{d}{2}}\sqrt{\det\Sigma}}\exp\biggl(-
\frac
{1}{2n} \bigl(x-\mathbb{E}^{0} [X_{n} ]
\bigr)^{T}\Sigma^{-1} \bigl(x-\mathbb{E}^{0}
[X_{n} ] \bigr) \biggr)\biggr\rrvert
\nonumber
\\
&&\qquad=  \sum_{\Delta\in\Pi_{n}^{ (\varepsilon)}\setminus
\widehat{\Pi}_{n}^{ (\varepsilon,\delta)}}\mathop{\sum
_{x\in\mathbb{Z}^{d}}}_{x\leftrightarrow n}\biggl\llvert\mathbb{P}^{0}
(X_{n}=x )
\nonumber\\[-8pt]\label{eqannealedlclt1} \\[-8pt]\nonumber
&&\quad\qquad{} -\frac{2}{ (2\pi
n )^{\sfrac{d}{2}}\sqrt{\det\Sigma}}\exp\biggl(-\frac
{1}{2n}
\bigl(x-\mathbb{E}^{0} [X_{n} ] \bigr)^{T}\Sigma
^{-1} \bigl(x-\mathbb{E}^{0} [X_{n} ] \bigr)
\biggr)\biggr\rrvert
\\
&&\quad\qquad{}+  \sum_{\Delta\in\widehat{\Pi}_{n}^{ (\varepsilon,\delta
)}}\mathop{\sum
_{x\in\mathbb{Z}^{d}}}_{x\leftrightarrow n} \biggl\llvert\mathbb{P}^{0}
(X_{n}=x )
\nonumber\\[-8pt]\label{eqannealedlclt2}\\[-8pt]\nonumber
&&\quad\qquad{} -\frac{2}{ (2\pi
n )^{\sfrac{d}{2}}\sqrt{\det\Sigma}}\exp\biggl(-\frac
{1}{2n}
\bigl(x-\mathbb{E}^{0} [X_{n} ] \bigr)^{T}\Sigma
^{-1} \bigl(x-\mathbb{E}^{0} [X_{n} ] \bigr)
\biggr)\biggr\rrvert.
\end{eqnarray}
We estimate each of the term separately starting with (\ref{eqannealedlclt1}).
Due to the choice of~$C_{\delta}$
\begin{eqnarray*}
\eqref{eqannealedlclt1} & \leq&\sum_{x: \llVert x-\mathbb
{E}^{0} [X_{n} ]\rrVert _{\infty}>C_{\delta}\sqrt
{n}}
\mathbb{P}^{0} (X_{n}=x )
\\
&&{}+\frac{2}{ (2\pi n
)^{\sfrac{d}{2}}\sqrt{\det\Sigma}}\exp\biggl(-
\frac{1}{2n} \bigl(x-\mathbb{E}^{0} [X_{n} ]
\bigr)^{T}\Sigma^{-1} \bigl(x-\mathbb{E}^{0}
[X_{n} ] \bigr) \biggr)
\\
&\leq& \delta+C\cdot\exp\biggl(-\frac{c}{2}C_{\delta}^{2}
\biggr)\cdot
\end{eqnarray*}
Thus, by increasing $C_{\delta}$ we can ensure that (\ref{eqannealedlclt1})
is bounded by $2\delta$.

Turning to deal with (\ref{eqannealedlclt2}), we estimate each
of the terms inside the absolute value by an average on the appropriate
box containing it. Due to the annealed derivative estimations from
Lemma~\ref{lemgeneralannealedestimations}, we have
\[
\biggl\llvert\mathbb{P}^{0} (X_{n}=x )-\frac{2}{ (
\lceil\varepsilon n^{\sfrac {1}{2}} \rceil)^{d}}
\mathbb{P}^{0} (X_{n}\in\Delta)\biggr\rrvert\leq
\frac{C\varepsilon
n^{\sfrac {1}{2}}}{n^{\vfrac{d+1}{2}}}=\frac{C\varepsilon}{n^{\sfrac{d}{2}}},
\]
for every $\Delta\in\Pi_{n}^{ (\varepsilon)}$ and every
$x\in\Delta$ such that $x\leftrightarrow n$. In addition, for every
$\Delta\in\Pi_{n}^{ (\varepsilon)}$ and every $x\in
\Delta$
\begin{eqnarray*}
&& \biggl\llvert\exp\biggl(-\frac{1}{2n} \bigl(x-\mathbb{E}^{0}
[X_{n} ] \bigr)^{T}\Sigma^{-1} \bigl(x-
\mathbb{E}^{0} [X_{n} ] \bigr) \biggr)
\\[-2pt]
&&\quad{}-\frac{1}{ ( \lceil
\varepsilon n^{\sfrac {1}{2}} \rceil)^{d}}
\int_{\Delta
}\exp\biggl(-\frac{1}{2n} \bigl(y-
\mathbb{E}^{0} [X_{n} ] \bigr)^{T}
\Sigma^{-1} \bigl(y-\mathbb{E}^{0} [X_{n} ] \bigr)
\biggr)\,dy\biggr\rrvert
\\[-2pt]
&&\qquad \leq \exp\biggl(-\frac{1}{2n} \bigl(x-\mathbb{E}^{0}
[X_{n} ] \bigr)^{T}\Sigma^{-1} \bigl(x-
\mathbb{E}^{0} [X_{n} ] \bigr) \biggr)
\\[-2pt]
&&\quad\qquad{}\times
\frac{1}{ ( \lceil
\varepsilon n^{\sfrac {1}{2}} \rceil)^{d}}\biggl\llvert\int_{\Delta
}1-\exp\biggl(-
\frac{1}{2n} (y-x )^{T}\Sigma^{-1} (y-x ) \biggr)\,dy
\biggr\rrvert
\\[-2pt]
&&\qquad\leq \exp\biggl(-\frac{1}{2n} \bigl(x-\mathbb{E}^{0}
[X_{n} ] \bigr)^{T}\Sigma^{-1} \bigl(x-
\mathbb{E}^{0} [X_{n} ] \bigr) \biggr)\cdot C
\varepsilon^{2}\leq C\varepsilon^{2}.
\end{eqnarray*}
Combining the last two estimation gives
\begin{eqnarray*}
\eqref{eqannealedlclt1}&\leq&\sum_{\Delta\in\widehat{\Pi
}_{n}^{ (\varepsilon,\delta)}} \mathop{
\sum_{x\in\mathbb{Z}^{d}}}_{x\leftrightarrow n} \biggl\llvert
\mathbb{P}^{0} (X_{n}=x )-\frac{2}{ (
\lceil\varepsilon n^{\sfrac {1}{2}} \rceil)^{d}}\mathbb
{P}^{0} (X_{n}\in\Delta)\biggr\rrvert
\\[-2pt]
&&{} +\sum_{\Delta\in\widehat{\Pi}_{n}^{ (\varepsilon,\delta
)}} \mathop{\sum
_{x\in\mathbb{Z}^{d}}}_{x\leftrightarrow n} \biggl\llvert\frac{2}{ (
\lceil\varepsilon n^{\sfrac {1}{2}} \rceil)^{d}}
\mathbb{P}^{0} (X_{n}\in\Delta)-\frac{2}{ (2\pi n )^{\sfrac{d}{2}}\sqrt
{\det
\Sigma}}
\\[-2pt]
&&{}\times
\frac{1}{ ( \lceil\varepsilon n^{\sfrac {1}{2}} \rceil)^{d}}\int_{\Delta}\exp\biggl(-\frac
{1}{2n}
\bigl(y-\mathbb{E}^{0} [X_{n} ] \bigr)^{T}\Sigma
^{-1} \bigl(y-\mathbb{E}^{0} [X_{n} ] \bigr)
\biggr)\,dy\biggr\rrvert
\\[-2pt]
&&{} +\sum_{\Delta\in\widehat{\Pi}_{n}^{ (\varepsilon,\delta
)}} \mathop{\sum
_{x\in\mathbb{Z}^{d}}}_{x\leftrightarrow n} \frac{2}{ (2\pi n )^{\sfrac{d}{2}}\sqrt{\det
\Sigma}}
\\[-2pt]
&&{}\times \biggl\llvert
\frac{1}{ ( \lceil\varepsilon n^{\sfrac {1}{2}} \rceil)^{d}}\int_{\Delta}\exp\biggl(-\frac
{1}{2n}
\bigl(y-\mathbb{E}^{0} [X_{n} ] \bigr)^{T}\Sigma
^{-1} \bigl(y-\mathbb{E}^{0} [X_{n} ] \bigr)
\biggr)\,dy
\\[-2pt]
&&{}-\exp\biggl(-\frac{1}{2n} \bigl(x-\mathbb{E}^{0}
[X_{n} ] \bigr)^{T}\Sigma^{-1} \bigl(x-
\mathbb{E}^{0} [X_{n} ] \bigr) \biggr)\biggr\rrvert
\\[-2pt]
& \leq&\sum_{\Delta\in\widehat{\Pi}_{n}^{ (\varepsilon,\delta)}} \mathop
{\sum
_{x\in\mathbb{Z}^{d}}}_{x\leftrightarrow n} \frac{C\varepsilon}{n^{\sfrac{d}{2}}}+\sum
_{\Delta\in\widehat
{\Pi}_{n}^{ (\varepsilon,\delta)}}\mathop{\sum_{
x\in\Delta}}_{
x\leftrightarrow n}
\frac{C\varepsilon^{2}}{ (2\pi n )^{\sfrac{d}{2}}\sqrt
{\det\Sigma}}
\\[-2pt]
&&{} +\sum_{\Delta\in\widehat{\Pi}_{n}^{ (\varepsilon,\delta
)}} \mathop{\sum
_{
x\in\mathbb{Z}^{d}}}_{x\leftrightarrow n} \biggl\llvert\frac{2}{ (
\lceil\varepsilon n^{\sfrac {1}{2}} \rceil)^{d}}
\mathbb{P}^{0} (X_{n}\in\Delta)-\frac{1}{ (2\pi n )^{\sfrac{d}{2}}\sqrt
{\det
\Sigma}}
\\[-2pt]
&&{}\times \frac{2}{ ( \lceil\varepsilon n^{\sfrac {1}{2}} \rceil)^{d}}\int_{\Delta}\exp\biggl(-\frac
{1}{2n}
\bigl(y-\mathbb{E}^{0} [X_{n} ] \bigr)^{T}\Sigma
^{-1} \bigl(y-\mathbb{E}^{0} [X_{n} ] \bigr)
\biggr)\,dy\biggr\rrvert.
\end{eqnarray*}
The total number of vertices in the boxes in $\widehat{\Pi
}_{n}^{ (\varepsilon,\delta)}$
is $ (C_{\delta}n^{\sfrac {1}{2}} )^{d}$ and, therefore, the
first two sums are bounded by $C_{\delta}^{d}\cdot C\varepsilon$.
As for the last term, we have
\begin{eqnarray*}
&& \sum_{\Delta\in\widehat{\Pi}_{n}^{ (\varepsilon,\delta
)}}\mathop{\sum
_{
x\in\mathbb{Z}^{d}}}_{
x\leftrightarrow n} \biggl\llvert\frac{2}{ ( \lceil\varepsilon n^{\sfrac {1}{2}} \rceil)^{d}}
\mathbb{P}^{0} (X_{n}\in\Delta)-\frac{1}{ (2\pi n )^{\sfrac{d}{2}}\sqrt
{\det
\Sigma}}\cdot
\frac{2}{ ( \lceil\varepsilon n^{\sfrac {1}{2}} \rceil)^{d}}
\\
&&\quad{}\times \int_{\Delta}\exp\biggl(-\frac
{1}{2n}
\bigl(y-\mathbb{E}^{0} [X_{n} ] \bigr)^{T}\Sigma
^{-1} \bigl(y-\mathbb{E}^{0} [X_{n} ] \bigr)
\biggr)\,dy\biggr\rrvert
\\
&&\qquad=  \sum_{\Delta\in\widehat{\Pi}_{n}^{ (\varepsilon,\delta
)}}\biggl\llvert\mathbb{P}^{0}
(X_{n}\in\Delta)-\frac
{2}{ (2\pi n )^{\sfrac{d}{2}}\sqrt{\det\Sigma}}
\\
&&\quad\qquad{}\times \int_{\Delta}
\exp\biggl(-\frac{1}{2n} \bigl(y-\mathbb{E}^{0}
[X_{n} ] \bigr)^{T}\Sigma^{-1} \bigl(y-
\mathbb{E}^{0} [X_{n} ] \bigr) \biggr)\,dy\biggr\rrvert.
\end{eqnarray*}
Apply the functional CLT proved by Sznitman in \cite{Sz01} and
nothing that for a fixed $\varepsilon$ and $\delta$ the sum is finite
gives that the last term goes to zero as $n$ goes to infinity.

Combining all of the above, we conclude that
\begin{eqnarray*}
&& \limsup_{n\to\infty}\mathop{\sum_{
x\in\mathbb{Z}^{d}}}_{x\leftrightarrow n}
\biggl\llvert\mathbb{P}^{0} (X_{n}=x )-\frac{2}{ (2\pi
n )^{\sfrac{d}{2}}\sqrt{\det\Sigma}}
\\
&&\quad{}\times
\exp\biggl(-\frac
{1}{2n} \bigl(x-\mathbb{E}^{0}
[X_{n} ] \bigr)^{T}\Sigma^{-1} \bigl(x-
\mathbb{E}^{0} [X_{n} ] \bigr) \biggr)\biggr\rrvert
\\
&&\qquad\leq C_{\delta}^{d}\cdot C\varepsilon+2\delta.
\end{eqnarray*}
By first taking $\delta>0$ arbitrary small and then choosing
$\varepsilon>0$
even smaller so that $C_{\delta}^{d}\cdot C\varepsilon<\delta$ the
result follows.
\end{pf*}

\subsection{Annealed derivative
estimations}\label{sec8.2}
\label{appendixsubAnnealed-derivative-estimations}

In this part of the \hyperref[appendi]{Appendix}, we prove Lemma~\ref
{lemAnnealedderivativeestimationsd-1+time}
and Lemma~\ref{lemgeneralannealedestimations} regarding annealed
derivative estimations.

\subsubsection{General estimations}\label{sec8.2.1}

We start with the following claim which is a general result for i.i.d.
random variables on a lattice:

\begin{claim}
\label{clmFourierclaim}Let $ \{ Y_{i} \} _{i=1}^{\infty}$
and $ \{ Z_{i} \} _{i=1}^{\infty}$ be a sequence of $d$-dimensional
random variables and a sequence of $1$-dimensional nonnegative
integer valued random variables, respectively, such that $ \{
(Y_{i},Z_{i} ) \} _{i=1}^{\infty}$
are independent and have joint distribution $\mathsf{P}$. Assume
in addition that $ \{ (Y_{i},Z_{i} ) \}
_{i=2}^{\infty}$
are i.i.d.\vspace*{1pt} and there exists $v\in\mathbb{Z}^{d}$, $k\in\mathbb{N}$
such that $\mathsf{P} ( (Y_{2},Z_{2} )=
(v,k ) )>0$
and $\mathsf{P} ( (Y_{2},Z_{2} )=
(v+e_{i},k+1 ) )>0$
for every $1\leq i\leq d$. Let $S_{n}=\sum_{i=1}^{n}Y_{i}$ and
$T_{n}=\sum_{i=1}^{n}Z_{i}$.
Then there exists $C<\infty$ which is determined by distribution
$\mathsf{P}$ such that for every $n\in\mathbb{N}$, $m\in\mathbb{N}$,
every $x,y\in\mathbb{Z}^{d}$ such that $\llVert x-y\rrVert _{1}=1$
and every $1\leq k\leq d$
%
\begin{eqnarray}
\mathsf{P} \bigl( (S_{n},T_{n} )= (x,m )
\bigr)&<&Cn^{-\vfrac{d+1}{2}},\label{eqFourieranalysis1}
\\
\bigl\llvert\mathsf{P} \bigl( (S_{n},T_{n} )= (x,m )
\bigr)-\mathsf{P} \bigl( (S_{n},T_{n} )= (y,m+1 ) \bigr)
\bigr\rrvert&<&Cn^{-\vfrac{d+2}{2}}\label{eqFourieranalysis2}
\end{eqnarray}
and
%
\begin{eqnarray}\label{eqFourieranalysis25}
&& \bigl\llvert\mathsf{P} \bigl( (S_{n},T_{n} )= (x,m )
\bigr)+\mathsf{P} \bigl( (S_{n},T_{n} )=
(y+e_{k},m ) \bigr)-\mathsf{P} \bigl( (S_{n},T_{n})
\nonumber\\[-8pt]\\[-8pt]\nonumber
&&\qquad = (x+e_{k},m+1 ) \bigr)-\mathsf{P} \bigl( (S_{n},T_{n}
)= (y,m+1 ) \bigr)\bigr\rrvert<Cn^{-\vfrac{d+3}{2}}.
\end{eqnarray}
In addition, if $ \{ Y_{i} \} _{i=1}^{\infty}$ and $
\{ Z_{i} \} _{i=1}^{\infty}$
have finite moments. Then for every $\varepsilon>0$, every $m\in
\mathbb{N}$
and every partition $\Pi_{n}$ of $\mathbb{Z}^{d}$ into boxes of
side length $n^{\varepsilon}$
%
\begin{eqnarray}\label{eqFourieranalysis3}
&& \sum_{\Delta\in\Pi_{n}}\mathop{\sum_{
x\in\Delta}}_{
x\leftrightarrow m}
\Bigl[\max_{y\in\Delta}\mathsf{P} \bigl( (S_{n},T_{n}
)= (y,m ) \bigr)-\mathsf{P} \bigl( (S_{n},T_{n} )= (x,m )
\bigr) \Bigr]
\nonumber\\[-8pt]\\[-8pt]\nonumber
&&\qquad \leq Cn^{-1+3d\varepsilon}.
\end{eqnarray}
\end{claim}
\begin{pf}
Let $\chi$ be the characteristic function of $ (Y_{2},Z_{2} )$.
Since $ (Y_{2},Z_{2} )$ is concentrated on a lattice $\chi$
is periodic. The existence of $v,k$ as above implies that the period
is $2\pi$ in every coordinate. The existence of $v$ and $k$ also
implies that there are $D>0$ and $\delta>0$ such that:
\begin{longlist}[(2)]
\item[(1)]$\llvert \chi(\xi,s )\rrvert <1-D$ for every $
(\xi,s )\in[-\pi,\pi]^{d+1}$
such that $\llVert (\xi,s )\rrVert _{1}\geq
\delta$,
\item[(2)]$\llvert \chi(\xi,s )\rrvert <1-D\llVert
(\xi,s )\rrVert _{1}^{2}$
for every $ (\xi,s )\in[-\pi,\pi]^{d+1}$ such
that $\llVert (\xi,s )\rrVert _{1}<\delta$.
\end{longlist}

The last two facts implies [(\ref{eqFourieranalysis1})--(\ref
{eqFourieranalysis25})].
Indeed,
\begin{eqnarray*}
&& \mathsf{P} \Biggl(\sum_{i=2}^{n}
(Y_{i},Z_{i} )= (x,m ) \Biggr)
\\[-2pt]
&&\qquad  = \frac{1}{ (2\pi)^{d+1}}\int
_{ [-\pi,\pi]^{d+1}}e^{-i \langle\xi,x
\rangle-i \langle s,m \rangle}\chi^{n-1} (\xi,s )\,d\xi \,ds
\\[-2pt]
&&\qquad  \leq \int_{ [-\pi,\pi]^{d+1}}\bigl\llvert\chi^{n-1} (\xi,s )
\bigr\rrvert \,d\xi \,ds
\\[-2pt]
&&\qquad  =\int_{\llVert (\xi,s )\rrVert _{1}>\delta
}\bigl\llvert\chi^{n-1} (\xi,s )\bigr
\rrvert \,d\xi \,ds+\int_{\llVert (\xi,s )\rrVert _{1}\leq\delta}\bigl
\llvert\chi^{n-1} (
\xi,s )\bigr\rrvert \,d\xi \,ds
\\[-2pt]
&&\qquad  \leq (2\pi)^{d+1} (1-D )^{n-1}+\int_{\llVert (\xi,s )\rrVert
_{1}\leq\delta}
\bigl(1-D\bigl\llVert(\xi,s )\bigr\rrVert_{1}^{2}
\bigr)^{n-1}\,d\xi \,ds
\\[-2pt]
&&\qquad <Cn^{-\vfrac{d+1}{2}}
\end{eqnarray*}
and convolution with the distribution of $ (Y_{1},Z_{1} )$
only decreases the supremum.

To see (\ref{eqFourieranalysis2}), note that $y=x\pm e_{j}$ for
some $1\leq j\leq d$ and, therefore,
\begin{eqnarray*}
&& \Biggl\llvert\mathsf{P} \Biggl(\sum_{i=2}^{n}
(Y_{i},Z_{i} )= (x,m ) \Biggr)-\mathsf{P} \Biggl(\sum
_{i=2}^{n} (Y_{i},Z_{i}
)= (x+e_{j},m+1 ) \Biggr)\Biggr\rrvert
\\[-2pt]
&&\qquad=  \frac{1}{ (2\pi)^{d+1}}
\\[-2pt]
&&\quad\qquad{}\times \biggl\llvert\int_{ [-\pi,\pi
]^{d+1}}
\bigl(e^{-i \langle\xi,x \rangle-i
\langle s,m \rangle}-e^{-i \langle\xi,x\pm e_{j}
\rangle-i \langle s,m+1 \rangle} \bigr)\chi(\xi,s )^{n-1}\,d\xi
\,ds\biggr\rrvert
\\[-2pt]
&&\qquad\leq \frac{1}{ (2\pi)^{d+1}}\int_{ [-\pi,\pi
]^{d+1}}\bigl\llvert
e^{-i \langle\xi,x \rangle-i
\langle s,m \rangle}-e^{-i \langle\xi,x\pm e_{j}
\rangle-i \langle s,m+1 \rangle}\bigr\rrvert
\\[-2pt]
&&\quad\qquad{}\times \bigl\llvert\chi(\xi
,s )
\bigr\rrvert^{n-1}\,d\xi \,ds.
\end{eqnarray*}
Recalling that $\llvert e^{-i \langle\xi,x \rangle-i
\langle s,m \rangle}-e^{-i \langle\xi,x\pm e_{j}
\rangle-i \langle s,m+1 \rangle}\rrvert \leq\llvert
\langle(\xi,s ), \langle\pm e_{j},1 \rangle
\rangle\rrvert $,
we can bound the last term by
\begin{eqnarray*}
&& \frac{1}{ (2\pi)^{d+1}}\int_{ [-\pi,\pi
]^{d+1}}\bigl\llvert s\pm\langle
\xi,e_{j} \rangle\bigr\rrvert\cdot\bigl\llvert\chi(\xi,s )\bigr
\rrvert \,d\xi \,ds
\\[-1pt]
&&\qquad \leq (1-D )^{n+1}+C\int_{\llVert (\xi,s
)\rrVert _{1}\leq\delta}\bigl\llvert s\pm
\langle\xi,e_{j} \rangle\bigr\rrvert\bigl(1-D\bigl\llVert(\xi,s )
\bigr\rrVert_{1}^{2} \bigr)^{n-1}\,d\xi \,ds
\\[-1pt]
&&\qquad \leq (1-D )^{n+1}+C\int_{\llVert (\xi,s
)\rrVert _{1}\leq\delta}\bigl\llvert s\pm
\langle\xi,e_{j} \rangle\bigr\rrvert e^{-Dn\llVert (\xi,s
)\rrVert _{1}^{2}}\,d\xi \,ds.
\end{eqnarray*}
Substituting $\zeta=\xi\sqrt{n}$ and $t=s\sqrt{n}$ the last integral
equals
\begin{eqnarray*}
&& \frac{C}{\sqrt{n}^{d+2}}\int_{\llVert (\zeta,t
)\rrVert _{1}\leq\delta\sqrt{n}}\bigl\llvert t\pm\langle
\zeta,e_{j} \rangle\bigr\rrvert e^{-D\llVert (\zeta,t
)\rrVert _{1}^{2}}\,d\zeta \,dt
\\[-1pt]
&&\qquad\leq \frac{C}{n^{\vfrac{d+2}{2}}}\int_{\mathbb{R}^{d+1}}\bigl\llvert
t\pm\langle
\zeta,e_{j} \rangle\bigr\rrvert e^{-D\llVert
(\zeta,t )\rrVert _{1}^{2}}\,d\zeta \,dt=O
\bigl(n^{-\vfrac{d+2}{2}} \bigr).
\end{eqnarray*}
For (\ref{eqFourieranalysis25}), note that
\begin{eqnarray*}
&& \Biggl\llvert\mathsf{P} \Biggl(\sum_{i=2}^{n}
(Y_{i},Z_{i} )= (x,m ) \Biggr) +\mathsf{P} \Biggl(\sum
_{i=2}^{n} (Y_{i},Z_{i}
)= (y+e_{k},m ) \Biggr)
\\[-1pt]
&&\quad{}-\mathsf{P} \Biggl(\sum
_{i=2}^{n} (Y_{i},Z_{i} )=
(y,m+1 ) \Biggr) -\mathsf{P} \Biggl(\sum_{i=2}^{n}
(Y_{i},Z_{i} )= (x+e_{k},m+1 ) \Biggr)\Biggr
\rrvert
\\[-1pt]
&&\qquad=  \Biggl\llvert\mathsf{P} \Biggl(\sum_{i=2}^{n}
(Y_{i},Z_{i} )= (x,m ) \Biggr) +\mathsf{P} \Biggl(\sum
_{i=2}^{n} (Y_{i},Z_{i}
)= (x+e_{j}+e_{k},m ) \Biggr)
\\[-1pt]
&&\quad\qquad{} -\mathsf{P} \Biggl(\sum
_{i=2}^{n} (Y_{i},Z_{i}
)= (x+e_{j},m+1 ) \Biggr)
\\[-1pt]
&&\quad\qquad{} -\mathsf{P} \Biggl(\sum
_{i=2}^{n} (Y_{i},Z_{i} )=
(x+e_{k},m+1 ) \Biggr)\Biggr\rrvert
\\[-1pt]
&&\qquad=  \frac{1}{ (2\pi)^{d+1}}\biggl\llvert\int_{ [-\pi,\pi
]^{d+1}}
\bigl(e^{-i \langle\xi,x \rangle-i
\langle s,m \rangle}+e^{-i \langle\xi,x+e_{j}+e_{k}
\rangle-i \langle s,m \rangle}
\\[-1pt]
&&\quad\qquad{}-e^{-i \langle\xi,x+e_{j} \rangle-i
\langle s,m+1 \rangle} -e^{-i \langle\xi,x+e_{k} \rangle-i \langle
s,m+1 \rangle}
\bigr)\chi(\xi,s )^{n-1}\,d\xi \,ds\biggr\rrvert
\\[-1pt]
&&\qquad\leq \biggl\llvert\int_{ [-\pi,\pi]^{d+1}}\bigl\llvert
1+e^{-i \langle\xi,e_{j}+e_{k} \rangle}-e^{-i
\langle\xi,e_{j} \rangle-i \langle s,1 \rangle
}-e^{-i \langle\xi,e_{k} \rangle-i \langle s,1
\rangle}\bigr\rrvert
\\[-1pt]
&&\quad\qquad{}\times \bigl\llvert
\chi(\xi,s )\bigr\rrvert^{n-1}\,d\xi \,ds\biggr\rrvert
\\[-1pt]
&&\qquad\leq \biggl\llvert\int_{ [-\pi,\pi]^{d+1}}\bigl\llvert\bigl\langle(
\xi,s ), (e_{j},1 ) \bigr\rangle\bigr\rrvert\bigl\llvert\bigl
\langle(\xi,s ), (e_{k},1 ) \bigr\rangle\bigr\rrvert\bigl\llvert\chi
(\xi,s )\bigr\rrvert^{n-1}\,d\xi \,ds\biggr\rrvert
\end{eqnarray*}
and the proof continuous now as before except that we gained an
additional factor of $n^{-\sfrac{1}{2}}$.

Finally, we turn to the proof of (\ref{eqFourieranalysis3}). For
every $\Delta\in\Pi$ denote by $x_{\Delta}$ a point in $\Delta$
such that $\mathsf{P} (S_{n}=x_{\Delta} )=\max_{y\in
\Delta}\mathsf{P} (S_{n}=y )$.
As a first step, we show that
%
\begin{equation}
\sum_{\Delta\in\Pi_{n}}\sum_{x\in\Delta}
\bigl[\mathsf{P} (S_{n}=x_{\Delta} )-\mathsf{P}
(S_{n}=x ) \bigr]\leq Cn^{-\sfrac{1}{2}+3d\varepsilon}.\label
{eqFouriermaxvssumwithouttime}
\end{equation}
By \cite{berger2008slowdown}, Claim 4.3, for every $x,y\in\mathbb{Z}^{d}$
such that $\llVert x-y\rrVert _{1}=1$, we have $\llvert \mathsf
{P} (S_{n}=y )-\mathsf{P} (S_{n}=x )\rrvert \leq
Cn^{-\vfrac{d+1}{2}}$
and, therefore, for every $\Delta\in\Pi$
%
\begin{eqnarray}\label{eqFouriermaxvssumwithouttime2}
&& \sum_{x\in\Delta} \bigl[\mathsf{P} (S_{n}=x_{\Delta}
)-\mathsf{P} (S_{n}=x ) \bigr]\nonumber
\\
&&\qquad  \leq \sum
_{x\in\Delta
}\llVert x_{\Delta}-x\rrVert_{\infty}Cn^{-\vfrac{d+1}{2}}
\\
&&\qquad \leq C\sum_{x\in\Delta}n^{d\varepsilon}\cdot
n^{-\vfrac{d+1}{2}}
= Cn^{-\vfrac{d+1}{2}+2d\varepsilon}.\nonumber
\end{eqnarray}
Splitting the sum over the boxes to those boxes whose distance from
$\mathsf{E} [S_{n} ]$ is greater or smaller than $n^{\sfrac{1}{2}+\varepsilon}$,
we can the first with by an Azuma inequality which yields $\mathsf
{P} (\llVert S_{n}-\mathsf{E} [S_{n} ]\rrVert
_{1}>n^{\sfrac{1}{2}+\varepsilon} )=n^{-\xi(1 )}$
and the second with (\ref{eqFouriermaxvssumwithouttime2})
we get
\begin{eqnarray*}
&& \sum_{\Delta\in\Pi_{n}}\sum_{x\in\Delta}
\bigl[\mathsf{P} (S_{n}=x_{\Delta} )-\mathsf{P}
(S_{n}=x ) \bigr]
\\
&&\qquad  \leq \mathop{\sum_{
\Delta\in\Pi_{n}}}_{
\operatorname{dist} (\Delta,\mathsf{E} [S_{n} ] )\leq
n^{\sfrac{1}{2}+\varepsilon}
}Cn^{-\vfrac{d+1}{2}+2d\varepsilon}+n^{-\xi(1 )}
\leq Cn^{-\sfrac{1}{2}+3d\varepsilon}
\end{eqnarray*}

Finally, we turn to prove (\ref{eqFourieranalysis3}). Denote
\[
g (m )=\sum_{\Delta\in\Pi_{n}}
\mathop{\sum_{
x\in\Delta}}_{
x\leftrightarrow m}
\Bigl[\max_{y\in\Delta}\mathsf{P} \bigl( (S_{n},T_{n}
)= (y,m ) \bigr)-\mathsf{P} \bigl( (S_{n},T_{n} )= (x,m )
\bigr) \Bigr].
\]
By Azuma's inequality $\mathsf{P} (\llvert T_{n}-\mathsf{E}
[T_{n} ]\rrvert >n^{\sfrac{1}{2}+\varepsilon} )=n^{-\xi
(1 )}$
and, therefore, it is enough to deal with $m\in\mathbb{N}$ such that
$\llvert m-\mathsf{E} [T_{n} ]\rrvert \leq n^{\sfrac{1}{2}+\varepsilon}$.
By the same estimation and (\ref{eqFouriermaxvssumwithouttime}),
we also know that $n^{\sfrac{1}{2}+\varepsilon}\min_{m: \llvert
m-\mathsf{E} [T_{n} ]\rrvert \leq n^{\sfrac{1}{2}+\varepsilon}}g (m )\le Cn^{-\sfrac{1}{2} +3d\varepsilon}$
and, therefore,
%
\begin{equation}
\min_{m: \llvert m-\mathsf{E} [T_{n} ]\rrvert \leq
n^{\sfrac{1}{2}+\varepsilon}}g (m )\le Cn^{-1+
(3d-1 )\varepsilon}.\label{eqminisfine}
\end{equation}

However, using (\ref{eqFourieranalysis25}), for every $m\in\mathbb{N}$
and $x,z\in\Delta$ such that $x\leftrightarrow m$, $z\leftrightarrow m+1$
we have
\begin{eqnarray*}
&& \max_{y\in\Delta}\mathsf{P} \bigl( (S_{n},T_{n}
)= (y,m ) \bigr)-\mathsf{P} \bigl( (S_{n},T_{n} )= (x,m )
\bigr)
\\
&&\quad{} -\max_{w\in\Delta}\mathsf{P} \bigl( (S_{n},T_{n}
)= (w,m+1 ) \bigr)-\mathsf{P} \bigl( (S_{n},T_{n} )= (z,m
) \bigr)
\\
&&\qquad \leq \sum_{k=1}^{d} \mathop{\mathop{\sum
_{
x,y\in\Delta}}_{
x\leftrightarrow m}}_{
\llVert y-x\rrVert _{1}=1} \bigl\llvert
\mathsf{P} \bigl( (S_{n},T_{n} )= (x,m ) \bigr) +\mathsf{P}
\bigl( (S_{n},T_{n} )= (y+e_{k},m ) \bigr)
\\
&&\quad\qquad{}-
\mathsf{P} \bigl( (S_{n},T_{n} )= (x+e_{k},m+1 )
\bigr) -\mathsf{P} \bigl( (S_{n},T_{n} )= (y,m+1 ) \bigr)
\bigr\rrvert
\\
&&\qquad\leq Cn^{-\vfrac{d+3}{2}+d\varepsilon}
\end{eqnarray*}
and, therefore, by separating the sum into boxes which are at distance
$\leq n^{\sfrac{1}{2}+\varepsilon}$ and those who at distance
$>n^{\sfrac{1}{2}+\varepsilon}$
we get
%
\begin{eqnarray}\label{eqsecondderivativeisgood}
\bigl\llvert g (m )-g (m+1 )\bigr\rrvert &\leq& Cn^{(\sfrac{1}{2}+\varepsilon)d}\cdot
n^{-\vfrac
{d+3}{2}+d\varepsilon}+n^{-\xi(1 )}
\nonumber\\[-8pt]\\[-8pt]\nonumber
&=& Cn^{-\sfrac
{3}{2}+2d\varepsilon}.
\end{eqnarray}
Using (\ref{eqminisfine}) and (\ref{eqsecondderivativeisgood})
gives
\[
g (m )\leq Cn^{-1+ (3d-1 )\varepsilon
}+Cn^{-\sfrac{3}{2}+2d\varepsilon}\cdot n^{\sfrac{1}{2}+\varepsilon
}\leq
Cn^{-1+3d\varepsilon}
\]
for every $m\in\mathbb{N}$ such that $\llvert m-\mathsf{E}
[T_{n} ]\rrvert \leq n^{\sfrac{1}{2}+\varepsilon}$,
and thus completes the proof.
\end{pf}

\subsubsection{Proof of Lemma \protect\ref
{lemAnnealedderivativeestimationsd-1+time}}\label{sec8.2.2}

Before turning to the proof of the lemmas, we give the following estimations
on hitting point of an hyperplane conditioned to contain a regeneration
time. More formally, we have the following.

\begin{lem}
\label{lemmiddlestepannealedestimations}Let $d\geq4$ and assume
$P$ is uniformly elliptic, i.i.d. and satisfies $ (\mathscr
{P} )$.
Fix $z\in\BZ^{d}$, $N\in\mathbb{N}$ and let $z_{1}\in\tilde{\CP
} (0,N )$.
Let $ \{ X_{n} \} $ be an RWRE starting at $z_{1}$. For
$k,l\in\mathbb{N}$ let $B (l,k )$ be the event that $
\langle X_{\tau_{k}},e_{1} \rangle=l$,
$B (l )=\bigcup_{k}B (l,k )$ and
\[
\hat{B} \bigl(l,N^{2} \bigr)\equiv\hat{B} (l )=B (l )\cap\bigcap
_{j=l+1}^{N^{2}}B^{c} (j ).
\]
Then (for a given $l\leq N^{2}$):
\begin{longlist}[(2)]
\item[(1)] For every $n\in\mathbb{N}$ and $w\in H_{l}$
%
\begin{equation}
\mathbb{P}^{z_{1}} \bigl( (X_{T_{l}},T_{l} )= (w,n )
\mid\hat{B} (l ) \bigr)\leq Cl^{-\sfrac{d}{2}}.\label{eqAnnealedestmiddle1-1}
\end{equation}

\item[(2)] For every $n\in\mathbb{N}$, and every $w,z\in H_{l}$ such that
$\llVert w-z\rrVert _{1}=1$
%
\begin{eqnarray}\label{eqAnnealedestmiddle2-1}
&& \bigl\llvert\mathbb{P}^{z_{1}} \bigl( (X_{T_{l}},T_{l}
)= (w,n )\mid \hat{B} (l ) \bigr)-\mathbb{P}^{z_{1}} \bigl(
(X_{T_{l}},T_{l} )= (z,n+1 )\mid\hat{B} (l ) \bigr)
\bigr\rrvert
\nonumber\\[-8pt]\\[-8pt]\nonumber
&&\qquad \leq Cl^{-\vfrac{d+1}{2}}.
\end{eqnarray}

\item[(3)] For every $n\in\mathbb{N}$ and every $w\in H_{l}$
%
\begin{eqnarray}\label{eqAnnealedestmiddle3-1}
\qquad && \bigl\llvert\mathbb{P}^{z_{1}} \bigl( (X_{T_{l}},T_{l}
)= (w,n )\mid\hat{B} (l ) \bigr)-\mathbb{P}^{z_{1}+e_{1}}
\bigl(
(X_{T_{l}},T_{l} )= (w,n+1 )\mid \hat{B} (l ) \bigr)
\bigr\rrvert
\nonumber\\[-8pt]\\[-8pt]\nonumber
&&\qquad \leq Cl^{-\vfrac{d+1}{2}}.
\end{eqnarray}
\end{longlist}
\end{lem}

\begin{pf}
Due to the independence of $ (X_{T_{l}},T_{l} )$ from
$\bigcap_{j=l+1}^{N^{2}}B^{c} (j )$,
we get that for every $M\in\mathbb{N}$
\begin{eqnarray*}
&& \mathbb{P}^{z_{1}} \bigl( (X_{T_{l}},T_{l} )= (w,n )
\mid\hat{B} (l ) \bigr)
\\
&&\qquad   = \mathbb{P}^{z_{1}} \bigl( (X_{T_{l}},T_{l}
)= (w,n )\mid B (l ) \bigr)
\\
&&\qquad  = \frac{1}{\mathbb{P}^{z_{1}} (B (l ) )}\sum_{k=1}^{\infty}
\mathbb{P}^{z_{1}} \bigl( (X_{\tau_{k}},\tau_{k} )= (w,n )
\bigr)
\\
&&\qquad  =\frac{1}{\mathbb{P}^{z_{1}} (B (l ) )}\sum_{k=1}^{M}
\mathbb{P}^{z_{1}} \biggl( (X_{\tau_{k}},\tau_{k} )= (w,n
), \langle X_{\tau_{ \lceil\sfrac{k}{2} \rceil}},e_{1} \rangle\geq\frac
{l}{2}
\biggr)
\\
&&\quad\qquad{} +\frac{1}{\mathbb{P}^{z_{1}} (B (l ) )}\sum_{k=1}^{M}
\mathbb{P}^{z_{1}} \biggl( (X_{\tau_{k}},\tau_{k} )= (w,n
), \langle X_{\tau_{k}}-X_{\tau _{ \lceil\sfrac{k}{2} \rceil}},e_{1} \rangle\geq
\frac{l}{2} \biggr)
\\
&&\quad\qquad{} +\frac{1}{\mathbb{P}^{z_{1}} (B (l ) )}\sum_{k=M+1}^{\infty}
\mathbb{P}^{z_{1}} \biggl( (X_{\tau_{k}},\tau_{k} )= (w,n
), \langle X_{\tau_{ \lceil\sfrac{k}{2} \rceil}},e_{1} \rangle\leq\frac
{l}{2}
\biggr)
\\
&&\quad\qquad{} +\frac{1}{\mathbb{P}^{z_{1}} (B (l ) )}\sum_{k=M+1}^{\infty}
\mathbb{P}^{z_{1}} \biggl( (X_{\tau_{k}},\tau_{k} )= (w,n
), \langle X_{\tau_{k}}-X_{\tau
_{ \lceil\sfrac{k}{2} \rceil}},e_{1} \rangle\leq
\frac{l}{2} \biggr).
\end{eqnarray*}
Using Claim~\ref{clmFourierclaim} gives
%
\begin{eqnarray}\label{eqmiddlestepannealedderivativeestimationmarker}
&& \mathbb{P}^{z_{1}} \biggl( (X_{\tau_{k}},\tau_{k} )=
(w,n ), \langle X_{\tau_{ \lceil\sfrac{k}{2} \rceil}},e_{1} \rangle\leq
\frac{l}{2} \biggr)
\nonumber
\\
&&\qquad=  \sum_{x: \langle x,e_{1} \rangle\leq\sfrac{l}{2}}\sum_{s\in\mathbb{N}}
\mathbb{P}^{z_{1}} \bigl( (X_{\tau_{ \lceil\sfrac{k}{2} \rceil}},\tau_{ \lceil\sfrac
{k}{2} \rceil} )= (x,s )
\bigr)\nonumber
\\
&&\quad\qquad{}\times \mathbb{P}^{z_{1}} \bigl( (X_{\tau_{k}},\tau_{k} )=
(w,n )\mid(X_{\tau_{ \lceil\sfrac{k}{2}
\rceil}},\tau_{ \lceil\sfrac{k}{2} \rceil} )= (x,s ) \bigr)
\\
&&\qquad\leq Ck^{-\vfrac{d+1}{2}}\sum_{x: \langle x,e_{1} \rangle
\leq\sfrac{l}{2}}\sum
_{s\in\mathbb{N}}\mathbb{P}^{z_{1}} \bigl( (X_{\tau_{ \lceil\sfrac{k}{2}
\rceil}},\tau
_{ \lceil\sfrac{k}{2} \rceil} )= (x,s ) \bigr)\nonumber
\\
&&\qquad=  Ck^{-\vfrac{d+1}{2}}\mathbb{P}^{z_{1}} \biggl( \langle
X_{\tau_{ \lceil\sfrac{k}{2} \rceil}},e_{1} \rangle\leq\frac{l}{2} \biggr),
\nonumber
\end{eqnarray}
and in a similar manner
\begin{eqnarray*}
&& \mathbb{P}^{z_{1}} \biggl( (X_{\tau_{k}},\tau_{k} )= (w,n
), \langle X_{\tau_{k}}-X_{\tau_{ \lceil
\sfrac{k}{2} \rceil}},e_{1} \rangle\leq \frac {l}{2} \biggr)
\\
&&\qquad  \leq  Ck^{-\vfrac{d+1}{2}}\mathbb{P}^{z_{1}}
\biggl( \langle X_{\tau_{k}}-X_{\tau_{ \lceil\sfrac{k}{2}
\rceil}},e_{1} \rangle\leq
\frac{l}{2} \biggr)
\\
&&\qquad  \leq  Ck^{-\vfrac{d+1}{2}}\mathbb{P}^{z_{1}} \biggl( \langle
X_{\tau_{ \lceil\sfrac{k}{2} \rceil}},e_{1} \rangle\leq\frac{l}{2}
\biggr).
\end{eqnarray*}
Repeating the same calculations while separating the sum according
to the events $ \langle X_{\tau_{ \lfloor\sfrac{k}{2}
\rfloor}},e_{1} \rangle\geq\frac{l}{2}$
and
$ \langle X_{\tau_{k}}-X_{\tau_{ \lfloor\sfrac{k}{2}
\rfloor}},e_{1} \rangle\geq\frac{l}{2}$
we get that
\begin{eqnarray*}
&& \mathbb{P}^{z_{1}} \biggl( (X_{\tau_{k}},\tau_{k} )= (w,n
), \langle X_{\tau_{ \lceil\sfrac
{k}{2} \rceil}},e_{1} \rangle\geq\frac{l}{2}
\biggr)
\\
&&\qquad \leq Ck^{-\vfrac{d+1}{2}}\mathbb{P}^{z_{1}} \biggl( \langle
X_{\tau_{ \lceil\sfrac{k}{2} \rceil}},e_{1} \rangle\geq\frac{l}{2}
\biggr)
\end{eqnarray*}
and
\begin{eqnarray*}
&& \mathbb{P}^{z_{1}} \biggl( (X_{\tau_{k}},\tau_{k} )= (w,n
), \langle X_{\tau_{k}}-X_{\tau_{ \lceil \sfrac{k}{2} \rceil}},e_{1} \rangle\geq\frac
{l}{2} \biggr)
\\
&&\qquad \leq Ck^{-\vfrac{d+1}{2}}\mathbb{P}^{z_{1}} \biggl(
\langle X_{\tau_{ \lceil\sfrac{k}{2} \rceil}},e_{1} \rangle\geq\frac{l}{2}
\biggr),
\end{eqnarray*}
combining all of the above yields
\begin{eqnarray*}
\mathbb{P}^{z_{1}} \bigl( (X_{T_{l}},T_{l} )= (w,n )
\mid\hat{B} (l ) \bigr) & \leq& C\sum_{k=1}^{M}k^{-\vfrac{d+1}{2}}
\mathbb{P}^{z_{1}} \biggl( \langle X_{\tau_{ \lceil\sfrac{k}{2} \rceil}},e_{1}
\rangle\geq\frac{l}{2} \biggr)
\\
&&{} +C\sum_{k=M+1}^{\infty}k^{-\vfrac{d+1}{2}}
\mathbb{P}^{z_{1}} \biggl( \langle X_{\tau_{ \lceil\sfrac{k}{2} \rceil
}},e_{1}
\rangle\leq\frac{l}{2} \biggr).
\end{eqnarray*}
The argument continues now as in the proof of \cite
{berger2008slowdown}, Lemma~4.2.
Choosing
\[
M=\frac{l}{\mathbb{E}^{z_{1}} [ \langle X_{\tau
_{2}}-X_{\tau_{1}},e_{1} \rangle]}=\Theta(l )
\]
and using Theorem~\ref{teoregenerationtimesthm} (see also Remark
\ref{RemTheeventAN}) we get that $\tau_{k}-\tau_{k-1}$ has finite
$2d$ moments, and from standard estimates for the sum of i.i.d. variable
[that the $2d$ moment for the sum of $k$ i.i.d. mean zero random
variables grows like $O (k^{d} )$], thus
\[
\mathbb{P}^{z_{1}} \biggl( \langle X_{\tau_{ \lceil\sfrac
{k}{2} \rceil}},e_{1}
\rangle\geq\frac{l}{2} \biggr),\mathbb{P}^{z_{1}} \biggl( \langle
X_{\tau_{ \lceil\sfrac
{k}{2} \rceil}},e_{1} \rangle\leq\frac{l}{2} \biggr)\leq\min
\biggl[1,\frac{Ck^{d}}{ (M-k )^{2d}} \biggr]
\]
and, therefore,
\[
\mathbb{P}^{z_{1}} \bigl(X_{l}=x\mid\hat{B} (l ) \bigr)\leq
C\sum_{k=1}^{\infty}k^{-\vfrac{d+1}{2}}\min
\biggl[1,\frac
{Ck^{d}}{ (M-k )^{2d}} \biggr]=O \bigl(l^{-\sfrac{d}{2}} \bigr).
\]
To see the last equality, we have to separate the sum into four parts
as in \cite{berger2008slowdown}, Lemma 4.2, the first part is a sum
over $k\in[1,\frac{M}{2} ]$ which contains roughly $l$
summands, each of them is bounded by $Cl^{-d}$ and therefore the
whole sum is bounded by $Cl^{-d+1}$. The second part is a sum over
$k\in[\frac{M}{2},M-\sqrt{M} ]$. In this case, the sum is
bounded up to a constant by
\begin{eqnarray*}
&& \int_{\sfrac{M}{2}}^{M-\sqrt{M}}x^{\vfrac{d-1}{2}} (M-x
)^{-2d}\,dx
\\
&&\qquad   = \int_{\sqrt{M}}^{\sfrac{M}{2}} (M-y
)^{\vfrac{d-1}{2}}y^{-2d}\,dy
\\
&&\qquad  \leq \biggl(\frac{M}{2} \biggr)^{\vfrac{d-1}{2}}\int
_{\sqrt
{M}}^{\sfrac{M}{2}}y^{-2d}\,dy
\\
&&\qquad \leq
CM^{\vfrac{d-1}{2}}\sqrt{M}^{-2d+1}=O \bigl(M^{-\sfrac{d}{2}} \bigr)=O
\bigl(l^{-\sfrac{d}{2}} \bigr).
\end{eqnarray*}
The third sum is over $k\in[M-\sqrt{M},M+\sqrt{M} ]$. This
part contains roughly $\sqrt{M}$ summands, each of them is bounded
by $M^{-\vfrac{d+1}{2}}$ so the sum is $O (l^{-\sfrac{d}{2}} )$.
Finally, the last sum is over $k\geq M+\sqrt{M}$. This case is similar
to the second sum and is bounded up to a constant by
\begin{eqnarray*}
&& \int_{M+\sqrt{M}}^{\infty}x^{\vfrac{d-1}{2}} (x-M
)^{-2d}\,dx
\\
&&\qquad   = \int_{M+\sqrt{M}}^{2M}x^{\vfrac{d-1}{2}}
(x-M )^{-2d}\,dx+\int_{2M}^{\infty}x^{\vfrac{d-1}{2}}
(x-M )^{-2d}\,dx
\\
&&\qquad  = \int_{\sqrt{M}}^{M} (y+M )^{\vfrac {d-1}{2}}y^{-2d}\,dy+
\int_{M}^{\infty} (y+M )^{\vfrac {d-1}{2}}y^{-2d}\,dy
\\
&&\qquad  \leq  CM^{\vfrac{d-1}{2}}\int_{\sqrt{M}}^{M}y^{-2d}\,dy+C
\int_{M}^{\infty}y^{\vfrac{d-1}{2}}y^{-2d}\,dy=O
\bigl(l^{-\sfrac{d}{2}} \bigr).
\end{eqnarray*}
Thus,
\[
\mathbb{P}^{z_{1}} \bigl( (X_{T_{l}},T_{l} )= (w,n )
\mid\hat{B} (l ) \bigr)\leq Cl^{-\sfrac{d}{2}}.
\]
The arguments for the other two inequalities are very similar and,
therefore, we only discuss the proof for~(\ref{eqAnnealedestmiddle2-1}).
Assuming without loss of generality that $y=x+e_{j}$ for some $1\leq
j\leq d$
we have
\begin{eqnarray*}
&& \bigl\llvert\mathbb{P}^{z_{1}} \bigl( (X_{T_{l}},T_{l}
)= (w,n )\mid \hat{B} (l ) \bigr)-\mathbb{P}^{z_{1}} \bigl(
(X_{T_{l}},T_{l} )= (z,n+1 )\mid\hat{B} (l ) \bigr)
\bigr\rrvert
\\
&&\qquad=  \bigl\llvert\mathbb{P}^{z_{1}} \bigl( (X_{T_{l}},T_{l}
)= (w,n )\bigl\llvert B (l ) \bigr)-\mathbb{P}^{z_{1}} \bigl(
(X_{T_{l}},T_{l} )= (z,n+1 )\bigr\rrvert B (l ) \bigr)\bigr
\rrvert
\\
&&\qquad\leq \frac{1}{\mathbb{P}^{z_{1}} (B (l )
)}\sum_{k=1}^{\infty}
\bigl\llvert\mathbb{P}^{z_{1}} \bigl( (X_{\tau
_{k}},
\tau_{k} )= (w,n ) \bigr)-\mathbb{P}^{z_{1}} \bigl(
(X_{\tau_{k}},\tau_{k} )= (z,n+1 ) \bigr)\bigr\rrvert.
\end{eqnarray*}
We\vspace*{1pt} can now continue as in the previous case by separating the sum
for $k\leq M$ and $k>M$ and also adding either the assumption $
\langle X_{\tau_{k}}-X_{\tau_{ \lceil\sfrac{k}{2} \rceil
}},e_{1} \rangle\geq\frac{l}{2}$
or $ \langle X_{\tau_{ \lceil\sfrac{k}{2} \rceil
}},e_{1} \rangle\geq\frac{l}{2}$.
Now we can estimate each term in the same way except that in (\ref
{eqmiddlestepannealedderivativeestimationmarker})
we have
\begin{eqnarray*}
&& \biggl\llvert\mathbb{P}^{z_{1}} \biggl( (X_{\tau_{k}},
\tau_{k} )= (w,n ), \langle X_{\tau_{ \lceil\sfrac
{k}{2} \rceil},e_{1}} \rangle\leq
\frac{l}{2} \biggr)
\\[-2pt]
&&\quad{}-\mathbb{P}^{z_{1}} \biggl( (X_{\tau_{k}},
\tau_{k} )= (z,n+1 ), \langle X_{\tau_{ \lceil\sfrac
{k}{2} \rceil},e_{1}} \rangle\leq
\frac{l}{2} \biggr)\biggr\rrvert
\\[-2pt]
&&\qquad\leq \sum_{x: \langle x,e_{1} \rangle\leq\sfrac
{l}{2}}\sum_{s\in\mathbb{N}}
\mathbb{P}^{z_{1}} \bigl( (X_{\tau
_{ \lceil\sfrac{k}{2} \rceil}},\tau_{ \lceil\sfrac
{k}{2} \rceil} )= (x,s )
\bigr)
\\[-2pt]
&&\quad\qquad{}\times \bigl\llvert\mathbb{P}^{z_{1}} \bigl( (X_{\tau_{k}},\tau
_{k} )= (w,n )\mid (X_{\tau_{ \lceil \sfrac{k}{2} \rceil}},\tau_{ \lceil\sfrac{k}{2}
\rceil} )=
(x,s ) \bigr)
\\[-2pt]
&&\quad\qquad{}-\mathbb{P}^{z_{1}} \bigl( (X_{\tau_{k}},
\tau_{k} )= (z,n+1 )\mid (X_{\tau_{ \lceil\sfrac{k}{2} \rceil
}},\tau
_{ \lceil\sfrac{k}{2} \rceil} )= (x,s ) \bigr)\bigr\rrvert
\\[-2pt]
&&\qquad \leq \sum_{x: \langle x,e_{1} \rangle\leq\sfrac
{l}{2}}\sum_{s\in\mathbb{N}}Ck^{-\vfrac{d+2}{2}}
\mathbb{P}^{z_{1}} \bigl( (X_{\tau_{ \lceil\sfrac{k}{2}
\rceil}},\tau_{ \lceil\sfrac{k}{2} \rceil} )= (x,s
) \bigr)
\\[-2pt]
&&\qquad =Ck^{-\vfrac{d+2}{2}}\mathbb{P}^{z_{1}} \biggl( \langle
X_{\tau_{ \lceil\sfrac{k}{2} \rceil
}},e_{1} \rangle\leq\frac{l}{2} \biggr),
\end{eqnarray*}
where for the last inequality we used (\ref{eqFourieranalysis2})
instead of (\ref{eqFourieranalysis1}). Apart from that difference,
the proof continues via the same lines.
\end{pf}
Next, we turn to the proof of the annealed estimations. We follow the
same ideas as in the proof of \cite{berger2008slowdown}, Lemma 4.2.

\begin{pf*}{Proof of Lemma~\ref{lemAnnealedderivativeestimationsd-1+time}}
(1) Denote $u:= (X_{{T_{\partial\mathcal
{P}(0,N)}}},{T_{\partial\mathcal{P}(0,N)}} )$. Then
\begin{eqnarray*}
&& \mathbb{P}^{z} \bigl(u= (x,m ) \bigr)
\\[-2pt]
&&\qquad  =\sum_{l\leq N^{2}}\mathbb{P}^{z} \bigl(
\hat{B} (l ) \bigr) \mathop{\sum_{
w\in H_{l}, n\in\mathbb{N}}}_{
w-z_{1}\leftrightarrow n}\mathbb{P}^{z} \bigl( (X_{T_{l}},T_{l} )= (w,n )
\mid\hat{B} (l ) \bigr)
\\[-2pt]
&&\quad\qquad{}\times
\BP^{z} \bigl(u= (x,m )\mid\hat{B} (l ),
(X_{T_{l}},T_{l} )= (w,n ) \bigr)
\\[-2pt]
&&\qquad \overset{ (1 )} {\leq}\sum_{l\leq N^{2}}\mathbb
{P}^{z} \bigl(\hat{B} (l ) \bigr) \mathop{\max_{
w\in H_{l}, n\in\mathbb{N}}}_{
w-z_{1}\leftrightarrow n}
Cl^{-\sfrac{d}{2}}\BP^{z} \bigl(u= (x,m )\mid\hat{B} (l ),
(X_{T_{l}},T_{l} )= (w,n ) \bigr)
\\[-2pt]
&&\qquad \overset{ (2 )} {=}\sum
_{l\leq N^{2}}\mathbb{P}^{z} \bigl(\hat{B} (l )
\bigr)Cl^{-\sfrac{d}{2}}
\\[-2pt]
&&\qquad  \overset{ (3 )} {\leq}\sum_{l\leq N^{2}}Ce^{-c
(\vfrac{N^{2}-l}{2} )^{\gamma}}Cl^{-\sfrac{d}{2}}
\\[-2pt]
&&\qquad
=\sum_{l\leq\sfrac{N^{2}}{2}}Ce^{-c (\vfrac{N^{2}-l}{2}
)^{\gamma}}Cl^{-\sfrac{d}{2}}+\sum
_{\sfrac{N^{2}}{2}\leq l\leq
N^{2}}Ce^{-c (\vfrac{N^{2}-l}{2} )^{\gamma}}Cl^{-\sfrac{d}{2}}
\\[-2pt]
&&\qquad  \leq C
\frac{N^{2}}{2}e^{-c (\sfrac{N^{2}}{2} )^{\gamma
}}+\frac{C}{N^{d}}\sum
_{\sfrac{N^{2}}{2}\leq l\leq N^{2}}Ce^{-c (\vfrac{N^{2}-l}{2} )^{\gamma}}\leq\frac{C}{N^{d}},
\end{eqnarray*}
where for $ (1 )$ we used Lemma~\ref
{lemAnnealedderivativeestimationsd-1+time},
(\ref{eqannealedderivativeestimation1}), for $ (2 )$
we used the shift invariance of the annealed walk and for the first
sum in $ (3 )$ we used Corollary~\ref{corlengthofregenerations}
(see also Remark~\ref{RemTheeventAN}).

(2) For $y\in\mathbb{Z}^{d}$ such that $\llVert y-x\rrVert _{1}=1$,
we can find
$2\leq j\leq d$
such that $y=x\pm e_{j}$ (without loss of generality assume the sign
is $+$) and, therefore,
\begin{eqnarray*}
&& \mathbb{P}^{z} \bigl(u= (y,m+1 ) \bigr)
\\[-1pt]
&&\qquad=  \sum_{l\leq N^{2}}\mathbb{P}^{z} \bigl(
\hat{B} (l ) \bigr) \mathop{\sum_{
w\in H_{l}, n\in\mathbb{N}}}_{
w-z_{1}\leftrightarrow n}\mathbb{P}^{z} \bigl( (X_{T_{l}},T_{l} )=
(w+e_{j},n+1 )\mid\hat{B} (l ) \bigr)
\\[-1pt]
&&\quad\qquad{}\times
\BP^{z} \bigl(u=
(y,m+1 )\mid\hat{B} (l ), (X_{T_{l}},T_{l} )=
(w+e_{j},n+1 ) \bigr)
\\[-1pt]
&&\qquad=  \sum_{l\leq N^{2}}\mathbb{P}^{z} \bigl(
\hat{B} (l ) \bigr) \mathop{\sum_{
w\in H_{l}, n\in\mathbb{N}}}_{
w-z_{1}\leftrightarrow n}
\mathbb{P}^{z} \bigl( (X_{T_{l}},T_{l} )=
(w+e_{j},n+1 )\mid\hat{B} (l ) \bigr)
\\[-1pt]
&&\quad\qquad{}\times \BP^{z} \bigl(u=
(x,m )\mid\hat{B} (l ), (X_{T_{l}},T_{l} )= (w,n ) \bigr).
\end{eqnarray*}
Subtracting the formula for $ $ $\mathbb{P}^{z} (u=
(y,m+1 ) )$
from the one for $\mathbb{P}^{z_{1}} (u= (x,m ) )$,
we thus get
\begin{eqnarray*}
&& \bigl\llvert\mathbb{P}^{z} \bigl(u= (x,m ) \bigr)-\mathbb
{P}^{z} \bigl(u= (y,m+1 ) \bigr)\bigr\rrvert
\\[-1pt]
&&\qquad\leq \sum_{l\leq N^{2}}\mathbb{P}^{z} \bigl(
\hat{B} (l ) \bigr) \mathop{\sum_{
w\in H_{l}, n\in\mathbb{N}}}_{
w-z_{1}\leftrightarrow n}
\bigl[\bigl\llvert\mathbb{P}^{z} \bigl( (X_{T_{l}},T_{l}
)= (w,n )\mid \hat{B} (l ) \bigr)
\\[-1pt]
&&\quad\qquad{}-\mathbb{P}^{z} \bigl(
(X_{T_{l}},T_{l} )= (w+e_{j},n+1 )\mid
\hat{B} (l ) \bigr)\bigl\llvert
\\[-1pt]
&&\quad\qquad{}\times \BP^{z} \bigl(u= (x,m )\mid\hat{B} (l
), (X_{T_{l}},T_{l} )= (w,n ) \bigr) \bigr]
\\[-1pt]
&&\qquad\leq \sum_{l\leq N^{2}}\mathbb{P}^{z} \bigl(
\hat{B} (l ) \bigr)\max_{w\in H_{l}, 2\leq j\leq d, n\in\mathbb{N}}\bigl
\llvert\mathbb{P}^{z}
\bigl( (X_{T_{l}},T_{l} )= (w,n )\mid \hat{B} (l )
\bigr)
\\[-1pt]
&&\quad\qquad{} -\mathbb{P}^{z} \bigl( (X_{T_{l}},T_{l} )=
(w+e_{j},n+1 )\mid\hat{B} (l ) \bigr)\bigr\rrvert,
\end{eqnarray*}
where as before we used the shift invariance. Using (\ref
{eqAnnealedestmiddle2-1}),
we get
\[
\bigl\llvert\mathbb{P}^{z} \bigl(u= (x,m ) \bigr)-\mathbb
{P}^{z} \bigl(u= (y,m+1 ) \bigr)\bigr\rrvert\leq\sum
_{l\leq
N^{2}}\mathbb{P}^{z} \bigl(\hat{B} (l )
\bigr)Cl^{-\vfrac{d+1}{2}}
\]
which by the same argument as before is bounded by $CN^{-d-1}$.

(3) We start with the case where $w=z+e_{j}$ for some $2\leq j\leq d$.
Due to the (2), we have
\begin{eqnarray*}
&& \bigl\llvert\mathbb{P}^{z} \bigl(u= (x,m ) \bigr)-\mathbb
{P}^{z+e_{j}} \bigl(u= (x,m+1 ) \bigr)\bigr\rrvert
\\
&&\qquad  \leq \bigl\llvert
\mathbb{P}^{z} \bigl(u= (x,m ) \bigr)-\mathbb{P}^{z+e_{j}}
\bigl(u= (x+e_{j},m ) \bigr)\bigr\rrvert
\\
&&\quad\qquad{} +\bigl\llvert\mathbb{P}^{z+e_{j}} \bigl(u= (x+e_{j},m )
\bigr)-\mathbb{P}^{z+e_{j}} \bigl(u= (x,m+1 ) \bigr)\bigr\rrvert
\\
&&\qquad \leq\bigl\llvert\mathbb{P}^{z} \bigl(u= (x,m ) \bigr)-
\mathbb{P}^{z+e_{j}} \bigl(u= (x+e_{j},m ) \bigr)\bigr\rrvert
+CN^{-d-1},
\end{eqnarray*}
and, therefore, it is enough to compare $\mathbb{P}^{z} (u=
(x,m ) )$
with $\mathbb{P}^{z+e_{j}} (u= (x+e_{j},m ) )$.
In this case, we have
\begin{eqnarray*}
&& \mathbb{P}^{z+e_{j}} \bigl(u= (x+e_{j},m ) \bigr)
\\
&&\qquad=  \sum_{l\leq N^{2}}\mathbb{P}^{z+e_{j}} \bigl(
\hat{B} (l ) \bigr) \mathop{\sum_{
w\in H_{l}, n\in\mathbb{N}}}_{
w-z_{1}\leftrightarrow n}
\mathbb{P}^{z+e_{j}} \bigl( (X_{T_{l}},T_{l} )=
(w+e_{j},n )\mid \hat{B} (l ) \bigr)
\\
&&\quad\qquad{}\times \BP^{z+e_{j}}
\bigl(u= (x+e_{j},m )\mid \hat{B} (l ), (X_{T_{l}},T_{l}
)= (w+e_{j},n ) \bigr)
\\
&&\qquad=  \sum_{l\leq N^{2}}\mathbb{P}^{z+e_{j}} \bigl(
\hat{B} (l ) \bigr) \mathop{\sum_{
w\in H_{l}, n\in\mathbb{N}}}_{
w-z_{1}\leftrightarrow n}
\mathbb{P}^{z+e_{j}} \bigl( (X_{T_{l}},T_{l} )=
(w+e_{j},n )\mid \hat{B} (l ) \bigr)
\\
&&\quad\qquad{}\times \BP^{z}
\bigl(u= (x,m )\mid \hat{B} (l ), (X_{T_{l}},T_{l} )=
(w,n ) \bigr)
\\
&&\qquad=  \sum_{l\leq N^{2}}\mathbb{P}^{z} \bigl(
\hat{B} (l ) \bigr) \mathop{\sum_{
w\in H_{l}, n\in\mathbb{N}}}_{
w-z_{1}\leftrightarrow n}
\mathbb{P}^{z+e_{j}} \bigl( (X_{T_{l}},T_{l} )=
(w+e_{j},n )\mid \hat{B} (l ) \bigr)
\\
&&\quad\qquad{}\times \BP^{z}
\bigl(u= (x,m )\mid \hat{B} (l ), (X_{T_{l}},T_{l} )=
(w,n ) \bigr)
\end{eqnarray*}
and, therefore,
\begin{eqnarray*}
&& \bigl\llvert\mathbb{P}^{z} \bigl(u= (x,m ) \bigr)-\mathbb
{P}^{z+e_{j}} \bigl(u= (x+e_{j},m ) \bigr)\bigr\rrvert
\\
&&\qquad\leq \sum_{l\leq N^{2}}\mathbb{P}^{z} \bigl(
\hat{B} (l ) \bigr) \mathop{\sum_{
w\in H_{l}, n\in\mathbb{N}}}_{
w-z_{1}\leftrightarrow n}
\bigl[\bigl\llvert\mathbb{P}^{z+e_{j}} \bigl( (X_{T_{l}},T_{l}
)= (w+e_{j},n )\mid \hat{B} (l ) \bigr)
\\
&&\quad\qquad{} -\mathbb{P}^{z}
\bigl( (X_{T_{l}},T_{l} )= (w,n )\mid \hat{B} (l )
\bigr)\bigr\rrvert\BP^{z} \bigl(u= (x,m )\mid \hat{B}
(l ), (X_{T_{l}},T_{l} )= (w,n ) \bigr) \bigr]
\\
&&\qquad\leq \sum_{l\leq N^{2}}\mathbb{P}^{z} \bigl(
\hat{B} (l ) \bigr)\max_{w\in H_{l}, 2\leq j\leq d, n\in\mathbb{N}}\bigl\llvert \mathbb{P}^{z+e_{j}}
\bigl( (X_{T_{l}},T_{l} )= (w+e_{j},n )\mid \hat{B} (l ) \bigr)
\\
&&\quad\qquad{} -\mathbb{P}^{z} \bigl( (X_{T_{l}},T_{l}
)= (w,n )\mid \hat{B} (l ) \bigr)\bigr\rrvert.
\end{eqnarray*}
Using (\ref{eqAnnealedestmiddle2-1}) and (\ref{eqAnnealedestmiddle3-1})
gives
\[
\bigl\llvert\mathbb{P}^{z} \bigl(u= (x,m ) \bigr)-\mathbb
{P}^{z+e_{1}} \bigl(u= (x+e_{1},m ) \bigr)\bigr\rrvert\leq\sum
_{l\leq N^{2}}\mathbb{P}^{z} \bigl(\hat{B} (l )
\bigr)Cl^{-\vfrac{d+1}{2}}
\]
and the proof is completed in the same way as in (2). Finally, we
turn to deal with the case $w=z+e_{1}$. One can rewrite the term
for $\mathbb{P}^{z+e_{1}} (u= (x=m ) )$ as
\begin{eqnarray*}
&& \mathbb{P}^{z+e_{1}} \bigl(u= (x,m ) \bigr)
\\
&&\qquad=  \sum_{l\leq N^{2}-1}\mathbb{P}^{z+e_{1}} \bigl(
\hat{B} (l+1 ) \bigr)
\\
&&\quad\qquad{}\times \mathop{\sum_{
w\in H_{l}, n\in\mathbb{N}}}_{
w-z\leftrightarrow n}
\mathbb{P}^{z+e_{1}} \bigl( (X_{T_{l+1}},T_{l+1} )=
(w+e_{1},n )\mid\hat{B} (l+1 ) \bigr)
\\
&&\quad\qquad{}\times  \BP^{z+e_{1}} \bigl(u= (x,m )\mid\hat{B} (l+1 ),
(X_{T_{l+1}},T_{l+1} )= (w+e_{1},n ) \bigr)
\\
&&\qquad =  \sum_{l\leq N^{2}-1}\mathbb{P}^{z} \bigl(
\hat{B} (l ) \bigr) \mathop{\sum_{
w\in H_{l}, n\in\mathbb{N}}}_{
w-z\leftrightarrow n}\mathbb{P}^{z} \bigl( (X_{T_{l}},T_{l} )= (w,n )
\mid\hat{B} (l ) \bigr)
\\
&&\quad\qquad{}\times
\BP^{z+e_{1}} \bigl(u= (x,m )\mid\hat{B} (l+1 ),
(X_{T_{l+1}},T_{l+1} )= (w+e_{1},n ) \bigr)
\end{eqnarray*}
and, therefore, using Lemma~\ref{lemmiddlestepannealedestimations}
\begin{eqnarray*}
&& \bigl\llvert\mathbb{P}^{z} \bigl(u= (x,m ) \bigr)-\mathbb
{P}^{z+e_{1}} \bigl(u= (x,m ) \bigr)\bigr\rrvert
\\
&&\qquad\leq \sum_{l\leq N^{2}-1}\mathbb{P}^{z} \bigl(
\hat{B} (l ) \bigr) \mathop{\sum_{
w\in H_{l}, n\in\mathbb{N}}}_{
w-z\leftrightarrow n}
\mathbb{P}^{z} \bigl( (X_{T_{l}},T_{l} )= (w,n )
\mid\hat{B} (l ) \bigr)
\\
&&\quad\qquad{}\times \bigl\llvert\BP^{z} \bigl(u= (x,m )\mid \hat{B} (l ),
(X_{T_{l}},T_{l} )= (w,n ) \bigr)
\\
&&\quad\qquad{} -\BP^{z+e_{1}}
\bigl(u= (x,m )\mid\hat{B} (l+1 ), (X_{T_{l+1}},T_{l+1}
)= (w+e_{1},n ) \bigr)\bigr\rrvert
\\
&&\quad\qquad{}+  \mathbb{P}^{z} \bigl(\hat{B} \bigl(N^{2} \bigr) \bigr)
\mathop{\max_{
w\in H_{N^{2}}, n\in\mathbb{N}}}_{
w-z\leftrightarrow n} \mathbb{P}^{z}
\bigl( (X_{T_{N^{2}}},T_{N^{2}} )= (w,n )\mid\hat{B}
\bigl(N^{2} \bigr) \bigr)\\
&&\quad\qquad{}\times \BP^{z} \bigl(u= (x,m )\mid\hat
{B} \bigl(N^{2} \bigr), (X_{T_{N^{2}}},T_{N^{2}} )= (w,n
) \bigr)
\\
&&\qquad\leq \sum_{l\leq N^{2}-1} \mathbb{P}^{z} \bigl(
\hat{B} (l ) \bigr)
\\
&&\quad\qquad{}\times  \mathop{\sum_{
w\in H_{l}, n\in\mathbb{N}}}_{
w-z\leftrightarrow n}
Cl^{-\vfrac{d+1}{2}}\big| \BP^{z} \bigl(u= (x,m )\mid\hat{B} (l ),
(X_{T_{l}},T_{l} )= (w,n ) \bigr)
\\
&&\quad\qquad{}-\BP^{z+e_{1}} \bigl(u= (x,m )\mid\hat{B} (l+1 ),
(X_{T_{l+1}},T_{l+1} )= (w+e_{1},n ) \bigr)\big|
\\
&&\quad\qquad{}+  \mathbb{P}^{z} \bigl(\hat{B} \bigl(N^{2} \bigr) \bigr)
\cdot CN^{-\vfrac{d+1}{2}}
\\
&&\qquad\leq 2C\sum_{l\leq N^{2}-1}\mathbb{P}^{z} \bigl(
\hat{B} (l ) \bigr)l^{-\vfrac{d+1}{2}}+\mathbb{P}^{z} \bigl(\hat{B}
\bigl(N^{2} \bigr) \bigr)\cdot CN^{-\vfrac{d+1}{2}}
\\
&&\qquad \leq CN^{-\vfrac{d+1}{2}},
\end{eqnarray*}
as required.
\end{pf*}

\subsubsection{Proof of Lemma \protect\ref{lemgeneralannealedestimations}}\label{sec8.2.3}

The proof of Lemma~\ref{lemgeneralannealedestimations} follows
very similar lines to the one of Lemma~\ref
{lemAnnealedderivativeestimationsd-1+time}
and is based on very similar estimations to the one obtained in Lemma
\ref{lemmiddlestepannealedestimations}. Here, we need a slightly
different version of it in which we replace $\hat{B (l )}$
the event that the last regeneration time is in the hyperplane $l$
with the event $\hat{Z} (l )$ in which the last regeneration
time is at time $l$.

\begin{lem}[(Middle step in Lemma~\ref
{lemAnnealedderivativeestimationsd-1+time})]
\label{lemMiddlestepinLemmageneralestimations} Let $d\geq4$
and assume $P$ is uniformly elliptic, i.i.d. and satisfies $
(\mathscr{P} )$.
Fix $z\in\BZ^{d}$ and $n\in\mathbb{N}$. For $k,l\in\mathbb{N}$
let $Z (l,k )$ be the event that $\tau_{k}=l$, $Z
(l )=\bigcup_{k}Z (l,k )$
and
\[
\hat{Z}_{n} (l )\equiv\hat{Z} (l )=Z (l )\cap\bigcap
_{j=l+1}^{n}Z^{c} (j ).
\]
Then for every $z\in\mathbb{Z}^{d}$:
\begin{longlist}[(2)]
\item[(1)] For every $l\leq n$ and $x\in\mathbb{Z}^{d}$
%
\begin{equation}
\mathbb{P}^{z} \bigl(X_{l}=x\mid\hat{Z} (l ) \bigr)\leq
Cl^{-\sfrac{d}{2}}.\label{eqAnnealedestmiddle1}
\end{equation}

\item[(2)] For every $l\in\mathbb{N}$, and every $x,y\in\mathbb{Z}^{d}$ such
that $\llVert x-y\rrVert _{1}=1$
%
\begin{equation}
\bigl\llvert\mathbb{P}^{z} \bigl(X_{l}=x\mid\hat{Z} (l
) \bigr)-\mathbb{P}^{z} \bigl(X_{l+1}=y\mid\hat{Z} (l+1 )
\bigr)\bigr\rrvert\leq Cl^{-\vfrac{d+1}{2}}.\label
{eqAnnealedestmiddle2}
\end{equation}

\item[(3)] For every $l\in\mathbb{N}$ every $x\in\mathbb{Z}^{d}$ and every
$1\leq j\leq d$
%
\begin{equation}
\bigl\llvert\mathbb{P}^{z} \bigl(X_{l}=x\mid\hat{Z} (l
) \bigr)-\mathbb{P}^{z+e_{j}} \bigl(X_{l+1}=x\mid\hat{Z} (l+1 )
\bigr)\bigr\rrvert\leq Cl^{-\vfrac{d+1}{2}}.\label
{eqAnnealedestmiddle3}
\end{equation}

\item[(4)] For every $\varepsilon>0$, every partition $\Pi_{n}$ of
$\mathbb{Z}^{d}$
into boxes of side length $n^{\varepsilon}$ and any $l\in\mathbb{N}$
%
\begin{equation}
\qquad \sum_{\Delta\in\Pi_{n}}\mathop{\sum
_{
x\in\Delta}}_{
x-z\leftrightarrow l} \max_{y\in\Delta}
\mathbb{P}^{z} \bigl(X_{l}=y\mid\hat{Z} (l ) \bigr)-
\mathbb{P}^{z} \bigl(X_{l}=x\mid\hat{Z} (l ) \bigr)\leq
Cl^{-\sfrac{1}{2}+3d\varepsilon}.\label{eqeqAnnealedestmiddle4}
\end{equation}
\end{longlist}
\end{lem}

\begin{pf}
Due to the independence of $X_{l}$ from $\bigcap_{j=l+1}^{n}Z^{c}
(j )$
conditioned on $Z (l )$ we get that for every $M\in\mathbb{N}$
\begin{eqnarray*}
\mathbb{P}^{z}\bigl(X_{l}=x\mid \hat{Z} (l ) \bigr) & =&\mathbb{P}^{z} \bigl(X_{l}=x\mid Z
(l ) \bigr)
\\
& =&\frac{1}{\mathbb{P}^{z} (Z (l ) )}\sum_{k=1}^{\infty}
\mathbb{P}^{z} \bigl( (X_{\tau_{k}},\tau_{k} )= (x,l )
\bigr)
\\
& =&\frac{1}{\mathbb{P}^{z} (Z (l ) )}\sum_{k=1}^{M}
\mathbb{P}^{z_{1}} \biggl( (X_{\tau_{k}},\tau_{k} )= (x,l
), \tau_{ \lceil\sfrac
{k}{2} \rceil}\geq\frac{l}{2} \biggr)
\\
&&{} +\frac{1}{\mathbb{P}^{z} (Z (l ) )}\sum_{k=1}^{M}
\mathbb{P}^{z_{1}} \biggl( (X_{\tau_{k}},\tau_{k} )= (x,l
), \tau_{k}-\tau_{ \lceil\sfrac
{k}{2} \rceil}\geq\frac{l}{2} \biggr)
\\
&&{} +\frac{1}{\mathbb{P}^{z} (Z (l ) )}\sum_{k=M+1}^{\infty}
\mathbb{P}^{z_{1}} \biggl( (X_{\tau_{k}},\tau_{k} )= (x,l
), \tau_{ \lceil\sfrac
{k}{2} \rceil}\leq\frac{l}{2} \biggr)
\\
&&{} +\frac{1}{\mathbb{P}^{z} (Z (l ) )}\sum_{k=M+1}^{\infty}
\mathbb{P}^{z_{1}} \biggl( (X_{\tau_{k}},\tau_{k} )= (x,l
), \tau_{k}-\tau_{ \lceil\sfrac
{k}{2} \rceil}\leq\frac{l}{2} \biggr).
\end{eqnarray*}
Claim~\ref{clmFourierclaim} gives
\begin{eqnarray*}
&& \mathbb{P}^{z} \biggl( (X_{\tau_{k}},\tau_{k} )= (x,l
), \tau_{ \lceil\sfrac{k}{2} \rceil}\leq\frac{l}{2} \biggr)
\\
&&\qquad   =\sum
_{s=0}^{\sfrac{l}{2}} \mathop{\sum
_{w\in\mathbb{Z}^{d}}}_{
w-z\leftrightarrow s} \mathbb{P}^{z} \bigl(
(X_{\tau_{ \lceil\sfrac{k}{2} \rceil}},\tau_{ \lceil\sfrac
{k}{2} \rceil} )= (w,s ) \bigr)
\\
&&\quad\qquad{}\times \mathbb
{P}^{z} \bigl( (X_{\tau_{k}},\tau_{k} )= (x,l )\mid
(X_{\tau_{ \lceil\sfrac{k}{2} \rceil}},\tau_{ \lceil\sfrac{k}{2} \rceil}
)= (w,s ) \bigr)
\\
&&\qquad  \leq Ck^{-\vfrac{d+1}{2}}\sum_{s=0}^{\sfrac{l}{2}}
\mathop{\sum_{
w\in\mathbb{Z}^{d}}}_{
w-z\leftrightarrow s}
\mathbb{P}^{z} \bigl( (X_{\tau_{ \lceil\sfrac{k}{2} \rceil}},\tau_{ \lceil\sfrac
{k}{2} \rceil} )= (w,s )
\bigr)
\\
&&\qquad  = Ck^{-\vfrac{d+1}{2}}\mathbb{P}^{z_{1}} \biggl(\tau_{ \lceil \sfrac{k}{2} \rceil}\leq
\frac{l}{2} \biggr),
\end{eqnarray*}
and similarly we have
\begin{eqnarray*}
\mathbb{P}^{z} \biggl( (X_{\tau_{k}},\tau_{k} )= (x,l
), \tau_{k}-\tau_{ \lceil\sfrac{k}{2}
\rceil}\leq\frac{l}{2} \biggr) & \leq& Ck^{-\vfrac{d+1}{2}}\mathbb{P}^{z_{1}} \biggl(\tau_{k}-
\tau_{ \lceil\sfrac{k}{2}
\rceil}\leq\frac{l}{2} \biggr)
\\
& \leq& Ck^{-\vfrac{d+1}{2}}\mathbb{P}^{z_{1}} \biggl(\tau_{ \lceil\sfrac{k}{2} \rceil}
\leq\frac{l}{2} \biggr).
\end{eqnarray*}
Repeating\vspace*{1pt} the same calculations while separating the sum according
to the events $\tau_{ \lfloor\sfrac{k}{2} \rfloor}\geq
\frac{l}{2}$
and $\tau_{k}-\tau_{ \lfloor\sfrac{k}{2} \rfloor}\geq
\frac{l}{2}$,
we get that
\[
\mathbb{P}^{z} \biggl( (X_{\tau_{k}},\tau_{k} )= (x,l
), \tau_{ \lceil\sfrac{k}{2} \rceil}\geq\frac{l}{2} \biggr)\leq
Ck^{-\vfrac{d+1}{2}}
\mathbb{P}^{z} \biggl(\tau_{ \lceil\sfrac{k}{2} \rceil}\geq\frac{l}{2}
\biggr)
\]
and
\[
\mathbb{P}^{z} \biggl( (X_{\tau_{k}},\tau_{k} )= (x,l
), \tau_{k}-\tau_{ \lceil\sfrac{k}{2}
\rceil}\geq\frac{l}{2} \biggr)\leq
Ck^{-\vfrac{d+1}{2}}\mathbb{P}^{z} \biggl(\tau_{ \lceil\sfrac{k}{2}
\rceil}\geq
\frac
{l}{2} \biggr),
\]
combining all of the above yields
\begin{eqnarray*}
\mathbb{P}^{z} \bigl(X_{l}=x\mid\hat{Z} (l ) \bigr) & \leq& C\sum_{k=1}^{M}k^{-\vfrac{d+1}{2}}
\mathbb{P}^{z} \biggl(\tau_{ \lceil\sfrac{k}{2} \rceil}\geq\frac{l}{2}
\biggr)
\\
&&{} +C\sum_{k=M+1}^{\infty}k^{-\vfrac{d+1}{2}}
\mathbb{P}^{z} \biggl(\tau_{ \lceil\sfrac{k}{2} \rceil}\leq\frac{l}{2}
\biggr).
\end{eqnarray*}
Choosing
\[
M=\frac{l}{\mathbb{E}^{z} [ \langle\tau_{2}-\tau
_{1},e_{1} \rangle]}=O (l )
\]
and using Theorem~\ref{teoRegenerationtimestimeestimate} (see
also Corollary~\ref{corlengthofregenerations}) we get that $\tau
_{k}-\tau_{k-1}$
has finite $2d$ moments, and from standard estimates for the sum
of i.i.d. variable [that the $2d$ moment for the sum of $k$ i.i.d.
mean zero random variables grows like $O (k^{d} )$], thus
\[
\mathbb{P}^{z} \biggl(\tau_{ \lceil\sfrac{k}{2} \rceil
}\geq\frac{l}{2}
\biggr),\mathbb{P}^{z} \biggl(\tau_{ \lceil \sfrac{k}{2} \rceil}\leq
\frac{l}{2} \biggr)\leq\min\biggl[1,\frac{Ck^{d}}{ (M-k )^{2d}} \biggr]
\]
and, therefore,
\[
\mathbb{P}^{z_{1}} \bigl(X_{l}=x\mid\hat{Z} (l ) \bigr)\leq
C\sum_{k=1}^{\infty}k^{-\vfrac{d+1}{2}}\min
\biggl[1,\frac
{Ck^{d}}{ (M-k )^{2d}} \biggr]=O \bigl(l^{-\sfrac{d}{2}} \bigr).
\]
To see the last equality, we have to separate the sum into four parts
as in \cite{berger2008slowdown}, Lemma 4.2, the first part is a sum
over $k\in[1,\frac{M}{2} ]$ which contains roughly $l$
summands, each of them is bounded by $Cl^{-d}$ and therefore the
whole sum is bounded by $Cl^{-d+1}$. The second part is a sum over
$k\in[\frac{M}{2},M-\sqrt{M} ]$. In this case, the sum is
bounded up to a constant by
\begin{eqnarray*}
\int_{\sfrac{M}{2}}^{M-\sqrt{M}}x^{\vfrac{d-1}{2}} (M-x
)^{-2d}\,dx & =&\int_{\sqrt{M}}^{\sfrac{M}{2}} (M-y
)^{\vfrac {d-1}{2}}y^{-2d}\,dy
\\
& \leq&\biggl(\frac{M}{2} \biggr)^{\vfrac{d-1}{2}}\int
_{\sqrt
{M}}^{\sfrac{M}{2}}y^{-2d}\,dy
\\
&\leq&
CM^{\vfrac{d-1}{2}}\sqrt{M}^{-2d+1}
\\
&=&O \bigl(M^{-\sfrac{d}{2}} \bigr)=O
\bigl(l^{-\sfrac{d}{2}} \bigr).
\end{eqnarray*}
The third sum is over $k\in[M-\sqrt{M},M+\sqrt{M} ]$. This
part contains roughly $\sqrt{M}$ summands, each of them is bounded
by $M^{-\vfrac{d+1}{2}}$ so the sum is $O (l^{-\sfrac{d}{2}} )$.
Finally, the last sum is over $k\geq M+\sqrt{M}$. This case is similar
to the second sum and is bounded up to a constant by
\begin{eqnarray*}
&& \int_{M+\sqrt{M}}^{\infty}x^{\vfrac{d-1}{2}} (x-M
)^{-2d}\,dx
\\
&&\qquad   =\int_{M+\sqrt{M}}^{2M}x^{\vfrac{d-1}{2}}
(x-M )^{-2d}\,dx+\int_{2M}^{\infty}x^{\vfrac{d-1}{2}}
(x-M )^{-2d}\,dx
\\
&&\qquad  =\int_{\sqrt{M}}^{M} (y+M )^{\vfrac {d-1}{2}}y^{-2d}\,dy+
\int_{M}^{\infty} (y+M )^{\vfrac {d-1}{2}}y^{-2d}\,dy
\\
&&\qquad  \leq CM^{\vfrac{d-1}{2}}\int_{\sqrt{M}}^{M}y^{-2d}\,dy+C
\int_{M}^{\infty}y^{\vfrac{d-1}{2}}y^{-2d}\,dy=O
\bigl(l^{-\sfrac{d}{2}} \bigr).
\end{eqnarray*}
Thus,
\[
\mathbb{P}^{z_{1}} \bigl(X_{l}=x\mid\hat{Z} (l ) \bigr)\leq
Cl^{-\sfrac{d}{2}}.
\]
The arguments for the other two inequalities are very similar and,
therefore, we only discuss the proof for~\ref{eqAnnealedestmiddle2-1}.
Similarly, to the first case, we have
%
\begin{eqnarray}
&& \bigl\llvert\mathbb{P}^{z_{1}} \bigl(X_{l}=x\mid
\hat{Z}_{n} (l ) \bigr)-\mathbb{P}^{z_{1}} \bigl(X_{l+1}=y
\mid\hat{Z}_{n+1} (l+1 ) \bigr)\bigr\rrvert
\nonumber
\\
&&\qquad =  \bigl\llvert\mathbb{P}^{z_{1}} \bigl(X_{l}=x\mid Z (l )
\bigr)-\mathbb{P}^{z_{1}} \bigl(X_{l+1}=y\mid Z (l+1 ) \bigr)
\bigr\rrvert
\nonumber
\\
&&\qquad=  \Biggl\llvert\frac{1}{\mathbb{P}^{z_{1}} (Z (l )
)}\sum_{k=1}^{\infty}
\mathbb{P}^{z_{1}} \bigl( (X_{\tau
_{k}},\tau_{k} )= (x,l )
\bigr)
\nonumber
\\
&&\quad\qquad{} -\frac{1}{\mathbb
{P}^{z_{1}} (Z (l+1 ) )}\sum_{k=1}^{\infty
}\mathbb{P}^{z_{1}} \bigl( (X_{\tau_{k}},\tau_{k} )= (y,l+1
) \bigr)\Biggr\rrvert
\nonumber
\\
&&\qquad \leq \frac{1}{\mathbb{P}^{z_{1}} (Z (l )
)}\sum_{k=1}^{\infty}
\bigl\llvert\mathbb{P}^{z_{1}} \bigl( (X_{\tau
_{k}},
\tau_{k} )= (x,l ) \bigr)-\mathbb{P}^{z_{1}} \bigl(
(X_{\tau_{k}},\tau_{k} )= (y,l+1 ) \bigr)\bigr\rrvert
\label{eqZllemma1}
\\
&&\quad\qquad{}+  \biggl\llvert\frac{1}{\mathbb{P}^{z_{1}} (Z (l )
)}-\frac{1}{\mathbb{P}^{z_{1}} (Z (l+1 )
)}\biggr\rrvert\sum
_{k=1}^{\infty}\mathbb{P}^{z_{1}} \bigl(
(X_{\tau
_{k}},\tau_{k} )= (y,l+1 ) \bigr).\label{eqZllemma2}
\end{eqnarray}

For the first term, we can now continue as in the previous case, by
first separating the sum for $k\leq M$ and $k>M$ and also adding
either the assumption $ \langle X_{\tau_{k}}-X_{\tau_{ \lceil\sfrac{k}{2} \rceil}},e_{1} \rangle\geq\frac{l}{2}$
or $ \langle X_{\tau_{ \lceil\sfrac{k}{2} \rceil
}},e_{1} \rangle\geq\frac{l}{2}$.
Now we can estimate each term in the same way except that in (\ref
{eqmiddlestepannealedderivativeestimationmarker})
we have
\begin{eqnarray*}
&& \biggl\llvert\mathbb{P}^{z_{1}} \biggl( (X_{\tau_{k}},
\tau_{k} )= (x,l ), \langle X_{\tau_{ \lceil\sfrac
{k}{2} \rceil},e_{1}} \rangle\leq
\frac{l}{2} \biggr)
\\
&&\quad{} -\mathbb{P}^{z_{1}} \biggl( (X_{\tau_{k}},
\tau_{k} )= (y,l+1 ), \langle X_{\tau_{ \lceil\sfrac
{k}{2} \rceil},e_{1}} \rangle\leq
\frac{l}{2} \biggr)\biggr\rrvert
\\
&&\qquad \leq \sum_{w: \langle w,e_{1} \rangle\leq\sfrac
{l}{2}}\sum_{s\in\mathbb{N}}
\mathbb{P}^{z_{1}} \bigl( (X_{\tau
_{ \lceil\sfrac{k}{2} \rceil}},\tau_{ \lceil\sfrac
{k}{2} \rceil} )= (w,s )
\bigr)
\\
&&\quad\qquad{}\times \bigl\llvert\mathbb{P}^{z_{1}} \bigl( (X_{\tau_{k}},\tau
_{k} )= (x,l )\mid(X_{\tau_{ \lceil \sfrac{k}{2} \rceil}},\tau_{ \lceil\sfrac{k}{2}
\rceil} )= (w,s )
\bigr)
\\
&&\quad\qquad{}-\mathbb{P}^{z_{1}} \bigl( (X_{\tau_{k}},\tau_{k} )=
(y,l+1 )\mid(X_{\tau_{ \lceil\sfrac{k}{2} \rceil}},\tau_{ \lceil\sfrac
{k}{2} \rceil} )= (w,s ) \bigr)\bigr
\rrvert
\\
&&\qquad \leq \sum_{w: \langle w,e_{1} \rangle\leq\sfrac
{l}{2}}\sum_{s\in\mathbb{N}}Ck^{-\vfrac{d+2}{2}}
\mathbb{P}^{z_{1}} \bigl( (X_{\tau_{ \lceil\sfrac{k}{2}
\rceil}},\tau_{ \lceil\sfrac{k}{2} \rceil} )= (w,s
) \bigr)
\\
&&\qquad =Ck^{-\vfrac{d+2}{2}}\mathbb{P}^{z_{1}} \biggl( \langle
X_{\tau_{ \lceil\sfrac{k}{2} \rceil
}},e_{1} \rangle\leq\frac{l}{2} \biggr),
\end{eqnarray*}
where for the last inequality we used (\ref{eqFourieranalysis2})
instead of (\ref{eqFourieranalysis1}). Apart from that difference
the proof for the first term continues via the same lines. Regarding
the second term, since $ \{ \tau_{k}-\tau_{k-1} \}
_{k=2}^{\infty}$
are i.i.d. under $\mathbb{P}$ using the Fourier analysis of Claim~\ref{clmFourierclaim}, one can verify that $\llvert \mathbb
{P}^{z_{1}} (Z (l ) )-\mathbb{P}^{z_{1}}
(Z (l+1 ) )\rrvert \leq\frac{C}{l}$
and, therefore,
\begin{eqnarray*}
\eqref{eqZllemma2} & \leq&\frac{C}{l\cdot\mathbb{P}^{z_{1}}
(l )}\cdot\mathbb{P}^{z_{1}}
\bigl(X_{l+1}=y\mid Z (l+1 ) \bigr)
\\
& \leq&\frac{C}{l}\cdot\mathbb{P}^{z_{1}} \bigl(X_{l+1}=y
\mid Z (l+1 ) \bigr)\leq C\cdot l^{\vfrac{d+2}{2}}=o \bigl(l^{\vfrac{d+1}{2}}
\bigr),
\end{eqnarray*}
where for the last inequality we used the first part of the lemma
[see (\ref{eqAnnealedestmiddle2})].

Finally, we turn to discuss the last term. The proof is very similar
to the previous ones. For every $\Delta\in\Pi_{n}$ denote by
$x_{\Delta}$
a point in $\Delta$ such that $\mathbb{P}^{z} (X_{l}=x_{\Delta
}\mid\hat{Z} (l ) )=\max_{y\in\Delta}\mathbb
{P}^{z} (X_{l}=y\mid\hat{Z} (l ) )$
we get
\begin{eqnarray*}
&& \sum_{\Delta\in\Pi_{n}}\mathop{\sum_{
x\in\Delta}}_{
x-z\leftrightarrow l}
\max_{y\in\Delta}\mathbb{P}^{z} \bigl(X_{l}=y
\mid\hat{Z} (l ) \bigr)-\mathbb{P}^{z} \bigl(X_{l}=x\mid
\hat{Z} (l ) \bigr)
\\
&&\qquad=  \frac{1}{\mathbb{P}^{z} (Z (l ) )}
\\
&&\quad\qquad{}\times \sum_{k=1}^{\infty}
\sum_{\Delta\in\Pi_{n}}
\mathop{\sum_{
x\in\Delta}}_{
x-z\leftrightarrow l}
\mathbb{P}^{z} \bigl( (X_{\tau_{k}},\tau_{k} )=
(x_{\Delta},l ) \bigr)-\mathbb{P}^{z} \bigl(
(X_{\tau
_{k}},\tau_{k} )= (x,l ) \bigr).
\end{eqnarray*}
Separating the sum as in the previous cases and using (\ref
{eqFourieranalysis3})
in the appropriate inequality, this completes the proof.
\end{pf}

\begin{pf*}{Proof of Lemma~\ref{lemgeneralannealedestimations}}
(1) We have
\begin{eqnarray*}
&& \mathbb{P}^{z} (X_{n}=x )
\\
&&\qquad  =\sum_{l\leq n}\mathbb{P}^{z} \bigl(
\hat{Z} (l ) \bigr)\sum_{w\in\mathbb{Z}^{d}}\mathbb{P}^{z}
\bigl(X_{l}=w\mid\hat{Z} (l ) \bigr)\mathbb{P}^{z}
\bigl(X_{n}=x\mid\hat{Z} (l ), X_{l}=w \bigr)
\\
&&\qquad  \overset{ (1 )} {\leq}\sum_{l\leq n}
\mathbb{P}^{z} \bigl(\hat{Z} (l ) \bigr)\sum
_{w\in\mathbb{Z}^{d}}Cl^{-\sfrac{d}{2}}\mathbb{P}^{z}
\bigl(X_{n}=x\mid\hat{Z} (l ), X_{l}=w \bigr)
\\
&&\qquad  \overset{ (2 )} {=}\sum_{l\leq n}
\mathbb{P}^{z} \bigl(\hat{Z} (l ) \bigr)Cl^{-\sfrac{d}{2}}
\\
&&\qquad  \overset{ (3 )} {\leq}\sum_{l\leq n}e^{- (\log
(n-l ) )^{2}}Cl^{-\sfrac{d}{2}}
\\
&&\qquad  =\sum_{l\leq\sfrac{n}{2}}e^{- (\log(n-l )
)^{2}}Cl^{-\sfrac{d}{2}}+
\sum_{\sfrac{n}{2}\leq l\leq n}e^{-
(\log(n-l ) )^{2}}Cl^{-\sfrac{d}{2}}
\\
&&\qquad  \leq Ce^{-c (\log n )^{2}}+Cn^{-\sfrac{d}{2}}\sum_{\sfrac
{n}{2}\leq l\le n}e^{- (\log(n-l ) )^{2}}
\leq Cn^{-\sfrac{d}{2}},
\end{eqnarray*}
where for $ (1 )$ we used Lemma~\ref
{lemMiddlestepinLemmageneralestimations},
(\ref{eqannealedderivativeestimation1}), for $ (2 )$
we used the shift invariance of the annealed walk, and for the first
sum in $ (3 )$ we used Corollary~\ref{corlengthofregenerations}.

(2) For $y\in\mathbb{Z}^{d}$ such that $\llVert y-x\rrVert _{1}=1$,
we can find
$1\leq j\leq d$
such that $y=x+e_{j}$ and, therefore,
\begin{eqnarray*}
&& \mathbb{P}^{z} (X_{n+1}=y )
\\
&&\qquad=  \sum_{l\leq n+1}\mathbb{P}^{z} \bigl(
\hat{Z}_{n+1} (l ) \bigr) \mathop{\sum_{
w\in\mathbb{Z}^{d}}}_{
w-z\leftrightarrow l}\mathbb{P}^{z} \bigl(X_{l}=w\mid\hat{Z}_{n+1}
(l ) \bigr)
\\
&&\quad\qquad{}\times
\BP^{z} \bigl(X_{n+1}=y\mid\hat
{Z}_{n+1} (l ), X_{l}=w \bigr)
\\
&&\qquad=  \sum_{l\leq n}\mathbb{P}^{z} \bigl(
\hat{Z}_{n+1} (l+1 ) \bigr) \mathop{\sum_{
w\in\mathbb{Z}^{d}}}_{
w-z\leftrightarrow l}\mathbb{P}^{z} \bigl(X_{l+1}=w+e_{j}\mid
\hat{Z}_{n+1} (l ) \bigr)
\\
&&\quad\qquad{}\times
\BP^{z} \bigl(X_{n+1}=y
\mid\hat{Z}_{n+1} (l ), X_{l+1}=w+e_{j} \bigr)
\\
&&\qquad =  \sum_{l\leq n}\mathbb{P}^{z} \bigl(
\hat{Z}_{n+1} (l+1 ) \bigr) \mathop{\sum_{
w\in\mathbb{Z}^{d}}}_{
w-z\leftrightarrow l}\mathbb{P}^{z} \bigl(X_{l+1}=w+e_{j}\mid
\hat{Z}_{n+1} (l+1 ) \bigr)
\\
&&\quad\qquad{}\times
\BP^{z} \bigl(X_{n}=x
\mid\hat{Z}_{n} (l ), X_{l}=w \bigr).
\end{eqnarray*}
Subtracting the formula for $\mathbb{P}^{z} (X_{n+1}=y )$
from the one for $\mathbb{P}^{z} (X_{n}=x )$, we thus get
\begin{eqnarray*}
&& \bigl\llvert\mathbb{P}^{z} (X_{n}=x )-
\mathbb{P}^{z} (X_{n+1}=y )\bigr\rrvert
\\
&&\qquad\leq \sum_{l\leq n-1}\bigl\llvert\mathbb{P}^{z}
\bigl(\hat{Z}_{n+1} (l+1 ) \bigr)-\mathbb{P}^{z} \bigl(\hat
{Z}_{n} (l ) \bigr)\bigr\rrvert
\\
&&\quad\qquad{}\times \sum_{w\in\mathbb
{Z}^{d}}
\mathbb{P}^{z} \bigl(X_{l+1}=w+e_{j}\mid
\hat{Z}_{n+1} (l+1 ) \bigr)\BP^{z} \bigl(X_{n}=x
\mid\hat{Z}_{n} (l ), X_{l}=w \bigr)
\\
&&\quad\qquad{}+  \sum_{l\leq n}\mathbb{P}^{z} \bigl(
\hat{Z}_{n} (l ) \bigr)\sum_{w\in\mathbb{Z}^{d}}\bigl
\llvert\mathbb{P}^{z} \bigl(X_{l}=w\mid
\hat{Z}_{n} (l ) \bigr)-\mathbb{P}^{z}
\bigl(X_{l}=w+e_{j}\mid\hat{Z}_{n} (l ) \bigr)
\bigr\rrvert
\\
&&\quad\qquad{}\times  \BP^{z} \bigl(X_{n}=x\mid
\hat{Z}_{n} (l ), X_{l}=w \bigr)
\\
&&\qquad\leq \sum_{l\leq n-1}\bigl\llvert\mathbb{P}^{z}
\bigl(\hat{Z}_{n+1} (l+1 ) \bigr)-\mathbb{P}^{z} \bigl(\hat
{Z}_{n} (l ) \bigr)\bigr\rrvert\BP^{z}
\bigl(X_{n+1}=x+e_{j}\mid\hat{Z}_{n+1} (l+1 )
\bigr)
\\
&&\qquad\quad{}+  \sum_{l\leq n}\mathbb{P}^{z} \bigl(
\hat{Z}_{n+1} (l ) \bigr)\max_{w\in\mathbb{Z}^{d}, 2\leq j\leq d}\bigl
\llvert
\mathbb{P}^{z} \bigl(X_{l}=w\mid\hat{Z}_{n} (l
) \bigr)
\\
&&\quad\qquad{}-\mathbb{P}^{z} \bigl(X_{l+1}=w+e_{j}
\mid\hat{Z}_{n+1} (l+1 ) \bigr)\bigr\rrvert,
\end{eqnarray*}
where as before we used the shift invariance. Using (\ref
{eqAnnealedestmiddle2-1}),
the second term is bounded by $\sum_{l\leq n}\mathbb{P}^{z}
(\hat{Z}_{n} (l ) )Cl^{-\vfrac{d+1}{2}}$,
which by the same argument as before is bounded by $Cn^{-\vfrac{d+1}{2}}$.
Regarding the first term, using the first part of the lemma, see (\ref
{eqgeneralannealedestimation1})
and Theorem~\ref{teoRegenerationtimestimeestimate} gives
\begin{eqnarray*}
&& \sum_{l\leq n-1}\bigl\llvert\mathbb{P}^{z}
\bigl(\hat{Z}_{n+1} (l+1 ) \bigr)-\mathbb{P}_{n+1}^{z}
\bigl(\hat{Z} (l ) \bigr)\bigr\rrvert\BP^{z} \bigl(X_{n+1}=x+e_{j}
\mid\hat{Z} (l+1 ) \bigr)
\\[-2pt]
&&\qquad\leq \sum_{l\leq n-1}\frac{\llvert \mathbb{P}^{z} (\hat
{Z}_{n+1} (l+1 ) )-\mathbb{P}^{z} (\hat
{Z}_{n} (l ) )\rrvert }{\mathbb{P}^{z} (\hat
{Z}_{n+1} (l+1 ) )}
\BP^{z} (X_{n+1}=x+e_{j} )
\\[-2pt]
&&\qquad\leq \sum_{l\leq n-1}\frac{\llvert \mathbb{P}^{z} (\hat
{Z}_{n+1} (l+1 ) )-\mathbb{P}^{z} (\hat
{Z}_{n} (l ) )\rrvert }{\mathbb{P}^{z} (\hat
{Z}_{n+1} (l+1 ) )}Cn^{-\sfrac{d}{2}}
\\[-2pt]
&&\qquad\leq \sum_{l\leq n-n^{\sfrac{1}{4}}}e^{- (\log(n-l
) )^{2}}Cn^{-\sfrac{d}{2}}+n^{-\sfrac{d}{2}}
\\[-2pt]
&&\qquad\quad{}\times
\sum_{n-n^{\sfrac
{1}{4}}\leq l\leq n-1}\bigl\llvert\mathbb{P}^{z}
\bigl(\hat{Z}_{n+1} (l+1 ) \bigr)-\mathbb{P}^{z} \bigl(
\hat{Z}_{n} (l ) \bigr)\bigr\rrvert
\\[-2pt]
&&\qquad \overset{ (1 )} {=}  o \bigl(n^{-\vfrac{d+1}{2}} \bigr)+n^{-\sfrac{d}{2}}\sum
_{n-n^{\sfrac{1}{4}}\leq l\leq n-1}\frac
{C}{l}\leq Cn^{-\sfrac{d}{2}}\cdot
n^{\sfrac{1}{4}}\cdot n^{-1}
\\[-2pt]
&&\qquad =o \bigl(n^{-\vfrac{d+1}{2}} \bigr),
\end{eqnarray*}
where for $ (1 )$ we used the fact that for $l\geq
n-n^{\sfrac{1}{4}}\geq\frac{n}{2}$
\begin{eqnarray*}
&& \bigl\llvert\mathbb{P}^{z} \bigl(\hat{Z}_{n+1} (l+1 )
\bigr)-\mathbb{P}^{z} \bigl(\hat{Z}_{n} (l ) \bigr)\bigr
\rrvert
\\[-2pt]
&&\qquad=  \Biggl\llvert\mathbb{P}^{z} \Biggl(Z (l+1 )\cap\bigcap
_{j=l+2}^{n+1}Z (j )^{c} \Biggr)-
\mathbb{P}^{z} \Biggl(Z (l )\cap\bigcap_{j=l+1}^{n}Z
(j )^{c} \Biggr)\Biggr\rrvert
\\[-2pt]
&&\qquad=  \Biggl\llvert\mathbb{P}^{z} \bigl(Z (l+1 ) \bigr)\mathbb
{P}^{z} \Biggl(\bigcap_{j=l+2}^{n+1}Z
(j )^{c}\Big| Z (l+1 ) \Biggr)
\\[-2pt]
&&\quad\qquad{} -
\mathbb{P}^{z} \bigl(Z (l ) \bigr)\mathbb{P}^{z} \Biggl(
\bigcap_{j=l+1}^{n}Z (j )^{c}
\Big| Z (l ) \Biggr)\Biggr\rrvert
\\[-2pt]
&&\qquad=  \Biggl\llvert\mathbb{P}^{z} \bigl(Z (l+1 ) \bigr)\mathbb
{P}^{z} \Biggl(\bigcap_{j=l+1}^{n}Z
(j )^{c}\Big| Z (l ) \Biggr)
\\[-2pt]
&&\quad\qquad{} -
\mathbb{P}^{z} \bigl(Z (l ) \bigr)\mathbb{P}^{z} \Biggl(
\bigcap_{j=l+1}^{n}Z (j )^{c}
\Big| Z (l ) \Biggr)\Biggr\rrvert
\\[-2pt]
&&\qquad=  \mathbb{P}^{z} \Biggl(\bigcap_{j=l+1}^{n}Z
(j )^{c}\Big|  Z (l ) \Biggr)\cdot\bigl\llvert\mathbb{P}^{z}
\bigl(Z (l+1 ) \bigr)-\mathbb{P}^{z} \bigl(Z (l ) \bigr)\bigr\rrvert
\\[-2pt]
&&\qquad\leq \bigl\llvert\mathbb{P}^{z} \bigl(Z (l+1 ) \bigr)-\mathbb
{P}^{z} \bigl(Z (l ) \bigr)\bigr\rrvert\leq\frac{C}{l}\leq
Cn^{-1}.
\end{eqnarray*}

(3) This follows exactly the same lines as the argument for the previous
inequality.

(4) A similar calculation gives
\begin{eqnarray*}
&& \sum_{\Delta\in\Pi_{n}^{ (\varepsilon)}}\mathop{\sum
_{
x\in\Delta}}_{
x\leftrightarrow n} \Bigl[\max_{y\in\Delta}
\mathbb{P}^{0} (X_{n}=y )-\mathbb{P}^{0}
(X_{n}=x ) \Bigr]
\\
&&\qquad=  \sum_{\Delta\in\Pi_{n}^{ (\varepsilon)}}\mathop{\max_{
x\in\Delta}}_{
x\leftrightarrow n}
\sum_{l\leq n}\mathbb{P}^{z} \bigl(\hat{Z} (l
) \bigr) \mathop{\sum_{
w\in\mathbb{Z}^{d}}}_{
w\leftrightarrow l}
\mathbb{P}^{z} \bigl(X_{l}=w\mid\hat{Z} (l ) \bigr)
\\
&&\quad\qquad{}\times \bigl[
\mathbb{P}^{z} \bigl(X_{n}=x_{\Delta}\mid\hat{Z} (l
), X_{l}=w \bigr)-\mathbb{P}^{z} \bigl(X_{n}=x
\mid\hat{Z} (l ), X_{l}=w \bigr) \bigr]
\\
&&\qquad =  \sum_{l\leq n}\mathbb{P}^{z} \bigl(
\hat{Z} (l ) \bigr) \mathop{\sum_{
w\in\mathbb{Z}^{d}}}_{
w\leftrightarrow l}
\sum_{\Delta\in\Pi_{n}^{ (\varepsilon)}}\mathop{\sum_{
x\in\Delta}}_{
x\leftrightarrow n}
\bigl[\mathbb{P}^{z} \bigl(X_{l}=w\mid\hat{Z} (l )
\bigr)
\\
&&\quad\qquad{}-\mathbb{P}^{z} \bigl(X_{l}=w+x-x_{0}\mid
\hat{Z} (l ) \bigr) \bigr]\mathbb{P}^{z} \bigl(X_{n}=x_{\Delta}
\mid\hat{Z} (l ), X_{l}=w \bigr).
\end{eqnarray*}
Using (\ref{eqeqAnnealedestmiddle4}) and the shift invariance
of the annealed measure this is bounded by
\[
\sum_{l\leq n}\mathbb{P}^{z} \bigl(\hat{Z}
(l ) \bigr)Cl^{-\sfrac{1}{2}+3d\varepsilon}\leq\sum_{l\leq n}e^{- (\log
(n-l ) )^{2}}
\cdot Cl^{-\sfrac{1}{2}+3d\varepsilon
}\leq Cn^{-\sfrac{1}{2}+3d\varepsilon}.
\]\upqed
\end{pf*}

\subsubsection{Proof of Lemma \protect\ref{lemgoodestimationforthelocation}}\label{sec8.2.4}
%
Recalling Corollary~\ref{corlengthofregenerations}, we have
%
\begin{eqnarray}\label{eqgoodestimationforthelocation1}
&& \mathbb{P}^{z} \bigl(\bigl\llVert X_{n}-
\mathbb{E}^{z} [X_{n} ]\bigr\rrVert_{\infty}>
\sqrt{n}R_{5} (n ) \bigr)\nonumber
\\
&&\qquad   \leq \mathbb{P}^{z} \bigl(\bigl
\llVert X_{n}-\mathbb{E}^{z} [X_{n} ]\bigr
\rrVert_{\infty}>\sqrt{n}R_{5} (n )\mid B_{n}
\bigr)+P \bigl(B_{n}^{c} \bigr)
\\
&&\qquad \leq \mathbb{P}^{z} \bigl(\exists k\leq n: \bigl\llVert
X_{\tau
_{k}}-\mathbb{E}^{z} [X_{\tau_{k}} ]\bigr\rrVert
_{\infty
}>\tfrac{1}{3}\sqrt{n}R_{5} (n )\mid
B_{n} \bigr)+n^{-\xi(1 )}.\nonumber
\end{eqnarray}
Note that conditioned on $B_{n}$ the regenerations are still independent
and all of them are bounded by $R (n )$. If we could show
that $\llVert \mathbb{E}^{z} [X_{\tau_{k}} ]-\mathbb
{E}^{z} [X_{\tau_{k}}\mid\break  B_{n} ]\rrVert _{\infty
}=n^{-\xi(1 )}$
then
\[
\eqref{eqgoodestimationforthelocation1}\leq\mathbb{P}^{z} \bigl(
\exists k\leq n:, \bigl\llVert X_{\tau_{k}}-\mathbb{E}^{z}
[X_{\tau_{k}}\mid B_{n} ]\bigr\rrVert_{\infty
}>
\tfrac{1}{4}\sqrt{n}R_{5} (n )\mid B_{n}
\bigr)+n^{-\xi(1 )}
\]
which by Azuma's inequality is no more than
\begin{eqnarray*}
&& \sum_{k=1}^{n}\mathbb{P}^{z}
\biggl(\bigl\llVert X_{\tau
_{k}}-\mathbb{E}^{z}
[X_{\tau_{k}}\mid B_{n} ]\bigr\rrVert_{\infty}>
\frac{1}{4}\sqrt{n}R_{5} (n ) \mid B_{n}
\biggr)+n^{-\xi(1 )}
\\
&&\qquad\leq d\sum_{k=1}^{n}\exp\biggl(-
\frac{nR_{5}^{2} (n
)}{16kR^{2} (n )} \biggr)\leq de^{-R_{5} (n
)}=n^{-\xi(1 )}.
\end{eqnarray*}
Thus, it is left to show that $\llVert \mathbb{E}^{z}
[X_{\tau_{k}} ]-\mathbb{E}^{z} [X_{\tau_{k}}\mid
B_{n} ]\rrVert _{\infty}=n^{-\xi(1 )}$.
Since $k\leq n$, by the triangle inequality it is enough to show that
$\llVert \mathbb{E}^{z} [X_{\tau_{k}}-X_{\tau_{k-1}}
]-\mathbb{E}^{z} [X_{\tau_{k}}-X_{\tau_{k-1}}\mid B_{n}
]\rrVert _{\infty}=n^{-\xi(1 )}$.
However, for every $k\leq n$
\begin{eqnarray*}
&& \bigl\llVert\mathbb{E}^{z} [X_{\tau_{k}}-X_{\tau_{k-1}} ]-
\mathbb{E}^{z} [X_{\tau_{k}}-X_{\tau_{k-1}}\mid
B_{n} ]\bigr\rrVert_{\infty}
\\
&&\qquad\leq \bigl\llVert\mathbb{E}^{z} [X_{\tau_{k}}-X_{\tau
_{k-1}}
]-\mathbb{E}^{z} \bigl[ (X_{\tau_{k}}-X_{\tau
_{k-1}} )
\ind_{B_{n}} \bigr]\bigr\rrVert_{\infty}
\\
&&\quad\qquad{}+  \bigl\llVert\mathbb{E}^{z} \bigl[ (X_{\tau_{k}}-X_{\tau
_{k-1}}
)\ind_{B_{n}} \bigr]-\mathbb{E}^{z} [X_{\tau
_{k}}-X_{\tau_{k-1}}
\mid B_{n} ]\bigr\rrVert_{\infty}
\\
&&\qquad=  \bigl\llVert\mathbb{E}^{z} \bigl[ (X_{\tau_{k}}-X_{\tau
_{k-1}}
)\ind_{B_{n}^{c}} \bigr]\bigr\rrVert_{\infty}+\mathbb
{P}^{z} \bigl(B_{n}^{c} \bigr)\bigl\llVert
\mathbb{E}^{z} [X_{\tau_{k}}-X_{\tau_{k-1}}\mid
B_{n} ]\bigr\rrVert_{\infty
}
\\
&&\qquad\leq \bigl\llVert\mathbb{E}^{z} \bigl[ (X_{\tau_{k}}-X_{\tau
_{k-1}}
)\ind_{B_{n}^{c}} \bigr]\bigr\rrVert_{\infty}+R (n )
\mathbb{P}^{z} \bigl(B_{n}^{c} \bigr)
\\
&&\qquad\leq \bigl\llVert\mathbb{E}^{z} \bigl[ (X_{\tau_{k}}-X_{\tau
_{k-1}}
)\ind_{\exists j\neq k, \tau_{j}-\tau_{j-1}>R
(n )} \bigr]\bigr\rrVert_{\infty}
\\
&&\quad\qquad{}+\bigl\llVert\mathbb
{E}^{z} \bigl[ (X_{\tau_{k}}-X_{\tau_{k-1}} )
\ind_{\tau
_{k}-\tau_{k-1}>R (n )} \bigr]\bigr\rrVert_{\infty
}+R (n )
\mathbb{P}^{z} \bigl(B_{n}^{c} \bigr)
\\
&&\qquad\leq \bigl\llVert\mathbb{E}^{z} \bigl[ (X_{\tau_{k}}-X_{\tau
_{k-1}}
) \bigr]\bigr\rrVert_{\infty}\mathbb{P}^{z}
\bigl(B_{n}^{c} \bigr)+\sum_{l=R (n )}^{\infty}le^{-cl^{\gamma
}}+R
(n )\mathbb{P}^{z} \bigl(B_{n}^{c}
\bigr)
\\
&&\qquad =n^{-\xi
(1 )},
\end{eqnarray*}
where for the last inequality we used the assumption $ (\mathscr
{P} )$
which implies $T_{\gamma}$ for any $0<\gamma<1$, and for the last
equality we used Corollary~\ref{corlengthofregenerations}.

The quenched estimation follows from the first inequality together
with Claim~\ref{Clmsimpleclaimannealedtoquenched}, while the
second annealed estimation follows by the exact same proof with
$R_{5} (n )$
replaced with a large constant $C$.

\subsection{More annealed estimations}\label{sec8.3}
\label{AppendixsubMore-annealed-estimations}

\subsubsection{Proof of (\protect\ref{eqFirstdifferenceestimationannealedestimation1})--(\protect\ref
{eqFirstdifferenceestimationannealedestimation4})}\label{sec8.3.1}

We start with the proof of (\ref{eqFirstdifferenceestimationannealedestimation1}).
\begin{eqnarray*}
&& \mathbb{P}^{z} \bigl(X_{T_{M+V}}\in\Delta^{ (1
)},T_{M+V}
\in I^{ (1 )} \bigr)
\\
&&\qquad=  \mathbb{P}^{z} \bigl(X_{T_{M+V}}\in\Delta^{ (1
)},T_{M+V}
\in I^{ (1 )},X_{T_{M}}\in\Delta,T_{M}\in I \bigr)
\\
&&\quad\qquad{}+  \mathbb{P}^{z} \bigl(X_{T_{M+V}}\in\Delta^{ (1
)},T_{M+V}
\in I^{ (1 )}, (X_{T_{M}}\in\Delta,T_{M}\in I
)^{c} \bigr)
\\
&&\qquad\leq \mathbb{P}^{z} (X_{T_{M}}\in\Delta,T_{M}\in
I )+\mathbb{P}^{z} \bigl(X_{T_{M+V}}\in\Delta^{ (1
)},X_{T_{M}}
\notin\Delta\bigr)
\\
&&\quad\qquad{}+\mathbb{P}^{z} \bigl(T_{M+V}\in
I^{ (1 )},T_{M}\notin I \bigr).
\end{eqnarray*}
Thus, it is enough to show that $\mathbb{P}^{z} (X_{T_{M+V}}\in
\Delta^{ (1 )},X_{T_{M}}\notin\Delta)=N^{-\xi
(1 )}$
and $\mathbb{P}^{z} (T_{M+V}\in I^{ (1 )},T_{M}\notin
I )=N^{-\xi(1 )}$.
Since by Lemma~\ref{lemDistanceoftimefromexpectation} $\mathbb
{P}^{z} (X_{T_{M}}\notin\mathcal{P} (0,N )
)=\mathbb{P}^{z} (X_{T_{M}}\notin\mathcal{P} (0,N
),T_{\partial\mathcal{P} (0,N )}\neq T_{\partial
^{+}\mathcal{P} (0,N )} )+N^{-\xi(1
)}=N^{-\xi(1 )}$,
we have
\begin{eqnarray*}
&& \mathbb{P}^{z} \bigl(X_{T_{M+V}}\in\Delta^{ (1
)},X_{T_{M}}
\notin\Delta\bigr)
\\
&&\qquad =\sum_{y\in\mathcal{P}
(0,N )\cap H_{M}\setminus\Delta}\mathbb{P}^{z}
\bigl(X_{T_{M}}=y, X_{T_{M+V}}\in\Delta^{ (1 )}
\bigr)+N^{-\xi(1 )}.
\end{eqnarray*}
However, using Lemma~\ref{lemDistanceoftimefromexpectation}
once more, for every $y\in\mathcal{P} (0,N )\cap
H_{M}\setminus\Delta$
and every $w\in\tilde{\mathcal{P}} (y,\sqrt{V} )$ we have\vspace*{1pt}
$\mathbb{P}^{w} (T_{\partial\mathcal{P} (y,\sqrt{V}
)}=T_{\partial^{+}\mathcal{P} (y,\sqrt{V} )}
)=1- (\sqrt{V} )^{-\xi(1 )}=1-N^{-\xi
(1 )}$.
Since in addition $\operatorname{dist} (\partial^{+}\mathcal{P}
(y,\sqrt{V} ),\Delta)>\frac{1}{2}N^{\theta}-\frac
{1}{2}\cdot\frac{9}{10}N^{\theta}-\break \sqrt{V}R_{5} (V )\geq
\frac{1}{20}N^{\theta}-cN^{\theta'}$
it follows that $\mathbb{P}^{w} (X_{T_{M+V}}\in\Delta^{
(1 )} )=N^{-\xi(1 )}$.
To complete the argument, we note that we note that (by Corollary~\ref
{corlengthofregenerations})
\begin{eqnarray*}
\mathbb{P}^{z} \bigl(X_{T_{M}}=y, X_{T_{M+V}}\in
\Delta^{
(1 )} \bigr) & =&\mathbb{P}^{z} \bigl(X_{T_{M}}=y, X_{T_{M+V}}\in\Delta
^{ (1 )},B_{N}
\bigr)+N^{-\xi
(1 )}
\end{eqnarray*}
and under $B_{N}$ there is a regeneration time at distance at most
$R (N )$ from $y$. This\vspace*{1pt} gives a new point $w\in\mathbb{Z}^{d}$
[such that $\llVert w-y\rrVert _{\infty}\leq R (N )$
and in particular $w\in\tilde{\mathcal{P}} (0,N )$] from
which the probability to hit $\Delta^{ (1 )}$ when hitting
the hyperplane $H_{M+V}$ (conditioned to start in a regeneration
time). Since the last conditioning has a positive probability this
is bounded by $C\mathbb{P}^{w} (X_{T_{M+V}}\in\Delta^{
(1 )} )=N^{-\xi(1 )}$.
Thus,
\begin{eqnarray*}
&& \mathbb{P}^{z} \bigl(X_{T_{M+V}}\in\Delta^{ (1
)},X_{T_{M}}
\notin\Delta\bigr)
\\
&&\qquad  =\sum_{y\in\mathcal{P}
(0,N )\cap H_{M}\setminus\Delta}
\mathbb{P}^{z} \bigl(X_{T_{M}}=y, X_{T_{M+V}}\in
\Delta^{ (1 )} \bigr)+N^{-\xi(1 )}
\\
&&\qquad  \leq \bigl\llvert\mathcal{P} (0,N )\bigr\rrvert\cdot N^{-\xi
(1 )}+N^{-\xi(1 )}=N^{-\xi(1 )}.
\end{eqnarray*}
A similar argument shows that $\mathbb{P}^{z} (T_{M+V}\in
I^{ (1 )},T_{M}\notin I )=N^{-\xi(1 )}$.
Indeed, by Lemma~\ref{lemDistanceoftimefromexpectation} up to
an event of probability $M^{-\xi(1 )}=N^{-\xi
(1 )}$
the first hitting time to the hyperplane $H_{M}$ is the same as the
exit time of the box $\mathcal{P} (0,M )$. By the same lemma,
we also know that up to an event of probability $N^{-\xi(1 )}$
this time is at distance at most $NR_{2} (N )$ from the
expectation of $\mathbb{E}^{z} [T_{M} ]$. Therefore,
\begin{eqnarray*}
&& \mathbb{P}^{z} \bigl(T_{M+V}\in I^{ (1 )},T_{M}
\notin I \bigr)
\\
&&\qquad =\mathop{\sum_{
t: \llvert t-\mathbb{E}^{z} [T_{M} ]\rrvert <NR_{2}
(N )}}_{
t\notin I}
\mathbb{P}^{z}
\bigl(T_{m}=t, T_{M+V}\in I^{ (1
)}
\bigr)+N^{-\xi(1 )}.
\end{eqnarray*}
In the case $t<c (I )-N^{\theta}$ if $T_{M+V}\in I^{
(1 )}$,
then the random walk crossed the distance from $H_{M}$ to $H_{M+V}$
in more then $V\frac{1}{ \langle\bbbv,e_{1} \rangle
}-\frac{1}{2}\cdot\frac{9}{10}N^{\theta}+N^{\theta}=V\frac
{1}{ \langle\bbbv,e_{1} \rangle}+\frac
{1}{20}N^{\theta}$
which happens with probability $N^{-\xi(1 )}$ by Lemma
\ref{lemDistanceoftimefromexpectation}. Similarly, if $t>C
(I )+N^{\theta}$
and $T_{M+V}\in I^{ (1 )}$ then the random walk crossed
the distance from $H_{M}$ to $H_{M+V}$ in less than $V\frac{1}{
\langle\bbbv,e_{1} \rangle}+\frac{1}{2}\cdot\frac
{9}{10}N^{\theta}-\frac{1}{2}N^{\theta}=V\frac{1}{ \langle
\bbbv,e_{1} \rangle}-\frac{1}{20}N^{\theta}$
which also has probability $N^{-\xi(1 )}$ by Lemma~\ref
{lemDistanceoftimefromexpectation}.

Thus,
\begin{eqnarray*}
&& \mathbb{P}^{z} \bigl(T_{M+V}\in I^{ (1 )},T_{M}
\notin I \bigr)
\\
&&\qquad  =
\mathop{\sum_{
t: \llvert t-\mathbb{E}^{z} [T_{M} ]\rrvert <NR_{2}
(N )}}_{
t\notin I}
\mathbb{P}^{z}
\bigl(T_{m}=t, T_{M+V}\in I^{ (1
)}
\bigr)+N^{-\xi(1 )}
\\
&&\qquad  \leq  CNR_{2} (N )\cdot N^{-\xi(1 )}+N^{-\xi
(1 )}=N^{-\xi(1 )}.
\end{eqnarray*}

Turning to (\ref{eqFirstdifferenceestimationannealedestimation3}),
we have
\begin{eqnarray*}
&& E \bigl[P_{\omega}^{z} \bigl(X_{T_{M+V}}\in
\Delta^{ (1
)},T_{M+V}\in I^{ (1 )} \bigr)\mid\mathcal{G}
\bigr]
\\
&&\qquad= E \bigl[P_{\omega}^{z} \bigl(X_{T_{M+V}}\in
\Delta^{ (1
)},T_{M+V}\in I^{ (1 )},X_{T_{M}}\in
\Delta,T_{M}\in I \bigr)\mid\mathcal{G} \bigr]
\\
&&\quad\qquad{}+ E \bigl[P_{\omega}^{z} \bigl(X_{T_{M+V}}\in
\Delta^{ (1
)},T_{M+V}\in I^{ (1 )}, (X_{T_{M}}\in
\Delta,T_{M}\in I )^{c} \bigr)\mid\mathcal{G} \bigr]
\\
&&\qquad\leq E \bigl[P_{\omega}^{z} (X_{T_{M}}\in
\Delta,T_{M}\in I )\mid\mathcal{G} \bigr]+E \bigl[P_{\omega}^{z}
\bigl(X_{T_{M+V}}\in\Delta^{ (1 )},X_{T_{M}}\notin\Delta
\bigr)\mid\mathcal{G} \bigr]
\\
&&\quad\qquad{}+ E \bigl[P_{\omega}^{z} \bigl(T_{M+V}\in
I^{ (1
)},T_{M}\notin I \bigr)\mid\mathcal{G} \bigr]
\\
&&\qquad= P_{\omega}^{z} (X_{T_{M}}\in\Delta,T_{M}
\in I )+E \bigl[P_{\omega}^{z} \bigl(X_{T_{M+V}}\in
\Delta^{ (1
)},X_{T_{M}}\notin\Delta\bigr)\mid\mathcal{G} \bigr]
\\
&&\quad\qquad{}+ E \bigl[P_{\omega}^{z} \bigl(T_{M+V}\in
I^{ (1
)},T_{M}\notin I \bigr)\mid\mathcal{G} \bigr].
\end{eqnarray*}
Separating to the case when $B_{N}$ holds and when $B_{N}^{c}$ (which
has probability $N^{-\xi(1 )}$) we can control the terms
$E [P_{\omega}^{z} (X_{T_{M+V}}\in\Delta^{ (1
)},X_{T_{M}}\notin\Delta)\mid\mathcal{G} ]$
and $E [P_{\omega}^{z} (T_{M+V}\in I^{ (1
)},T_{M}\notin I )\mid\mathcal{G} ]$
by the annealed probability of the events $ \{ X_{T_{M+V}}\in
\Delta^{ (1 )},X_{T_{M}}=w \} $
and $ \{ T_{M+V}\in I^{ (1 )},T_{m}=t \} $ with
$w$ and $t$ the place and time of the first regeneration time after
hitting the hyerplane $H_{M}$ (outside of $\Delta$). Since by the
first argument those events have probability $N^{-\xi(1 )}$
the proof is complete.

The proof of (\ref{eqFirstdifferenceestimationannealedestimation2})
and (\ref{eqFirstdifferenceestimationannealedestimation4})
is very similar and, therefore, is left to the reader.

\subsubsection{Proof of (\protect\ref
{eqTimed-1boxdifferenceannealedestimation1})--(\protect\ref
{eqTimed-1boxdifferenceannealedestimation4})}\label{sec8.3.2}

The proof of (\ref{eqTimed-1boxdifferenceannealedestimation1})--(\ref
{eqTimed-1boxdifferenceannealedestimation4})
follows the same lines as the proof of (\ref
{eqFirstdifferenceestimationannealedestimation1})--(\ref
{eqFirstdifferenceestimationannealedestimation4}).
The\vspace*{1pt} only difference is that in (\ref
{eqFirstdifferenceestimationannealedestimation1})--(\ref
{eqFirstdifferenceestimationannealedestimation4})
we took boxes of side length $\frac{9}{10}N^{\theta}$ and $\frac
{11}{10}N^{\theta}$
leaving a difference of wide $\frac{1}{10}N^{\theta}$ from the original
box whose side length is $N^{\theta}$. This together with the fact
that the distance between the hyperplanes was $V=N^{2\theta'}$ for
some $\theta'<\theta$ made it impossible to hit one box without hitting
the other. Similarly in (\ref
{eqTimed-1boxdifferenceannealedestimation1})--(\ref
{eqTimed-1boxdifferenceannealedestimation4}),
we take boxes of side length $N^{\theta}\pm R_{3} (N )\sqrt{V}$.
As in the previous case, we have $R_{3} (N )\sqrt{V}\gg
\sqrt{V}$
and, therefore, the probability to hit one box without hitting the other
is still of magnitude $N^{-\xi(1 )}$.

\subsubsection{Proof of (\protect\ref{eqLorentztrans3})--(\protect\ref{eqLorentztrans4})}\label{sec8.3.3}

We start with the proof of (\ref{eqLorentztrans3}). Denoting by
$A_{t,s,w}$ the event that the first regeneration time after time
$t$ is at time $s$ and $X_{s}=w$ we have
\begin{eqnarray*}
&& \mathbb{P}^{z} \bigl(X_{T_{\partial\mathcal{P} (0,\sqrt
{L} )}}\in\Delta^{ (1 )},
T_{\partial\mathcal
{P} (0,\sqrt{L} )}\in I^{ (1 )}, X_{N}\notin\Delta\bigr)
\\
&&\qquad =  \mathop{\sum_{
y\in\Delta^{ (1 )}}}_{
t\in I^{ (1 )}}
\mathbb{P}^{z} (X_{T_{\partial\mathcal{P} (0,\sqrt
{L} )}}=y, T_{\partial\mathcal{P} (0,\sqrt{L}
)}=t,
X_{N}\notin\Delta, B_{N} )+N^{-\xi(1
)}
\\
&&\qquad=  \mathop{\sum_{
y\in\Delta^{ (1 )}}}_{
t\in I^{ (1 )}} \mathop{
\sum_{
w: \llVert w-y\rrVert _{\infty}\leq R (N )}}_{
s: \llvert t-s\rrvert \leq R (N )} \mathbb{P}^{z}
(X_{T_{\partial\mathcal{P} (0,\sqrt
{L} )}}=y,
\\
&&\quad\qquad{} T_{\partial\mathcal{P} (0,\sqrt{L}
)}=t, B_{N}, A_{t,s,w}, X_{N}\notin\Delta)+N^{-\xi
(1 )}
\\
&&\qquad\leq \mathop{\sum_{
y\in\Delta^{ (1 )}}}_{
t\in I^{ (1 )}} \mathop{
\sum_{
w: \llVert w-y\rrVert _{\infty}\leq R (N )}}_{
s: \llvert t-s\rrvert \leq R (N )} \mathbb{P}^{z}
(X_{T_{\partial\mathcal{P} (0,\sqrt
{L} )}}=y, T_{\partial\mathcal{P} (0,\sqrt{L}
)}=t, B_{N}, A_{t,s,w}
)
\\
&&\quad\qquad{}\times
 \mathbb{P}^{z} (X_{N}\notin\Delta\mid
X_{T_{\partial
\mathcal{P} (0,\sqrt{L} )}}=y, T_{\partial\mathcal
{P} (0,\sqrt{L} )}=t, B_{N}, A_{t,s,w}
)+N^{-\xi(1 )}
\\
&&\qquad \leq \mathop{\sum_{
y\in\Delta^{ (1 )}}}_{
t\in I^{ (1 )}} \mathop{
\sum_{
w: \llVert w-y\rrVert _{\infty}\leq R (N )}}_{
s: \llvert t-s\rrvert \leq R (N )} \mathbb{P}^{z}
(X_{T_{\partial\mathcal
{P} (0,\sqrt{L} )}}=y, T_{\partial\mathcal{P}
(0,\sqrt{L} )}=t, B_{N}, A_{t,s,w} )
\\
&&\quad\qquad{}\times
\mathbb{P}^{w}(X_{N-s}\notin\Delta\mid
\mbox{0 is a regeneration time}
)+N^{-\xi(1 )}
\\
&&\qquad \leq C\cdot\mathop{\sum_{
y\in\Delta^{ (1 )}}}_{
t\in I^{ (1 )}}
\mathop{\sum_{
w: \llVert w-y\rrVert _{\infty}\leq R (N )}}_{
s: \llvert t-s\rrvert \leq R (N )}
\mathbb{P}^{z} (X_{T_{\partial\mathcal{P} (0,\sqrt
{L} )}}=y, T_{\partial\mathcal{P} (0,\sqrt{L}
)}=t,
B_{N}, A_{t,s,w} )
\\
&&\quad\qquad{}\times \mathbb{P}^{w}
(X_{N-s}\notin\Delta)+N^{-\xi(1 )}
\\
&&\qquad\leq CR (N )^{d} \mathop{\sum_{
w: \operatorname{dist} (w,\Delta^{ (1 )} )\leq
R (N )}}_{
s: \operatorname{dist} (s,I^{ (1 )} )\leq R
(N )}
\mathbb{P}^{w} (X_{N-s}\notin\Delta)+N^{-\xi
(1 )}.
\end{eqnarray*}
Since the number of pairs $ (w,s )$ satisfying the above
inequalities is at most $ (N^{\theta}R (N ) )^{d}$,
it is enough to show that for every $w\in\mathbb{Z}^{d}$ such that
$\operatorname{dist} (w,\break \Delta^{ (1 )} )\leq R
(N )$
and every $s\in\mathbb{N}$ such that $\operatorname{dist} (s,I^{
(1 )} )\leq R (N )$
we have $\mathbb{P}^{w} (X_{N-s}\notin\Delta)=N^{-\xi
(1 )}$.
To this end, fix $w$ and $s$ as above, and note that
\begin{eqnarray*}
&& \mathbb{P}^{w} (X_{N-s}\notin\Delta)
\\
&&\qquad=  \mathbb{P}^{w} (X_{N-s}\notin\Delta,
T_{\partial
\mathcal{P} (w,\sqrt{N^{\theta}} )}=T_{\partial
^{+}\mathcal{P} (w,\sqrt{N^{\theta}} )},
\\
&&\quad\qquad{} T_{\partial
\mathcal{P} (w,\sqrt{\sklfrac{1}{2}N^{\theta}}
)}=T_{\partial^{+}\mathcal{P} (w,\sqrt{\sklfrac{1}{2}N^{\theta
}} )}
)+N^{-\xi(1 )}
\\
&&\qquad=  \mathbb{P}^{w} (X_{N-s}\notin\Delta,
T_{\partial
\mathcal{P} (w,\sqrt{\sklfrac{3}{2}N^{\theta}}
)}=T_{\partial^{+}\mathcal{P} (w,\sqrt{\sklfrac{3}{2}N^{\theta
}} )},
\\
&&\quad\qquad{}
T_{\partial\mathcal{P} (w,\sqrt{\sklfrac
{1}{2}N^{\theta}} )}=T_{\partial^{+}\mathcal{P} (w,\sqrt
{\sklfrac{1}{2}N^{\theta}} )},
\\
&&\quad\qquad{} T_{\partial\mathcal{P}
(w,\sqrt{\sklfrac{1}{2}N^{\theta}} )}\leq N-s\leq T_{\partial
\mathcal{P} (w,\sqrt{\sklfrac{3}{2}N^{\theta}} )} )
\\
&&\quad\qquad{}+  \mathbb{P}^{w} (X_{N-s}\notin\Delta,
T_{\partial
\mathcal{P} (w,\sqrt{\sklfrac{3}{2}N^{\theta}}
)}=T_{\partial^{+}\mathcal{P} (w,\sqrt{\sklfrac{3}{2}N^{\theta
}} )},
\\
&&\quad\qquad{}  N-s>T_{\partial\mathcal{P} (w,\sqrt{\sklfrac
{3}{2}N^{\theta}} )} )
\\
&&\quad\qquad{}+  \mathbb{P}^{w} (X_{N-s}\notin\Delta,
T_{\partial
\mathcal{P} (w,\sqrt{\sklfrac{1}{2}N^{\theta}}
)}=T_{\partial^{+}\mathcal{P} (w,\sqrt{\sklfrac{1}{2}N^{\theta
}} )},
\\
&&\quad\qquad{} N-s<T_{\partial\mathcal{P} (w,\sqrt{\sklfrac
{1}{2}N^{\theta}} )} )+N^{-\xi(1 )}.
\end{eqnarray*}
Note, however, that if $T_{\partial\mathcal{P} (w,\sqrt{\sklfrac
{3}{2}N^{\theta}} )}=T_{\partial^{+}\mathcal{P} (w,\sqrt
{\sklfrac{3}{2}N^{\theta}} )}$,
$T_{\partial\mathcal{P} (w,\sqrt{\sklfrac{1}{2}N^{\theta}}
)}=T_{\partial^{+}\mathcal{P} (w,\sqrt{\sklfrac{1}{2}N^{\theta
}} )}$
and $T_{\partial\mathcal{P} (w,\sqrt{\sklfrac{1}{2}N^{\theta
}} )}\leq N-s\leq T_{\partial\mathcal{P} (w,\sqrt{\sklfrac
{3}{2}N^{\theta}} )}$
then $X_{N-s}\in\Delta$ and, therefore,
\begin{eqnarray*}
&& \mathbb{P}^{w} (X_{N-s}\notin\Delta)
\\
&&\qquad   \leq \mathbb
{P}^{w} (T_{\partial\mathcal{P} (w,\sqrt{\sklfrac
{3}{2}N^{\theta}} )}=T_{\partial^{+}\mathcal{P} (w,\sqrt
{\sklfrac{3}{2}N^{\theta}} )}, N-s>T_{\partial\mathcal
{P} (w,\sqrt{\sklfrac{3}{2}N^{\theta}} )} )
\\
&&\quad\qquad{} +\mathbb{P}^{w} (T_{\partial\mathcal{P} (w,\sqrt{\sklfrac
{1}{2}N^{\theta}} )}=T_{\partial^{+}\mathcal{P} (w,\sqrt
{\sklfrac{1}{2}N^{\theta}} )}, N-s\leq
T_{\partial\mathcal
{P} (w,\sqrt{\sklfrac{1}{2}N^{\theta}} )} )
\\
&&\quad\qquad{} +N^{-\xi
(1 )}.
\end{eqnarray*}
Since $N-s\in[\frac{N^{\theta}}{ \langle\bbbv,e_{1} \rangle}-\frac{1}{2} (N^{\theta}-R_{5}
(N )N^{\sfrac{\theta}{2}} ),\frac{N^{\theta}}{
\langle\bbbv,e_{1} \rangle}+\frac{1}{2} (N^{\theta
}-R_{5} (N )N^{\sfrac{\theta}{2}} ) ]$ it
follows that
\begin{eqnarray*}
&& \mathbb{P}^{w} (T_{\partial\mathcal{P} (w,\sqrt{\sklfrac
{3}{2}N^{\theta}} )}=T_{\partial^{+}\mathcal{P} (w,\sqrt
{\sklfrac{3}{2}N^{\theta}} )},
N-s>T_{\partial\mathcal
{P} (w,\sqrt{\sklfrac{3}{2}N^{\theta}} )} )
\\
&&\qquad=  \mathbb{P}^{0} (T_{\partial\mathcal{P} (0,\sqrt{\sklfrac
{3}{2}N^{\theta}} )}=T_{\partial^{+}\mathcal{P} (0,\sqrt
{\sklfrac{3}{2}N^{\theta}} )},
N-s>T_{\partial\mathcal
{P} (0,\sqrt{\sklfrac{3}{2}N^{\theta}} )} )
\\
&&\qquad\leq \mathbb{P}^{0} \biggl(T_{\sklfrac{3}{2}N^{\theta}}<\frac
{N^{\theta}}{ \langle\bbbv,e_{1} \rangle}+
\frac
{1}{2} \bigl(N^{\theta}-R_{5} (N )N^{\sfrac{\theta
}{2}}
\bigr) \biggr)
\\
&&\qquad\leq \mathbb{P}^{0} \biggl(T_{\sklfrac{3}{2}N^{\theta}}<\frac{\sklfrac
{3}{2}N^{\theta}}{ \langle\bbbv,e_{1} \rangle
}-
\frac{1}{2}R_{5} (N )N^{\sfrac{\theta}{2}} \biggr)=N^{-\xi(1 )}
\end{eqnarray*}
and similarly
\begin{eqnarray*}
&& \mathbb{P}^{w} (X_{N-s}\notin\Delta,
T_{\partial\mathcal
{P} (w,\sqrt{\sklfrac{1}{2}N^{\theta}} )}=T_{\partial
^{+}\mathcal{P} (w,\sqrt{\sklfrac{1}{2}N^{\theta}} )}, N-s\leq T_{\partial
\mathcal{P} (w,\sqrt{\sklfrac{1}{2}N^{\theta
}} )} )
\\
&&\qquad=  \mathbb{P}^{0} (T_{\partial\mathcal{P} (0,\sqrt{\sklfrac
{1}{2}N^{\theta}} )}=T_{\partial^{+}\mathcal{P} (0,\sqrt
{\sklfrac{1}{2}N^{\theta}} )},
N-s<T_{\partial\mathcal
{P} (0,\sqrt{\sklfrac{1}{2}N^{\theta}} )} )
\\
&&\qquad\leq \mathbb{P}^{0} \biggl(T_{\sklfrac{1}{2}N^{\theta}}>\frac
{N^{\theta}}{ \langle\bbbv,e_{1} \rangle}-
\frac
{1}{2} \bigl(N^{\theta}-R_{5} (N )N^{\sfrac{\theta
}{2}}
\bigr) \biggr)
\\
&&\qquad\leq \mathbb{P}^{0} \biggl(T_{\sklfrac{1}{2}N^{\theta}}>\frac{\sklfrac
{1}{2}N^{\theta}}{ \langle\bbbv,e_{1} \rangle
}+
\frac{1}{2}R_{5} (N )N^{\sfrac{\theta}{2}} \biggr)=N^{-\xi(1 )}.
\end{eqnarray*}
\end{appendix}

\section*{Acknowledgments}
The authors would like to thank Marek Biskup, Jean-Dominique Deuschel,
Tal Orenshtein, Jon Peterson, Pierre-Fran\c{c}ois Rodriguez, Atilla Yilmaz
and Ofer Zeitouni for useful discussions and to the referee for useful
comments.


%

\printaddresses

\begin{thebibliography}{29}
\bibitem{Al99}
%
\begin{barticle}[mr]
\bauthor{\bsnm{Alili},~\bfnm{S.}\binits{S.}}
(\byear{1999}).
\btitle{Asymptotic behaviour for random walks in random environments}.
\bjournal{J. Appl. Probab.}
\bvolume{36}
\bpages{334--349}.
\bid{issn={0021-9002}, mr={1724844}}
\end{barticle}
%

\bptok{imsref}%
\endbibitem

\bibitem{berger2008slowdown}
%
\begin{barticle}[mr]
\bauthor{\bsnm{Berger},~\bfnm{Noam}\binits{N.}}
(\byear{2012}).
\btitle{Slowdown estimates for ballistic random walk in random environment}.
\bjournal{J.~Eur. Math. Soc. (JEMS)}
\bvolume{14}
\bpages{127--174}.
\bid{doi={10.4171/JEMS/298}, issn={1435-9855}, mr={2862036}}
\end{barticle}
%

\bptok{imsref}%
\endbibitem

\bibitem{BB07}
%
\begin{barticle}[mr]
\bauthor{\bsnm{Berger},~\bfnm{Noam}\binits{N.}} \AND
\bauthor{\bsnm{Biskup},~\bfnm{Marek}\binits{M.}}
(\byear{2007}).
\btitle{Quenched invariance principle for simple random walk on
percolation clusters}.
\bjournal{Probab. Theory Related Fields}
\bvolume{137}
\bpages{83--120}.
\bid{doi={10.1007/s00440-006-0498-z}, issn={0178-8051}, mr={2278453}}
\end{barticle}
%

\bptok{imsref}%
\endbibitem

\bibitem{BD14}
%
\begin{barticle}[mr]
\bauthor{\bsnm{Berger},~\bfnm{Noam}\binits{N.}} \AND
\bauthor{\bsnm{Deuschel},~\bfnm{Jean-Dominique}\binits{J.-D.}}
(\byear{2014}).
\btitle{A quenched invariance principle for non-elliptic random walk
in i.i.d. balanced random environment}.
\bjournal{Probab. Theory Related Fields}
\bvolume{158}
\bpages{91--126}.
\bid{doi={10.1007/s00440-012-0478-4}, issn={0178-8051}, mr={3152781}}
\end{barticle}
%

\bptok{imsref}%
\endbibitem

\bibitem{BDR12}
%
\begin{barticle}[mr]
\bauthor{\bsnm{Berger},~\bfnm{Noam}\binits{N.}},
\bauthor{\bsnm{Drewitz},~\bfnm{Alexander}\binits{A.}} \AND
\bauthor{\bsnm{Ram{\'{\i}}rez},~\bfnm{Alejandro~F.}\binits{A.~F.}}
(\byear{2014}).
\btitle{Effective polynomial ballisticity conditions for random walk
in random environment}.
\bjournal{Comm. Pure Appl. Math.}
\bvolume{67}
\bpages{1947--1973}.
\bid{doi={10.1002/cpa.21500}, issn={0010-3640}, mr={3272364}}
\bptnote{check volume, check pages, check year}%
\end{barticle}
%

\bptok{imsref}%
\endbibitem

\bibitem{BZ08}
%
\begin{bincollection}[mr]
\bauthor{\bsnm{Berger},~\bfnm{Noam}\binits{N.}} \AND
\bauthor{\bsnm{Zeitouni},~\bfnm{Ofer}\binits{O.}}
(\byear{2008}).
\btitle{A quenched invariance principle for certain ballistic random
walks in i.i.d. environments}.
In \bbooktitle{In and Out of Equilibrium. 2}.
\bseries{Progress in Probability}
\bvolume{60}
\bpages{137--160}.
\bpublisher{Birkh\"auser},
\blocation{Basel}.
\bid{doi={10.1007/978-3-7643-8786-0_7}, mr={2477380}}
\end{bincollection}
%

\bptok{imsref}%
\endbibitem

\bibitem{bolthausen2002satic}
%
\begin{barticle}[mr]
\bauthor{\bsnm{Bolthausen},~\bfnm{Erwin}\binits{E.}} \AND
\bauthor{\bsnm{Sznitman},~\bfnm{Alain-Sol}\binits{A.-S.}}
(\byear{2002}).
\btitle{On the static and dynamic points of view for certain random
walks in random environment}.
\bjournal{Methods Appl. Anal.}
\bvolume{9}
\bpages{345--375}.
\bid{doi={10.4310/MAA.2002.v9.n3.a4}, issn={1073-2772}, mr={2023130}}
\bptnote{check pages}%
\end{barticle}
%

\bptok{imsref}%
\endbibitem

\bibitem{bolthausen2002ten}
%
\begin{bbook}[mr]
\bauthor{\bsnm{Bolthausen},~\bfnm{Erwin}\binits{E.}} \AND
\bauthor{\bsnm{Sznitman},~\bfnm{Alain-Sol}\binits{A.-S.}}
(\byear{2002}).
\btitle{Ten Lectures on Random Media}.
\bseries{DMV Seminar}
\bvolume{32}.
\bpublisher{Birkh\"auser},
\blocation{Basel}.
\bid{doi={10.1007/978-3-0348-8159-3}, mr={1890289}}
\end{bbook}
%

\bptok{imsref}%
\endbibitem

\bibitem{CR13}
%
\begin{barticle}[auto:parserefs-M02]
\bauthor{\bsnm{Campos},~\bfnm{David}\binits{D.}} \AND
\bauthor{\bsnm{Ram{\'{\i}}rez},~\bfnm{Alejandro~F.}\binits{A.~F.}}
(\byear{2013}).
\btitle{Ellipticity criteria for ballistic behavior of random walks in
random environment}.
\bjournal{Probab. Theory Related Fields}
\bvolume{160}
\bpages{189--251}.
\end{barticle}
%

\bptok{imsref}%
\endbibitem

\bibitem{DR11}
%
\begin{barticle}[mr]
\bauthor{\bsnm{Drewitz},~\bfnm{A.}\binits{A.}} \AND
\bauthor{\bsnm{Ram{\'{\i}}rez},~\bfnm{A.~F.}\binits{A.~F.}}
(\byear{2011}).
\btitle{Ballisticity conditions for random walk in random environment}.
\bjournal{Probab. Theory Related Fields}
\bvolume{150}
\bpages{61--75}.
\bid{doi={10.1007/s00440-010-0268-9}, issn={0178-8051}, mr={2800904}}
\end{barticle}
%

\bptok{imsref}%
\endbibitem

\bibitem{DR12}
%
\begin{barticle}[mr]
\bauthor{\bsnm{Drewitz},~\bfnm{Alexander}\binits{A.}} \AND
\bauthor{\bsnm{Ram{\'{\i}}rez},~\bfnm{Alejandro~F.}\binits{A.~F.}}
(\byear{2012}).
\btitle{Quenched exit estimates and ballisticity conditions for
higher-dimensional random walk in random environment}.
\bjournal{Ann. Probab.}
\bvolume{40}
\bpages{459--534}.
\bid{doi={10.1214/10-AOP637}, issn={0091-1798}, mr={2952083}}
\end{barticle}
%

\bptok{imsref}%
\endbibitem

\bibitem{DR13}
%
\begin{barticle}[mr]
\bauthor{\bsnm{Drewitz},~\bfnm{Alexander}\binits{A.}} \AND
\bauthor{\bsnm{Ram\'{\i}rez},~\bfnm{Alejandro~F.}\binits{A.~F.}}
(\byear{2014}).
\btitle{Selected topics in random walk in random environment}.
\bjournal{Topics in Percolative and Disordered Systems. Springer Proc. Math. Stat.}
\bvolume{69}
\bpages{23--83}.
\bid{mr={3229286}}
\end{barticle}
%

\bptok{imsref}%
\endbibitem

\bibitem{GZ12}
%
\begin{barticle}[mr]
\bauthor{\bsnm{Guo},~\bfnm{Xiaoqin}\binits{X.}} \AND
\bauthor{\bsnm{Zeitouni},~\bfnm{Ofer}\binits{O.}}
(\byear{2012}).
\btitle{Quenched invariance principle for random walks in balanced
random environment}.
\bjournal{Probab. Theory Related Fields}
\bvolume{152}
\bpages{207--230}.
\bid{doi={10.1007/s00440-010-0320-9}, issn={0178-8051}, mr={2875757}}
\end{barticle}
%

\bptok{imsref}%
\endbibitem

\bibitem{KV86}
%
\begin{barticle}[mr]
\bauthor{\bsnm{Kipnis},~\bfnm{C.}\binits{C.}} \AND
\bauthor{\bsnm{Varadhan},~\bfnm{S.~R.~S.}\binits{S.~R.~S.}}
(\byear{1986}).
\btitle{Central limit theorem for additive functionals of reversible
{M}arkov processes and applications to simple exclusions}.
\bjournal{Comm. Math. Phys.}
\bvolume{104}
\bpages{1--19}.
\bid{issn={0010-3616}, mr={0834478}}
\end{barticle}
%

\bptok{imsref}%
\endbibitem

\bibitem{Ko85}
%
\begin{barticle}[mr]
\bauthor{\bsnm{Kozlov},~\bfnm{S.~M.}\binits{S.~M.}}
(\byear{1985}).
\btitle{The averaging method and walks in inhomogeneous environments}.
\bjournal{Uspekhi Mat. Nauk}
\bvolume{40}
\bpages{61--120}.
\bid{issn={0042-1316}, mr={0786087}}
\end{barticle}
%

\bptok{imsref}%
\endbibitem

\bibitem{La87}
%
\begin{barticle}[mr]
\bauthor{\bsnm{Lawler},~\bfnm{Gregory~F.}\binits{G.~F.}}
(\byear{1982/1983}).
\btitle{Weak convergence of a random walk in a random environment}.
\bjournal{Comm. Math. Phys.}
\bvolume{87}
\bpages{81--87}.
\bid{issn={0010-3616}, mr={0680649}}
\end{barticle}
%

\bptok{imsref}%
\endbibitem

\bibitem{MP07}
%
\begin{barticle}[mr]
\bauthor{\bsnm{Mathieu},~\bfnm{P.}\binits{P.}} \AND
\bauthor{\bsnm{Piatnitski},~\bfnm{A.}\binits{A.}}
(\byear{2007}).
\btitle{Quenched invariance principles for random walks on percolation
clusters}.
\bjournal{Proc. R. Soc. Lond. Ser. A Math. Phys. Eng. Sci.}
\bvolume{463}
\bpages{2287--2307}.
\bid{doi={10.1098/rspa.2007.1876}, issn={1364-5021}, mr={2345229}}
\end{barticle}
%

\bptok{imsref}%
\endbibitem

\bibitem{McD98}
%
\begin{bincollection}[mr]
\bauthor{\bsnm{McDiarmid},~\bfnm{Colin}\binits{C.}}
(\byear{1998}).
\btitle{Concentration}.
In \bbooktitle{Probabilistic Methods for Algorithmic Discrete Mathematics}.
\bseries{Algorithms Combin.}
\bvolume{16}
\bpages{195--248}.
\bpublisher{Springer},
\blocation{Berlin}.
\bid{doi={10.1007/978-3-662-12788-9_6}, mr={1678578}}
\end{bincollection}
%

\bptok{imsref}%
\endbibitem

\bibitem{RA03}
%
\begin{barticle}[mr]
\bauthor{\bsnm{Rassoul-Agha},~\bfnm{Firas}\binits{F.}}
(\byear{2003}).
\btitle{The point of view of the particle on the law of large numbers
for random walks in a mixing random environment}.
\bjournal{Ann. Probab.}
\bvolume{31}
\bpages{1441--1463}.
\bid{doi={10.1214/aop/1055425786}, issn={0091-1798}, mr={1989439}}
\end{barticle}
%

\bptok{imsref}%
\endbibitem

\bibitem{RAS05}
%
\begin{barticle}[mr]
\bauthor{\bsnm{Rassoul-Agha},~\bfnm{Firas}\binits{F.}} \AND
\bauthor{\bsnm{Sepp{\"a}l{\"a}inen},~\bfnm{Timo}\binits{T.}}
(\byear{2005}).
\btitle{An almost sure invariance principle for random walks in a
space--time random environment}.
\bjournal{Probab. Theory Related Fields}
\bvolume{133}
\bpages{299--314}.
\bid{doi={10.1007/s00440-004-0424-1}, issn={0178-8051}, mr={2198014}}
\end{barticle}
%

\bptok{imsref}%
\endbibitem

\bibitem{RAS07}
%
\begin{barticle}[mr]
\bauthor{\bsnm{Rassoul-Agha},~\bfnm{Firas}\binits{F.}} \AND
\bauthor{\bsnm{Sepp{\"a}l{\"a}inen},~\bfnm{Timo}\binits{T.}}
(\byear{2007}).
\btitle{Quenched invariance principle for multidimensional ballistic
random walk in a random environment with a forbidden direction}.
\bjournal{Ann. Probab.}
\bvolume{35}
\bpages{1--31}.
\bid{doi={10.1214/009117906000000610}, issn={0091-1798}, mr={2303942}}
\end{barticle}
%

\bptok{imsref}%
\endbibitem

\bibitem{RAS09}
%
\begin{barticle}[mr]
\bauthor{\bsnm{Rassoul-Agha},~\bfnm{Firas}\binits{F.}} \AND
\bauthor{\bsnm{Sepp{\"a}l{\"a}inen},~\bfnm{Timo}\binits{T.}}
(\byear{2009}).
\btitle{Almost sure functional central limit theorem for ballistic
random walk in random environment}.
\bjournal{Ann. Inst. Henri Poincar\'e Probab. Stat.}
\bvolume{45}
\bpages{373--420}.
\bid{doi={10.1214/08-AIHP167}, issn={0246-0203}, mr={2521407}}
\end{barticle}
%

\bptok{imsref}%
\endbibitem

\bibitem{Sa13}
%
\begin{barticle}[mr]
\bauthor{\bsnm{Sabot},~\bfnm{Christophe}\binits{C.}}
(\byear{2013}).
\btitle{Random {D}irichlet environment viewed from the particle in
dimension {$d\geq3$}}.
\bjournal{Ann. Probab.}
\bvolume{41}
\bpages{722--743}.
\bid{doi={10.1214/11-AOP699}, issn={0091-1798}, mr={3077524}}
\end{barticle}
%

\bptok{imsref}%
\endbibitem

\bibitem{SS04}
%
\begin{barticle}[mr]
\bauthor{\bsnm{Sidoravicius},~\bfnm{Vladas}\binits{V.}} \AND
\bauthor{\bsnm{Sznitman},~\bfnm{Alain-Sol}\binits{A.-S.}}
(\byear{2004}).
\btitle{Quenched invariance principles for walks on clusters of
percolation or among random conductances}.
\bjournal{Probab. Theory Related Fields}
\bvolume{129}
\bpages{219--244}.
\bid{doi={10.1007/s00440-004-0336-0}, issn={0178-8051}, mr={2063376}}
\end{barticle}
%

\bptok{imsref}%
\endbibitem

\bibitem{Sz01}
%
\begin{barticle}[mr]
\bauthor{\bsnm{Sznitman},~\bfnm{Alain-Sol}\binits{A.-S.}}
(\byear{2001}).
\btitle{On a class of transient random walks in random environment}.
\bjournal{Ann. Probab.}
\bvolume{29}
\bpages{724--765}.
\bid{doi={10.1214/aop/1008956691}, issn={0091-1798}, mr={1849176}}
\end{barticle}
%

\bptok{imsref}%
\endbibitem

\bibitem{Sz02}
%
\begin{barticle}[mr]
\bauthor{\bsnm{Sznitman},~\bfnm{Alain-Sol}\binits{A.-S.}}
(\byear{2002}).
\btitle{An effective criterion for ballistic behavior of random walks
in random environment}.
\bjournal{Probab. Theory Related Fields}
\bvolume{122}
\bpages{509--544}.
\bid{doi={10.1007/s004400100177}, issn={0178-8051}, mr={1902189}}
\end{barticle}
%

\bptok{imsref}%
\endbibitem

\bibitem{SZ99}
%
\begin{barticle}[mr]
\bauthor{\bsnm{Sznitman},~\bfnm{Alain-Sol}\binits{A.-S.}} \AND
\bauthor{\bsnm{Zerner},~\bfnm{Martin}\binits{M.}}
(\byear{1999}).
\btitle{A law of large numbers for random walks in random environment}.
\bjournal{Ann. Probab.}
\bvolume{27}
\bpages{1851--1869}.
\bid{doi={10.1214/aop/1022874818}, issn={0091-1798}, mr={1742891}}
\end{barticle}
%

\bptok{imsref}%
\endbibitem

\bibitem{Ze04}
%
\begin{bincollection}[mr]
\bauthor{\bsnm{Zeitouni},~\bfnm{Ofer}\binits{O.}}
(\byear{2004}).
\btitle{Random walks in random environment}.
In \bbooktitle{Lectures on Probability Theory and Statistics}.
\bseries{Lecture Notes in Math.}
\bvolume{1837}
\bpages{189--312}.
\bpublisher{Springer},
\blocation{Berlin}.
\bid{doi={10.1007/978-3-540-39874-5_2}, mr={2071631}}
\end{bincollection}
%

\bptok{imsref}%
\endbibitem

\bibitem{Ze02}
%
\begin{barticle}[mr]
\bauthor{\bsnm{Zerner},~\bfnm{Martin~P.~W.}\binits{M.~P.~W.}}
(\byear{2002}).
\btitle{A non-ballistic law of large numbers for random walks in
i.i.d. random environment}.
\bjournal{Electron. Commun. Probab.}
\bvolume{7}
\bpages{191--197}.
\bid{doi={10.1214/ECP.v7-1060}, issn={1083-589X}, mr={1937904}}
\end{barticle}
%

\bptok{imsref}%
\endbibitem
\end{thebibliography}
\end{document}